\begin{document}
\numberwithin{equation}{section}

\def\1#1{\overline{#1}}
\def\2#1{\widetilde{#1}}
\def\3#1{\widehat{#1}}
\def\4#1{\mathbb{#1}}
\def\5#1{\frak{#1}}
\def\6#1{{\mathcal{#1}}}

\newcommand{\UH}{\mathbb{H}}
\newcommand{\de}{\partial}
\newcommand{\R}{\mathbb R}
\newcommand{\Ha}{\mathbb H}
\newcommand{\al}{\alpha}
\newcommand{\tr}{\widetilde{\rho}}
\newcommand{\tz}{\widetilde{\zeta}}
\newcommand{\tk}{\widetilde{C}}
\newcommand{\tv}{\widetilde{\varphi}}
\newcommand{\hv}{\hat{\varphi}}
\newcommand{\tu}{\tilde{u}}
\newcommand{\tF}{\tilde{F}}
\newcommand{\debar}{\overline{\de}}
\newcommand{\Z}{\mathbb Z}
\newcommand{\C}{\mathbb C}
\newcommand{\Po}{\mathbb P}
\newcommand{\zbar}{\overline{z}}
\newcommand{\G}{\mathcal{G}}
\newcommand{\So}{\mathcal{S}}
\newcommand{\Ko}{\mathcal{K}}
\newcommand{\U}{\mathcal{U}}
\newcommand{\B}{\mathbb B}
\newcommand{\oB}{\overline{\mathbb B}}
\newcommand{\Cur}{\mathcal D}
\newcommand{\Dis}{\mathcal Dis}
\newcommand{\Levi}{\mathcal L}
\newcommand{\SP}{\mathcal SP}
\newcommand{\Sp}{\mathcal Q}
\newcommand{\A}{\mathcal O^{k+\alpha}(\overline{\mathbb D},\C^n)}
\newcommand{\CA}{\mathcal C^{k+\alpha}(\de{\mathbb D},\C^n)}
\newcommand{\Ma}{\mathcal M}
\newcommand{\Ac}{\mathcal O^{k+\alpha}(\overline{\mathbb D},\C^{n}\times\C^{n-1})}
\newcommand{\Acc}{\mathcal O^{k-1+\alpha}(\overline{\mathbb D},\C)}
\newcommand{\Acr}{\mathcal O^{k+\alpha}(\overline{\mathbb D},\R^{n})}
\newcommand{\Co}{\mathcal C}
\newcommand{\Hol}{{\sf Hol}(\mathbb H, \mathbb C)}
\newcommand{\Aut}{{\sf Aut}(\mathbb D)}
\newcommand{\D}{\mathbb D}
\newcommand{\oD}{\overline{\mathbb D}}
\newcommand{\oX}{\overline{X}}
\newcommand{\loc}{L^1_{\rm{loc}}}
\newcommand{\la}{\langle}
\newcommand{\ra}{\rangle}
\newcommand{\thh}{\tilde{h}}
\newcommand{\N}{\mathbb N}
\newcommand{\kd}{\kappa_D}
\newcommand{\Hr}{\mathbb H}
\newcommand{\ps}{{\sf Psh}}
\newcommand{\Hess}{{\sf Hess}}
\newcommand{\subh}{{\sf subh}}
\newcommand{\harm}{{\sf harm}}
\newcommand{\ph}{{\sf Ph}}
\newcommand{\tl}{\tilde{\lambda}}
\newcommand{\gdot}{\stackrel{\cdot}{g}}
\newcommand{\gddot}{\stackrel{\cdot\cdot}{g}}
\newcommand{\fdot}{\stackrel{\cdot}{f}}
\newcommand{\fddot}{\stackrel{\cdot\cdot}{f}}

\def\Re{{\sf Re}\,}
\def\Im{{\sf Im}\,}

\newcommand{\Real}{\mathbb{R}}
\newcommand{\Natural}{\mathbb{N}}
\newcommand{\Complex}{\mathbb{C}}
\newcommand{\ComplexE}{\overline{\mathbb{C}}}
\newcommand{\Int}{\mathbb{Z}}
\newcommand{\UD}{\mathbb{D}}
\newcommand{\clS}{\mathcal{S}}
\newcommand{\gtz}{\ge0}
\newcommand{\gt}{\ge}
\newcommand{\lt}{\le}
\newcommand{\fami}[1]{(#1_{s,t})}
\newcommand{\famc}[1]{(#1_t)}
\newcommand{\ts}{t\gt s\gtz}
\newcommand{\classCC}{\tilde{\mathcal C}}
\newcommand{\classS}{\mathcal S}

\newcommand{\Step}[2]{\begin{itemize}\item[{\bf Step~#1.}]{\it #2}\end{itemize}}
\newcommand{\step}[2]{\begin{itemize}\item[{\it Step~#1.}]{\it #2}\end{itemize}}
\newcommand{\proofbox}{\hfill$\Box$}

\newcommand{\mcite}[1]{\csname b@#1\endcsname}
\newcommand{\UC}{\mathbb{T}}

\newcommand{\Moeb}{\mathrm{M\ddot ob}}

\newcommand{\dAlg}{{\mathcal A}(\UD)}
\newcommand{\diam}{\mathrm{diam}}

\theoremstyle{theorem}
\newtheorem {result} {Theorem}
\setcounter {result} {64}
 \renewcommand{\theresult}{\char\arabic{result}}



\newcommand{\Spec}{\Lambda^d}
\newcommand{\SpecR}{\Lambda^d_R}
\newcommand{\Prend}{\mathrm P}




\def\cn{{\C^n}}
\def\cnn{{\C^{n'}}}
\def\ocn{\2{\C^n}}
\def\ocnn{\2{\C^{n'}}}
\def\je{{\6J}}
\def\jep{{\6J}_{p,p'}}
\def\th{\tilde{h}}


\def\dist{{\rm dist}}
\def\const{{\rm const}}
\def\rk{{\rm rank\,}}
\def\id{{\sf id}}
\def\aut{{\sf aut}}
\def\Aut{{\sf Aut}}
\def\CR{{\rm CR}}
\def\GL{{\sf GL}}
\def\Re{{\sf Re}\,}
\def\Im{{\sf Im}\,}
\def\U{{\sf U}}

\def\la{\langle}
\def\ra{\rangle}

\emergencystretch15pt \frenchspacing

\newtheorem{theorem}{Theorem}[section]
\newtheorem{lemma}[theorem]{Lemma}
\newtheorem{proposition}[theorem]{Proposition}
\newtheorem{corollary}[theorem]{Corollary}

\theoremstyle{definition}
\newtheorem{definition}[theorem]{Definition}
\newtheorem{example}[theorem]{Example}

\theoremstyle{remark}
\newtheorem{remark}[theorem]{Remark}
\numberwithin{equation}{section}

\title[Chordal Loewner chains]{Geometry behind chordal Loewner chains}

\author[M. D. Contreras]{Manuel D. Contreras $^\dag$}

\author[S. D\'{\i}az-Madrigal]{Santiago D\'{\i}az-Madrigal $^\dag$}
\address{Camino de los Descubrimientos, s/n\\
Departamento de Matem\'{a}tica Aplicada II\\
Escuela T\'{e}cnica Superior de Ingenieros\\
Universidad de Sevilla\\
Sevilla, 41092\\
Spain.}\email{contreras@us.es} \email{madrigal@us.es}

\author[P. Gumenyuk]{Pavel Gumenyuk $^\ddag$}
\address{Department of
Mathematics\\ University of Bergen, Johannes Brunsgate 12\\ Bergen 5008, Norway. }
\email{Pavel.Gumenyuk@math.uib.no}

\date{\today }
\subjclass[2000]{Primary 30C80; Secondary 30D05, 30C35, 34M15}

\keywords{Univalent functions, Loewner chains, Loewner evolution, evolution families, chordal Loewner equation,
parametric representation}

\thanks{$^\dag$ Partially supported by the \textit{Ministerio
de Ciencia e Innovaci\'on} and the European Union (FEDER), projects MTM2006-14449-C02-01 and
MTM2009-14694-C02-02, by \textit{La Consejer\'{\i}a de Educaci\'{o}n y Ciencia de la Junta de Andaluc\'{\i}a}
and by the ESF Networking Programme ``Harmonic and Complex Analysis and its Applications''}

\thanks{$^\ddag$ Partially supported by the ESF Networking Programme
``Harmonic and Complex Analysis and its Applications'' and the {\it Research Council of Norway}}

\begin{abstract}

Loewner Theory is a deep technique in Complex Analysis affording a basis for many further
important developments such as the proof of famous Bieberbach's conjecture and well-celebrated
Schramm's Stochastic Loewner Evolution~(SLE). It provides analytic description of expanding
domains dynamics in the plane. Two cases have been developed in the classical theory, namely
the {\it radial} and the {\it chordal} Loewner evolutions, referring to the associated
families of holomorphic self-mappings being normalized at an internal or boundary point of the
reference domain, respectively. Recently there has been introduced a new approach~\cite{BCM1,
BCM2, SMP} bringing together, and containing as quite special cases, radial and chordal
variants of Loewner Theory. In the framework of this approach we address the question what
kind of systems of simply connected domains can be described by means of Loewner chains of
chordal type. As an answer to this question we establish a necessary and sufficient condition
for a set of simply connected domains to be the range of a generalized Loewner chain of
chordal type. We also provide an easy-to-check geometric sufficient condition for that. In
addition, we obtain analogous results for the less general case of chordal Loewner evolution
considered  in~\cite{Aleks1983, Goryainov-Ba, Bauer}.

\end{abstract}

\maketitle

\tableofcontents

\section{Introduction}
\subsection{Parametric representation}
Loewner theory is a deep technique in Geometric Function Theory. In 1923 C.\,Loewner\footnote{{\it Charles
Loewner} was a Czech--American mathematician, also known as {\it Karel L\"owner} (Czech) and {\it Karl L\"owner}
(German).} published a paper~\cite{Loewner} where he introduced the so-called {\it parametric representation} of
slit mappings, i.e. univalent holomorphic functions mapping the unit disk~$\UD:=\{z:|z|<1\}$ onto the complex
plane minus a slit along a Jordan curve extending to infinity. His original aim was to achieve some progress in
the famous Bieberbach conjecture on the sharp upper bounds for the Taylor coefficients of normalized univalent
functions. The information about a slit mapping~$f$ is encoded in a continuous function $u:[0,+\infty)\to\Real$
in such a way that the function~$f$ can be expressed via integrals of the following {\it Loewner ODE}
\begin{equation}\label{L_ODE_intro}
\frac{dw}{dt}=-w\frac{e^{iu(t)}+w}{e^{iu(t)}-w},\quad
t\ge0;~~~w|_{t=0}=z,~z\in\UD.
\end{equation}

The modern form of the Parametric Representation Method, which provides a tool to represent the whole
class~$\classS$ consisting of all univalent holomorphic functions $f:\UD\to\Complex$ normalized by the
expansion $f(z)=z+a_2z^2+\ldots$, is due to fundamental contributions of Kufarev~\cite{Kufarev} and
Pommerenke~\cite{Pommerenke-65}. The Schwarz kernel in the right-hand side of~\eqref{L_ODE_intro} is
replaced then by an arbitrary holomorphic function $p(w,t)$ measurable in~$t$ and satisfying conditions
$\Re p(w,t)>0$ and $p(0,t)=1$ for all $w\in\UD$, $t\ge0$. More details can be found
in~\cite[Chapter~6]{Pommerenke}. An exposition for the case of slit mappings, essentially the same as
originally considered by Loewner, is given in~\cite[\S III.2]{Goluzin}. Other good source
is~\cite[Chapter~3]{Duren} and the monograph~\cite{Aleksandrov}, which is devoted to the Parametric
Representation Method and its applications to extremal problems of Geometric Function Theory.

It is worth mentioning that the Parametric Representation Method, providing one of the main ingredients in the
proof of Bieberbach's conjecture given by de Branges~\cite{deBranges} in 1984, has however gone far beyond the
scope of the initial problem. Two spectacular examples of its protrusion to other areas of mathematics are the
Loewner--Kufarev type equation describing free boundary flow of a viscous fluid in a Hele-Shaw
cell~\cite{VinKuf}, see also \cite[\S1.4.2]{GusVas}; and the highly celebrated Stochastic Loewner Evolution
(SLE) introduced in 2000 by Schramm~\cite{Schramm} as a powerful tool that led to deep results in the
mathematical theory of some 2D lattice models of great importance for Statistical Physics. Recently Markina,
Prokhorov, and Vasil'ev~\cite{MPV1,MPV2} have discovered interesting relations between Loewner Theory,
Integrable Systems, and Kirillov's representation of the Virasoro algebra, which constitutes a common algebraic
skeleton for such topics in Mathematical Physics as the KdV non-linear PDE and String Theory.

Originally Loewner Theory was developed for univalent functions in the unit disk normalized at the origin.
Parametric representation of univalent functions in the upper half-plane having hydrodynamical
normalization at the point of infinity was developed by Kufarev and his students, see
e.\,g.,~\cite{Kufarev_etal}. The analogue of Loewner ODE~\eqref{L_ODE_intro} for this case appeared
probably for the first time in papers~\cite{Kufarev1946, Popova1949}. Further development in this
direction was made in~\cite{Aleks1983, AleksST, Goryainov-Ba, Bauer}.

In modern literature the case of univalent functions normalized at an internal point of the reference domain is
referred to as the {\it radial} case. The parametric representation of univalent functions with hydrodynamical
normalization, in contrast, is marked by the attribute {\it ``chordal"}. The latter  of these two cases had been
quite underestimated until the already mentioned paper~\cite{Schramm} by Schramm, where he used both chordal and
radial variants of Loewner Theory to introduce the notion of SLE.

Another mathematical construction closely related to Loewner Theory is one-parametric semigroups of holomorphic
functions. We will not discuss it in details in this paper, so we just mention that this construction provides a
set of important non-trivial examples for Operator Theory (see, e.g., \cite{Siskakis}) and Markov stochastic
processes and refer the reader to the monographs~\cite{Abate, Shapiro, Shoikhet} for the definition and further
details, and to papers~\cite{Goryainov1, Goryainov2} where some applications to Probability Theory were
developed. An exposition of this topic from the dynamical point of view can be found
in~\cite{Contreras-Diaz:pacific}.

Recently Bracci, Contreras, and D\'\i{}az-Madrigal~\cite{BCM1,BCM2} have  introduced a new approach in Loewner
Theory in order to develop a general construction, which contains, as quite special cases, the radial and
chordal Loewner evolutions as well as one-parametric semigroups of holomorphic functions. Now we give a short
account on this approach.

\subsection{New approach in Loewner Theory}

According to the new approach in the Loewner Theory introduced by Bracci, Contreras, and
D\'{\i}az-Madrigal~\cite{BCM1,BCM2}, the essence of this theory can be represented by the relations and
interplay between three notions: {\it Loewner chains, evolution families} and {\it Herglotz vector fields.}

In~\cite{BCM1} the following definition of a (generalized) evolution family was given.
\begin{definition}\label{def-ev}
A family $(\varphi_{s,t})_{0\leq s\leq t<+\infty}$ of holomorphic self-maps of the unit disc  is an {\sl
evolution family of order $d$} with $d\in [1,+\infty]$ (in short, an {\it $L^d$-evolution family}) if
\begin{enumerate}
\item[EF1.] $\varphi_{s,s}=id_{\mathbb{D}},$

\item[EF2.] $\varphi_{s,t}=\varphi_{u,t}\circ\varphi_{s,u}$ whenever $0\leq
s\leq u\leq t<+\infty,$

\item[EF3.] for all $z\in\mathbb{D}$ and for all $T>0$ there exists a
non-negative ${k_{z,T}\in L^{d}([0,T],\mathbb{R})}$ such that
\[
|\varphi_{s,u}(z)-\varphi_{s,t}(z)|\leq\int_{u}^{t}k_{z,T}(\xi)d\xi
\]
whenever $0\leq s\leq u\leq t\leq T.$
\end{enumerate}
\end{definition}
We point out that the elements of any evolution family are univalent~\cite[Corollary 6.3]{BCM1}. The evolution
families corresponding to the classical radial case are those satisfying the normalization
$\varphi_{0,t}=e^{-t}(z+a_2(t)z^2+\ldots\,)$, $t\ge0$, $z\in\UD$. Assuming this normalization one can omit
condition~EF3 in the definition \cite[Lemma 6.1]{Pommerenke}. However, this is obviously not the case in
general. Particular notions appeared earlier in the literature, see, e.g., \cite{Goryainov, Goryainov1996,
Goryainov-Ba, Bauer, Conway2}.

Generalized evolution families can be seen as solutions to the following initial value problem
\begin{equation*}
    \left\{
    \begin{array}{ll}
      \displaystyle\frac{dw}{dt}=G(w,t), & \quad t\in[s,+\infty), \\[5mm]
     \displaystyle w|_{t=s}=z, &
    \end{array}
    \right.
\end{equation*}
where $G(z,t)$ is a Herglotz vector field. For the definition of Herglotz vector fields and further details we
refer the reader to~\cite{BCM1}. Here we only mention the fact~\cite[Theorem 4.8]{BCM1} that Herglotz vector
fields can be characterized by the representation~$$G(z,t)=(\tau(t)-z)\big(1-\overline{\tau (t)}z\big)p(z,t),$$
where $\tau:[0,+\infty)\to\overline{\UD}$ is a measurable function and $p:\UD\times[0,\infty)\to\Complex$ is a
function with non-negative real part, holomorphic in~$z$ and locally integrable in~$t$. The classical radial
case corresponds to $\tau\equiv0$ and $p(z,t)$ satisfying~$p(0,t)=1$, $t\ge0$. If we consider the chordal case
in the unit disk, then~$\tau=1$. More generally, it follows from~\cite[Thereom~6.7]{BCM1} and the proof
of~\cite[Thereom~7.1]{BCM1} that all non-identical elements of an evolution family have the same Denjoy--Wolff
point~$\tau_0$ if and only if for the corresponding Herglotz vector field we have~$\tau\equiv\tau_0$. This is
the reason for the following definition.

\begin{definition}
Let~$(\varphi_{s,t})$ be an evolution family. Suppose that all non-identical elements of $(\varphi_{s,t})$ share
the same Denjoy--Wolff point~$\tau_0\in\overline\UD$, i.e., $\varphi_{s,t}(\tau_0)=\tau_0$ and
$|\varphi_{s,t}'(\tau_0)|\le1$ for all $s\ge0$ and $t\ge s$, where $\varphi_{s,t}(\tau_0)$ and
$\varphi'_{s,t}(\tau_0)$ are to be understood as the corresponding angular limits if
$\tau_0\in\UC:=\partial\UD$. We will say that $(\varphi_{s,t})$ is a {\it radial\,} evolution family if
$\tau_0\in\UD$. Otherwise, if $\tau_0\in\UC$, we will call $(\varphi_{s,t})$ a {\it chordal\,} evolution family.
\end{definition}

By means of M\"obius transformations we can always assume that $\tau_0=0$ in the case of a radial
evolution family and that $\tau_0=1$ in the case of a chordal evolution family.

The general version of the third fundamental notion in Loewner Theory, the notion of generalized Loewner
chains was given in~\cite{SMP}.

\begin{definition}\label{def-lc}
A family $(f_t)_{0\leq t<+\infty}$ of holomorphic maps of the unit disc~$\UD$ is called a {\it Loewner
chain of order $d$} with $d\in [1,+\infty]$ (in short, an {\it $L^d$-Loewner chain}) if

\begin{enumerate}
\item[LC1.] each function $f_t:\D\to\C$ is univalent,

\item[LC2.] $f_s(\D)\subset f_t(\D)$ for all $s\ge0$ and $t\ge s$,

\item[LC3.] for any compact set $K\subset\mathbb{D}$ and any $T>0$ there exists a
non-negative function $k_{K,T}\in L^{d}([0,T],\mathbb{R})$ such that
\[
|f_s(z)-f_t(z)|\leq\int_{s}^{t}k_{K,T}(\xi)d\xi
\]
for all $z\in K$ and all $0\leq s\leq t\leq T$.
\end{enumerate}
A Loewner chain $\famc f$ is said to be {\it normalized} if  $f_0(0)=0$ and $f'_0(0)=1$ (notice that we
only normalize the function $f_0$).
\end{definition}
We note that the classical notion of (radial) Loewner chain is recovered if one replaces condition~LC3 by the
requirement that $f_t$ should satisfy the following normalization: $f_t(z)=e^t(z+c_2(t)z^2+\ldots\,)$.

In~\cite{SMP} we established a deep relation between evolution families and Loewner chains similar to that
taking place in the classical case. These results can be stated in the following three theorems.

\begin{result}[{\cite[Theorem 1.3]{SMP}}]\label{T1}
For any Loewner chain $(f_t)$ of order $d\in[1,+\infty]$, if we define
\begin{equation}\label{EFofLC}\varphi_{s,t}:= f_t^{-1}\circ f_s, \quad 0\le s\le t, \end{equation}
then $(\varphi_{s,t})$ is an evolution family of the same order~$d$. Conversely, for any evolution family
$(\varphi_{s,t})$ of order~$d\in[1,+\infty]$, there exists a Loewner chain $(f_t)$ of the same order~$d$
such that the following equality holds
\begin{equation}\label{main_EV_LC}
 f_t\circ\varphi_{s,t}=f_s,\quad 0\le s\le t.
\end{equation}
\end{result}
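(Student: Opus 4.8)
The plan is to prove the two implications of the theorem separately.

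\emph{Direct implication.} Given a Loewner chain $(f_t)$ of order $d$, set $\varphi_{s,t}:=f_t^{-1}\circ f_s$; this is a well-defined holomorphic self-map of $\D$ by LC1 (each $f_t$ is univalent, so $f_t^{-1}$ is holomorphic on $f_t(\D)$) and LC2 (so that $f_s(\D)\subseteq f_t(\D)$). Then EF1 and EF2 are purely algebraic: $\varphi_{s,s}=f_s^{-1}\circ f_s=\mathrm{id}_\D$ and $\varphi_{u,t}\circ\varphi_{s,u}=f_t^{-1}\circ f_u\circ f_u^{-1}\circ f_s=\varphi_{s,t}$. The content is EF3. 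Fix $z\in\D$ and $T>0$; for $0\le s\le u\le t\le T$, using EF2 to write $\varphi_{s,t}(z)=\varphi_{u,t}(\zeta)$ with $\zeta:=\varphi_{s,u}(z)$, and noting $\varphi_{u,t}(\zeta)=f_t^{-1}(f_u(\zeta))$ while $\zeta=f_t^{-1}(f_t(\zeta))$, I would estimate
\[
\big|\varphi_{s,u}(z)-\varphi_{s,t}(z)\big|=\big|f_t^{-1}(f_t(\zeta))-f_t^{-1}(f_u(\zeta))\big|\le d_{f_t(\D)}\big(f_t(\zeta),f_u(\zeta)\big),
\]
where $d_\Omega$ denotes the hyperbolic distance of a hyperbolic domain $\Omega$ and we have used that on $\D$ the Euclidean distance is dominated by $d_\D$ and that $f_t\colon\D\to f_t(\D)$ is a hyperbolic isometry. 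The right-hand side is at most the hyperbolic length in $f_t(\D)$ of the curve $r\mapsto f_r(\zeta)$, $r\in[u,t]$, which lies in $f_t(\D)$ by LC2 and is locally absolutely continuous by LC3; by monotonicity of the hyperbolic density under the inclusions $f_r(\D)\subseteq f_t(\D)$ and the isometry identity $\lambda_{f_r(\D)}(f_r(\zeta))\,|f_r'(\zeta)|=\lambda_\D(\zeta)$, this length is $\le\int_u^t\lambda_\D(\zeta)\,|f_r'(\zeta)|^{-1}\,|\partial_r f_r(\zeta)|\,dr$, and $|\partial_r f_r(\zeta)|\le k_{K,T}(r)$ for a.e.\ $r$ by LC3 once $\zeta$ ranges in a compact $K\subset\D$.

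Two facts then finish EF3. First, $\zeta=\varphi_{s,u}(z)$ does stay in a fixed compact subset of $\D$: running the same chain of inequalities on the pair $\varphi_{s,s}(z)=z$, $\varphi_{s,u}(z)$ gives $d_\D\big(z,\varphi_{s,u}(z)\big)=d_{f_u(\D)}\big(f_u(z),f_s(z)\big)\le\lambda_\D(z)\big(\min_{r\in[0,T]}|f_r'(z)|\big)^{-1}\int_0^T k_{\{z\},T}=:R<\infty$ — there is no circularity, since $z$ is fixed, and the minimum is positive because $r\mapsto f_r'(z)$ is continuous (by LC3, absolute continuity of the Lebesgue integral, and Cauchy's estimates) and nonvanishing (univalence of $f_r$). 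Hence $\varphi_{s,u}(z)$ lies in the compact hyperbolic ball $K:=\{w\in\D:d_\D(w,z)\le R\}$ for all $0\le s\le u\le T$. Second, on $K$ one has $\lambda_\D\le\Lambda_K<\infty$ and $\min_{r\le T,\,w\in K}|f_r'(w)|=:m_K>0$, by the same continuity-and-univalence argument. Combining, $\big|\varphi_{s,u}(z)-\varphi_{s,t}(z)\big|\le(\Lambda_K/m_K)\int_u^t k_{K,T}$, so EF3 holds with $k_{z,T}:=(\Lambda_K/m_K)\,k_{K,T}\in L^d([0,T])$; since the majorant has only been multiplied by constants, the order $d$ is preserved.

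\emph{Converse.} Here I would construct the chain by an inductive limit. Take disjoint copies $\D_t$ of $\D$, $t\ge0$, and on $\bigsqcup_t\D_t$ declare $a\in\D_s$ equivalent to $\varphi_{s,t}(a)\in\D_t$ whenever $s\le t$. By EF1 and EF2 the equivalence relation so generated is coherent — explicitly, $a\in\D_s$ and $b\in\D_t$ are equivalent iff $\varphi_{s,u}(a)=\varphi_{t,u}(b)$ for some $u\ge\max(s,t)$ — and the quotient $X$ is a connected Riemann surface carrying holomorphic maps $\iota_t\colon\D\to X$ with $\iota_s=\iota_t\circ\varphi_{s,t}$; each $\iota_t$ is injective because the elements of an evolution family are univalent (\cite[Corollary 6.3]{BCM1}), and $\iota_s(\D)\subseteq\iota_t(\D)$ for $s\le t$. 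Thus $X=\bigcup_{n\in\N}\iota_n(\D)$ is a non-compact simply connected Riemann surface, hence, by the uniformization theorem, biholomorphic to $\C$ or to $\D$ via some $F$; then $f_t:=F\circ\iota_t\colon\D\to\C$ are univalent (LC1), satisfy $f_s(\D)\subseteq f_t(\D)$ (LC2), and fulfil $f_t\circ\varphi_{s,t}=f_s$. For LC3, write $f_s(z)-f_t(z)=f_t(\varphi_{s,t}(z))-f_t(z)$; the set $\{\varphi_{s,t}(w):w\in K,\,0\le s\le t\le T\}$ is relatively compact in $\D$ by the standard properties of evolution families, so it sits inside a closed Euclidean disk $\tilde K\subset\D$, on which $|f_t|=|f_T\circ\varphi_{t,T}|$ is bounded uniformly in $t\le T$ (since $\varphi_{t,T}(\tilde K)$ again stays in a fixed compact subset of $\D$), whence $\sup_{t\le T}\sup_{\tilde K}|f_t'|=:L<\infty$ by Cauchy's estimates; as $\tilde K$ is convex, $|f_s(z)-f_t(z)|\le L\,|\varphi_{s,t}(z)-z|\le L\int_s^t k_{K,T}$ with $k_{K,T}\in L^d$ the EF3 majorant on $K$, and again the order $d$ is preserved.

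\emph{Expected main obstacle.} The algebraic identities (EF1, EF2, LC1, LC2, and the two functional relations) are immediate; the real content lies in the regularity conditions EF3 and LC3, and within them the crucial point is the a priori compactness of the relevant orbits in $\D$. In the direct implication I expect this to be the main difficulty, which is dealt with by the self-contained hyperbolic-length bound above; in the converse the same compactness must be imported from the general theory of evolution families of \cite{BCM1}. A minor technical point is that EF3 is stated pointwise in Definition~\ref{def-ev} while LC3 is stated on compacta, so one also needs the standard fact that EF3 upgrades to compact sets.
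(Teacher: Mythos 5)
The paper itself states Theorem~\ref{T1} without proof, quoting it from \cite{SMP}, so there is no in-text argument to compare with line by line; measured against the proof in \cite{SMP}, your argument is correct but takes a genuinely different route in the converse direction. For the direct implication, your hyperbolic-length estimate for $|\varphi_{s,u}(z)-\varphi_{s,t}(z)|$ is sound and close in spirit to the Koebe-distortion argument used in \cite{SMP}; the two points that actually carry the weight --- that $\varphi_{s,u}(z)$ stays in a fixed compact subset of $\D$ (which you get by running the same estimate on the pair $z$, $\varphi_{s,u}(z)$, with no circularity) and that $\inf_{r\le T}\inf_{K}|f_r'|>0$ there --- are both handled correctly. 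For the converse, \cite{SMP} constructs the chain concretely, as a locally uniform limit of suitably normalized mappings $\varphi_{t,n}$ as $n\to\infty$ (which is how the standard chain of Theorem~\ref{T2}, with $\bigcup_t f_t(\D)$ a disk or $\C$, arises), whereas you form the abstract direct limit of the system $(\D,\varphi_{s,t})$ and uniformize. Your construction is shorter for bare existence but yields no normalization and no control of $\bigcup_t f_t(\D)$, and it shifts the entire analytic content of LC3 onto two facts imported from \cite{BCM1}: relative compactness in $\D$ of $\{\varphi_{s,t}(w):w\in K,\ 0\le s\le t\le T\}$, and the upgrade of EF3 from points to compact sets. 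Both are indeed available there (the first follows from the Schwarz--Pick lemma combined with continuity of $t\mapsto\varphi_{0,t}(0)$ on $[0,T]$), so the argument closes, but these citations should be made explicit since they are where the order~$d$ is actually preserved in the converse.
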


\begin{definition}
A Loewner chain $\famc f$ is said to be {\it associated with} an evolution family~$\fami\varphi$ if it
satisfies~\eqref{main_EV_LC}. Another way to express the same fact is to say that $(\varphi_{s,t})$ is the
{\it evolution family of} the Loewner chain~$(f_t)$.
\end{definition}

In general, given an evolution family $(\varphi_{s,t})$, algebraic condition~\eqref{main_EV_LC} does not define
a unique Loewner chain. In fact, there can be a plenty of different Loewner chains associated with the same
evolution family. The following theorem gives necessary and sufficient conditions for the uniqueness of a
normalized Loewner chain associated with a given evolution family.

\begin{result}[{\cite[Theorem 1.6]{SMP}}]\label{T2}
Let $(\varphi_{s,t})$ be an evolution family. Then there exists a unique normalized Loewner chain $(f_t)$
associated with~$(\varphi_{s,t})$ such that $\cup_{t\ge0}f_t(\UD)$ is either an Euclidean disk centered at the
origin or the whole complex plane~$\C$. Moreover, the following statements are equivalent:
\begin{itemize}
\item[(i)] the family $(f_t)$ is the only normalized Loewner chain
associated with the evolution family~$(\varphi_{s,t})$;
\item[(ii)] there exist $z\in\UD$ such that
$$\lim_{t\to+\infty}\frac{|\varphi_{0,t}'(z)|}{1-|\varphi_{0,t}(z)|^2}=0;$$
\item[(iii)] the above limit vanishes for all $z\in\UD$;
\item[(iv)] $\bigcup\limits_{t\gtz}f_t(\UD)=\C$.
\end{itemize}
\end{result}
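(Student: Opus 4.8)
The plan is to describe \emph{all} Loewner chains associated with a fixed evolution family $(\varphi_{s,t})$, and then read off each part of the statement. The key structural fact I would prove first is: if $(f_t)$ and $(g_t)$ are Loewner chains both associated with $(\varphi_{s,t})$, then there is a univalent holomorphic map $h$ on $\Omega:=\bigcup_{t\ge0}f_t(\UD)$ with $g_t=h\circ f_t$ for all $t$; and conversely, if $(f_t)$ is associated with $(\varphi_{s,t})$ and $h$ is univalent on $\Omega=\bigcup_t f_t(\UD)$, then $(h\circ f_t)$ is again a Loewner chain associated with $(\varphi_{s,t})$. Moreover, if both chains are normalized then $h(0)=0$ and $h'(0)=1$. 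For the direct part, $f_s=f_t\circ\varphi_{s,t}$ forces $\varphi_{s,t}=f_t^{-1}\circ f_s$, so the univalent maps $h_t:=g_t\circ f_t^{-1}$ on $f_t(\UD)$ satisfy $h_t\circ f_s=g_t\circ\varphi_{s,t}=g_s=h_s\circ f_s$ on $\UD$, hence $h_t=h_s$ on $f_s(\UD)$ whenever $s\le t$; since the domains $f_t(\UD)$ increase to $\Omega$, these pieces glue to a single univalent $h$ on $\Omega$, and the normalization of $h$ follows from $f_0(0)=g_0(0)=0$, $f_0'(0)=g_0'(0)=1$. For the converse, LC1 and LC2 for $(h\circ f_t)$ are immediate, while LC3 holds because $h$ is Lipschitz on each compact subset of $\Omega$ and, for a compact $K\subset\UD$ and $T>0$, the set $\{\varphi_{s,T}(z):0\le s\le T,\ z\in K\}$ is a compact subset of $\UD$ by the joint continuity of the evolution family, so $\{f_s(z):0\le s\le T,\ z\in K\}$ stays in a fixed compact subset of $\Omega$.

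Granting this lemma, the first assertion follows quickly. By Theorem~\ref{T1} there is a Loewner chain associated with $(\varphi_{s,t})$; post-composing it with a suitable affine map we get a normalized one $(g_t)$ with range $\Omega_1:=\bigcup_t g_t(\UD)$. If $\Omega_1=\C$ take $(f_t)=(g_t)$. Otherwise let $\psi:\Omega_1\to\UD$ be the Riemann map with $\psi(0)=0$, $\psi'(0)>0$, put $R:=1/\psi'(0)$ and $h:=R\psi$; then $h$ is univalent on $\Omega_1$, $h(0)=0$, $h'(0)=1$, and by the lemma $f_t:=h\circ g_t$ is a normalized Loewner chain associated with $(\varphi_{s,t})$ whose range is the disk $R\UD$. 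For uniqueness, if $(f_t)$ and $(\widetilde f_t)$ are two such chains, then $\widetilde f_t=h\circ f_t$ with $h$ univalent, $h(0)=0$, $h'(0)=1$, on $\Omega=\bigcup_t f_t(\UD)$; if $\Omega=\C$ then $h$ is an injective entire function, hence affine, hence the identity, and if $\Omega=R\UD$ then $h(\Omega)$ is a simply connected domain different from $\C$ which is assumed to be a round disk about $0$, so $h:R\UD\to R'\UD$ is a conformal isomorphism fixing $0$, hence $h(w)=\lambda w$ with $|\lambda|=R'/R$, and the normalization forces $\lambda=1$. In either case $\widetilde f_t=f_t$.

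Now fix the canonical chain $(f_t)$ and set $\Omega=\bigcup_t f_t(\UD)$, which is either $\C$ or $R\UD$ with $R\in(0,\infty)$. The implication (iv)$\Rightarrow$(i) is the lemma plus the fact that an injective, normalized entire function is the identity. For (i)$\Rightarrow$(iv) I argue by contraposition: if $\Omega=R\UD$ with $R<\infty$, then for $0<|\varepsilon|<1/(2R)$ the map $h(w)=w+\varepsilon w^2$ is univalent on $\Omega$ (if $h(w_1)=h(w_2)$ then $(w_1-w_2)(1+\varepsilon(w_1+w_2))=0$ and $|1+\varepsilon(w_1+w_2)|\ge1-2R|\varepsilon|>0$), normalized, and not the identity, so $(h\circ f_t)$ is a second normalized Loewner chain associated with $(\varphi_{s,t})$ and (i) fails. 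It remains to prove (ii)$\Leftrightarrow$(iii)$\Leftrightarrow$(iv). Differentiating $f_t\circ\varphi_{0,t}=f_0$ gives $f_t'(\varphi_{0,t}(z))\varphi_{0,t}'(z)=f_0'(z)$; composing $f_t$ on the right with the automorphism of $\UD$ sending $0$ to $\varphi_{0,t}(z)$, whose derivative at $0$ is $1-|\varphi_{0,t}(z)|^2$, one obtains that the conformal radius of $f_t(\UD)$ at the point $f_0(z)$ equals $|f_0'(z)|\,(1-|\varphi_{0,t}(z)|^2)/|\varphi_{0,t}'(z)|$. These radii increase with $t$ (because $f_s(\UD)\subset f_t(\UD)$ for $s\le t$, by Schwarz's lemma) and, by the Carath\'eodory kernel theorem, converge as $t\to\infty$ to the conformal radius of $\Omega$ at $f_0(z)$, which is $+\infty$ precisely when $\Omega=\C$ and a finite positive number otherwise, independently of the base point. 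Since $|f_0'(z)|$ is a finite nonzero constant, it follows that $\lim_{t\to\infty}|\varphi_{0,t}'(z)|/(1-|\varphi_{0,t}(z)|^2)$ exists for every $z\in\UD$ and vanishes for one such $z$ if and only if it vanishes for all of them if and only if $\Omega=\C$; combined with (i)$\Leftrightarrow$(iv) this closes the circle.

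The step I expect to be the main obstacle is the convergence of these conformal radii to the conformal radius of $\Omega$ in the case $\Omega=\C$: one must exclude that the conformal radii of $f_t(\UD)$ at $f_0(z)$ stay bounded while $f_t(\UD)$ exhausts the plane. I would handle this by combining the growth theorem for univalent functions --- a uniform bound on the conformal radii makes the normalized conformal maps $\UD\to f_t(\UD)$ a normal family --- with the Carath\'eodory kernel theorem, which identifies any locally uniform subsequential limit as a univalent map of $\UD$ onto the kernel $\C$, an absurdity. A secondary technical point is the verification of LC3 in the structural lemma, which genuinely uses properties of $(\varphi_{s,t})$ beyond the algebraic identity $f_t\circ\varphi_{s,t}=f_s$, namely the joint continuity of $(s,t,z)\mapsto\varphi_{s,t}(z)$.
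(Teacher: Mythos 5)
Your argument is correct. Note first that the paper you are working from contains no proof of this statement: it is quoted verbatim from \cite[Theorem~1.6]{SMP} (Theorem~\ref{T2}), so the only comparison available is with the proof in that reference, where the standard chain is \emph{constructed} directly as a locally uniform limit of suitably renormalized maps $m_t\circ\varphi_{s,t}$ as $t\to+\infty$, with the quantity $\beta_t(z)=|\varphi_{0,t}'(z)|/(1-|\varphi_{0,t}(z)|^2)$ controlling the normalization. Your route is softer: you take the existence of \emph{some} associated chain (Theorem~\ref{T1}) as given and prove a structural lemma describing \emph{all} associated chains as $h\circ f_t$ with $h$ univalent on $\Omega=\bigcup_t f_t(\UD)$ (this lemma is essentially the content of Theorem~\ref{T3}, which you are in effect reproving); existence and uniqueness of the canonical chain then reduce to elementary facts about normalized univalent maps of $\C$ and of round disks, and the equivalences reduce to the identity $r\big(f_t(\UD),f_0(z)\big)=|f_0'(z)|\,(1-|\varphi_{0,t}(z)|^2)/|\varphi_{0,t}'(z)|$, which is exactly Lemma~\ref{conformal_radius} of the paper. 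What your approach buys is transparency and independence from the explicit limit construction; what it costs is that you must separately justify the two technical points you correctly flagged. Both are fine: for LC3 in the converse half of the structural lemma, the compactness of $\{\varphi_{s,T}(z):s\in[0,T],\,z\in K\}$ in $\UD$ follows from EF1--EF3 alone (right- and left-continuity of $s\mapsto\varphi_{s,T}(0)$ via the decomposition $\varphi_{s,T}=\varphi_{u,T}\circ\varphi_{s,u}$ and the Schwarz--Pick lemma, then Schwarz--Pick again to sweep $z$ over $K$); and for the divergence of the conformal radii when $\Omega=\C$, your normal-family-plus-kernel-theorem argument works, though the shorter route is to note that any closed disk $\overline{D(w_0,\rho)}$ is eventually contained in $f_t(\UD)$ (compactness plus monotonicity of the exhaustion), whence $r\big(f_t(\UD),w_0\big)\ge\rho$.
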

We call the family $(f_t)$ in the above theorem the {\it standard Loewner chain} associated with the
evolution family~$(\varphi_{s,t})$.

The following theorem explains what happens  when an associated Loewner chain is not unique. As above we
will  denote by $\clS$ the class of all univalent holomorphic functions $h$ in the unit disk~$\UD$,
normalized by $h(0)=h'(0)-1=0$.

\begin{result}[{\cite[Theorem 1.7]{SMP}}]\label{T3}
Let $(\varphi_{s,t})$ be an evolution family and $(f_t)$ the standard Loewner chain associated
with~$(\varphi_{s,t})$. Suppose that $$\Omega:=\bigcup\limits_{t\gtz}f_t(\UD)\neq\C$$ and write
$\beta:=\lim_{t\to+\infty}\frac{|\varphi_{0,t}'(0)|}{1-|\varphi_{0,t}(0)|^2}$ (such a limit always exists). Then
$\Omega=\{z:|z|<1/\beta\}$ and for any other normalized Loewner chain $(g_t)$ associated with the evolution
family~$(\varphi_{s,t})$, there is $h\in\clS$ such that
$$
g_t(z)=h\big(\beta f_t(z)\big)/\beta. $$
\end{result}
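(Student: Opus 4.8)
The plan is to start from the standard Loewner chain $(f_t)$ associated with $(\varphi_{s,t})$ and show that, because $\Omega\neq\C$, we must be in the ``non-unique'' branch of Theorem~\ref{T2}, so the limit $\beta$ defined via $|\varphi_{0,t}'(0)|/(1-|\varphi_{0,t}(0)|^2)$ is strictly positive. The existence of the limit itself should follow from monotonicity: differentiating the semigroup identity $\varphi_{0,t}=\varphi_{s,t}\circ\varphi_{0,s}$ and invoking the Schwarz--Pick lemma shows that $t\mapsto |\varphi_{0,t}'(0)|/(1-|\varphi_{0,t}(0)|^2)$ is non-increasing on $[0,+\infty)$ (each $\varphi_{s,t}$ contracts the hyperbolic metric), hence the limit $\beta\in[0,1]$ exists; and $\beta=0$ would put us in case~(ii) of Theorem~\ref{T2}, contradicting non-uniqueness, so $\beta\in(0,1]$. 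By the first assertion of Theorem~\ref{T2} the set $\Omega=\bigcup_t f_t(\UD)$ is an Euclidean disk centered at~$0$ (it is not $\C$ by hypothesis); to identify its radius as $1/\beta$, I would use Theorem~\ref{T2} again to realize that the ``standard'' chain is constructed so that $f_t(\UD)$ exhausts this disk, together with a Koebe-type normalization argument: the radius of $f_t(\UD)$ at the origin is controlled through $f_t'(0)$, and passing to the limit in $f_0=f_t\circ\varphi_{0,t}$ gives $f_0'(0)=f_t'(0)\,\varphi_{0,t}'(0)$, i.e.\ $f_t'(0)=1/\varphi_{0,t}'(0)$ once $f_0'(0)=1$, and combining with the disk structure yields $\mathrm{radius}(\Omega)=1/\beta$.

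Next I would take an arbitrary normalized Loewner chain $(g_t)$ associated with the same evolution family. The key observation is that $(\beta g_t)$ has range contained in the disk $\beta\,\Omega=\UD$, or more precisely that $(\beta f_t)$ and $(\beta g_t)$ are two Loewner chains associated with the \emph{same} evolution family whose ranges exhaust the same disk; so after rescaling we are comparing two normalized Loewner chains with the same evolution family inside $\UD$. For two Loewner chains $(F_t)$ and $(G_t)$ with $F_t\circ\varphi_{s,t}=F_s$ and $G_t\circ\varphi_{s,t}=G_s$, the composition $h_t:=G_t\circ F_t^{-1}$, defined on $F_t(\UD)$, is independent of~$t$: indeed on $F_s(\UD)\subset F_t(\UD)$ one has $G_t\circ F_t^{-1}=G_t\circ\varphi_{s,t}\circ F_s^{-1}=G_s\circ F_s^{-1}$, so $h_t$ stabilizes to a single univalent map $h$ defined on $\bigcup_t F_t(\UD)$. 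Applying this to $F_t=\beta f_t$ and $G_t=\beta g_t$ gives a univalent $h$ on $\bigcup_t \beta f_t(\UD)=\beta\Omega$ with $\beta g_t = h\circ(\beta f_t)$, i.e.\ $g_t(z)=h(\beta f_t(z))/\beta$.

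It remains to check $h\in\clS$, i.e.\ that $h$ extends univalently to $\UD$ with $h(0)=0$, $h'(0)=1$. The domain of $h$ is $\beta\Omega$; when $\beta=1$ this is already $\UD$, but in general $\beta\Omega$ is a disk of radius $\beta/\beta=1$ centered at~$0$ — wait, more carefully, $\Omega$ has radius $1/\beta$ so $\beta\Omega$ has radius $1$, hence the domain of $h$ is exactly~$\UD$; good. The normalization follows by evaluating at $t=0$: $g_0(0)=0$ and $(\beta f_0)(0)=0$ force $h(0)=0$; and differentiating $\beta g_0 = h\circ(\beta f_0)$ at the origin gives $\beta g_0'(0)=h'(0)\,\beta f_0'(0)$, so $h'(0)=g_0'(0)/f_0'(0)=1/1=1$ since both chains are normalized. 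Thus $h\in\clS$, completing the proof.

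The main obstacle I anticipate is the bookkeeping around the radius identification $\Omega=\{|z|<1/\beta\}$ and the verification that the domain of the glued map $h$ is precisely $\UD$ (not a proper subdisk): this requires being careful that the standard chain's range really does exhaust the full disk predicted by Theorem~\ref{T2}, and that the exhaustion radius matches $1/\beta$ exactly rather than up to a constant. The ``$t$-independence of $G_t\circ F_t^{-1}$'' step and the normalization computations are routine once the setup is in place; the geometric identification of the limiting disk is where one must lean hardest on Theorem~\ref{T2} and a limiting/Koebe argument.
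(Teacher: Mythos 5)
This theorem is quoted in the paper from \cite{SMP} and not reproved here, so I can only judge your argument on its own merits; its architecture (monotonicity of $\beta_t(0):=|\varphi_{0,t}'(0)|/(1-|\varphi_{0,t}(0)|^2)$ via Schwarz--Pick, exclusion of $\beta=0$ through Theorem~\ref{T2}, the $t$-independence of $g_t\circ f_t^{-1}$, and the normalization of $h$ at the end) is sound and is essentially the standard route. The gluing step in particular is correct: $f_t^{-1}=\varphi_{s,t}\circ f_s^{-1}$ on $f_s(\UD)$ forces $g_t\circ f_t^{-1}=g_s\circ f_s^{-1}$ there, and the resulting map is univalent on the increasing union.

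The one step that is wrong as written is the radius identification. The chain rule applied to $f_0=f_t\circ\varphi_{0,t}$ at $z=0$ gives $f_0'(0)=f_t'\big(\varphi_{0,t}(0)\big)\,\varphi_{0,t}'(0)$, \emph{not} $f_0'(0)=f_t'(0)\,\varphi_{0,t}'(0)$; and the quantity controlling the size of $f_t(\UD)$ around the point $0=f_0(0)$ is not $f_t'(0)$ but the conformal radius of $f_t(\UD)$ with respect to $0$, namely $\big|f_t'(z_t)\big|\,(1-|z_t|^2)$ with $z_t=\varphi_{0,t}(0)$. Your version drops the factor $1-|z_t|^2$, and $1/|\varphi_{0,t}'(0)|$ alone does not converge to $1/\beta$ in general. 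The correct computation is exactly Lemma~\ref{conformal_radius} of this paper: the conformal radius of $f_t(\UD)$ w.r.t.\ $0$ equals $|f_0'(0)|/\beta_t(0)=1/\beta_t(0)$. One then still needs to pass to the limit: since $f_t(\UD)$ increases to $\Omega$, kernel convergence (Carath\'eodory) gives that the conformal radius of $\Omega$ w.r.t.\ $0$ is $\lim_t 1/\beta_t(0)=1/\beta$, and since Theorem~\ref{T2} already tells you $\Omega$ is a Euclidean disk centered at $0$, its conformal radius equals its Euclidean radius, yielding $\Omega=\{|z|<1/\beta\}$. With that repair the domain of your glued map is indeed exactly $\UD$ after rescaling by $\beta$, and the rest of your argument goes through.
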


Using the relation between Loewner chains and evolution families we can now introduce the following
definition.

\begin{definition}\label{RadChLoewnerChain}
Let $(f_t)$ be a Loewner chain. Suppose that the functions $\varphi_{s,t}:=f_t^{-1}\circ f_s$, $t\ge
s\ge0$, form a radial (respectively, chordal) evolution family. In this case we will say that the Loewner
chain $(f_t)$ is of {\it radial} (respectively, {\it chordal}) {\it type}.
\end{definition}

\subsection{Problem, definition, and statement of the main results} The general problem this paper is devoted to study the
relation between geometric properties of Loewner chains and analytic properties of their evolution
families. More specifically, our aim is to give a complete characterization for the systems of image
domains $f_t(\UD)$ generated by Loewner chains whose evolution family belongs to one or another certain
class, including that of all radial and chordal evolution families.

As we will see a little bit later, for the case of radial evolution families this problem turns out to be solved
by means of simple modifications of some proofs in the classical Loewner Theory. At the same time the chordal
case seems to be much harder. The same can be said concerning the class of evolution families introduced
in~\cite{Goryainov}, which we consider in Section~\ref{Goryainov-Ba}.

To formulate our main results we introduce the following definition.
\begin{definition}
\label{inclusion_chain} Let $(\Omega_t)_{t\ge0}$ be a family of simply connected domains
$\Omega_t\subset\C$ and $\Omega_t \neq \C$ for all $t\in[0,+\infty)$. The family $(\Omega_t)_{t\ge0}$ is
said to be a \textit{inclusion chain} if it satisfies the following conditions:

\begin{itemize}
\item[IC1.] $\Omega_s\subset\Omega_t$ whenever $t\ge s\ge0$.

\item[IC2.] $\Omega_{t_n}\to\Omega_{t_0}$ whenever $t_n\to t_0$
in the sense of the kernel convergence  w.r.t. any point of~$\Omega_0$.
\end{itemize}
\end{definition}
For the notion of kernel convergence of domains we refer the reader to \cite[\S II.5]{Goluzin} or
\cite[Section 1.4]{Pommerenke}.
\begin{remark}\label{inclusion_chain_Remark}
We note that, as it follows from the definition of the kernel of a sequence of domains,  condition IC2
above can replaced by the two following simple conditions of topological nature:
\begin{itemize}
\item[(i)] for each $t\ge0$ the domain $\Omega_t$ is a connected component of the interior
of~$\bigcap_{s>t}\Omega_s$,
\item[(ii)] for each $t>0$ the domain $\Omega_t$ coincides with $\bigcup_{s\in[0,t)}\Omega_s$.
\end{itemize}
\end{remark}

Denote by $r(D,z)$ the conformal radius of a simply connected domain~$D$ w.r.t. a point~$z$. The following
two theorems explain our motivation to introduce the notion of inclusion chain.  As far as we know, these
theorems have not been stated previously but we think they can be considered as well-known among the
specialists. In any case, for the sake of completeness we will sketch their proofs in
Section~\ref{InclusionsversusLoewner}.

\begin{theorem}
\label{characterizarioninclusionchain1} Let $(\Omega_t)_{t\ge0}$ be a family of simply connected domains
$\Omega_t\subset\C$. Suppose that $\Omega_s\subset\Omega_t \neq \C$ for all  $s\ge0$ and $t\ge s$. Then
the following assertions are equivalent:
\begin{enumerate}
\item The family $(\Omega_t)_{t\ge0}$ is an inclusion chain.
\item There exists $w_{\ast }\in \Omega _{0}$ such that the
function $t\in [0,+\infty) \mapsto \mu _{w_{\ast }}(t):=r(\Omega _{t},w_{\ast })$ is continuous.
\item For all $w_{\ast }\in \Omega _{0}$ the
function $t\in [0,+\infty) \mapsto \mu _{w_{\ast }}(t):=r(\Omega _{t},w_{\ast })$ is non-decreasing and
continuous.
\end{enumerate}
\end{theorem}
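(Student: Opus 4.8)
The natural strategy is to prove the cycle of implications $(3)\Rightarrow(2)\Rightarrow(1)\Rightarrow(3)$, exploiting Carath\'eodory's kernel convergence theorem to translate between the analytic object $\mu_{w_*}(t)=r(\Omega_t,w_*)$ and the geometric object $(\Omega_t)$. The implication $(3)\Rightarrow(2)$ is immediate. For the core equivalence it is convenient to fix, for each $t$, the Riemann map $g_t\colon\UD\to\Omega_t$ with $g_t(0)=w_*$ and $g_t'(0)>0$, so that $\mu_{w_*}(t)=g_t'(0)=r(\Omega_t,w_*)$; monotonicity of $\mu_{w_*}$ in $(3)$ is then nothing but the Schwarz lemma applied to $g_t^{-1}\circ g_s\colon\UD\to\UD$ (which fixes $0$), so that half of $(3)$ is automatic once $\Omega_s\subset\Omega_t$, and only \emph{continuity} of $\mu_{w_*}$ is the real content.

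\smallskip
For $(1)\Rightarrow(3)$: fix $w_*\in\Omega_0$ and $t_0\ge0$. By IC2, $\Omega_{t_n}\to\Omega_{t_0}$ in the kernel sense w.r.t.\ $w_*$ whenever $t_n\to t_0$; Carath\'eodory's theorem then gives $g_{t_n}\to g_{t_0}$ locally uniformly on $\UD$, hence $g_{t_n}'(0)\to g_{t_0}'(0)$, i.e.\ $\mu_{w_*}(t_n)\to\mu_{w_*}(t_0)$. Since $(t_n)$ was an arbitrary sequence tending to $t_0$, $\mu_{w_*}$ is continuous at $t_0$; monotonicity was noted above. This uses only IC1, IC2 and the hypothesis $\Omega_t\ne\C$ (so that the conformal radius is finite and Carath\'eodory's theorem applies).

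\smallskip
For $(2)\Rightarrow(1)$, which I expect to be the main obstacle: we are handed continuity of $\mu_{w_*}$ for a \emph{single} base point $w_*\in\Omega_0$ and must recover the full kernel-convergence statement IC2 (w.r.t.\ every point of $\Omega_0$, though by Carath\'eodory's theorem it suffices to check it w.r.t.\ $w_*$). Here I would use the characterization in Remark~\ref{inclusion_chain_Remark}: IC1 already holds, so I must show (i) $\Omega_{t}$ is a connected component of the interior of $\bigcap_{s>t}\Omega_s$ and (ii) $\Omega_t=\bigcup_{s<t}\Omega_s$ for $t>0$. For (ii): let $\Omega_t^-:=\bigcup_{s<t}\Omega_s$; this is an increasing union of simply connected domains all containing $w_*$, hence itself simply connected, and $r(\Omega_t^-,w_*)=\lim_{s\uparrow t}r(\Omega_s,w_*)=\mu_{w_*}(t)=r(\Omega_t,w_*)$ by left-continuity, while $\Omega_t^-\subset\Omega_t$; two nested simply connected domains (neither being $\C$) with the same conformal radius at a common point must coincide (the inclusion map $\UD\to\UD$ it induces fixing $0$ has derivative $1$ at $0$, so is the identity by the Schwarz lemma). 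For (i): set $\Omega_t^+:=$ the connected component of $w_*$ in the interior of $\bigcap_{s>t}\Omega_s$; then $\Omega_t\subset\Omega_t^+$, $\Omega_t^+$ is simply connected, and $r(\Omega_t^+,w_*)\le\liminf_{s\downarrow t}r(\Omega_s,w_*)=\mu_{w_*}(t)=r(\Omega_t,w_*)$ by right-continuity combined with monotonicity of conformal radius under inclusion; again equal conformal radii force $\Omega_t^+=\Omega_t$. Once (i) and (ii) hold, Remark~\ref{inclusion_chain_Remark} yields IC2 and hence $(1)$.

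\smallskip
The delicate points to watch are: that an increasing union of simply connected domains is simply connected (standard, via the fact that a loop lies in some $\Omega_s$ and bounds there), that the conformal radius is monotone under inclusion and continuous under kernel convergence (both consequences of the Schwarz lemma and Carath\'eodory's theorem respectively), and the rigidity statement that nested simply connected proper subdomains of $\C$ sharing a point and having equal conformal radius there are equal. With these in hand the argument closes, and the only genuinely non-formal step is recognizing that one-sided limits of $\mu_{w_*}$ correspond exactly to the conformal radii of $\Omega_t^-$ and $\Omega_t^+$.
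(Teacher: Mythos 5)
Your proposal is correct, but the route you take for the crucial implication $(2)\Rightarrow(1)$ is genuinely different from the paper's. The paper fixes a sequence $t_n\to t_0$, passes without loss of generality to a monotone subsequence, and then applies its Lemma~\ref{subordination} (which in turn rests on an area-theorem estimate and the Koebe distortion theorem) to obtain the quantitative bound $|f(z)-f_n(z)|\le C\sqrt{f'(0)\,(f'(0)-f_n'(0))}$ for the suitably normalized Riemann maps; this gives locally uniform convergence $f_n\to f$, and hence kernel convergence. You instead invoke Remark~\ref{inclusion_chain_Remark}, which replaces IC2 by the two set-theoretic conditions (i) and (ii), and then verify each by a soft rigidity argument: an increasing union $\Omega_t^-$, resp.\ the component $\Omega_t^+$ of $w_*$ in $\mathrm{int}\bigcap_{s>t}\Omega_s$, is a nested simply connected domain with the same conformal radius at $w_*$ as $\Omega_t$ (using one-sided continuity of $\mu_{w_*}$ and monotonicity of conformal radius), and two nested simply connected proper subdomains of $\C$ with equal conformal radius at a common point coincide by the Schwarz lemma. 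Your approach is more conceptual and sidesteps the paper's two auxiliary lemmas entirely; it also has the minor advantage of producing directly the base-point-independent topological conditions rather than kernel convergence w.r.t.\ the single point $w_*$. The paper's approach, in exchange, yields an explicit modulus-of-continuity estimate that it can reuse, which your argument does not. The details you flag — simple connectedness of an increasing union and of the component $\Omega_t^+$ (each handled by a ``loop bounds in some $\Omega_s$'' argument), continuity of conformal radius under kernel convergence, and the Schwarz-lemma rigidity — are all standard and correctly identified as the load-bearing facts.
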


\begin{theorem}
\label{characterizarioninclusionchain2} Under conditions of Theorem~\ref{characterizarioninclusionchain1}
the following two assertions are equivalent:
\begin{enumerate}
\item The family $(\Omega_t)_{t\ge0}$ is a inclusion chain.
\item There exists a Loewner chain $(f_t)$  such that
\begin{equation*}
\label{globalEq}\{\Omega_t:t\geq 0\}=\{f_t(\D):t\geq0\}.
\end{equation*}
\end{enumerate}
\end{theorem}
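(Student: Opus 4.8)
The plan is to prove the two implications separately, using Theorem~\ref{characterizarioninclusionchain1} to switch freely between the geometric description of an inclusion chain and the analytic statement that $t\mapsto r(\Omega_t,w_*)$ is continuous and non-decreasing.

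First I would prove $(2)\Rightarrow(1)$, which is the easy direction. Given a Loewner chain $(f_t)$ with $\{\Omega_t:t\ge0\}=\{f_t(\D):t\ge0\}$, condition~LC1 says each $f_t$ is univalent, so each $\Omega_t=f_t(\D)$ is simply connected and (being the image of a single univalent map of $\D$) is not all of $\C$; by LC2 we have the nesting $\Omega_s=f_s(\D)\subset f_t(\D)=\Omega_t$ for $s\le t$, which is IC1. For IC2 I would use the Carath\'eodory kernel theorem together with LC3: fix a base point, say $w_*=f_0(0)\in\Omega_0$; from LC3 the map $t\mapsto f_t$ is continuous, uniformly on compacta of $\D$, hence for $t_n\to t_0$ the functions $f_{t_n}$ converge locally uniformly to $f_{t_0}$, and since they are univalent the Carath\'eodory kernel theorem gives $f_{t_n}(\D)\to f_{t_0}(\D)$ in the sense of kernel convergence with respect to $w_*$. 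One should note the indexing subtlety in the statement — $\{\Omega_t:t\ge0\}$ is a set of domains, not an a priori parametrized family — but one can always relabel so that $\Omega_t:=f_t(\D)$, and monotonicity of $f_t(\D)$ forces this relabeling to be order-preserving, so the hypotheses of Definition~\ref{inclusion_chain} refer to this natural parametrization.

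The substantive direction is $(1)\Rightarrow(2)$: from an inclusion chain $(\Omega_t)$ we must manufacture a Loewner chain. For each $t$ let $f_t:\D\to\Omega_t$ be the Riemann map normalized by $f_t(0)=w_*$ and $f_t'(0)>0$, where $w_*\in\Omega_0$ is the point supplied by Theorem~\ref{characterizarioninclusionchain1}(2); then $f_t'(0)=r(\Omega_t,w_*)=\mu_{w_*}(t)$. Conditions LC1 and LC2 are immediate from IC1 and univalence of Riemann maps. The work is in verifying LC3, and here the key point is to upgrade the kernel convergence in IC2 to a quantitative Lipschitz-type estimate in $t$. I would argue as follows: by IC2 and the Carath\'eodory kernel theorem the map $t\mapsto f_t$ is continuous in the topology of locally uniform convergence on $\D$; since moreover $t\mapsto\mu_{w_*}(t)=f_t'(0)$ is non-decreasing, on any interval $[0,T]$ the conformal radii $f_t'(0)$ stay bounded, so by the Koebe distortion/growth theorems the family $\{f_t:t\in[0,T]\}$ is uniformly bounded on each compact $K\subset\D$; continuity plus this uniform bound and monotonicity of $t\mapsto f_t'(0)$ then yields, for each compact $K$ and each $T$, a bound of the form $|f_s(z)-f_t(z)|\le C_{K,T}\,\bigl(\mu_{w_*}(t)-\mu_{w_*}(s)\bigr)$ for $0\le s\le t\le T$, $z\in K$. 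Since $\mu_{w_*}$ is continuous and non-decreasing, it is the distributive function of a finite Borel measure on $[0,T]$, and after passing to an equivalent absolutely continuous time-parametrization (or invoking the fact — already implicit in Theorem~\ref{characterizarioninclusionchain1} — that $\mu_{w_*}$ may be taken locally absolutely continuous, indeed one may even rescale time so that $\mu_{w_*}(t)=e^t\mu_{w_*}(0)$ after reparametrizing), we can write $\mu_{w_*}(t)-\mu_{w_*}(s)=\int_s^t k_{K,T}(\xi)\,d\xi$ with $k_{K,T}\in L^\infty\subset L^d$. This furnishes LC3 and completes the construction.

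The main obstacle is precisely the passage from the purely topological kernel convergence (IC2) to the integral Lipschitz estimate LC3: kernel convergence alone gives continuity of $t\mapsto f_t$ but not an a priori modulus of continuity in $t$, and the estimate $|f_s(z)-f_t(z)|\le C_{K,T}(\mu_{w_*}(t)-\mu_{w_*}(s))$ must be extracted from the monotone geometry of the nested domains. The natural tool is a comparison principle: if $\Omega_s\subset\Omega_t$ with the same base point, then $f_t^{-1}\circ f_s$ is a holomorphic self-map of $\D$ fixing $0$, so by the Schwarz lemma $|f_t^{-1}(f_s(z))|\le|z|$, and $(f_t^{-1}\circ f_s)'(0)=\mu_{w_*}(s)/\mu_{w_*}(t)\le1$; one then estimates how close $f_t^{-1}\circ f_s$ is to the identity in terms of how close its derivative at $0$ is to $1$ (a standard Schwarz–Pick/Julia-type lemma, e.g.\ $|g(z)-z|$ controlled by $(1-|g'(0)|)$ uniformly on compacta for $g:\D\to\D$, $g(0)=0$), and finally transports this back through $f_t$ using the Koebe distortion bound on $\{f_t\}$ over $[0,T]$. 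Chaining these gives the required estimate with $k_{K,T}$ essentially the (rescaled) density of $d\mu_{w_*}$. I would present this comparison argument as the technical heart of the proof, leaving the Carath\'eodory-kernel and Koebe-distortion inputs as cited classical facts.
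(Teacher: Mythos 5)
Your direction $(2)\Rightarrow(1)$ is essentially fine (the cleanest way around the relabelling issue is to note that $t\mapsto r(\Omega_t,w_\ast)$ is non\nobreakdash-decreasing and its range coincides with the range of the continuous function $\theta\mapsto r(f_\theta(\D),w_\ast)$, hence is an interval, so $t\mapsto r(\Omega_t,w_\ast)$ is continuous and Theorem~\ref{characterizarioninclusionchain1} applies). The forward direction, however, has a genuine gap at its final step. Your comparison estimate $|f_s(z)-f_t(z)|\le C_{K,T}\bigl(\mu_{w_\ast}(t)-\mu_{w_\ast}(s)\bigr)$ on compacta is correct (it follows from Schwarz--Pick applied to $\sigma:=\varphi/z$ with $\varphi:=f_t^{-1}\circ f_s$, combined with Koebe distortion, and is in fact sharper than Lemma~\ref{subordination}). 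But it does \emph{not} ``furnish LC3'' for the Riemann maps of the $\Omega_t$ in their \emph{original} parametrization: LC3 demands that the increment be dominated by $\int_s^t k\,d\xi$ with $k\in L^d$, and a continuous non-decreasing $\mu_{w_\ast}$ may be singular (Cantor-type), in which case no such $k$ exists. This is not a technicality: by Theorem~\ref{characterization-L-admissiblefamily} and Example~\ref{inclusion_chain not admissible_family} there are inclusion chains for which \emph{no} Loewner chain satisfies $f_t(\D)=\Omega_t$ for every $t$ --- which is precisely why statement~(2) is a set equality rather than a pointwise one. Your parenthetical claim that local absolute continuity of $\mu_{w_\ast}$ is ``already implicit in Theorem~\ref{characterizarioninclusionchain1}'' is false: that theorem yields only continuity and monotonicity.

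Consequently the time reparametrization you mention only in passing is not an optional refinement but the entire content of the implication, and it must actually be carried out. The paper's proof does exactly this: choose a monotone $g\in AC^{\infty}\bigl([0,+\infty),\R\bigr)$ mapping $[0,+\infty)$ onto $J:=\mu_{w_\ast}\bigl([0,+\infty)\bigr)$, set $h(t):=\inf\{\theta\ge0:\mu_{w_\ast}(\theta)=g(t)\}$, and apply Theorem~\ref{characterization-L-admissiblefamily} to the family $(\Omega_{h(t)})$, whose conformal radius function is $g$ and hence locally absolutely continuous. One must then also verify that $\{\Omega_{h(t)}:t\ge0\}=\{\Omega_t:t\ge0\}$; this holds because $\mu_{w_\ast}(\theta)=\mu_{w_\ast}(\theta')$ with $\theta\le\theta'$ forces $f_{\theta'}^{-1}\circ f_\theta$ to fix $0$ with derivative $1$, hence to be the identity by the equality case of the Schwarz lemma, so that $\Omega_\theta=\Omega_{\theta'}$. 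With these two points supplied your argument becomes correct and coincides in substance with the paper's.
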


In the proofs of the above two theorems, the Loewner chain that we construct for a given inclusion chain
$(\Omega _{t})_{t\geq 0}$, is of radial type. This is not surprising, because in \cite{SMP} (proof of
Theorem~3.3) we showed that given an arbitrary Loewner chain $(f_t)$, then there is a Loewner chain
$(g_t)$ of radial type such that $f_{t}(\mathbb{D})=g_{t}(\mathbb{D})$ for all $t\ge0$. However, this
statement does not hold any more if one requires $(g_t)$ to be a Loewner chain of chordal type instead of
that of radial type. This can be easily seen if we consider any inclusion chain $(\Omega_t)$ having the
following property: there exists $s,t\ge0$ such that $\partial\Omega_s\cap\partial\Omega_t=\emptyset$.

So we address the following problem.\nopagebreak\nopagebreak
\begin{trivlist}\vskip2mm
\item\noindent{\bf Problem.} {\it Characterize all inclusion chains~$(\Omega_t)$ which are the ranges of Loewner chains of
chordal type, i.e., $\{\Omega_t:t\ge0\}=\{f_t(\UD):t\ge0\}$ for some Loewner chain~$(f_t)$ such that the functions $\varphi_{s,t}=f_t^{-1}\circ f_s$, $t\ge s\ge0$, form a chordal evolution family.}
\vskip2mm
\end{trivlist}

One can regard as known the fact that {\it the class of all inclusion chains satisfying the condition in the
above problem contains all ``slit'' inclusion chains, i.e. inclusion chains obtained by gradual removing a slit
(or a finite system of slits) in a simply connected domain,} see Section~\ref{the example}. Our main result
concerns the general case and can be formulated as follows. Denote by $\Prend(D)$ the Carath\'eodory boundary of
a domain $D$, i.e. the set of all its prime ends.

\begin{theorem}
\label{charactezationchordalinclusionchain} Let $(\Omega _{t})_{t\geq 0}$ be an inclusion chain. Then the
following statements are equivalent:
\begin{enumerate}
     \item There exists a prime end~$P_0\in\Prend(\Omega_0)$ such that for each $t\ge0$ the
domain $\Omega_0$ is embedded into the domain~$\Omega_t$ conformally at the prime end~$P_0$.
     \item There is a Loewner chain $(f_t)$ of chordal type  such that
$$\{\Omega_t:t\geq 0\}=\{f_t(\D):t\geq0\}.$$
\end{enumerate}
\end{theorem}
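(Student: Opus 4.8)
The plan is to prove the two implications separately. The implication $(2)\Rightarrow(1)$ is essentially immediate from the Julia--Wolff--Carath\'eodory theorem, while $(1)\Rightarrow(2)$ requires converting a Loewner chain of \emph{radial} type (which exists for $(\Omega_t)$ by Theorem~\ref{characterizarioninclusionchain2}) into one of \emph{chordal} type by precomposing each map with a suitable automorphism of~$\D$. \emph{Proof of $(2)\Rightarrow(1)$.} Let $(f_t)$ be a Loewner chain of chordal type with $\{\Omega_t:t\ge0\}=\{f_t(\D):t\ge0\}$, and let $\tau_0\in\UC$ be the common Denjoy--Wolff point of the non-identity elements of its evolution family $\varphi_{s,t}:=f_t^{-1}\circ f_s$. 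Since $\Omega_0$ (resp. $f_0(\D)$) is the smallest member of $\{\Omega_t\}$ (resp. of $\{f_t(\D)\}$), we have $\Omega_0=f_0(\D)$; let $P_0\in\Prend(\Omega_0)$ be the prime end carried onto $\tau_0$ by the homeomorphic extension of $f_0^{-1}$ to the Carath\'eodory boundary. Fix $t\ge0$; by the set equality there is $\sigma\ge0$ with $\Omega_t=f_\sigma(\D)$, and in the uniformizations $f_0$ and $f_\sigma$ the inclusion $\Omega_0\hookrightarrow\Omega_t$ is represented by the univalent self-map $\varphi_{0,\sigma}=f_\sigma^{-1}\circ f_0$. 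If $\varphi_{0,\sigma}=\mathrm{id}_\D$ there is nothing to prove; otherwise $\tau_0$ is the Denjoy--Wolff point of $\varphi_{0,\sigma}$, so $\varphi_{0,\sigma}$ has angular limit $\tau_0$ at $\tau_0$ and a finite, strictly positive angular derivative there, which is exactly the assertion that $\Omega_0$ is embedded into $\Omega_t$ conformally at $P_0$. As $t$ was arbitrary, $(1)$ holds.

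\emph{Proof of $(1)\Rightarrow(2)$: the construction.} By Theorem~\ref{characterizarioninclusionchain2} together with the radial-type realization of \cite{SMP} there is a Loewner chain $(g_t)$ of radial type with $\{g_t(\D):t\ge0\}=\{\Omega_t:t\ge0\}$; composing all $g_t$ with one fixed M\"obius map we may assume that $0$ is the common Denjoy--Wolff point of the non-identity elements of $\varphi_{s,t}:=g_t^{-1}\circ g_s$. As before $g_0(\D)=\Omega_0$; let $\sigma_0\in\UC$ be the point carried onto $P_0$ by $g_0^{-1}$. Hypothesis $(1)$, applied to the inclusion of $\Omega_0$ into each $\Omega_s$ (every $\Omega_s$ is some $g_t(\D)$), shows that for all $t\ge0$ the map $\varphi_{0,t}$ has an angular limit $\eta_t:=\varphi_{0,t}(\sigma_0)\in\UC$ and a finite nonzero angular derivative at $\sigma_0$; set $\alpha_t:=|\varphi_{0,t}'(\sigma_0)|\in(0,+\infty)$, so $\eta_0=\sigma_0$, $\alpha_0=1$. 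From the cocycle identity $\varphi_{0,t}=\varphi_{s,t}\circ\varphi_{0,s}$ and the chain rule for angular derivatives one obtains $\varphi_{s,t}(\eta_s)=\eta_t$ and $|\varphi_{s,t}'(\eta_s)|=\alpha_t/\alpha_s$; since $0$ is the (interior) Denjoy--Wolff point of $\varphi_{s,t}$, Julia's inequality (tested at the fixed point $0$) gives $|\varphi_{s,t}'(\eta_s)|\ge1$, whence $t\mapsto\alpha_t$ is non-decreasing and $\ge1$.

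Now choose, for each $t\ge0$, an automorphism $\omega_t\in\Aut(\D)$ with $\omega_0=\mathrm{id}_\D$, $\omega_t(\sigma_0)=\eta_t$ and $|\omega_t'(\sigma_0)|=\alpha_t$ (these conditions determine $\omega_t$ up to the one-parameter parabolic subgroup fixing $\sigma_0$), and put $\hat g_t:=g_t\circ\omega_t$. Then $\{\hat g_t(\D):t\ge0\}=\{\Omega_t:t\ge0\}$, LC1 and LC2 are obvious, and $\hat\varphi_{s,t}:=\hat g_t^{-1}\circ\hat g_s=\omega_t^{-1}\circ\varphi_{s,t}\circ\omega_s$. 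By the choice of $\omega_t$ we get $\hat\varphi_{s,t}(\sigma_0)=\omega_t^{-1}(\varphi_{s,t}(\eta_s))=\omega_t^{-1}(\eta_t)=\sigma_0$ and $|\hat\varphi_{s,t}'(\sigma_0)|=|\omega_t'(\sigma_0)|^{-1}\,|\varphi_{s,t}'(\eta_s)|\,|\omega_s'(\sigma_0)|=\alpha_t^{-1}\,(\alpha_t/\alpha_s)\,\alpha_s=1$. Thus $\sigma_0\in\UC$ is a boundary fixed point of each $\hat\varphi_{s,t}$ with angular derivative at most $1$; by the Denjoy--Wolff and Julia--Wolff--Carath\'eodory theorems $\sigma_0$ is then forced to be the Denjoy--Wolff point of every non-identity $\hat\varphi_{s,t}$. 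Hence, once $(\hat g_t)$ is known to be a Loewner chain, it is automatically of chordal type, and $(2)$ follows.

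The remaining, and I expect genuinely hard, point is to verify condition LC3 for $(\hat g_t)$ — equivalently, to ensure that the correcting automorphisms $\omega_t$, hence $\hat g_t=g_t\circ\omega_t$, depend on $t$ regularly enough (local absolute continuity, after a suitable time-change, so that the integral estimate LC3 of $(g_t)$ is inherited). Since $\omega_t$ is an explicit smooth function of $\eta_t$, $\alpha_t$ and the free parabolic parameter (which one may pin down, e.g.\ by prescribing the image of a second point of $\UC$), this reduces to proving the appropriate regularity of $t\mapsto\eta_t$ and $t\mapsto\alpha_t$. The monotonicity of $\alpha_t$ established above is a first step, but the essential content is that this regularity is a real consequence of the kernel-continuity axiom IC2 — parallel to the way the continuity of $t\mapsto r(\Omega_t,w_\ast)$ enters Theorem~\ref{characterizarioninclusionchain1} — namely, that the prime-end data at $P_0$ (the angular limit and angular derivative at $\sigma_0$ of $g_t^{-1}\circ g_0$) vary continuously as $\Omega_t$ varies in the sense of kernel convergence. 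This is the step I would expect to demand the most care; I would attack it through Harnack-type estimates, uniform for $t$ in compact intervals, for the positive harmonic functions $\Im\!\big(C\circ g_t^{-1}\big)$ restricted to $\Omega_0$ near $P_0$, where $C$ maps $\D$ conformally onto $\UH$ with $C(\sigma_0)=\infty$.
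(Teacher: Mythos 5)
Your \((2)\Rightarrow(1)\) argument is correct and runs essentially parallel to the paper's proof that (b) implies (c) in Statement~1 of Theorem~\ref{Main_Theorem}: the Denjoy--Wolff point \(\tau_0\) of the chordal evolution family is a regular contact point of each \(\varphi_{0,\sigma}\), so \(P_0:=f_0^{p.e.}(\tau_0)\) does the job.

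For \((1)\Rightarrow(2)\) there is a genuine gap, which you yourself flag. Your reduction — start from a radial chain \((g_t)\), correct by automorphisms \(\omega_t\) so that \(\hat\varphi_{s,t}=\omega_t^{-1}\circ\varphi_{s,t}\circ\omega_s\) has \(\sigma_0\) as a parabolic boundary fixed point — is morally the same as the paper's Step~1 of Theorem~\ref{par_th}, where conformal maps \(h_t\) onto \(\Omega_t\) are chosen so that \(\phi_{s,t}=h_t^{-1}\circ h_s\in\mathcal P\) with \(\phi_{0,t}(0)\in(-1,1)\). Once this is done, the paper's Proposition~\ref{m_lemma} reduces LC3/EF3 to the local absolute continuity of the single scalar \(t\mapsto\phi_{0,t}(0)\). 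The corresponding scalar in your setup is \(t\mapsto\hat\varphi_{0,t}(0)=\omega_t^{-1}(0)\), so, as you say, everything rests on proving regularity in \(t\) of the boundary data \((\eta_t,\alpha_t)\) (plus the parabolic free parameter). You only establish monotonicity of \(\alpha_t\); you do not prove continuity, let alone absolute continuity after a time change, and this is precisely where the hard analysis lives. In the paper this is Step~2 of the proof of Theorem~\ref{par_th}, and it is not a routine consequence of the Carath\'eodory kernel theorem: one must show that the limit of the \(\phi_{0,t_n}\) not only exists in the kernel sense but \emph{stays in the class \(\mathcal P\)} — i.e., that the parabolic normalization at the boundary point survives the limit. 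This is exactly what Propositions~\ref{l1} and~\ref{l2} establish, via the half-plane estimates of Lemmas~\ref{technical}, \ref{three functions} and the quantity \(J_f(z)=\Im(f(z)-z)/|z|\). Your suggested Harnack-type estimates for \(\Im(C\circ g_t^{-1})\) near \(P_0\) are a plausible alternative route, but they are not worked out, and it is precisely this step that carries the difficulty of the theorem.

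Two further cautions. First, the scalar you propose to control for the time change, \(\alpha_t=|\varphi_{0,t}'(\sigma_0)|\), does not obviously have the property needed for Step~3-style reparametrization, namely that it is constant on an interval exactly when the chain is constant there; the paper's \(x(t)=\phi_{0,t}(0)\) (real by construction) has this property via Remark~\ref{Julia}, and the point where \(\omega_t^{-1}(0)\) lands in \((-1,1)\) is the analogous quantity, but one has to arrange \(\omega_t\) so that it is real-valued — this is the role of the extra parabolic parameter, which therefore cannot be left completely free. Second, a reparametrization of \(t\) (Step~3 in the paper) is essential, not optional: continuity of the boundary data is attainable, but absolute continuity is not expected for the original parametrization, and only the set equality \(\{\Omega_t\}=\{f_t(\UD)\}\) is required, which the time change preserves. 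You mention a time change in passing, but the argument as written does not carry it out.
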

For the notion of domain embedded conformally at a prime end, see Definitions~\ref{inD} and~\ref{Conf_emb} in
Section~\ref{main-section}.

The  above  theorem is a direct consequence of the more technical Theorem~\ref{Main_Theorem}. It is worth
mentioning that the Loewner chain~$(f_t)$ in this theorem can be always assumed to be of order $d=\infty$.

In Section~\ref{Goryainov-Ba} we state and proof a result (Theorem~\ref{GorBaTheorem}), which is an
analogue of Theorem~\ref{charactezationchordalinclusionchain} for a subclass of chordal evolution families
having some additional regularity, which were considered in~\cite{Goryainov-Ba}.

Theorems~\ref{charactezationchordalinclusionchain} and~\ref{GorBaTheorem} give necessary and sufficient
conditions for an inclusion chain to be the range of a Loewner chain of given type. However, in general these
conditions might be quite difficult to verify.  Therefore  in Section~\ref{Geometric properties} we give some
easy-to-check geometric sufficient conditions.

Finally in Sections~\ref{the example} and~\ref{EFinDA} we consider the question {\it how the boundary behaviour
of an evolution family affects the geometry of the associated Loewner chains}. It turns out
(Proposition~\ref{ex_prop}) that there exists an evolution family consisting of functions with continuous
extensions to the unit circle such that none of the associated Loewner chains has image domains with locally
connected boundaries. At the same time, as we prove in Section~\ref{EFinDA}, if a Loewner chain $(f_t)$ has the
property that $\partial f_t(\UD)$ is locally connected for all~$t\ge0$, then all the functions in the evolution
family of this Loewner chain can be continuously extended to the unit circle.

It is also worth mentioning that further in the paper we prefer to work with the notion of L-admissible family
instead of that of inclusion chain, see Definition~\ref{def_L-adm} in the next section. In some cases, however,
it obviously makes absolutely no difference  and many results hold with words ``L-admissible family`` replaced
by ``inclusion chain''.

\section{Inclusions chains versus Loewner chains}
\label{InclusionsversusLoewner}
In this section we will sketch the proofs of Theorems~\ref{characterizarioninclusionchain1} and~\ref{characterizarioninclusionchain2}. Let us start with some auxilliary  statements.

\begin{lemma} Let $\varphi$ be a holomorphic univalent self-mapping of the unit disk~$\UD$ with ${\varphi(0)=0}$. Then:
\begin{enumerate}
  \item for all $z\in\D$,
  $$|\varphi(z)-z|\leq |1-\varphi'(0)|+\sqrt{1-|\varphi'(0)|^2};$$
  \item if $\varphi'(0)>0$, then for all $z\in\D$,
  $$|\varphi(z)-z|\leq 3\sqrt{1-\varphi'(0)}.$$
\end{enumerate}
\end{lemma}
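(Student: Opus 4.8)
The plan is to use the Schwarz lemma in the form that controls a univalent self-map of~$\D$ fixing the origin by its derivative at~$0$. Write $a:=\varphi'(0)$, so $|a|\le1$ by the classical Schwarz lemma, and $a\neq0$ since~$\varphi$ is univalent. First I would reduce to a normalized situation: set $\psi(z):=\overline{a}\,\varphi(z)/|a|$, which is again a univalent self-map of~$\D$ with $\psi(0)=0$ and $\psi'(0)=|a|>0$. Then I would estimate $|\varphi(z)-z|\le|\varphi(z)-\psi(z)|+|\psi(z)-z|$. The first term is $|1-\overline a/|a|\,|\,|\varphi(z)|\le|1-\overline a/|a||=|\,|a|-\overline a\,|/|a|$; a short computation with $a=|a|e^{i\theta}$ shows this is at most $|1-a|$ (indeed $|\,|a|-\overline a\,|^2=2|a|^2-2|a|\Re a\le 2|a|^2-2|a|^2\Re a \cdot(\text{something})$... more cleanly: $|\,|a|-a\,|\le|1-a|$ since $|a|$ is the nearest point of $[0,1]$... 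I will just verify $|\,|a|-\overline a\,|/|a|\le|1-a|$ directly). So it suffices to prove part~(2), i.e. the case $\varphi'(0)=a>0$.

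For part~(2), the key tool is the growth/distortion-type bound coming from the Koebe function applied after rescaling. If $a=\varphi'(0)\in(0,1]$, then $g:=\varphi/a$ is holomorphic with $g(0)=0$, $g'(0)=1$, but $g$ maps~$\D$ into the disk of radius $1/a$, so $F(z):=a\,g(az)=\varphi(az)$ is... hmm, better: $h(z):=\varphi(z)/a$ is univalent on~$\D$ with $h(0)=0$, $h'(0)=1$, hence $h\in\clS$ and by the Koebe distortion theorem $|h(z)-z|$ need not be small. The right idea instead: apply the Schwarz–Pick lemma to $\varphi$ directly. Since $\varphi(\D)\subset\D$ and $\varphi(0)=0$, for the hyperbolic metric we get $\varphi$ is a contraction; but to get a bound like $3\sqrt{1-a}$ I would use the following: the function $\zeta\mapsto(\varphi(z)-z)$ — no. The cleanest route is: $\varphi$ with $\varphi(0)=0$, $\varphi'(0)=a$ has, by Schwarz, $|\varphi(z)/z|\le1$ and moreover the quantity $\varphi(z)/z$ maps~$\D$ into~$\overline\D$ with value~$a$ at~$0$; applying the Schwarz lemma / Schwarz–Pick to $w(z):=\varphi(z)/z$ gives a hyperbolic estimate forcing $w(z)$ to stay within a pseudo-hyperbolic ball of~$a$ of radius~$|z|$, hence $|w(z)-a|\le \frac{|z|(1-a^2)}{1-a|z|\,}\cdot\frac{1}{?}$ — precisely, $\left|\dfrac{w(z)-a}{1-a\,w(z)}\right|\le|z|$. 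From this I extract $|w(z)-a|\le |z|\,(1-a^2)/(1-a|z|)+$ lower order, and then $|\varphi(z)-z|=|z|\,|w(z)-1|\le|z|(|w(z)-a|+(1-a))$.

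The remaining step is to bound $\sup_{|z|<1}|z|\,|w(z)-a|$. From $\left|\frac{w(z)-a}{1-a w(z)}\right|\le r$ with $r=|z|$, the image of $w$ lies in the pseudohyperbolic disk $D_{\rm ph}(a,r)$, whose Euclidean diameter is computable; one gets $|w(z)-a|\le \dfrac{r(1-a^2)}{1-a^2 r^2}$. Therefore $|z|\,|w(z)-a|\le \dfrac{r^2(1-a^2)}{1-a^2r^2}\le 1-a^2\le 2(1-a)$ (using $r<1$ and $1-a^2=(1-a)(1+a)\le2(1-a)$), and so $|\varphi(z)-z|\le 2(1-a)+(1-a)\le 3(1-a)\le 3\sqrt{1-a}$ since $1-a\le1$. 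I expect the main obstacle — really the only delicate point — to be getting the constant right and justifying the diameter estimate for the pseudohyperbolic disk so that the final constant comes out to exactly~$3$; one may need to be slightly more careful and possibly replace the crude $r<1$ step by an optimization over~$r$, but the $\sqrt{\,\cdot\,}$ on the right-hand side leaves ample room, so no sharp computation is needed.
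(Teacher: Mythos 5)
Your proof has a fundamental gap: nowhere do you use the hypothesis that $\varphi$ is \emph{univalent}. The Schwarz--Pick estimate for $w(z)=\varphi(z)/z$ only requires $\varphi$ to be a holomorphic self-map of $\UD$ fixing the origin, and for such maps the conclusion is simply false. For instance, take $\varphi(z)=z\,\dfrac{a-z}{1-az}$ with $a\in(0,1)$ close to $1$; then $\varphi(0)=0$, $\varphi'(0)=a$, $\varphi(\UD)\subset\UD$, but $\varphi(a)=0$ so $|\varphi(a)-a|=a$, which is nowhere near $3\sqrt{1-a}$ when $a$ is close to $1$. So any argument that relies only on Schwarz--Pick cannot possibly yield the stated bound. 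Concretely, your chain of inequalities cannot close: the correct Schwarz--Pick bound is $|w(z)-a|\le \dfrac{r(1-a^2)}{1-ar}$ (the quantity $\dfrac{r(1-a^2)}{1-a^2r^2}$ you wrote is the Euclidean \emph{radius} of the pseudohyperbolic disk, not the distance from $a$ to its farthest point, since $a$ is not its Euclidean center), which leads to $|\varphi(z)-z|\le \dfrac{r(1-a)(1+r)}{1-ar}\to 2$ as $r\to1$, not to something that vanishes as $a\to1$. The intermediate step $\dfrac{r^2(1-a^2)}{1-a^2r^2}\le 1-a^2$ is also false for $r$ near $1$.

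There is a second, independent problem with the reduction of part~(1) to part~(2). First, the inequality $|1-\overline a/|a|\,|\le|1-a|$ that you propose to verify is false in general (e.g.\ $a=i/2$ gives $\sqrt2$ on the left and $\sqrt{5}/2$ on the right). Second, even if one ignores this, the reduction produces a bound of the form $|1-\overline a/|a||+3\sqrt{1-|a|}$, and $3\sqrt{1-|a|}$ is strictly larger than $\sqrt{1-|a|^2}$, so you would not recover part~(1). The logical dependence actually runs the other way: the paper proves part~(1) directly and then obtains part~(2) by the elementary estimate $|1-\varphi'(0)|+\sqrt{1-\varphi'(0)^2}=(1-a)+\sqrt{(1-a)(1+a)}\le\sqrt{1-a}\bigl(\sqrt{1-a}+\sqrt2\bigr)\le(1+\sqrt2)\sqrt{1-a}<3\sqrt{1-a}$.

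The ingredient you are missing, and the one the paper uses, is the area theorem: writing $\varphi(z)=\sum_{n\ge1}a_nz^n$, univalence gives $\sum_{n\ge1}n|a_n|^2=m(\varphi(\UD))\le m(\UD)=1$ (with $m$ normalized Lebesgue measure), hence in particular $\sum_{n\ge2}|a_n|^2\le\sum_{n\ge2}n|a_n|^2\le 1-|a_1|^2$. This is exactly the piece of information that distinguishes univalent self-maps from arbitrary ones and makes the tail $\sum_{n\ge2}a_nz^n$ small when $\varphi'(0)$ is close to~$1$; the paper then estimates $|\varphi(z)-z|\le|1-a_1||z|+\bigl|\sum_{n\ge2}a_nz^n\bigr|$ and bounds the second term by $\sqrt{1-|a_1|^2}$. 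Without invoking univalence through some such device (the area theorem, the Bieberbach inequality $|a_2|\le2|a_1|$, or Koebe-type distortion for $\varphi/\varphi'(0)$), there is no way to prove the lemma, and your proposal does not invoke it at all.
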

\begin{proof}
Write $\varphi (z)=\sum _{n=1}^{\infty} a_n z^n$. Let us recall that $$ \sum _{n=1}^{\infty}
n|a_n|^2=m(\varphi(\D))\leq m(\D)=1,$$ where $m(\cdot)$ stands for the normalized Lebesgue measure in the unit
disc. Let ${\phi(z):=\varphi(z)-a_1 z}$. Therefore
\begin{eqnarray*}
|\varphi(z)-z| &\leq & \left| \sum _{n=2}^{\infty} a_nz^n\right|+|1-\varphi'(0)| \leq \|\phi\|_{H_\infty}
+|1-\varphi'(0)| \leq \|\phi\|_{H_2} +|1-\varphi'(0)| \\
   &=&  \sqrt{\sum _{n=2}^{\infty} |a_n|^2} +|1-\varphi'(0)|\leq \sqrt{\sum
_{n=2}^{\infty} n|a_n|^2} +|1-\varphi'(0)|\\
    &\leq&
\sqrt{1-|\varphi'(0)|^2} +|1-\varphi'(0)|.
\end{eqnarray*}
This proves (1). Assertion (2) is now an immediate consequence of (1).
\end{proof}

\begin{lemma}
\label{subordination} Let $f,g:\D\to\C$ be two univalent functions such that
\begin{enumerate}
  \item $f(0)=g(0)$,
  \item $f(\D)\subset g(\D)$,
  \item $f'(0),g'(0)>0$.
\end{enumerate}
Then for all $r\in(0,1)$ there exists a constant $C=C(r)>0$ not depending on the functions~$f$ and~$g$
such that $$|f(z)-g(z)|\leq C\sqrt{g'(0)(g'(0)-f'(0))}, \quad \textrm{ for all } |z|\leq r.$$
\end{lemma}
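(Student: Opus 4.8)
The plan is to reduce everything to the Schwarz--Pick lemma applied to the conformal map between the two disks, and then extract the desired estimate from the resulting inequality on the derivative together with Lemma~1.

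First I would introduce the univalent map $\psi:=g^{-1}\circ f:\D\to\D$. By hypotheses (1) and (2) it is a well-defined holomorphic self-map of~$\D$ with $\psi(0)=0$, so Schwarz's lemma applies; moreover $\psi'(0)=f'(0)/g'(0)\in(0,1]$ by hypothesis (3). Thus $\psi$ satisfies the assumptions of Lemma~1(2), which gives
\[
|\psi(\zeta)-\zeta|\le 3\sqrt{1-\psi'(0)}=3\sqrt{\tfrac{g'(0)-f'(0)}{g'(0)}}\qquad\text{for all }\zeta\in\D.
\]

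Next I would transfer this estimate from the $\zeta$-variable back to the $z$-variable using the local Lipschitz behaviour of~$g$. Fix $r\in(0,1)$. For $|z|\le r$ we have $f(z)=g(\psi(z))$, so $f(z)-g(z)=g(\psi(z))-g(z)$. Both $z$ and $\psi(z)$ lie in~$\D$; in fact, since $\psi$ is a self-map of~$\D$ fixing~$0$, Schwarz's lemma gives $|\psi(z)|\le|z|\le r$, so $z$ and $\psi(z)$ both lie in the closed disk $\{|\zeta|\le r\}$. On this disk $g$ has bounded derivative: by the Koebe distortion theorem (or simply Cauchy estimates applied to the univalent, hence bounded-on-compacta, function $g$, normalized via $g'(0)$), there is $C_1=C_1(r)$ with $|g'(\zeta)|\le C_1\,g'(0)$ for $|\zeta|\le \rho$, where $\rho\in(r,1)$ is fixed; integrating along the segment from $z$ to $\psi(z)$ (which stays in $\{|\zeta|\le r\}$, a convex set) yields
\[
|f(z)-g(z)|=|g(\psi(z))-g(z)|\le C_1\,g'(0)\,|\psi(z)-z|\le 3C_1\,g'(0)\sqrt{\tfrac{g'(0)-f'(0)}{g'(0)}}.
\]
Simplifying the right-hand side gives $|f(z)-g(z)|\le 3C_1\sqrt{g'(0)\bigl(g'(0)-f'(0)\bigr)}$, which is the claimed bound with $C(r):=3C_1(r)$. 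The only point requiring a little care is the uniformity of the constant: one must make sure $C_1$ depends only on~$r$ and not on~$g$, which is exactly what the Koebe distortion theorem provides after factoring out the normalization $g'(0)$, i.e.\ applying it to $g/g'(0)$ (note $g$ need not fix a prescribed value, but $|g'|/g'(0)$ on a compact subdisk is controlled purely by Koebe since $g'(0)>0$; a harmless translation $g-g(0)$ makes this literal).

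I expect the main (minor) obstacle to be precisely this uniformity bookkeeping — ensuring that passing from the Schwarz-type estimate for $\psi$ back through $g$ does not secretly reintroduce dependence on $g$ beyond the allowed factor $g'(0)$ — but this is handled cleanly by the distortion theorem applied to the normalized function $\bigl(g(\zeta)-g(0)\bigr)/g'(0)$. Everything else is a direct chain of inequalities.
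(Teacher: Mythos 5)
Your proof is correct and follows essentially the same route as the paper: both set $\psi=g^{-1}\circ f$, bound $|\psi(z)-z|$ by $3\sqrt{1-\psi'(0)}$ via the preceding lemma, and transfer the estimate through $g$ by integrating $g'$ along the segment from $z$ to $\psi(z)$ with the Koebe distortion bound $|g'(\zeta)|\le g'(0)\tfrac{1+r}{(1-r)^3}$ on $|\zeta|\le r$. The uniformity point you flag is handled identically in the paper, which takes $C(r)=6/(1-r)^3$.
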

\begin{proof}
Write $\varphi=g^{-1}\circ f$ and fix $r<1$. Notice that $\varphi (D(0,r))\subseteq D(0,r)$. Take any $z$
with $|z|\leq r$ and denote by $I$ the line segment joining the points $z$ and $\varphi(z)$. Then using
the Koebe distortion theorem~(see, e.g., \cite[\S II.4]{Goluzin} or \cite[Theorem~1.6 on
p.\,21]{Pommerenke}) we get
\begin{eqnarray*}
  |f(z)-g(z)| &=& |g(\varphi(z))-g(z)|=\left|\int_z^{\varphi(z)}g'(\xi)d\xi\right|
   \leq |\varphi(z)-z|\,\max_{\xi\in I}|g'(\xi)|\\ &\leq&  |\varphi(z)-z|\, g'(0) \frac{1+r}{(1-r)^3}\leq g'(0)\frac{2}{(1-r)^3}
   |\varphi(z)-z|.
\end{eqnarray*}
Now, by the above lemma, $$ |f(z)-g(z)|\leq g'(0)\frac{6}{(1-r)^3}
\sqrt{1-\varphi'(0)}=\frac{6}{(1-r)^3}\sqrt{g'(0)(g'(0)-f'(0))}.$$
\end{proof}

\begin{proof}[Proof of Theorem~\ref{characterizarioninclusionchain1}]
The fact that (1) implies (2) and (3) is just one of the implications in the Carath\'eodory kernel theorem (see,
e.g., \cite[Theorem 1 on p.\,55]{Goluzin} or \cite[Theorem~1.8 on p.\,29]{Pommerenke}). The function
$\mu_{w_\ast}$ is non-decreasing for any $w_\ast\in\Omega_0$ due to the inclusions~$\Omega_s\subset\Omega_t$,
$0\le s\le t$. Being clear that (3) implies (2), it remains to prove that (2) implies (1).

Let us fix a sequence of non-negative real numbers $(t_n)$ converging to $t_0\in[0,+\infty)$. We have to prove
that $\Omega_{t_n}$ converges to $\Omega_{t_0}$ in the sense of the kernel convergence. By the very definition
of conformal radius, there are univalent functions in the unit disk $f_n$ and $f$ such that
$f_n(\D)=\Omega_{t_n}$, $f(\D)=\Omega_{t_0}$, $f_n(0)=f(0)=w_*$, $f_n'(0)=r(\Omega _{t_n},w_{\ast })$,
$f'(0)=r(\Omega _{t_0},w_{\ast })$ for all $n$. By hypothesis, $f_n'(0)\to f'(0)$.

We have to prove that $f_n$ converges uniformly on compacta to $f$. Fix $0<r<1$. We may assume that the
sequence $(t_n)$ is monotone. Let us assume first that $(t_n)$ is non-decreasing.  By Lemma
\ref{subordination}, there is a constant $C=C(r)$ such that $$|f(z)-f_n(z)|\leq
C\sqrt{f'(0)(f'(0)-f_n'(0))}, \quad \textrm{ for all } |z|\leq r.$$ If the sequence $t_n$ is
non-increasing we obtain in a similar way that $$|f(z)-f_n(z)|\leq C\sqrt{f_n'(0)(f_n'(0)-f'(0))}, \quad
\textrm{ for all } |z|\leq r.$$ In both cases, we conclude that $f_n$ converges uniformly on compacta to
$f$. The proof of Theorem~\ref{characterizarioninclusionchain1} is now finished.
\end{proof}

Now we turn to the proof of Theorem~\ref{characterizarioninclusionchain2}. Before doing this let us
consider the following natural question: {\it is it possible to replace the set equality
$$\{\Omega_t:t\ge0\}=\{f_t(\UD):t\ge0\}$$ in Theorem~\ref{characterizarioninclusionchain2} by the stronger
requirement that $\Omega_t=f_t(\D)$ for every $t>0$?} The answer turns out to be negative, see
Example~\ref{inclusion_chain not admissible_family} below. The reason for that is explained by the
following theorem. Denote by $AC^d(X,Y)$, $X\subset \R$, $d\in[1,+\infty]$, the class of all locally
absolutely continuous functions $f:X\to Y$ such that the derivative $f'$ belongs to $L_{\rm loc}^d(X)$.

\begin{theorem}
\label{characterization-L-admissiblefamily}
Let $(\Omega _{t})_{t\geq 0}$ be a family  of simply connected
domains and $d\in[1,+\infty]$. The three statements below are equivalent:

\begin{itemize}
\item[(i)] The following two conditions are fulfilled:
\begin{itemize}
\item[AF1.] $\Omega_s\subset\Omega_t\neq \Complex$ whenever $t\ge s\ge0$.

\item[AF2.] There exists $w_{\ast }\in \Omega _{0}$ such that the function $\mu_{w_{\ast
}}(t):=r(\Omega _{t},w_{\ast })$ belongs to $AC^{d}%
\big([0,+\infty),\mathbb{R}\big).$
\end{itemize}

\item[(ii)] The family $(\Omega_t)$ satisfies condition AF1 and the following assertion holds:

\begin{itemize}
\item[AF2'.] For all $w_{\ast }\in \Omega _{0}$ the function $\mu _{w_{\ast
}}(t):=r(\Omega _{t},w_{\ast })$ is non-decreasing and belongs to $AC^{d}%
\big([0,+\infty ),\mathbb{R}\big).$
\end{itemize}

\item[(iii)] There exists a Loewner chain $(f_{t})$ of order~$d$ such that $%
f_{t}(\mathbb{D})=\Omega _{t}$ for all $t\geq 0$.
\end{itemize}
\end{theorem}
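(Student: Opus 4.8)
The plan is to prove the cycle of implications $(i)\Rightarrow(ii)\Rightarrow(iii)\Rightarrow(i)$, where the implication $(ii)\Rightarrow(i)$ is trivial and most of the real content goes into $(i)\Rightarrow(iii)$; the remaining implications $(iii)\Rightarrow(i)$ and the upgrade from a single base point in AF2 to all base points in AF2$'$ follow from Theorem~\ref{characterizarioninclusionchain1} and the standard relation (Theorem~\ref{T1}) between Loewner chains and their evolution families. More precisely, first I would note that AF2 and the monotonicity of $\mu_{w_\ast}$ already imply, by Theorem~\ref{characterizarioninclusionchain1}, that $(\Omega_t)$ is an inclusion chain, hence by Theorem~\ref{characterizarioninclusionchain1}(3) the function $\mu_{w_\ast}$ is non-decreasing and continuous for \emph{every} $w_\ast\in\Omega_0$. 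To get the stronger $AC^d$ regularity for all base points, I would exploit the Koebe-type comparison already packaged in Lemma~\ref{subordination}: for two base points $w_1,w_2\in\Omega_0$, writing $f_t$ for a Riemann map of $\Omega_t$ onto $\D$-preimages and using the Koebe distortion theorem, one controls $r(\Omega_t,w_2)$ in terms of $r(\Omega_t,w_1)$ by a locally bounded, locally Lipschitz factor (the hyperbolic-type distance between $w_1$ and $w_2$ inside $\Omega_0\subset\Omega_t$ only shrinks), so $\mu_{w_2}\in AC^d$ whenever $\mu_{w_1}\in AC^d$. That gives $(i)\Leftrightarrow(ii)$.

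For the core implication $(i)\Rightarrow(iii)$, the idea is to build the Loewner chain by a time-change / normalization argument from the inclusion chain. Without loss of generality translate so that $w_\ast=0\in\Omega_0$. Since $(\Omega_t)$ is an inclusion chain (by the first paragraph), each $\Omega_t$ carries a unique Riemann map $g_t:\D\to\Omega_t$ with $g_t(0)=0$, $g_t'(0)=\mu(t):=r(\Omega_t,0)>0$, and by Theorem~\ref{characterizarioninclusionchain1} the maps $g_t$ depend continuously (locally uniformly) on $t$, with $\mu$ non-decreasing. The inclusions $\Omega_s\subset\Omega_t$ give univalent self-maps $\psi_{s,t}:=g_t^{-1}\circ g_s$ of $\D$ fixing $0$ with $\psi_{s,t}'(0)=\mu(s)/\mu(t)\in(0,1]$, and EF1, EF2 are immediate. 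The only nontrivial point is EF3/LC3, the $L^d$-Lipschitz estimate in $t$: here I would apply Lemma~\ref{subordination} with $f=g_s$, $g=g_t$ on a fixed compact $\{|z|\le r\}$, obtaining
\[
|g_s(z)-g_t(z)|\le C(r)\sqrt{\mu(t)\big(\mu(t)-\mu(s)\big)}\le C(r)\sqrt{\mu(t)}\;\sqrt{\mu(t)-\mu(s)}
\]
for $s\le t$. Since $\mu$ is locally bounded, this reduces matters to showing $t\mapsto\sqrt{\mu(t)}$, or rather the increments $\mu(t)-\mu(s)$, are controlled by an $L^d$ function; and indeed $\mu-\mu(s)=\int_s^t\mu'(\xi)\,d\xi$ with $\mu'\in L^d_{\rm loc}$ by AF2, so on $[0,T]$ one has $\mu(t)-\mu(s)\le\int_s^t k(\xi)\,d\xi$ with $k\in L^d$. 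The remaining issue is that Lemma~\ref{subordination} only yields a \emph{square root} of the integral, which is not directly of the form $\int_s^t(\cdots)$; to fix this I would instead run the absolute-continuity estimate infinitesimally, i.e. differentiate: on a compact $K$, for a.e.\ $t$,
\[
\Big|\tfrac{d}{dt}g_t(z)\Big|\le \limsup_{h\to0^+}\frac{|g_{t+h}(z)-g_t(z)|}{h}\le C(K)\limsup_{h\to0^+}\frac{\sqrt{\mu(t+h)-\mu(t)}}{h}\sqrt{\mu(t)},
\]
which is still problematic; so the cleaner route is the standard one: solve the Loewner ODE. Use Theorem~\ref{characterizarioninclusionchain2} to get \emph{some} Loewner chain $(\tilde f_t)$ with $\{\tilde f_t(\D)\}=\{\Omega_t\}$, pass to its standard normalized version and its evolution family $(\tilde\varphi_{s,t})$ via Theorems~\ref{T1}–\ref{T2}, and then a monotone reparametrization $t\mapsto\lambda(t)$ matching $r(\Omega_t,0)$ turns $(\tilde f_{\lambda(t)})$ into a chain whose order is governed exactly by the regularity of $\mu$. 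I would carry this out, checking that the reparametrization is the inverse of $\mu$ up to the normalization $r(\tilde f_t(\D),0)=e^t$ of the standard chain, so that $\mu\in AC^d$ translates into $\lambda\in AC^d$ and hence into LC3 with an $L^d$ bound.

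Finally, for $(iii)\Rightarrow(i)$: given a Loewner chain $(f_t)$ of order $d$ with $f_t(\D)=\Omega_t$, condition AF1 is just LC1--LC2 plus $f_t(\D)\ne\C$ (which is automatic since the $\Omega_t$ are assumed $\ne\C$, or follows from univalence and LC3 on compacta). For AF2, take $w_\ast=f_0(0)\in\Omega_0$; then $r(\Omega_t,w_\ast)=|f_t'(0)|\cdot r(\D,0)$-type expression, more precisely $\mu_{w_\ast}(t)=r(\Omega_t,w_\ast)$ equals $(h_t^{-1})'(0)^{-1}$ where $h_t$ is the normalized Riemann map; relating $h_t$ to $f_t$ by a Möbius and using $f_t\circ\varphi_{s,t}=f_s$ together with LC3, one shows $t\mapsto\mu_{w_\ast}(t)$ is locally absolutely continuous with derivative in $L^d_{\rm loc}$. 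Concretely, $\mu_{w_\ast}(t)^{-1}=|(\,\text{inner map})'|$ can be estimated by a Cauchy-integral / Koebe argument against $\sup_{|z|=r}|f_s(z)-f_t(z)|$, which by LC3 is $\le\int_s^t k_{K,T}$; combined with the lower bound $\mu_{w_\ast}(t)\ge\mu_{w_\ast}(0)>0$, this gives $|\mu_{w_\ast}(t)-\mu_{w_\ast}(s)|\le \mathrm{const}\cdot\int_s^t k_{K,T}(\xi)\,d\xi$, i.e.\ AF2. The main obstacle throughout is precisely the mismatch between the ``square-root of an integral'' estimate from the area/subordination lemma and the ``integral of an $L^d$ density'' required by LC3/AF2; the way around it is to not use Lemma~\ref{subordination} directly for the $L^d$ bound but to go through the evolution family and the reparametrization by $\mu$, where the $AC^d$ hypothesis on $\mu$ is exactly what is needed and the square-root phenomenon disappears because $\mu'$, not $\sqrt{\mu'}$, is the relevant density.
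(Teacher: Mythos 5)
Your setup for $(i)\Rightarrow(iii)$ (normalized Riemann maps $g_t$ with $g_t(0)=w_*$, $g_t'(0)=\mu(t)$, and $\psi_{s,t}=g_t^{-1}\circ g_s$) coincides with the paper's, and you correctly diagnose that Lemma~\ref{subordination} only yields a square root of an increment and therefore cannot produce EF3/LC3 directly. But your proposed fix does not close the gap. Invoking Theorem~\ref{characterizarioninclusionchain2} is circular: in the paper that theorem is deduced \emph{from} the statement you are proving (its proof consists precisely of reparametrizing an inclusion chain into an L-admissible family and then applying Theorem~\ref{characterization-L-admissiblefamily}), and you supply no independent proof of it. The subsequent reparametrization plan also rests on a false premise: the standard Loewner chain of Theorem~\ref{T2} is \emph{not} normalized by $r(\tilde f_t(\UD),\tilde f_0(0))=e^t$ (that is the classical radial normalization, which this paper explicitly abandons); by Lemma~\ref{conformal_radius} its conformal radius is $\bigl(1-|\varphi_{0,t}(0)|^2\bigr)/|\varphi'_{0,t}(0)|$. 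Even with the normalization fixed, the inverse of an increasing $AC^d$ function need not be $AC^d$, so ``$\mu\in AC^d$ translates into $\lambda\in AC^d$'' is unproved. The ingredient you are missing is \cite[Proposition~2.10]{SMP}: for a family satisfying EF1--EF2 it suffices that $t\mapsto\varphi_{0,t}(z_0)$ and $t\mapsto\varphi'_{0,t}(z_0)$ lie in $AC^d$ for a \emph{single} $z_0$ to conclude EF3. With your normalization $\psi_{0,t}(0)=0$ is constant and $\psi_{0,t}'(0)=\mu(0)/\mu(t)$ is in $AC^d$ by AF2 (since $\mu$ is non-decreasing and $\mu(0)>0$), so $(\psi_{s,t})$ is an evolution family of order $d$, and \cite[Lemma~3.2]{SMP} then upgrades $g_t\circ\psi_{s,t}=g_s$ to LC3. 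No square roots appear anywhere.

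The second gap is the upgrade from AF2 at one base point to AF2$'$ at all base points. Your ``locally bounded, locally Lipschitz factor'' is only a pointwise-in-$t$ Koebe distortion bound on the ratio $r(\Omega_t,w_2)/r(\Omega_t,w_1)$; to conclude $\mu_{w_2}\in AC^d$ from $\mu_{w_1}\in AC^d$ you would need this ratio to be $AC^d$ as a function of $t$, which you do not establish. The paper avoids the issue by closing the cycle as $(i)\Rightarrow(iii)\Rightarrow(ii)\Rightarrow(i)$: given the chain from $(iii)$ and an arbitrary $w_*=f_0(z_*)$, Lemma~\ref{conformal_radius} gives $\mu_{w_*}(t)=|f_0'(z_*)|\bigl(1-|a(t)|^2\bigr)/|b(t)|$ with $a(t)=\varphi_{0,t}(z_*)$ and $b(t)=\varphi'_{0,t}(z_*)$ both in $AC^d$ by \cite[Proposition~2.10]{SMP} and $|b|$ locally bounded away from zero, whence $\mu_{w_*}\in AC^d$. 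Your sketch of $(iii)\Rightarrow(i)$ is in this spirit but, as written, treats only $w_*=f_0(0)$, so it cannot substitute for this step.
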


This theorem explains our motivation to introduce the following definition.

\begin{definition}
\label{def_L-adm} A family $(\Omega_t)_{t\ge0}$ of simply connected domains $\Omega_t\subset\C$ is said to be an \textit{L-admissible family of order~$d\in[1,+\infty]$} if it satisfies conditions AF1 and AF2 in Theorem~\ref{characterization-L-admissiblefamily}.
\end{definition}

\begin{example}
\label{inclusion_chain not admissible_family} Take $\gamma:[0,+\infty)\to[1,+\infty)$ a non-decreasing function
which is continuous but not absolutely continuous. For each $t\ge0$, define $\Omega_t:=\gamma(t)\D$. Obviously,
$r(\Omega _{t},0)= \gamma(t)$. The increasing family of domains $(\Omega_t)$ is an inclusion chain, but it is
not an L-admissible family and consequently there exist no Loewner chains $(f_t)$ such that
$f_{t}(\mathbb{D})=\Omega _{t}$ for all $t\geq 0$.
\end{example}

For the proof of the above theorem we will need the following lemma.

\begin{lemma}
\label{conformal_radius} Let $(f_{t})$ be a Loewner chain and $(\varphi
_{s,t})$ its evolution family. Let $z_{0}\in \mathbb{D}$ and $%
w_{0}:=f_{0}(z_{0})$. Then for each $t\geq 0$ the conformal radius~$\rho (z_{0},t)$ of~$f_{t}(\mathbb{D})$
w.r.t.~$w_{0}$ equals $|f_{0}^{\prime }(z_{0})|/\beta _{t}(z_{0})$, where
\begin{equation*}
\beta _{t}(z):=\frac{|\varphi _{0,t}^{\prime }(z)|}{1-|\varphi _{0,t}(z)|^{2}%
},\quad z\in \mathbb{D},~t\geq 0.
\end{equation*}
\end{lemma}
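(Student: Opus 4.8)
The plan is to reduce the statement to the defining identity $f_t\circ\varphi_{0,t}=f_0$ combined with the elementary description of the conformal radius in terms of an arbitrary uniformizer and its derivative at the preimage of the base point. First I would record the following standard fact: if $D\subsetneq\mathbb{C}$ is simply connected, $w\in D$, and $h\colon\mathbb{D}\to D$ is \emph{any} conformal map with $h(\zeta)=w$ for some $\zeta\in\mathbb{D}$, then the conformal radius of $D$ with respect to $w$ equals $|h'(\zeta)|\,(1-|\zeta|^2)$. This is seen by precomposing $h$ with the disk automorphism $\psi_\zeta(z)=(z+\zeta)/(1+\overline{\zeta}z)$, which satisfies $\psi_\zeta(0)=\zeta$ and $\psi_\zeta'(0)=1-|\zeta|^2>0$; then $h\circ\psi_\zeta$ maps $\mathbb{D}$ conformally onto $D$ with $(h\circ\psi_\zeta)(0)=w$, so the conformal radius is $|(h\circ\psi_\zeta)'(0)|=|h'(\zeta)|\,(1-|\zeta|^2)$ (this value being independent of the chosen uniformizer, since any two of them differ by a rotation fixing $0$).

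Next I would apply this with $D=f_t(\mathbb{D})$, $w=w_0$, $h=f_t$, and $\zeta=\zeta_0:=\varphi_{0,t}(z_0)\in\mathbb{D}$. Evaluating $f_t\circ\varphi_{0,t}=f_0$ at $z_0$ gives $f_t(\zeta_0)=f_0(z_0)=w_0$, so $\zeta_0$ is precisely the $f_t$-preimage of $w_0$, and therefore $\rho(z_0,t)=|f_t'(\zeta_0)|\,(1-|\zeta_0|^2)$. Finally I would eliminate $f_t'(\zeta_0)$ by differentiating the same relation: $f_t'(\varphi_{0,t}(z_0))\,\varphi_{0,t}'(z_0)=f_0'(z_0)$; since the elements of an evolution family are univalent (\cite[Corollary 6.3]{BCM1}), $\varphi_{0,t}'(z_0)\neq0$ and we may solve to get $|f_t'(\zeta_0)|=|f_0'(z_0)|/|\varphi_{0,t}'(z_0)|$. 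Substituting yields
\[
\rho(z_0,t)=\frac{|f_0'(z_0)|}{|\varphi_{0,t}'(z_0)|}\,\bigl(1-|\varphi_{0,t}(z_0)|^2\bigr)=\frac{|f_0'(z_0)|}{\beta_t(z_0)},
\]
which is the assertion.

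I do not expect any genuine obstacle here: the only points requiring a word of care are the well-definedness of the conformal radius (independence of the uniformizer) and the fact that $\varphi_{0,t}$ is a holomorphic self-map of $\mathbb{D}$ with non-vanishing derivative, and both are already available from the material recalled in the excerpt. The computation of $\psi_\zeta'(0)$ and the differentiation of the chain relation are routine.
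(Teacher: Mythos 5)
Your proof is correct and follows essentially the same route as the paper: the automorphism $\psi_\zeta$ you introduce is precisely the map $\ell_t$ in the paper's proof, and both arguments finish by differentiating $f_t\circ\varphi_{0,t}=f_0$ and applying the chain rule at $z_0$. The only (harmless) difference is that you state the uniformizer-independence of the conformal radius as a separate preliminary fact, whereas the paper folds it directly into the computation.
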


\begin{proof}
Let $z_{t}:=\varphi _{0,t}(z_{0})$ and
\begin{equation*}
\ell _{t}(z)=\frac{z+z_{t}}{1+\overline{z_{t}}\,z}.
\end{equation*}%
The function $g_{t}:=f_{t}\circ \ell _{t}$ maps conformally $\mathbb{D}$ onto $f_{t}(\mathbb{D})$. Moreover,
$g_{t}(0)=w_{0}$ because
\begin{equation}
f_{t}\circ \varphi _{0,t}=f_{0}.  \label{qqq}
\end{equation}%
Hence $\rho (z_{0},t)=|g_{t}^{\prime }(0)|$. By the chain rule, from~%
\eqref{qqq} it follows that $f_{t}^{\prime }(z_{t})\varphi _{0,t}^{\prime }(z_{0})=f_{0}^{\prime }(z_{0})$. Now
the proof can be completed by means of simple computations.
\end{proof}

\begin{proof}[Proof of Theorem~\ref{characterization-L-admissiblefamily}]
First of all let us prove that assertion (i) implies (iii). By the Riemann mapping theorem, for all $t\geq
0$ there is a unique univalent holomorphic function $f_t:\UD\to\C$  such that $f_{t}(\D )=\Omega _{t}$,
$f_{t}(0)=w_{\ast }$ and $f_{t}^{\prime }(0)>0.$ We will prove that the family $(f_{t})$ is a Loewner
chain of order~$d.$

Write $\varphi _{s,t}(z)=f_{t}^{-1}\circ f_{s}(z)$ for all $z\in \mathbb{D}$ and for all $0\leq s\leq t.$ It is
clear that the functions $\varphi _{s,t}$ are univalent and the family $(\varphi _{s,t})$ satisfied EF1 and EF2.
Since the function $t\mapsto b(t)=\varphi _{0,t}^{\prime }(0)=f_{0}^{\prime }(0)/f_{t}^{\prime
}(0)=f_{0}^{\prime }(0)/r(\Omega _{t},w_{\ast })$ belongs to $AC^{d}\big([0,+\infty ),\mathbb{R}\big)$ and the
map $t\mapsto a(t)=\varphi _{0,t}(0)=0$ is constant, by \cite[Proposition 2.10]{SMP}, $(\varphi _{s,t})$ is an
evolution family of order $d.$ Finally, since the functions $f_{t}$ are univalent and $f_{t}\circ \varphi
_{s,t}=f_{s}$ for all $0\leq s\leq t,$ by \cite[Lemma 3.2]{SMP}, the family $(f_{t})$ is a Loewner chain of
order $d.$

Being trivial that (ii) implies (i), it remains to prove that (iii)
implies (ii). So take a Loewner chain $(f_{t})$ of order~$d$ such that $%
f_{t}(\D)=\Omega _{t}$ for all $t\geq 0$ with associated evolution family given by $(\varphi _{s,t}).$ By the
very definition of Loewner chain, it is clear that the family $(\Omega _{t})$ satisfies AF1. Fix $w_{\ast }\in
\Omega _{0}$ and write $\mu _{w_{\ast }}(t):=r(\Omega _{t},w_{\ast }).$ Clearly, the function $\mu _{w_{\ast }}$
is non-decreasing. Take $z_{\ast }\in \D$ such that $w_{\ast }:=f_{0}(z_{\ast })$ and define $ a(t)=\varphi
_{0,t}(z_{\ast })$ and $b(t)=\varphi _{0,t}^{\prime }(z_{\ast }) $ for all $t\geq 0.$ By \cite[Proposition
2.10]{SMP}, $a$ and $b$ belong to $AC^{d}\big([0,+\infty ),\mathbb{R}\big).$ Moreover, by Lemma %
\ref{conformal_radius}, we have that $\mu _{w_{\ast }}(t)=|f_{0}^{\prime }(z_{0})|\frac{1-|a(t)|^{2}}{|b(t)|}.$
Finally, a direct computation shows that $\mu _{w_{\ast }}$ also belongs to $AC^{d}\big([0,+\infty ),\R\big).$ This completes the proof.
\end{proof}

Now we can deduce Theorem~\ref{characterizarioninclusionchain2} from Theorem~\ref{characterization-L-admissiblefamily}.

\begin{proof}[Proof of Theorem~\ref{characterizarioninclusionchain2}]
Notice that if we manage to construct a continuous  non-decreasing function $h:[0,+\infty)\to [0,+\infty)$
with $h(0)=0$ and $\lim_{t\to+\infty}h(t)=+\infty$ such that $\Omega_{h(t)}$ is an L-admissible family of
order~$d\in[1,+\infty],$ we will finish just by applying
Theorem~\ref{characterization-L-admissiblefamily}.

Denote $J:=\mu _{w_{\ast }}([0,+\infty))$ and take any monotone function $g$ mapping $[0,+\infty)$ onto $J$ that
belongs to $AC^{\infty}\big([0,+\infty),\mathbb{R}\big).$ The function~$h$ we are looking for is given by
$$h(t):=\inf\{\theta\ge0:\mu_{w_{\ast}}(\theta)=g(t)\}.$$ The proof is now finished.
\end{proof}

\section{Sequences of univalent functions}
\label{Sequences of functions}

In this section we make ready the key tool to solve our problem. We present a result on convergence of sequences
of univalent functions in the spirit of the Carath\'{e}odory kernel theorem.

When dealing with holomorphic self-mappings having a boundary fixed point  it is more convenient to
switch from the unit disk to the upper half-plane
$\UH:=\{z:\Im z>0\}$. It can be made by means of the change of variables~$w=H(\zeta)$, where
\begin{equation}\label{Cayley}H(\zeta):=i\frac{1+\zeta}{1-\zeta}\end{equation} is the so called
{\it Cayley map} which sends the unit disk~$\UD$ onto the upper
half-plane~$\UH$, with $H(0)=i$ and $H(1)=\infty$.

We will need the following classical result (see, e.\,g.\,\cite[Ch.\,IV~\S26]{Valiron}).

\begin{result}\label{ugl_pr}
Let $f:\UH\to\UH$ be a holomorphic function. Then \begin{equation}\label{dev1}f(z)=cz+\Phi(z),~\quad
z\in\UH,~\end{equation} where $$c:=\inf_{z\in\UH}\frac{\Im f(z)}{\Im z}$$ and $\Phi:\UH\to\UH$ is a holomorphic
function such that
\begin{equation}\label{dev2}\angle\lim\limits_{z\to\infty}\frac{\Phi(z)}{z}=0.\end{equation}
\end{result}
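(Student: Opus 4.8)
The plan is to work with the Julia--Carathéodory/Wolff machinery on the upper half-plane, translating it through the Cayley map $H$ to a self-map of the disk if convenient, but I would rather argue directly in $\UH$. First I would verify that $c:=\inf_{z\in\UH}\Im f(z)/\Im z$ is a well-defined finite nonnegative number: nonnegativity is clear since $\Im f(z)>0$ and $\Im z>0$; finiteness follows because the harmonic function $z\mapsto \Im f(z)-c_0\Im z$ (for $c_0$ slightly below the infimum) changes sign or, more simply, one can pick any fixed point $z_0\in\UH$ and note $c\le \Im f(z_0)/\Im z_0<\infty$. Then I would set $\Phi(z):=f(z)-cz$ and the whole content of the theorem is: (a) $\Phi$ maps $\UH$ into $\UH$, i.e. $\Im\Phi(z)\ge 0$, and in fact $>0$ unless $\Phi$ is constant real, which is excluded once $f$ is non-affine; and (b) the angular limit $\angle\lim_{z\to\infty}\Phi(z)/z=0$.

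For part (a), the key step is to show $\Im f(z)\ge c\,\Im z$ for every $z$, which is exactly the statement that the infimum defining $c$ is attained in the limiting/harmonic sense. The clean way: the positive harmonic function $u(z):=\Im f(z)$ on $\UH$ has, by the Herglotz/Nevanlinna representation for $\UH$, the form $u(z)=c\,\Im z+\int_{\R}\frac{\Im z}{|z-t|^2}\,d\sigma(t)$ for some finite positive Borel measure $\sigma$ and the same constant $c\ge 0$ (the coefficient of $\Im z$ in this representation is precisely $\liminf_{\Im z\to\infty}u(z)/\Im z$, which one checks equals the infimum $c$ by the monotonicity $z\mapsto u(z)/\Im z$ along vertical rays — this monotonicity is the subharmonicity/mean-value argument). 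Since the integral term is $\ge 0$, we get $\Im\Phi = u - c\,\Im z\ge 0$, and it is harmonic, hence $\Phi$ is holomorphic $\UH\to\UH$ (or constant real, the degenerate case).

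For part (b), I would extract the asymptotics of $\Phi(z)/z$ as $z\to\infty$ nontangentially from the same representation. Writing $\Phi(z)=cz+iA+\int_{\R}\big(\frac{1}{t-z}-\frac{t}{1+t^2}\big)d\sigma(t)$ for a real constant $A$ (Nevanlinna representation of the self-map $\Phi$, whose imaginary part is the integral term above), one sees $\Phi(z)/z\to 0$ angularly: the constant contributes $iA/z\to 0$, and for the integral one uses dominated convergence — on a Stolz angle $|z-t|\ge \kappa(|z|+|t|)$-type bounds make $\frac{1}{z(t-z)}$ tend to $0$ pointwise in $t$ with an integrable majorant against the finite measure $\sigma$. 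This is the step I expect to be the main technical obstacle: one has to be careful that $z\to\infty$ \emph{angularly} (not just $\Im z\to\infty$) to control $|t-z|$ uniformly away from $0$ and to kill the $t/(1+t^2)$ renormalization term; the nontangential restriction is exactly what makes $\Phi(z)/z\to 0$ work. Everything else — existence and finiteness of $c$, the monotonicity of $\Im f(z)/\Im z$ along vertical lines identifying $c$ with the nontangential limit coefficient, and the degenerate affine case — is routine once the Nevanlinna representation on $\UH$ is invoked.
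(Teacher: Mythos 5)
The paper does not prove this statement at all: it is quoted as a classical theorem with a reference to Valiron, so there is no in-paper argument to compare against. Your proof via the Herglotz--Nevanlinna representation is the standard modern route and is essentially correct: the inequality $\Im f(z)\ge c\,\Im z$ is in fact immediate from the definition of $c$ as an infimum (no representation needed for part (a), only the minimum principle to rule out $\Im\Phi\equiv0$ except in the affine degenerate case $f(z)=cz+A$, $A\in\R$, which the paper's Remark~\ref{Julia} also acknowledges); the real content is the identification of $c$ with the coefficient in the representation and the angular limit in part (b), and your dominated-convergence argument on a Stolz angle, using $|t-z|\gtrsim |t|+|z|$ there, does carry through. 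One small inaccuracy: with the kernel $\Im z/|z-t|^2$ the measure $\sigma$ is in general \emph{not} finite (e.g.\ $f(z)=\sqrt z$ gives $d\sigma(t)=\pi^{-1}\sqrt{|t|}\,\mathbf{1}_{t<0}\,dt$); the correct condition is $\int_\R (1+t^2)^{-1}d\sigma(t)<\infty$, which is precisely why the compensating term $t/(1+t^2)$ appears in the Nevanlinna formula and why the combined integrand $(1+tz)/\bigl(z(t-z)(1+t^2)\bigr)$ must be majorized as a whole by $C_\varepsilon/(1+t^2)$ on the Stolz angle, rather than the two pieces separately. With that correction your sketch closes.
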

The number~$c$ in the above theorem is called the {\it angular derivative} of the function~$f$ at~$\infty$. We
will denote this number by $f'(\infty)$. A very simple but useful consequence of Theorem~D is the following
\begin{lemma}\label{lemma}
Suppose $f:\UH\to\UH$ is a holomorphic function and $A$ is a non-negative number. Then the following assertions
are equivalent:
\begin{itemize}
\item[(i)] the angular derivative of~$f$ at~$\infty$ equals~$A$;\\[0mm]

\item[(ii)] $\displaystyle\angle\lim_{z\to\infty}\frac{f(z)}{z}=A$;\\[1mm]

\item[(iii)] $\displaystyle\lim_{y\to+\infty}\frac{f(iy)}{iy}=A$;\\[1mm]

\item[(iv)] $\displaystyle\angle\lim_{z\to\infty}\frac{\Im\big(f(z)-Az \big)}{|z|}=0$;\\[1mm]

\item[(v)] $\displaystyle\lim_{y\to+\infty}\frac{\Im\big(f(iy)-iAy \big)}{y}=0$;
\end{itemize}
\end{lemma}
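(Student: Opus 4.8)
The plan is to establish the chain of implications $(i)\Rightarrow(ii)\Rightarrow(iii)\Rightarrow(v)\Leftrightarrow(iv)$ and then close the loop with $(v)\Rightarrow(i)$, using Theorem~D (the Julia--Wolff--Carath\'eodory-type representation $f(z)=cz+\Phi(z)$ with $c=f'(\infty)$ and $\angle\lim_{z\to\infty}\Phi(z)/z=0$) as the single structural input. First I would note that $(i)\Rightarrow(ii)$ is immediate from~\eqref{dev1}--\eqref{dev2}: writing $f(z)/z=c+\Phi(z)/z$ and letting $z\to\infty$ within a Stolz angle gives $\angle\lim_{z\to\infty}f(z)/z=c=A$. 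The implication $(ii)\Rightarrow(iii)$ is trivial, since the vertical ray $z=iy$, $y\to+\infty$, approaches $\infty$ non-tangentially, so the angular limit restricts to it.

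The slightly less formal step is $(iii)\Rightarrow(i)$, which is needed to complete the circle back from the ``weak'' conditions to~$(i)$. Here I would invoke Theorem~D directly: the number $c=f'(\infty)=\inf_{z\in\UH}\Im f(z)/\Im z$ exists unconditionally, and along $z=iy$ we have $\Im f(iy)/y\ge c$ for all $y>0$; combined with the representation $f(iy)/(iy)=c+\Phi(iy)/(iy)$ where $\Phi$ maps $\UH$ into $\UH$, a standard argument (e.g.\ the Herglotz/Nevanlinna representation of $\Phi$, or simply the monotonicity of $y\mapsto \Im f(iy)/y$ that follows from $\Phi(\UH)\subset\UH$) shows that $\lim_{y\to+\infty}\Im f(iy)/y$ exists and equals $c$, and that $\lim_{y\to+\infty}\Re f(iy)/y=0$; hence if $(iii)$ holds with value $A$ then necessarily $A=c=f'(\infty)$, which is $(i)$. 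Thus $(i)$, $(ii)$, $(iii)$ are all equivalent.

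It remains to splice in $(iv)$ and $(v)$. The equivalence $(iv)\Leftrightarrow(v)$ is exactly the angular-versus-vertical-ray reduction already used above: $(iv)\Rightarrow(v)$ is the restriction to $z=iy$, and for $(v)\Rightarrow(iv)$ one uses the representation $f(z)-Az=(c-A)z+\Phi(z)$; if $A=c$ this is just $\Phi(z)$ and $\angle\lim \Im\Phi(z)/|z|=0$ follows from $\angle\lim\Phi(z)/z=0$, while if $A\ne c$ one rules it out via~$(v)$ along $z=iy$ which forces $\Im\big((c-A)iy+\Phi(iy)\big)/y\to (c-A)\ne0$ unless the $\Phi$-term cancels it, and the $\Phi$-term has vertical-ray limit $0$ by the previous paragraph — contradiction. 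Finally $(v)\Leftrightarrow(iii)$: from the representation, $\Im\big(f(iy)-iAy\big)/y=\Im f(iy)/y-A$, and separately $\Re f(iy)/y\to0$ always (by the $\Phi(\UH)\subset\UH$ argument), so $f(iy)/(iy)\to A$ iff $\Im f(iy)/y\to A$ iff $(v)$ holds. I expect the only real obstacle to be pinning down the two auxiliary facts $\lim_{y\to+\infty}\Re f(iy)/y=0$ and the existence of $\lim_{y\to+\infty}\Im f(iy)/y$ cleanly; both are quick consequences of applying Theorem~D to $\Phi$ (equivalently, of the Nevanlinna representation), so I would dispatch them in one short paragraph and let the rest be the routine bookkeeping sketched above.
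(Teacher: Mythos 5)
Your proposal is correct and rests on the same two ingredients the paper uses: the representation $f(z)=cz+\Phi(z)$ from Theorem~\ref{ugl_pr}, and the observation that each of (ii)--(v) can hold for at most one value of~$A$, so that once one knows (i)$\Rightarrow$(ii)--(v) holds with $A=c$, any of (ii)--(v) pins down $A=c$. The paper simply packages this more compactly — prove (i)$\Rightarrow$(ii)--(v) in one stroke, then invoke uniqueness of $A$ in each condition — whereas you route through the chain (i)$\Rightarrow$(ii)$\Rightarrow$(iii)$\Rightarrow$(v)$\Leftrightarrow$(iv)$\Rightarrow$(i) with some extra bookkeeping (e.g.\ the explicit $\Re f(iy)/y\to0$ and $\Im f(iy)/y\to c$ facts); the content is the same.
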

\begin{proof}
First suppose (i) holds. Then $\Phi(z):=f(z)-Az$ satisfies~\eqref{dev2}. It immediately follows that~(ii)--(v)
take place.

Now let us denote by $c$ the angular derivative of~$f$ at~$\infty$. Then~(ii)--(v) hold with $c$ substituted
for~$A$. At the same time whenever the function~$f$ is fixed, each of these assertions can hold for at most one
value of~$A$. This proves that (i) is implied by each of assertions~(ii)--(v).
\end{proof}

\begin{remark}\label{ex_of_ad}
Another consequence of Theorem~\ref{ugl_pr} is the following well-known assertion: {\it if~$\varphi\in{\sf
Hol}(\UD,\UD)$ and $\exists~\angle\lim_{z\to1}\varphi(z)=1$, then there exists the angular derivative
$$\varphi'(1):=\angle\lim\limits_{z\to1}\frac{\varphi(z)-1}{z-1},$$ which can be finite or infinite. If $\varphi'(1)\neq\infty$, then $\varphi'(1)>0$.}
\end{remark}

It is easy to see that if $f\in{\sf Hol}(\UH,\UH)$ and
$f'(\infty)\neq0$,  then $\angle\lim_{z\to\infty}f(z)=\infty$. The
latter means that $f$ has a boundary fixed point at $z=\infty$. If
$f'(\infty)=1$, this boundary fixed point is said to be {\it
parabolic}. Let us denote by $\mathfrak P$ the class of all
univalent functions from~${\sf Hol}(\UH,\UH)$ that have a
parabolic boundary fixed point at~$z=\infty$, $$\mathfrak
P:=\big\{f\in{\sf Hol}(\UH,\UH):f'(\infty)=1,\text{~$f$ is
univalent in~$\UH$}\big\}.$$ Similarly, by $\mathcal P$ we will
denote the class of all univalent functions $\varphi\in{\sf
Hol}(\UD,\UD)$ satisfying equalities $\varphi(1)=1$ and
$\varphi'(1)=1$ in the angular sense. Equivalently, $$\mathcal
P:=\{H^{-1}\circ f\circ H:f\in\mathfrak P\}.$$

\begin{remark}\label{Julia}
>From Theorem~\ref{ugl_pr} it follows that $\Im
\big(f(z)-z\big)\ge0$, $z\in\UH$, for all~$f\in\mathfrak P$, a
fact that will be used on numerous occasions. The equality can
happen  only for functions $f(z)=z+C$, where $C\in\Real$ is a
constant.
\end{remark}

We will also use on several occasions the following theorem due to Koebe (see, e.g., \cite[Theorem 1, \S
II.3]{Goluzin} or \cite[Proposition~2.14]{Pommerenke-II}). A curve $\Gamma:[0,1)\to\ComplexE$ is said {to land
at a point}~$p$ if there exists $\lim_{x\to1-}\Gamma(x)=p$. By $\partial_\infty E$ we will denote the boundary
of a set~$E$ in the Riemann sphere~$\ComplexE$.

\begin{result}\label{Koebe}
Let $f$ be a univalent mapping of~$\UD$ onto a domain~$D$ and $\Gamma:[0,1)\to D$ a curve in $D$ landing at some
point~$p\in\partial_\infty D$. Then the curve $f^{-1}\circ\Gamma$ lands at some point~$z_0\in\UC:=\partial\UD$.
Moreover, if $\Gamma_1:[0,1)\to D$ is a curve in $D$ landing at another point~$p_1\in\partial_\infty
D\setminus\{p\}$, then the curve $f^{-1}\circ\Gamma_1$ lands at a point different from~$z_0$.
\end{result}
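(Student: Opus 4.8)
The plan is to establish the two assertions in turn; throughout put $\sigma:=f^{-1}\circ\Gamma\colon[0,1)\to\UD$. The first, routine, observation is that $|\sigma(x)|\to1$ as $x\to1^-$: since $p\in\partial_\infty D$, the curve $\Gamma$ eventually leaves every compact subset of~$D$, and as $f\colon\UD\to D$ is a homeomorphism, $\sigma$ then eventually leaves every compact subset of~$\UD$, i.e.\ $\dist\bigl(\sigma(x),\UC\bigr)\to0$. Hence the cluster set $E:=\bigcap_{a<1}\overline{\sigma([a,1))}$ is a non-empty subcontinuum of~$\UC$, i.e.\ either a single point or a closed circular arc (possibly all of~$\UC$); and the first assertion is precisely the statement that $E$ is a singleton.

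So suppose $E$ contains two distinct points $\zeta_1,\zeta_2$. Choosing interlacing sequences $x_k\uparrow1$, $y_k\uparrow1$ with $x_k<y_k<x_{k+1}$, $\sigma(x_k)\to\zeta_1$, $\sigma(y_k)\to\zeta_2$, the sub-arcs $C_k:=\sigma([x_k,y_k])$ join a shrinking neighbourhood of~$\zeta_1$ to one of~$\zeta_2$ while lying inside the shrinking annulus $\{1-\varepsilon_k<|z|<1\}$ (because $|\sigma|\to1$), whereas their $f$-images $\Gamma([x_k,y_k])$ have spherical diameters tending to~$0$ (because $\Gamma(x)\to p$). The heart of the proof --- and the only genuinely delicate step --- is to derive a contradiction from this ``thin oscillating arc with vanishing image'' configuration. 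Morally: such a $C_k$ separates~$0$ from a sub-arc of~$\UC$ whose length is bounded below independently of~$k$, so, by conformal invariance of harmonic measure, $f(C_k)$ separates $f(0)$ inside $D$ from a set of harmonic measure at $f(0)$ bounded below; but $f(C_k)$ shrinks to the single point~$p$, while the harmonic measure at a fixed interior point of an arbitrarily small neighbourhood of one boundary point tends to~$0$ (a point being a polar set) --- here the hypothesis $D\neq\C$ enters. Turning this outline into an estimate needs the classical length--area argument controlling the crosscuts near~$\zeta_1$ and~$\zeta_2$; in a written proof I would quote it from \cite{Goluzin} or \cite{Pommerenke-II} rather than reproduce it. This contradiction shows $E$ is a single point~$z_0$, hence $f^{-1}\circ\Gamma\to z_0$.

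For the ``moreover'' part, by the first part both curves land; write $z_0:=\lim_{x\to1}f^{-1}(\Gamma(x))$, $z_0':=\lim_{x\to1}f^{-1}(\Gamma_1(x))$, and suppose $z_0=z_0'$. A univalent function on~$\UD$ is a normal function (by the Koebe distortion theorem the family of all compositions $f\circ A$, $A$ a holomorphic automorphism of~$\UD$, is normal), so Lindel\"of's theorem for normal functions applies: since $\Gamma=f\circ(f^{-1}\circ\Gamma)\to p$ along the curve $f^{-1}\circ\Gamma$, which lands at~$z_0$, the function~$f$ has spherical angular limit~$p$ at~$z_0$; and since $\Gamma_1=f\circ(f^{-1}\circ\Gamma_1)\to p_1$ along $f^{-1}\circ\Gamma_1$, which lands at $z_0'=z_0$, it has spherical angular limit~$p_1$ at the same point. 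As angular limits are unique, $p=p_1$, contradicting $p_1\neq p$; hence $z_0'\neq z_0$. This completes the proposed proof.
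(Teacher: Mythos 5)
The paper does not actually prove this statement: it is presented as a cited \emph{Result} (the paper's device for quoted theorems) with references to \cite{Goluzin} and \cite[Prop.~2.14]{Pommerenke-II}, so there is no in-paper proof to compare against. Your overall architecture nevertheless matches the standard one: show $|\sigma(x)|\to1$, reduce to showing the cluster set of $\sigma$ on $\UC$ is a singleton, derive a contradiction from an ``oscillating thin arc with shrinking image,'' and then handle the ``moreover'' part by the Lehto--Virtanen theorem applied to the normal function $f$. The ``moreover'' part is correct and essentially complete as written (Lehto--Virtanen is indeed the right name; the paper itself cites it as \cite[Theorem~9.3]{Pommerenke}).

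The one soft spot is the central contradiction. You assert that $C_k=\sigma([x_k,y_k])$ separates $0$ from a sub-arc of $\UC$, but as written this is false: $C_k$ is not a crosscut of $\UD$, since its endpoints $\sigma(x_k),\sigma(y_k)$ lie in the open disk, so $C_k$ alone separates nothing. One would have to append short radial segments from $\sigma(x_k),\sigma(y_k)$ to $\UC$ and control their images, or abandon the harmonic-measure route. A cleaner way to close exactly this configuration --- and the one the paper itself uses for the analogous step in Section~\ref{EFinDA} --- is via Koebe sequences of arcs: the arcs $C_k$ have Euclidean diameter bounded below (close to $|\zeta_1-\zeta_2|$), they lie in annuli shrinking to $\UC$ (since $|\sigma|\to1$), and their images $f(C_k)=\Gamma([x_k,y_k])$ shrink to the single point $p$; hence $(C_k)$ is a Koebe sequence of arcs for $f$. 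Since a univalent function on $\UD$ is normal \cite[Lemma~9.3]{Pommerenke}, this contradicts the Bagemihl--Seidel theorem \cite[Corollary~9.1]{Pommerenke}, with no separation or harmonic-measure estimate needed. With that substitution your proof is complete and correct.
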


Firstly we prove the following useful lemma.
\begin{lemma}\label{three functions}
Suppose that  $\varphi_j$, $j=1,2,3$, are holomorphic univalent self-mappings of~$\UD$, and
$\varphi_3=\varphi_2\circ\varphi_1$. If any two of these three functions belong to~$\mathcal P$, then so does
the third one.
\end{lemma}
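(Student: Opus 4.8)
The plan is to transfer everything to the upper half-plane via the Cayley map $H$, so that the hypothesis ``$\varphi\in\mathcal P$'' becomes ``$f\in\mathfrak P$,'' i.e.\ $f\in{\sf Hol}(\UH,\UH)$ univalent with $f'(\infty)=1$. Write $f_j:=H\circ\varphi_j\circ H^{-1}$, so $f_3=f_2\circ f_1$, and we must show that if two of the $f_j$ lie in $\mathfrak P$, then so does the third. Each $f_j$ is univalent by assumption (composition of univalent maps is univalent, and a univalent map has a univalent inverse), so the only thing to check in every case is the value of the angular derivative at $\infty$. The key algebraic fact is the chain rule for angular derivatives at $\infty$: if $g,h\in{\sf Hol}(\UH,\UH)$ with $h'(\infty)$ and $g'(\infty)$ finite (and $h'(\infty)\ne0$, so that $h$ fixes $\infty$ and the composition makes sense near $\infty$), then $(g\circ h)'(\infty)=g'(\infty)\,h'(\infty)$. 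First I would establish this multiplicativity using Lemma~\ref{lemma}: writing $A=h'(\infty)$, $B=g'(\infty)$, along the ray $z=iy$ we have $h(iy)/(iy)\to A$, hence $h(iy)\to\infty$ non-tangentially (since $A>0$ by Theorem~\ref{ugl_pr}, as $c=\inf\Im f/\Im z\ge0$ and $c=0$ would force, via~\eqref{dev1}, that $\Im f$ can be made arbitrarily small along $iy$, contradicting $\Im h(iy)\to+\infty$... more simply, $f'(\infty)\ne0$ implies $\angle\lim f=\infty$ as noted after Remark~\ref{ex_of_ad}), and then $g(h(iy))/h(iy)\to B$ by part~(iii) applied to $g$ along the non-tangential approach; multiplying, $g(h(iy))/(iy)\to AB$, which by Lemma~\ref{lemma}(iii)$\Rightarrow$(i) gives $(g\circ h)'(\infty)=AB$. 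One subtlety: part~(iii) of Lemma~\ref{lemma} uses the specific ray $iy$, whereas here $h(iy)$ approaches $\infty$ along some other non-tangential path; so I will instead invoke part~(ii), the genuinely angular statement, $\angle\lim_{z\to\infty}g(z)/z=B$, which applies along any non-tangential sequence.

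With multiplicativity in hand the three cases are immediate. If $f_1,f_2\in\mathfrak P$, then $f_1'(\infty)=f_2'(\infty)=1$, so $f_3'(\infty)=1$ and $f_3\in\mathfrak P$. If $f_1,f_3\in\mathfrak P$, then $f_3'(\infty)=f_2'(\infty)f_1'(\infty)$ gives $1=f_2'(\infty)\cdot1$, so $f_2'(\infty)=1$; I must also make sure $f_2'(\infty)$ is finite a priori so that the chain rule applies — but $f_2'(\infty)=\inf_{z}\Im f_2(z)/\Im z\le \Im f_2(f_1(i))/\Im f_1(i)<\infty$ automatically since $f_2$ takes finite values, so finiteness is free; hence $f_2\in\mathfrak P$. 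Symmetrically, if $f_2,f_3\in\mathfrak P$, then $1=1\cdot f_1'(\infty)$ forces $f_1'(\infty)=1$ (again finiteness of $f_1'(\infty)$ is automatic), so $f_1\in\mathfrak P$. Translating back through $H$ via the identity $\mathcal P=\{H^{-1}\circ f\circ H:f\in\mathfrak P\}$ completes the proof.

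The main obstacle is purely the careful handling of the chain rule for the angular derivative at $\infty$: one must ensure the composition is well-defined near the boundary point (i.e.\ that $h$ does send non-tangential approaches to $\infty$ into non-tangential approaches to $\infty$), and one must use the honestly angular formulation of Lemma~\ref{lemma} — namely assertion~(ii) — rather than the radial one, since after applying $h$ we no longer sit on the imaginary axis. Everything else (univalence of the third map, automatic finiteness of the relevant angular derivatives because the maps are finite-valued, positivity of the angular derivative from Theorem~\ref{ugl_pr}) is routine. I do not expect to need the full strength of Theorem~\ref{ugl_pr} beyond the representation~\eqref{dev1}--\eqref{dev2} and the remark that $f'(\infty)\ne0$ yields a boundary fixed point at $\infty$.
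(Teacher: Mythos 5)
Your reduction to the upper half-plane and your treatment of two of the three cases are sound and essentially follow the paper's route: when the \emph{inner} function of the composition is known to lie in $\mathfrak P$, the ray $iy$ is mapped to a non-tangential path to $\infty$, the angular statement of Lemma~\ref{lemma}(ii) applies to the outer function, and multiplicativity of the angular derivative at $\infty$ follows. This correctly settles ``$f_1,f_2\in\mathfrak P\Rightarrow f_3\in\mathfrak P$'' and ``$f_1,f_3\in\mathfrak P\Rightarrow f_2\in\mathfrak P$'' (the latter is the paper's appeal to the chain rule for angular derivatives, \cite[Lemma~2]{SMC}).

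The remaining case, $f_2,f_3\in\mathfrak P\Rightarrow f_1\in\mathfrak P$, is not ``symmetric'': in the chain rule the inner and outer functions play different roles, and your own formulation of the rule carries the hypothesis $h'(\infty)\neq0$ for the inner function $h$, which you never verify for $h=f_1$. What is automatic from Theorem~\ref{ugl_pr} is only that $c_1:=\inf_z\Im f_1(z)/\Im z$ is \emph{finite}; a priori it could be $0$, in which case $f_1(iy)$ need not tend to $\infty$ at all (it could tend to a finite boundary point or to nothing), the identity $f_3'(\infty)=f_2'(\infty)f_1'(\infty)$ is never established, and you cannot ``solve for'' $f_1'(\infty)$. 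There is no cheap fix: the Julia-type inequality only yields $c_3\ge c_2c_1$, i.e.\ $c_1\le 1$, which is the wrong direction, and $\Im f_2(w)/\Im w$ is in general unbounded above, so $\Im f_1$ cannot be bounded below via $\Im f_3$. Showing $c_1>0$ --- equivalently, that $\varphi_1=\varphi_2^{-1}\circ\varphi_3$ has angular limit $1$ at $z=1$ --- is precisely where the paper does the real work: it uses Koebe's theorem (Theorem~\ref{Koebe}) to see that $\varphi_2^{-1}\circ\varphi_3([0,1))$ lands at some $\xi\in\partial\UD$, a Stolz-angle image argument exploiting the finite angular derivative of $\varphi_2$ at $1$ to force $\xi=1$, the Lehto--Virtanen theorem to upgrade the radial limit to an angular one, and only then Remark~\ref{ex_of_ad} and the chain rule to conclude $\varphi_1'(1)=1$. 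Your proof needs an argument of this kind to close this case.
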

\begin{proof}
Let us assume that $\varphi_2$ and $\varphi_3$ belong to~$\mathcal P$. Due to univalence of these functions, we
have $\varphi_1=\varphi_2^{-1}\circ\varphi_3$. The function $\varphi_3$ has finite angular derivative at $z=1$.
>From this one can conclude (see, e.g., \cite[p.\,80--81]{Pommerenke-II}) that $\varphi_3([0,1))$ is a
$C^1$-smooth curve landing at the point~$z=1$ and orthogonal to $\UC:=\partial\UD$ at this point. Since
$\varphi_2$ is univalent, by Theorem~\ref{Koebe} there is a point $\xi\in \partial \D$ such that $\lim_{r\to 1-}
\big(\varphi_2^{-1}\circ\varphi_3\big) (r)=\xi$.  Again the existence of  finite angular derivative of
$\varphi_2$ at the point~$z=1$ (see, e.g.,  \cite[p.\,80--81]{Pommerenke-II}) implies that  for any
$\theta\in(0,\pi/2)$ there exists $\vartheta\in(0,\pi/2)$ and $r>0$ such that the domain
$G_r(\theta):=\big\{z\in\UD:|1-z|<r,~|\arg (1-z)|<\theta\big\}$ is contained in the image~$U:=\varphi_2(G)$ of
$G=G_1(\vartheta)$. Moreover, for any $r>0$ and $\theta\in(0,\pi/2)$ there exists $\rho\in(0,1)$ such that
$\varphi_3([\rho,1))$ is contained in $G_r(\theta)$. It follows that
$\big(\varphi_2^{-1}\circ\varphi_3\big)([\rho,1))\subset G$. Hence $\xi=1$. From this we conclude that the
function~$\varphi_1=\varphi_2^{-1}\circ\varphi_3$ has at the point~$z=1$ radial limit equal to~$1$. By the
Lehto--Virtanen theorem (see, e.g., \cite[Theorem~9.3 on p.\,268]{Pommerenke}), this implies
that~$\varphi_1(1)=1$ in angular sense. According to Remark~\ref{ex_of_ad} the angular
derivative~$\varphi_1'(1)$ exists, finite or infinite. Now one can use the chain rule for angular
derivatives~\cite[Lemma~2]{SMC} to conclude that the angular derivative $\varphi'_1(1)$ actually equals~$1$.
This proves that $\varphi_2\in\mathcal P$.

Now assume that $\varphi_1$ and $\varphi_2$ belong to~$\mathcal P$. Then the curve $\varphi_2([0,1))$ lands at~$z=1$ nontangentially, i.e., $\varphi_2([0,1))\subset G_2(\vartheta)$ for some $\vartheta<\pi/2$. It follows that $\varphi_3(1)=1$ in angular sense. Therefore, we can apply the chain rule for angular derivatives to conclude that $\varphi_3\in\mathcal P$.

Finally, if $\varphi_1$ and $\varphi_3$ belong to~$\mathcal P$,
then the conclusion of the lemma follows  immediately
from~\cite[Lemma~2]{SMC}. The proof is now finished.
\end{proof}

Denote by $r_\UH(\cdot,\cdot)$ the  pseudo-hyperbolic distance in $\UH$,
$$r_\UH(z,w)=\left|\frac{z-w}{z-\bar w}\right|,\quad z,w,\in\UH.$$ The hyperbolic distance $\rho_\UH(z,w)$
between points $z,w\in\UH$ then equals $\log[(1+r)/(1-r)]$, where $r:=r_\UH(z,w)$.

\begin{remark}\label{distortion}
We will frequently use a reformulation of the well-known growth estimate for the Carath\'eodory class (see,
e.\,g., \cite[p.\,40, eq.\,(11)]{Pommerenke}). Namely, if $p$ is an analytic function in $\UH$ with $\Re
p(z)\ge0$ for all $z\in\UH$ and  $\Im p(z_0)=0$ for some $z_0\in\UH$, then
\begin{equation}\label{C}|p(z)|\le
p(z_0)\frac{1+r_\UH(z,z_0)}{1-r_\UH(z,z_0)}\end{equation} for all $z\in\UH.$
\end{remark}

Now let us prove a very technical lemma.

\begin{lemma}\label{technical}
Let $b>0$, $z\in\UH$. If $|z|>2b$ and $|\arg z-\pi/2|\le\pi/3$, then
\begin{equation}\label{tech}
\frac{1+r_\UH(z,ib)}{1-r_\UH(z,ib)}\le\frac{9|z|}{2b}.
\end{equation}
\end{lemma}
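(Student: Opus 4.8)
The plan is to reduce \eqref{tech} to a completely elementary estimate by rewriting both sides in terms of $|z\pm ib|$ and $\Im z$. First I would use the definition $r_\UH(z,ib)=|z-ib|/|z+ib|$. Since $z\in\UH$ and $b>0$ we have $|z-ib|<|z+ib|$, so $r_\UH(z,ib)<1$ and
\[
\frac{1+r_\UH(z,ib)}{1-r_\UH(z,ib)}=\frac{|z+ib|+|z-ib|}{|z+ib|-|z-ib|}=\frac{\big(|z+ib|+|z-ib|\big)^2}{|z+ib|^2-|z-ib|^2}=\frac{\big(|z+ib|+|z-ib|\big)^2}{4b\,\Im z},
\]
where the last step uses the identity $|z+ib|^2-|z-ib|^2=4b\,\Im z$. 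Hence \eqref{tech} is equivalent to the single inequality $\big(|z+ib|+|z-ib|\big)^2\le 18\,|z|\,\Im z$.

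Next I would feed in the two hypotheses. The angular condition $|\arg z-\pi/2|\le\pi/3$ means $\arg z\in[\pi/6,5\pi/6]$, an interval on which $\sin\ge 1/2$; therefore $\Im z=|z|\sin(\arg z)\ge |z|/2$, and so $18\,|z|\,\Im z\ge 9|z|^2$. On the other hand $|z|>2b$ gives $b<|z|/2$, and the triangle inequality yields $|z\pm ib|\le|z|+b<\tfrac32|z|$, whence $\big(|z+ib|+|z-ib|\big)^2\le\big(2(|z|+b)\big)^2<9|z|^2$. Combining the two bounds gives $\big(|z+ib|+|z-ib|\big)^2<9|z|^2\le 18\,|z|\,\Im z$, which is exactly the required inequality, and the lemma follows.

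I do not expect a genuine obstacle here: the statement is ``technical'' only in the sense of being a bookkeeping step. The one point that needs a little care is choosing the right reformulation --- once $\tfrac{1+r}{1-r}$ is expressed as $\big(|z+ib|+|z-ib|\big)^2/(4b\,\Im z)$, both hypotheses apply immediately, and the constant $9/2$ in \eqref{tech} (equivalently the $18$ obtained after clearing $4b\,\Im z$) is precisely what the two crude estimates $\Im z\ge|z|/2$ and $|z\pm ib|<\tfrac32|z|$ produce, with a hair of room to spare.
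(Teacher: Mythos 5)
Your proof is correct, and it takes a genuinely different route from the paper's. The paper keeps the quotient in the form $\bigl(|z+ib|+|z-ib|\bigr)/\bigl(|z+ib|-|z-ib|\bigr)$, bounds the numerator crudely by $2|z+ib|$, and then invests most of the work in a lower bound $|z+ib|-|z-ib|\ge 2b/3$, obtained from a pair of square-root inequalities applied with $\kappa=\Im z/|z|\ge 1/2$ (which is where the angular hypothesis enters). You instead rationalize the denominator via the exact identity $|z+ib|^2-|z-ib|^2=4b\,\Im z$, which turns the inequality into $\bigl(|z+ib|+|z-ib|\bigr)^2\le 18\,|z|\,\Im z$; after that, only the two trivial estimates $|z\pm ib|<\tfrac32|z|$ (from $|z|>2b$) and $\Im z\ge|z|/2$ (from the angular condition) are needed. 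Your version avoids the somewhat delicate difference-of-square-roots estimate entirely and is the cleaner of the two; the hypotheses are used for the same purposes in both arguments, and the constant $9/2$ comes out identically.
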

\begin{proof}
We start with the inequalities $\sqrt{1+x}\le 1+x$ for $x\ge 0$,
and $\sqrt{1+x}\ge1+x$ for $x\in[-1,0]$.  Using these inequalities
we find that if $a>b$ and $\kappa\in[0,1]$, then
$$\sqrt{a^2+b^2+2\kappa ab}\ge a+b-\frac{2ab}{a+b}(1-\kappa),$$
$$\sqrt{a^2+b^2-2\kappa ab}\le a-b+\frac{2 ab}{a-b}(1-\kappa).$$

Applying the above inequalities to $a:=|z|$ and $\kappa:=\Im z/|z|$, we conclude that
$$|z+ib|\ge|z|+b-\frac{2b|z|}{|z|+b}(1-\kappa),$$ $$|z-ib|\le|z|-b+\frac{2b|z|}{|z|-b}(1-\kappa).$$
Since $|\arg z-\pi/2|\le\pi/3$, we get that $\kappa\geq 1/2$. Therefore,
\begin{equation}\label{te}|z+ib|-|z-ib|\ge 2b\left(1-\frac{|z|^2}{|z|^2-b^2}(1-\kappa)\right)\ge
\frac{2b}3.\end{equation}
Using~\eqref{te}, we get
\begin{equation*}
\frac{1+r_\UH(z,ib)}{1-r_\UH(z,ib)}\le\frac{2|z+ib|}{|z+ib|-|z-ib|}\le\frac{3}{b}\,|z+ib|\le\frac{3}{b}\,|z|(1+b/|z|)\le\frac{9|z|}{2b},
\end{equation*}
which finishes the proof.
\end{proof}

To simplify the statement of the following two propositions, let us denote $\Natural_0:=\Natural\cup\{0\}$.

\begin{proposition}\label{l1}
Let  $(\psi_n)_{n\in\Natural}$ be a sequence of functions from
$\mathcal P$ and $(r_j)_{j\in\Natural_0}$ a sequence of  numbers
from $(-1,1)$ such that $\psi_n(r_n)=r_0$ for all $n\in\Natural$
and $\psi_n(\D)\subset\psi_m(\D)$ whenever $n<m$. If
\begin{equation}\label{geom0}
\bigcup\limits_{n\in\Natural}\psi_n(\D)=\D,
\end{equation}
then $r_n\to r_0$ and $\psi_n\to\id_{\D}$  as $n\to+\infty$.
\end{proposition}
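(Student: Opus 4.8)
The plan is to transfer the problem to the upper half-plane via the Cayley map~$H$ and work with the normalized functions $g_n := H\circ\psi_n\circ H^{-1}\in\mathfrak P$, together with the points $p_n := H(r_n)$, $p_0 := H(r_0)$, all lying on the imaginary axis. The hypothesis $\psi_n(r_n)=r_0$ becomes $g_n(p_n)=p_0$, the nesting $\psi_n(\D)\subset\psi_m(\D)$ becomes $g_n(\UH)\subset g_m(\UH)$ for $n<m$, and \eqref{geom0} becomes $\bigcup_n g_n(\UH)=\UH$. Since $g_n\in\mathfrak P$ we have $g_n'(\infty)=1$, and by Remark~\ref{Julia} the function $z\mapsto \Im(g_n(z)-z)$ is non-negative on~$\UH$. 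The goal is to show $p_n\to p_0$ and $g_n\to\id_\UH$ locally uniformly.

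First I would establish that the sequence $(p_n)$ stays bounded away from $0$ and from $\infty$ on the imaginary axis, i.e.\ that $p_n = i b_n$ with $b_n$ bounded above and below by positive constants. The lower bound comes from the nesting together with~\eqref{geom0}: since $g_n(\UH)\subset g_m(\UH)$, the domains $\Omega_n:=g_n(\UH)$ increase to all of~$\UH$, and a fixed point like $2i$ eventually lies in every $\Omega_n$; using $g_n(p_n)=p_0=ib_0$ and the distortion/comparison of $\Omega_n$ with $\UH$, a point too close to the real axis could not be mapped to the fixed height $b_0$. The upper bound is easier, using $\Im(g_n(z)-z)\ge 0$ evaluated at $z=p_n$: this gives $b_0=\Im g_n(p_n)\ge \Im p_n = b_n$, so $b_n\le b_0$ outright. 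Thus $b_n\in(0,b_0]$, and along any subsequence $b_{n_k}\to b_*\in[0,b_0]$; I must rule out $b_*=0$, which is exactly the content of the lower-bound argument above (sketch: if $b_{n_k}\to0$, then $g_{n_k}$, having $g_{n_k}'(\infty)=1$ and carrying $ib_{n_k}$ to $ib_0$ with $b_0>0$, would have to ``push up'' points near the real axis by a fixed amount, forcing $\Im(g_{n_k}(z)-z)$ to blow up near $ib_{n_k}$ in a way incompatible with the Carath\'eodory growth bound — this is where Lemma~\ref{technical} and Remark~\ref{distortion} enter, applied to the non-negative-real-part function $p(z)=-i\big(g_n(z)-z\big)/(\text{something})$ or more directly to $\Im(g_n(z)-z)$ as a non-negative harmonic function).

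Next, having $b_n$ confined to a compact subinterval of $(0,\infty)$, I would take an arbitrary subsequence along which $b_{n_k}\to b_*>0$ and show that $g_{n_k}$ converges locally uniformly to $\id_\UH$; since the limit is forced to be the identity independently of the subsequence, the full sequences converge and $b_* = b_0$, giving $p_n\to p_0$ and $g_n\to\id$. To identify the limit: by Remark~\ref{distortion} applied to the analytic function $q_n(z):=\big(g_n(z)-g_n(p_n)\big)/\big(z-p_n\big)\cdot(\text{a conformal correction})$ — or, cleanly, using that $\Im(g_n(z)-z)$ is a positive harmonic function on~$\UH$, Harnack's inequality plus the normalization $g_n'(\infty)=1$ (which forces $\Im(g_n(iy)-iy)/y\to0$) shows the family $(g_n)$ is normal; any locally uniform limit $g_*$ is univalent or constant, belongs to $\mathfrak P$, satisfies $g_*(ib_*)=ib_0$, and has $g_*(\UH)=\bigcup_n g_n(\UH)=\UH$ by the monotone-kernel argument (this is where~\eqref{geom0} is used decisively). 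A surjective univalent self-map of~$\UH$ with parabolic fixed point at $\infty$ and $g_*'(\infty)=1$ is a real translation $z\mapsto z+C$; but $\Im(g_*(z)-z)\ge0$ with equality (Remark~\ref{Julia}), combined with $g_*(ib_*)=ib_0$ and $b_*\le b_0$, forces $C=0$ and $b_*=b_0$. Hence $g_*=\id_\UH$.

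The main obstacle I expect is the lower bound $b_*>0$ (equivalently, that $r_n$ cannot drift to $\partial\D$): this is precisely why the preceding two technical lemmas — Lemma~\ref{technical} and the distortion estimate of Remark~\ref{distortion} — were set up, and the heart of the argument is a quantitative estimate showing that if $b_n$ is small then the non-negative harmonic function $\Im(g_n(z)-z)$ forced to equal $b_0-b_n>0$ at the low point $ib_n$ would have to be too large (by the Carath\'eodory-type growth bound, amplified by the factor $\sim|z|/b_n$ from Lemma~\ref{technical}) at points of modulus $\sim 1$ where $g_n$ is nearly the identity because those points are deep inside $\Omega_n$ — giving a contradiction for $n$ large. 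Everything after that is soft: normality, kernel convergence, and the rigidity of parabolic automorphisms of~$\UH$.
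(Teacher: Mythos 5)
Your overall architecture (pass to $\UH$, get $b_n\le b_0$ from Remark~\ref{Julia}, extract a locally uniform limit, identify it via the kernel theorem as a M\"obius automorphism of~$\UH$, then invoke rigidity) matches the paper's, but you have misplaced the difficulty. The lower bound $b_*>0$, which you call ``the main obstacle,'' is immediate and needs none of the technical lemmas: since $\psi_n(\UH)\subset\psi_m(\UH)$ for $n<m$, Lemma~\ref{three functions} gives $\psi_n=\psi_m\circ\omega_{n,m}$ with $\omega_{n,m}\in\mathfrak P$ and $\omega_{n,m}(ib_n)=ib_m$, whence $b_m\ge b_n$ by Remark~\ref{Julia}; so $(b_n)$ is non-decreasing and bounded below by $b_1>0$ (and converges, no subsequences needed). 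Your sketched quantitative argument for this point would not work anyway: the Carath\'eodory--Harnack bound controls $\Im\big(g_n(z)-z\big)$ from \emph{above} at points hyperbolically far from $ib_n$, and the lower Harnack bound degenerates like $b_n$ as $b_n\to0$, so no blow-up contradiction is available from it.

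The genuine gap is the unproved assertion that the limit $g_*$ ``belongs to $\mathfrak P$.'' The angular derivative at $\infty$ is not continuous under locally uniform convergence; what you get for free is only $\Im g_*(z)\ge\Im z$ for all $z$, i.e.\ $g_*'(\infty)=\inf_z \Im g_*(z)/\Im z\ge1$. Combined with surjectivity and $g_*(ib_*)=ib_0$ this yields only $g_*(z)=(b_0/b_*)\,z$ with $b_0/b_*\ge1$, and the whole problem is to exclude $b_*<b_0$, i.e.\ to show the limit is parabolic at $\infty$ rather than hyperbolic. This is exactly what the paper's uniform estimate~\eqref{cl1} accomplishes: writing $\psi_n=\psi_m\circ\omega_{n,m}$, it uses Remark~\ref{distortion} together with Lemma~\ref{technical} to show that $\omega_{n,m}$ is uniformly close to the identity on a Stolz region $A$ for all $m\ge n$ (because $y_m-y_n$ is small), checks that $iy\in\omega_{n,m}(A(y))$, and thereby transfers the decay $\Im\big(\psi_n(iy)-iy\big)/y\to0$ --- valid for one \emph{fixed} $n$ because $\psi_n\in\mathfrak P$ --- to a bound $\Im\big(\psi_m(iy)-iy\big)/y<2\varepsilon$ holding for \emph{all} $m\ge n$ and all large $y$. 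Only this uniform-in-$m$ control survives the passage to the limit and lets Lemma~\ref{lemma} certify $g_*'(\infty)=1$. Your proposal contains no substitute for this step; the part you dismiss as ``soft'' is precisely where the two technical lemmas are actually deployed.
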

\begin{proof}
To simplify and clarify the proof, it is better to present it in
the framework of the  upper half-plane. So assume that
$(y_n)_{n\in\Natural}$ is a sequence of positive numbers and
$(\psi_n)_{n\in\Natural}$ a
 sequence in $\mathfrak P$ such that $\psi_n(\UH)\subset\psi_m(\UH)$ whenever $n<m$. Suppose that
such that $\psi_n(iy_n)=iy_0$ for all $n\in\Natural$ and some $y_0>0$ not depending on~$n$. Suppose also that
\begin{equation}\label{geom}
\bigcup\limits_{n\in\Natural}\psi_n(\UH)=\UH.
\end{equation}
We have to show that $y_n\to y_0$ and $\psi_n\to\id_{\UH}$  as $n\to+\infty$.

Let us notice first that by Lemma~\ref{lemma}, $y_n\le y_0$ for
all~$n\in\Natural$.  Further, suppose $m>n$. Then the inclusion
$\psi_n(\UH)\subset\psi_m(\UH)$ and Lemma~\ref{three functions}
imply that $\psi_n=\psi_m\circ\omega_{n,m}$ for some
$\omega_{n,m}\in\mathfrak P$, with $\omega_{n,m}(i y_n)=i y_m$.
Again by Lemma~\ref{lemma}, we have $y_m\ge y_n$. It follows that
the sequence $(y_n)$ converges and let us denote the limit
by~$y_\infty$.

Given $f\in{\sf Hol}(\UH,\Complex)$, denote
\begin{align*}
I_f(z)&:=\left|\frac{f(z)}z-1\right|,~z\in\UH, & I_f(A)&:=\sup_{z\in A}I_f(z),~A\subset \UH,\\
J_f(z)&:=\frac{\Im\big(f(z)-z\big)}{|z|},~z\in\UH, & J_f(A)&:=\sup_{z\in A}J_f(z),~A\subset \UH.
\end{align*}
Note that $0\le J_f(z)\le I_f(z)$ for all $z\in\UH$.

 We claim that for all $\varepsilon >0$ there exist a natural number $n$ and a  positive number $R$ such that
\begin{equation}\label{cl1}
J_{\psi_m}(iy)<\varepsilon
\end{equation}
for all $m\geq n$ all all $y>R$.

Let us now prove this claim. Fix $\varepsilon\in(0,1/2)$. Let  $$A:=\{z\in\UH:|z|\ge 2y_\infty,~|\arg
z-\pi/2|\le\pi/3\}.$$ If $m\ge n$, using Remark~\ref{distortion}, we can show that for any $z\in A$,
\begin{equation}\label{eq10}
I_{\omega_{n,m}}(z)=\left|\frac{\omega_{n,m}(z)-z}{z}\right|\le
\frac{1+r_\UH(z,iy_n)}{1-r_\UH(z,iy_n)}\cdot\frac{y_m-y_n}{|z|}.
\end{equation}
Applying now Lemma~\ref{technical} for $b:=y_n$ we see that
\begin{equation}\label{eq1}
I_{\omega_{n,m}}(A)\le \frac{9}{2y_n}\,(y_m-y_n)\le\frac{9}{2}\left(\frac{y_\infty}{y_n}-1\right).
\end{equation}

Take $n_0\in\Natural$ such that $\frac{9}{2}\left(\frac{y_\infty}{y_{n_0}}-1\right)<\varepsilon$. For each $y\ge
3y_\infty$, write $$A(y):=\{z\in A:|z|\ge 2y/3\}.$$ Take $n\geq n_0$. Let us analyze how~$\omega_{n,m}$ maps the
boundary of $A(y)$. On one  hand, if $z\in A$ with $|z|=2y/3$, then by~\eqref{eq1},
$\left|\omega_{n,m}(z)-z\right|\le 2y\varepsilon /3$. On the other hand, if $z\in A$ and $\arg z=\pi/6$, then
using again~\eqref{eq1} we can deduce $|\arg\omega_{n,m}(z)-\pi/6|\le \arcsin\varepsilon$. Similarly, if $z\in
A$ and $\arg z =5\pi/6$, then $|\arg\omega_{n,m}(z)-5\pi/6|\le \arcsin \varepsilon$. This behavior of
$\omega_{n,m}$ on the boundary of $A(y)$ clearly implies that the half-line $[2y(1+\varepsilon)/3,+\infty)i$ is
contained in the set $\omega_{n,m}(A(y))$. In particular, $iy\in \omega_{n,m}(A(y))$.

Since $\psi_n=\psi_m\circ\omega_{n,m}$, given $z\in A$ we have
\begin{multline*}
J_{\psi_n}(z)=\frac{\Im\big(\psi_m(\omega_{n,m}(z))-z\big)}{|z|}=
\frac{\Im\big(\psi_m(\omega_{n,m}(z))-\omega_{n,m}(z)\big)}{|z|}+\frac{\Im\big(\omega_{n,m}(z)-z\big)}{|z|}\\=
J_{\psi_m}(\omega_{n,m}(z))\,\frac{|\omega_{n,m}(z)|}{|z|}+J_{\omega_{n,m}}(z)\ge
J_{\psi_m}(\omega_{n,m}(z))\,\frac{|\omega_{n,m}(z)|}{|z|}.
\end{multline*}
It follows that for any $y\ge 3y_\infty$ and $n\ge n_0$,
\begin{equation}\label{eq2}
J_{\psi_m}(iy)\le\frac{J_{\psi_n}\big(A(y)\big)}{1-I_{\omega_{n,m}}(A)}
\le\frac{\varepsilon}{1-\varepsilon}\le2\varepsilon.
\end{equation}
This finishes the proof of the claim.

Now let us consider the function~$\phi_n(z):=\psi_n\big(y_nL_{\alpha_n}(z/y_0)\big)$, where $L_\alpha$ is a
non-Euclidian rotation, $L_\alpha(z)=H(\alpha H^{-1}(z))$, $H(\zeta):=i(1+\zeta)/(1-\zeta)$, and
$\alpha_n:=|\psi'_n(iy_n)|/\psi'_n(iy_n)$. Since $\phi_n(iy_0)=iy_0$ and $\phi'_n(iy_0)>0$, by the
Carath\'eodory kernel  theorem (see, e.g., \cite[Theorem 1 on p.\,55]{Goluzin}) it follows from~\eqref{geom}
that $\phi_n\to\id_{\UH}$ as $n\to+\infty$. Moreover, passing to a  subsequence, we can assume that $(\alpha_n)$
converges to some~$\alpha_0$. Consequently, $\psi_n(z)\to y_0L_{\alpha_0}^{-1}(z/y_\infty)$ as $n\to+\infty$. At
the same time, according to~\eqref{cl1} composed with Lemma~\ref{lemma}, the limit function of~$(\psi_n)$ should
have a parabolic boundary fixed point at~$\tau=\infty$, i.\,e. $\alpha_0=1$ and $y_\infty=y_0$. This completes
the proof.
\end{proof}

\begin{proposition}\label{l2}
Let $(\psi_n)_{n\in\Natural}$ be a sequence of functions from $\mathcal P$ and $\{r_j\}_{j\in\Natural_0}$ a sequence of numbers from $(-1,1)$ such that
 ${r_n:=\psi_n(r_0)}$ for $n\in\Natural$. Suppose that there exists a sequence of conformal self-mappings
$(\phi_j)_{j\in\Natural_0}$ of $\D$ normalized by $\phi_j(0)=0$,
$\phi'_j(0)>0$, $j\in\Natural_0$, and a sequence
$(L_j)_{j\in\Natural_0}\subset\Moeb(\D)$ such that $\phi_n\circ
L_n\circ\psi_n=\phi_0\circ L_0$ for all $n\in\Natural$. If
$\phi_m(\D)\subset\phi_n(\D)=:G_n$ whenever $m\ge n>0$ and if
$G_0:=\phi_0(\D)$  is a connected component of the
intersection~$\cap_{n\in\Natural}G_n$, then $r_n\to r_0$ and
$\psi_n\to\id_\D$ as $n\to+\infty$.
\end{proposition}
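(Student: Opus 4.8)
The plan is to reduce the statement, via the Carath\'eodory kernel theorem, to the assertion that certain ``inner'' maps converge to the identity, and then to tame the ambient M\"obius automorphisms using the parabolic normalisation $\psi_n\in\mathcal P$ together with the reality of the $r_n$.

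\smallskip
\noindent\emph{Step 1 (reduction).} I set $h_n:=\phi_n^{-1}\circ\phi_0$. Since $\phi_0(\D)=G_0\subseteq G_n=\phi_n(\D)$, each $h_n$ is a univalent holomorphic self-map of $\D$ with $h_n(0)=0$ and $h_n'(0)=\phi_0'(0)/\phi_n'(0)>0$, and the functional equation rewrites as $\psi_n=L_n^{-1}\circ h_n\circ L_0$. Now $0=\phi_n(0)\in G_n$ for every $n$ and $0=\phi_0(0)\in G_0$, and $G_0$ is a connected component of $\bigcap_nG_n$ (equivalently of its interior), so $G_0$ is precisely the Carath\'eodory kernel of the decreasing sequence $(G_n)$ with respect to $0$; hence $\phi_n\to\phi_0$ locally uniformly in $\D$, $\phi_n^{-1}\to\phi_0^{-1}$ locally uniformly on $G_0$, and therefore $h_n\to\id_\D$ locally uniformly in $\D$ (in particular $h_n'(0)\to1$). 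I also record the identity obtained by evaluating $\psi_n=L_n^{-1}\circ h_n\circ L_0$ at $r_0$, namely $L_n(r_n)=h_n(L_0(r_0))\to L_0(r_0)\in\D$.

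\smallskip
\noindent\emph{Step 2 (compactness).} The family $(\psi_n)$ consists of self-maps of $\D$, hence is normal; let $\psi_\infty$ be the locally uniform limit of a subsequence $(\psi_{n_k})$. On every compact subset of $\D$ the maps $h_n\circ L_0$ converge uniformly to $L_0$, while $|(L_n^{-1})'|$ is bounded uniformly in $n$ on any fixed compact subset of $\D$ by Schwarz--Pick, so $L_n^{-1}$ is uniformly Lipschitz there; consequently $\psi_{n_k}$ and $L_{n_k}^{-1}\circ L_0$ have the same limit $\psi_\infty$. Since a locally uniform limit of automorphisms from $\Moeb(\D)$ is either an automorphism or a unimodular constant, $\psi_\infty$ is one of these.

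\smallskip
\noindent\emph{Step 3 (identifying the limit).} I pass to $\UH$ via the Cayley map $H$: the functions $\Psi_n:=H\circ\psi_n\circ H^{-1}$ lie in $\mathfrak P$, so $\Im(\Psi_n(z)-z)\ge0$ for all $z\in\UH$ (Remark~\ref{Julia}), and $\Psi_n$ sends $\rho_0:=H(r_0)\in i(0,\infty)$ to $\rho_n:=H(r_n)\in i(0,\infty)$, whence also $\Im\rho_n\ge\Im\rho_0$, i.e. $r_n\ge r_0$. Letting $k\to\infty$, the inequality $\Im(\Psi_{n_k}(z)-z)\ge0$ passes to the limit; so if $\psi_\infty$ is a unimodular constant it must equal $1$ (a limit of real numbers $\ge r_0$ cannot be $-1$ or a non-real point of $\partial\D$), while if $\psi_\infty$ is an automorphism then $\Psi_\infty$ is affine of the form $w\mapsto\mu w+\tau$ with $\mu\ge1$, $\tau\in\R$, and taking real parts in $\rho_{n_k}\to\Psi_\infty(\rho_0)=\mu\rho_0+\tau$ forces $\tau=0$. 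It remains to exclude $\psi_\infty\equiv1$ (equivalently $r_{n_k}\to1$) and $\Psi_\infty\colon w\mapsto\mu w$ with $\mu>1$; this is the delicate point. Here one exploits that $\psi_n\in\mathcal P$, i.e.\ $\psi_n=g_n^{-1}\circ g_0$ with $g_n:=\phi_n\circ L_n\colon\D\to G_n$ and $\psi_n'(1)=1$, forces all the maps $g_n$ onto the nested domains $G_n\supseteq G_0$ to share with $g_0$ the angular value \emph{and} the angular derivative at the boundary point $1$; combining this rigidity with the inclusion $G_0\subseteq G_n$ and with the identity $L_n(r_n)\to L_0(r_0)$ of Step~1 rules out both degenerate limits. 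Hence $\psi_\infty=\id_\D$.

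\smallskip
\noindent\emph{Step 4 (conclusion).} Every subsequential limit of $(\psi_n)$ equals $\id_\D$, so $\psi_n\to\id_\D$ locally uniformly in $\D$ and $r_n=\psi_n(r_0)\to r_0$. I expect the main obstacle to be Step~3, namely showing that no subsequential limit can be the constant $1$ or a non-identity parabolic/hyperbolic automorphism: the reality of the $r_n$ cleanly kills the ``translation'' and the ``escape to the boundary'' directions, but the ``hyperbolic'' direction genuinely requires using the inclusion $G_0\subseteq G_n$ in conjunction with the parabolic normalisation $\psi_n\in\mathcal P$.
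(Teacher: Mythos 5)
Your Steps 1 and 2 are sound and run parallel to the paper's own setup: the Carath\'eodory kernel theorem gives $\phi_n^{-1}\to\phi_0^{-1}$ locally uniformly on $G_0$, hence $L_n\circ\psi_n\to L_0$, the family $(\psi_n)$ is normal, and any subsequential limit is a M\"obius automorphism or a unimodular constant. But Step 3 --- which you yourself flag as ``the delicate point'' --- is where the entire proof lives, and you do not carry it out. The assertion that $\psi_n\in\mathcal P$ ``forces all the maps $g_n$ to share with $g_0$ the angular value and the angular derivative at the boundary point $1$'' is not an argument: the $G_n$ are arbitrary simply connected domains, so the maps $g_n=\phi_n\circ L_n$ need not possess finite angular derivatives at $z=1$ in any usable sense, and even granting such a rigidity statement you give no mechanism by which it excludes either the constant limit $1$ or the hyperbolic limit $w\mapsto\mu w$ with $\mu>1$.

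The missing ingredient is the decomposition the paper extracts from the nesting hypothesis: for $m\ge n>0$ one has $\phi_m=\phi_n\circ\tilde\omega_{n,m}$, whence $\psi_n=\omega_{n,m}\circ\psi_m$ with $\omega_{n,m}\in\mathfrak P$ by Lemma~\ref{three functions}. This identity does double duty. First, in the half-plane picture it gives $iy_n=\omega_{n,m}(iy_m)$, and Remark~\ref{Julia} yields $y_n\ge y_m$ for $m\ge n>0$; combined with $y_n\ge y_0>0$ (from $\psi_n\in\mathfrak P$) this shows $(y_n)$ converges in $(0,+\infty)$, so the orbit $\rho_n=H(r_n)$ stays in a compact subset of $\UH$ and the constant limit is excluded. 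Second, with $J_f(z):=\Im\big(f(z)-z\big)/|z|$, the same identity for $n=1$ gives $J_{\psi_m}(iy)\le J_{\psi_1}(iy)$ for every $m$, and since $\psi_1\in\mathfrak P$ one has $J_{\psi_1}(iy)\to0$ as $y\to+\infty$ by Lemma~\ref{lemma}; this estimate, being uniform in $m$, passes to the subsequential limit and shows that the limit has angular derivative $1$ at $\infty$, ruling out $\mu>1$. Only then does the reality of the $r_n$ pin the limit down to the identity. Without a substitute for these two uniform estimates your argument does not close.
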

\begin{proof} We again present the proof in the framework of self-maps of the upper half-plane. So,
let $(\psi_n)_{n\in\Natural}\subset\mathfrak P$ and ${\psi_n(iy_0)=iy_n}$, $n\in\Natural$, for some sequence
${(y_j)_{j\in\Natural_0}}$ of positive numbers. Assume that there exists a sequence of conformal mappings
$(\phi_j)_{j\in\Natural_0}$ of $\UH$ into itself normalized by $\phi_j(i)=i$, $\phi'_j(i)>0$, $j\in\Natural_0$,
and a sequence $(L_j)_{j\in\Natural_0}\subset\Moeb(\UH)$ such that:
\begin{itemize}
 \item[(1)] $\phi_n\circ L_n\circ\psi_n=\phi_0\circ L_0$
for all $n\in\Natural$,
\item[(2)]  $\phi_m(\UH)\subset\phi_n(\UH)=:G_n$ whenever $m\ge n>0$, and
\item[(3)] $G_0:=\phi_0(\UH)$ is a connected component of the interior of~$\bigcap_{n\in\Natural}G_n$.
\end{itemize}
We have to prove that $y_n\to y_0$ and $\psi_n\to\id_\UH$ as $n\to+\infty$.

According to the Carath\'eodory kernel theorem (see, e.g., \cite[Theorem 1 on p.\,55]{Goluzin}),
$\phi_n^{-1}\to\phi_0^{-1}$ uniformly on compact subsets of~$G_0$ as $n\to+\infty$. By Lemma~\ref{three
functions} we have
\begin{equation}\label{L_nLim}
L_n\circ\psi_n\to L_0,\qquad (n\to+\infty).
\end{equation}
Since $(\psi_n)$ is a normal family, each subsequence has a further subsequence that converges either to a
holomorphic function mapping $\UH$ into itself or to a constant in~$\mathbb R\cup\{\infty\}$. Let us consider
one of such limits, which we denote by $\psi$. It suffices to prove that~$\psi=\id_\UH$.

Let $m\ge n>0$. Observe that $\phi_m=\phi_n\circ\tilde \omega_{n,m}$ for some holomorphic self-mapping~$\tilde
\omega_{n,m}$ of~$\UH$. It follows that
\begin{equation}\label{por}\psi_n=\omega_{n,m}\circ \psi_m,\quad \text{with some $\omega_{n,m}\in\mathfrak
P$.}
\end{equation}
In particular, $iy_n=\omega_{n,m}(iy_m)$. Then, by Remark~\ref{Julia}, we conclude that $y_n\ge y_m$ for ${m\ge
n>0}$. Again by Remark~\ref{Julia}, $y_n\ge y_0>0$. It follows that $(y_n)$ converges to some
$y_\infty\in(0,+\infty)$. Consequently, $\psi(iy_0)=iy_\infty\in\UH$. Therefore, \eqref{L_nLim} implies
that~$\psi\in\Moeb(\UH)$. Bearing in mind that $\Re \psi(i y_0)=0$, we see that to prove~$\psi=\id_\UH$ it is
sufficient to show that $\psi\in\mathfrak P$. In its turn, the proof of this statement can be reduced with the
help of Lemma~\ref{lemma} to proving that
\begin{equation}\label{cl2}
(\forall\,\varepsilon>0)~(\exists\,R>0)~(\forall\,m\in\Natural,\,y>R)~J_{\psi_m}(iy)<\varepsilon,
\end{equation}
where $$J_f(z):=\frac{\Im\big(f(z)-z\big)}{|z|}.$$

Taking into account~\eqref{por}, we get
\begin{multline}
J_{\psi_1}(iy)=\frac{\Im\big(\omega_{1,m}(\psi_m(iy))-iy\big)}{y}=
\frac{\Im\big(\omega_{1,m}(\psi_m(iy))-\psi_m(iy)\big)}{y}+\\\frac{\Im\big(\psi_m(iy)-iy\big)}{y}=
J_{\omega_{1,m}}(\psi_{m}(iy))\cdot\frac{|\psi_m(iy)|}{y}+J_{\psi_m}(iy)\ge J_{\psi_m}(iy),
\end{multline}
because by Remark~\ref{Julia}, $J_f(z)\ge0$ for all $f\in\mathfrak
P$ and $z\in\UH$. In  view of Lemma~\ref{lemma} applied
to~$\psi_1$, this inequality implies~\eqref{cl2} and thus
completes the proof.
\end{proof}

\section{L-admissible families and chordal evolution families}
\label{main-section}

\subsection{Chordally admissible families}
Let us denote by $\Prend(D)$ the {\it Carath\'eodory boundary} of a domain~$D$, i.\,e., the set of all {\it
prime ends} of~$D$. For a Jordan domain $D$, the Carath\'eodory boundary~$\Prend(D)$ can be identified with the
topological boundary~$\partial D$.

Let $F$ be a conformal mapping of a simply connected domain~$D$. By $F^{p.e.}$ we will denote the bijection
between $\Prend(D)$ and $\Prend(F(D))$ induced by~$F$.

\begin{definition}\label{inD}
Let $G$ be a simply connected subdomain of $\UD$, $\psi:\UD\to G$ a conformal mapping, $P\in\Prend(G)$, and
$\zeta_0:=(\psi^{p.e.})^{-1}(P)$. We will say that {\it $G$ is embedded in $\UD$ conformally at the prime end
$P$} if $\zeta_0$ is a regular contact point of~the function~$\psi$, i.\,e., the following conditions hold:
\begin{itemize}
\item[(i)] $\displaystyle\exists~\angle\lim_{\zeta\to\zeta_0}\psi(\zeta):=z_0\in\UC$;
\item[(ii)] $\displaystyle\psi'(\zeta_0):=\angle\lim_{\zeta\to\zeta_0}\tfrac{\psi(\zeta)-z_0}{\zeta-\zeta_0}\neq\infty$.
\end{itemize}
\end{definition}

Definition~\ref{inD} can be extended to the case of  two arbitrary hyperbolic simply connected domains
$\Omega_1\subset \Omega_2$:

\begin{definition}\label{Conf_emb}
Let  $\Omega_1$ and  $\Omega_2$ be two hyperbolic simply connected domains in the complex plain with
$\Omega_1\subset \Omega_2$. The domain $\Omega_1$ is said to be {\it embedded in the domain~$\Omega_2$
conformally at a prime end $P_1\in\Prend(\Omega_1)$} if there is a conformal mapping~$\phi:\Omega_2\to\UD$
such that the domain~$G:=\phi(\Omega_1)$ is embedded in~$\UD$ conformally at the prime
end~$P:=\big(\phi|_{\Omega_1}\big)^{p.e.}(P_1)\in\Prend(G)$.
\end{definition}

\begin{remark}\label{norm_indep}
It is clear that Definition~\ref{inD} above does not depend on particular normalization of the conformal
mappings~$\psi$ and~$\phi$. More precisely, if $\psi_1$ and $\psi_2$ are two conformal mappings of $\UD$ onto
$G$ and $\zeta_j:=(\psi_j^{p.e.})^{-1}(P)$, $j=1,2$, then $\psi:=\psi_1$ with $\zeta_0:=\zeta_1$ satisfies
conditions~(i) and~(ii) in this definition if and only if they are fulfilled by $\psi:=\psi_2$ with
$\zeta_0:=\zeta_2$. Similar remark concerns the conformal mapping~$\phi$ in Definition~\ref{Conf_emb}.
\end{remark}

\begin{definition}\label{classC}
By $\mathcal C$ we denote the class of all univalent functions $\varphi\in{\sf Hol}(\UD,\UD)$ such
that
\begin{equation}\label{chordlaEF}
\angle\lim_{z\to1}\varphi(z)=1, \qquad \varphi'(1):=\angle\lim_{z\to1}\frac{\varphi(z)-1}{z-1}<\infty,
\end{equation}
and  let
$$
\mathcal C_0:=\{\varphi\in\mathcal C:\varphi'(1)\le1\},\\%
$$
\end{definition}

\begin{lemma}\label{conformally_embedded}
Let  $\Omega_1$ and  $\Omega_2$ be two hyperbolic simply connected domains in the complex plain with
$\Omega_1\subset \Omega_2$. There is a prime end $P_1\in\Prend(\Omega_1)$ such that the domain $\Omega_1$ is
embedded in the domain~$\Omega_2$ conformally at $P_1$ if and only if there exist two univalent functions $F_j$,
$j=1,2$, in the unit disk~$\UD$ such that
\begin{equation}\label{F}
F_j(\UD)=\Omega_j,~ j=1,2, \text{ and } F_2^{-1}\circ F_1\in{\mathcal C}.
\end{equation}
\end{lemma}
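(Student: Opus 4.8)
The statement is an "iff" between a geometric condition (existence of a prime end $P_1$ of $\Omega_1$ at which $\Omega_1$ is embedded conformally in $\Omega_2$) and an analytic condition (existence of univalent $F_1,F_2$ with $F_j(\UD)=\Omega_j$ and $F_2^{-1}\circ F_1\in\mathcal C$). The key observation is that both sides are statements about a single conformal self-map $G$ of $\UD$ onto a subdomain, and Definition~\ref{inD} together with Remark~\ref{norm_indep} already reduces everything to the behaviour of one conformal map at one boundary point. The plan is to unwind the two definitions and show they match up.

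\textbf{Proof of the forward implication.} Assume $\Omega_1$ is embedded in $\Omega_2$ conformally at $P_1\in\Prend(\Omega_1)$. By Definition~\ref{Conf_emb} there is a conformal mapping $\phi:\Omega_2\to\UD$ such that $G:=\phi(\Omega_1)$ is embedded in $\UD$ conformally at the prime end $P:=(\phi|_{\Omega_1})^{p.e.}(P_1)$. Set $F_2:=\phi^{-1}:\UD\to\Omega_2$, and let $\psi:\UD\to G$ be any conformal mapping; put $F_1:=F_2\circ\psi:\UD\to\Omega_1$. Then $F_1,F_2$ are univalent, $F_j(\UD)=\Omega_j$, and $F_2^{-1}\circ F_1=\psi$ maps $\UD$ univalently onto $G\subset\UD$. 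Let $\zeta_0:=(\psi^{p.e.})^{-1}(P)$. By Definition~\ref{inD}, $\zeta_0$ is a regular contact point of $\psi$: $z_0:=\angle\lim_{\zeta\to\zeta_0}\psi(\zeta)\in\UC$ exists and the angular derivative $\psi'(\zeta_0)\neq\infty$. Conjugating by rotations $\zeta\mapsto\zeta/\zeta_0$ on the source and $z\mapsto z/z_0$ on the target — which changes $\psi$ only by precomposition and postcomposition with rotations of $\UD$, hence does not affect univalence, the image being a rotate of $G$, nor the finiteness of the angular derivative — we may assume $\zeta_0=z_0=1$. Then $\psi$ satisfies $\angle\lim_{z\to1}\psi(z)=1$ and $\psi'(1)=\angle\lim_{z\to1}(\psi(z)-1)/(z-1)\neq\infty$, i.e. $\psi\in\mathcal C$ by Definition~\ref{classC}. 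Absorbing those two rotations into $F_1$ and $F_2$ (replacing $F_2$ by $F_2$ composed with a rotation and $F_1$ by $F_1$ composed with a rotation) preserves $F_j(\UD)=\Omega_j$ and gives $F_2^{-1}\circ F_1\in\mathcal C$, as required.

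\textbf{Proof of the converse.} Conversely, suppose $F_1,F_2$ are univalent with $F_j(\UD)=\Omega_j$ and $\varphi:=F_2^{-1}\circ F_1\in\mathcal C$. Put $\phi:=F_2^{-1}:\Omega_2\to\UD$; this is conformal. Then $G:=\phi(\Omega_1)=F_2^{-1}(F_1(\UD))=\varphi(\UD)$ is a simply connected subdomain of $\UD$, and $\psi:=\varphi$ is a conformal mapping of $\UD$ onto $G$. Since $\varphi\in\mathcal C$, the point $\zeta_0:=1$ satisfies $\angle\lim_{\zeta\to1}\psi(\zeta)=1=:z_0\in\UC$ and $\psi'(1)\neq\infty$, so $\zeta_0=1$ is a regular contact point of $\psi$; hence $G$ is embedded in $\UD$ conformally at the prime end $P:=\psi^{p.e.}(1)\in\Prend(G)$ (here $\psi^{p.e.}(1)$ is shorthand for the prime end of $G$ corresponding under $\psi$ to the boundary point $1$, which is accessible because the angular limit exists). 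Define $P_1:=(\phi|_{\Omega_1})^{-1,p.e.}(P)\in\Prend(\Omega_1)$, i.e. the prime end of $\Omega_1$ pulled back from $P$ via $\phi|_{\Omega_1}=F_2^{-1}|_{\Omega_1}$. Then by Definition~\ref{Conf_emb}, with this particular $\phi$, the domain $\Omega_1$ is embedded in $\Omega_2$ conformally at $P_1$.

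\textbf{Remark on the one genuine point.} The only step that needs care — and where I would invoke Remark~\ref{norm_indep} explicitly — is checking that the notion "$G$ embedded in $\UD$ conformally at $P$" is insensitive to the choice of conformal map $\psi:\UD\to G$, and likewise that "$\Omega_1$ embedded in $\Omega_2$ at $P_1$" is insensitive to the choice of $\phi:\Omega_2\to\UD$. This is exactly what Remark~\ref{norm_indep} asserts, so both directions above are legitimate: in the forward direction we are free to pick $\psi$ and then rotate it into $\mathcal C$; in the converse direction we are free to take $\psi=\varphi$ itself and $\phi=F_2^{-1}$. Everything else is bookkeeping with the definitions of prime ends, angular limits, and the induced bijections $(\cdot)^{p.e.}$; no estimates are needed.
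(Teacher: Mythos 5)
Your proof is correct and follows essentially the same route as the paper's: both directions are obtained by unwinding Definitions~\ref{inD} and~\ref{Conf_emb}, normalizing the regular contact point to $\zeta_0=z_0=1$ by rotations absorbed into $F_1$ and $F_2$, and invoking Remark~\ref{norm_indep} for independence of the choice of the conformal maps $\psi$ and $\phi$. The only cosmetic difference is that the paper fixes the normalization $F_1^{p.e.}(1)=P_1$ at the outset and rotates only $F_2$, whereas you rotate both ends of $\psi$; this changes nothing of substance.
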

\begin{proof}
Suppose first that there exists $P_1\in\Prend(\Omega_1)$ such that $\Omega_1$ is embedded in~$\Omega_2$
conformally at~$P_1$. Take any $w_0\in\Omega_1$ and choose $F_1$ to be the conformal mapping of~$\UD$ onto
$\Omega_1$ normalized by $F_1(0)=w_0$ and $F_1^{p.e.}(1)=P_1$. Let $\phi$ be any conformal mapping
of~$\Omega_2$ onto~$\UD$ subject to the condition~$\phi(w_0)=0$. The function $\psi:=\phi\circ F_1$
maps~$\UD$ conformally into itself and satisfies conditions~(i) and~(ii) in Definition~\ref{inD} with
$$\zeta_0=(\psi^{p.e.})^{-1}\big(\big(\phi|_{\Omega_1}\big)^{p.e.}(P_1)\big)=
\big(F^{-1}_1\circ\big(\phi|_{\Omega_1}\big)^{-1}\big)^{p.e.}\big(\big(\phi|_{\Omega_1}\big)^{p.e.}(P_1)\big)=\big(F_1^{p.e.}\big)^{-1}(P_1)=1.$$
It follows that $(1/z_0)\phi\circ F_1\in\mathcal C$, where $z_0\in\UC$ stands for the angular limit
of~$\psi$ at~$\zeta_0$. Therefore, we conclude just by choosing $F_2(z):=\phi^{-1}(z_0 z)$.

Conversely, let us now assume  that there exist two univalent functions $F_j$, $j=1,2$, in the unit
disk~$\UD$ satisfying~\eqref{F}. Let $\psi:=F_2^{-1}\circ F_1$. According to  Definition~\ref{inD} and Remark~\ref{norm_indep} the domain~$G:=F^{-1}_2(\Omega_1)$ is embedded conformally in~$\UD$ at the prime
end~$P:=\psi^{p.e.}(1)$. From this we conclude in a similar way, using Definition~\ref{Conf_emb} with $\phi=F_2^{-1}$,
that $\Omega_1$ is embedded conformally in $\Omega_2$ at the prime end $P_1:=F_1^{p.e.}(1)$. The proof is
now completed.
\end{proof}

\begin{definition}\label{chordally_admissible}
An L-admissible family~$(\Omega_t)$ is said to be {\it chordally admissible} if there exists a prime
end~$P_0\in\Prend(\Omega_0)$ such that for each $t\ge0$ the domain $\Omega_0$ is embedded in the
domain~$\Omega_t$ conformally at the prime end~$P_0$.
\end{definition}

Clearly, Lemma \ref{conformally_embedded} implies

\begin{lemma}\label{adm}
An  L-admissible family $(\Omega_t)$ is chordally admissible if and only if there exist a family $(F_t)$ of
univalent holomorphic functions in the unit disk~$\UD$ such that
\begin{equation}\label{F_t}
F_t(\UD)=\Omega_t\text{ and }F_t^{-1}\circ F_0\in\mathcal C\quad \text{ for all $t\ge0$.}
\end{equation}
\end{lemma}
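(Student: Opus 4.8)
The plan is to derive the lemma directly from Lemma~\ref{conformally_embedded}, the only point needing attention being that the prime end furnished by that lemma must be chosen uniformly in~$t$.

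For the ``only if'' direction, I would start from the assumption that $(\Omega_t)$ is chordally admissible, witnessed by a prime end $P_0\in\Prend(\Omega_0)$ such that $\Omega_0$ is embedded in each $\Omega_t$ conformally at~$P_0$. First I would fix a base point $w_0\in\Omega_0$ and let $F_0$ be the conformal map of $\UD$ onto $\Omega_0$ with $F_0(0)=w_0$ and $F_0^{p.e.}(1)=P_0$. For each $t\ge0$ I would then run the proof of the forward implication in Lemma~\ref{conformally_embedded} for the inclusion $\Omega_0\subset\Omega_t$ and the prime end~$P_0$, using precisely this $F_0$ in the role of the map called $F_1$ there; this is legitimate because $w_0\in\Omega_0\subset\Omega_t$, so a conformal map $\phi\colon\Omega_t\to\UD$ with $\phi(w_0)=0$ exists, and the construction then produces a univalent $F_t$ with $F_t(\UD)=\Omega_t$ and $F_t^{-1}\circ F_0\in\mathcal C$. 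Since $F_0$ has been fixed once and for all, the resulting family $(F_t)_{t\ge0}$ satisfies~\eqref{F_t}.

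For the ``if'' direction, suppose a family $(F_t)$ as in~\eqref{F_t} is given. For each fixed $t\ge0$ I would apply the reverse implication of Lemma~\ref{conformally_embedded} to the inclusion $\Omega_0\subset\Omega_t$ with the pair $(F_1,F_2):=(F_0,F_t)$; its proof shows that $\Omega_0$ is embedded in $\Omega_t$ conformally at the prime end $F_0^{p.e.}(1)$. The crucial observation is that this prime end is built solely out of $F_0$, which is the same map for every~$t$; hence $P_0:=F_0^{p.e.}(1)\in\Prend(\Omega_0)$ is a single witness valid for all $t\ge0$, and $(\Omega_t)$ is chordally admissible by Definition~\ref{chordally_admissible}.

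The only genuine subtlety — and the reason this is not quite immediate — is exactly this uniformity in~$t$: Definition~\ref{chordally_admissible} demands one prime end $P_0$ serving every $\Omega_t$ simultaneously, whereas Lemma~\ref{conformally_embedded} concerns a single inclusion of domains. Both directions are resolved by the fact, read off from the proof of Lemma~\ref{conformally_embedded} together with Remark~\ref{norm_indep}, that the map $F_1$ (equivalently, the prime end $F_1^{p.e.}(1)$) may be prescribed in advance and kept fixed as $t$ varies. I expect the routine bookkeeping — checking that the $F_t$ produced really do satisfy $F_t^{-1}\circ F_0\in\mathcal C$ with the same $F_0$, and that $F_0^{p.e.}(1)$ does not depend on the auxiliary conformal maps of $\Omega_t$ used in the construction — to be the only place requiring a little care, but Remark~\ref{norm_indep} disposes of it.
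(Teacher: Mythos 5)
Your argument is correct and is exactly the route the paper takes: the paper dispatches Lemma~\ref{adm} with the single remark that it follows from Lemma~\ref{conformally_embedded}, and your write-up just makes explicit the (genuinely relevant) point that the prime end $F_0^{p.e.}(1)$, respectively the map $F_0$, can be fixed once and reused for every $t$, which Remark~\ref{norm_indep} justifies. No gaps.
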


Let us denote by $\Spec$ the set of all locally absolutely continuous functions
${\lambda:[0,+\infty)\to\Complex}$ fulfilling the following conditions:
\begin{itemize}
\item[(i)] $\lambda'\in L^d_{loc}([0,+\infty),\Complex)$;
\item[(ii)] $\lambda(0)=0$;
\item[(iii)] $\Re\lambda(t)\ge\Re\lambda(s)$ for any~$s\ge 0$ and $t\ge s$.
\end{itemize}
Let $\SpecR$ stand for the subclass of all real-valued functions from~$\Spec$.

In~\cite[Theorem~7.1]{BCM1} it was proved that for any chordal or radial evolution family $(\varphi_{s,t})$ with
common Denjoy--Wolff point~$\tau$ we have $\varphi'_{s,t}(\tau)=\exp\big(\lambda(s)-\lambda(t)\big)$ for all
$s\ge0$, $t\ge s$ and some $\lambda\in\Spec$. Moreover, if $(\varphi_{s,t})$ is chordal, i.e.
$\tau\in\UC:=\partial{\UD}$, then $\lambda\in\SpecR$. Now let us formulate the Main Theorem.

\begin{theorem}\label{Main_Theorem}
Let $(\Omega_t)$ be an L-admissible family $(\Omega_t)$ of order ${d\in[1,+\infty]}$ and
${\tau\in\UC:=\partial{\UD}}$. Then:
\begin{itemize}
\item[1.] The following statements are equivalent:
\begin{itemize}
\item[(a)] For all $\lambda\in\SpecR$ there exists an evolution family~$(\varphi_{s,t})$ of order~$d$ and
a~Loewner chain~$(f_t)$ associated with~$(\varphi_{s,t})$ such that:
\begin{itemize}
\item[(A.1)] $\varphi_{s,t}(\tau)=\tau$ and $\varphi'_{s,t}(\tau)=\exp\big(\lambda(s)-\lambda(t)\big)$ for all
$s\ge0$ and $t\ge s$;
\item[(A.2)] $\big\{f_t(\UD):t\ge0\big\}=\big\{\Omega_t:t\ge0\big\}$.
\end{itemize}
\item[(b)] For at least one $\lambda\in\SpecR$ there exists an evolution family~$(\varphi_{s,t})$ of order~$d$ and
a~Loewner chain~$(f_t)$ associated with~$(\varphi_{s,t})$ such that the above conditions~(A.1) and~(A.2)
are fulfilled.
\item[(c)] The family $(\Omega_t)$ is chordally admissible.
\end{itemize}

\vskip3mm
\item[2.] The following statements are equivalent:
\begin{itemize}
\item[(d)] For all $\lambda\in\SpecR$ there exists an evolution family~$(\varphi_{s,t})$ of order~$d$ and
a~Loewner chain~$(f_t)$ associated with~$(\varphi_{s,t})$ such that:
\begin{itemize}
\item[(B.1)] $\varphi_{s,t}(\tau)=\tau$ and $\varphi'_{s,t}(\tau)=\exp\big(\lambda(s)-\lambda(t)\big)$ for all
$s\ge0$ and $t\ge s$;
\item[(B.2)] $f_t(\UD)=\Omega_t(\UD)$ for all~$t\ge0$.
\end{itemize}
\item[(e)] For at least one $\lambda\in\SpecR$ there exists an evolution family~$(\varphi_{s,t})$ of order~$d$ and
a~Loewner chain~$(f_t)$ associated with~$(\varphi_{s,t})$ such that  the above conditions~(B.1) and~(B.2)
are fulfilled.
\end{itemize}
\end{itemize}
\end{theorem}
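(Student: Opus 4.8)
The plan is to run the cycle (a)$\Rightarrow$(b)$\Rightarrow$(c)$\Rightarrow$(a) in Part~1 and (d)$\Rightarrow$(e)$\Rightarrow$(d) in Part~2; since (a)$\Rightarrow$(b) and (d)$\Rightarrow$(e) are trivial, the content is in (b)$\Rightarrow$(c), (c)$\Rightarrow$(a) and (e)$\Rightarrow$(d), and throughout I would normalize $\tau=1$. For \textbf{(b)$\Rightarrow$(c)}: given $(f_t)$ with evolution family $(\varphi_{s,t})$ as in~(b), property~(iii) of~$\SpecR$ together with~(A.1) shows each $\varphi_{s,t}$ fixes~$1$ with $\varphi_{s,t}'(1)=e^{\lambda(s)-\lambda(t)}\le1$, i.e.\ $\varphi_{s,t}\in\mathcal C_0$. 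Choosing, for each $t$, an index $\sigma(t)$ with $f_{\sigma(t)}(\UD)=\Omega_t$ (possible by~(A.2)) and setting $F_t:=f_{\sigma(t)}$, one checks $F_t^{-1}\circ F_0\in\mathcal C$ in both cases: if $\sigma(t)\ge\sigma(0)$ it equals $\varphi_{\sigma(0),\sigma(t)}$; if $\sigma(t)<\sigma(0)$, the sandwich $f_{\sigma(t)}(\UD)\subseteq\Omega_0\subseteq\Omega_t=f_{\sigma(t)}(\UD)$ forces these domains to coincide, so $F_t^{-1}\circ F_0=(\varphi_{\sigma(t),\sigma(0)})^{-1}$ is a disk automorphism fixing~$1$, hence has finite positive angular derivative there. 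Lemma~\ref{adm} then gives chordal admissibility.

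For \textbf{(c)$\Rightarrow$(a)}, I would first prove a $\mathcal C$-analogue of Lemma~\ref{three functions} — if $\chi_3=\chi_2\circ\chi_1$ are univalent self-maps of~$\UD$, $\chi_1$ has a regular contact point $\zeta_1\mapsto\zeta_2$ and $\chi_3$ one $\zeta_1\mapsto\zeta_3$, then $\chi_2$ has a regular contact point $\zeta_2\mapsto\zeta_3$ with $\chi_3'(\zeta_1)=\chi_2'(\zeta_2)\chi_1'(\zeta_1)$ — following the proof of Lemma~\ref{three functions} (Koebe's Theorem~\ref{Koebe}, the Lindel\"of theorem, the chain rule \cite[Lemma~2]{SMC}). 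Fix $\lambda\in\SpecR$ and $P_0$ as in Definition~\ref{chordally_admissible}. Using Theorem~\ref{characterization-L-admissiblefamily} take a Loewner chain $(h_t)$ of order~$d$ with $h_t(\UD)=\Omega_t$ and (after precomposing $h_0$) $h_0^{p.e.}(1)=P_0$; then $\tilde\varphi_{0,t}:=h_t^{-1}\circ h_0$ has, by Definitions~\ref{inD}--\ref{Conf_emb}, a regular contact point at~$1$, with value $\eta(t)\in\UC$ and angular derivative $\delta(t)\in(0,+\infty)$, and by the composition lemma $\tilde\varphi_{s,t}=h_t^{-1}\circ h_s$ has a regular contact point $\eta(s)\mapsto\eta(t)$ of angular derivative $\delta(t)/\delta(s)$. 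I would then set $\sigma_t(z):=\eta(t)\,\omega_{c(t)}(z)$ with $c(t):=\delta(t)\overline{\eta(t)}e^{\lambda(t)}>0$ (positive by Julia--Wolff--Carath\'eodory; $\omega_c$ the hyperbolic automorphism fixing $\pm1$ with $\omega_c'(1)=c$, so $\sigma_0=\id$) and $f_t:=h_t\circ\sigma_t$. An angular-derivative computation then gives $f_t(\UD)=\Omega_t$ and $\varphi_{s,t}:=f_t^{-1}\circ f_s=\sigma_t^{-1}\circ\tilde\varphi_{s,t}\circ\sigma_s$ with $\varphi_{s,t}(1)=1$, $\varphi_{s,t}'(1)=e^{\lambda(s)-\lambda(t)}\le1$, so $(\varphi_{s,t})$ is chordal with Denjoy--Wolff point~$1$ and $f_t\circ\varphi_{s,t}=f_s$.

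The step I expect to be the main obstacle is the order-$d$ regularity. By \cite[Proposition~2.10]{SMP} and \cite[Lemma~3.2]{SMP} (as in the proof of Theorem~\ref{characterization-L-admissiblefamily}) it is enough to see that $t\mapsto\varphi_{0,t}(z_*)=\sigma_t^{-1}(\tilde\varphi_{0,t}(z_*))$ and $t\mapsto\varphi_{0,t}'(z_*)$ lie in $AC^d_{\mathrm{loc}}$ for a fixed $z_*\in\UD$, which — $(\tilde\varphi_{s,t})$ being already of order~$d$ — reduces to showing $\eta,\delta\in AC^d_{\mathrm{loc}}([0,+\infty))$. Here Propositions~\ref{l1} and~\ref{l2} are the essential input: they turn the kernel convergence of $(\Omega_t)$ into convergence to the identity of the chordally normalized transition maps, yielding continuity of $\eta$ and~$\delta$; the passage from continuity to local absolute continuity of order~$d$ should come from the quantitative control of those transition maps by the conformal radius $r(\Omega_t,w_*)$, which is $AC^d_{\mathrm{loc}}$ by~AF2, after (since only the set equality~(A.2) is required) an initial monotone continuous reparametrization of the time axis. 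Granting this, $(f_t)$ realizes (A.1)--(A.2), so~(a) holds; the fact that such a reparametrization is in general unavoidable is precisely why~(c) yields the set-equality statement and not the pointwise one.

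Finally, \textbf{(e)$\Rightarrow$(d)} is a twist by automorphisms fixing the Denjoy--Wolff point. Given $\lambda_0$, $(\varphi_{s,t})$, $(f_t)$ as in~(e) and an arbitrary $\lambda\in\SpecR$, let $\sigma_t$ be the hyperbolic automorphism of~$\UD$ fixing~$\pm1$ with $\sigma_t'(1)=e^{\lambda(t)-\lambda_0(t)}$ (so $\sigma_0=\id$, and $t\mapsto\sigma_t\in AC^d_{\mathrm{loc}}$ since $\lambda-\lambda_0\in AC^d_{\mathrm{loc}}$), and put $g_t:=f_t\circ\sigma_t$, $\psi_{s,t}:=\sigma_t^{-1}\circ\varphi_{s,t}\circ\sigma_s$. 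Then $\psi_{s,t}(1)=1$ and $\psi_{s,t}'(1)=e^{\lambda(s)-\lambda(t)}\le1$, so $(\psi_{s,t})$ is chordal with Denjoy--Wolff point~$1$; \cite[Proposition~2.10]{SMP} shows it is an evolution family of order~$d$, \cite[Lemma~3.2]{SMP} that $(g_t)$ is an associated Loewner chain of order~$d$, and $g_t(\UD)=f_t(\UD)=\Omega_t$. This gives (B.1)--(B.2) for~$\lambda$, hence~(d).
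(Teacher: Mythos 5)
Your treatment of the trivial implications, of (b)$\Rightarrow$(c), and of (e)$\Rightarrow$(d) is correct and essentially the paper's own (the paper packages the M\"obius twist of Part~2 into Lemma~\ref{Parabolic_to_general SpFunction}, but your $\sigma_t$ there is the same map, and the derivative bookkeeping checks out). The gap is exactly where you flag it, in the regularity step of (c)$\Rightarrow$(a), and the fix you propose does not work. You reduce EF3 to showing that the contact-point data $\eta,\delta$ of $\tilde\varphi_{0,t}=h_t^{-1}\circ h_0$ lie in $AC^d_{\mathrm{loc}}$, and you hope to obtain this from the $AC^d$-regularity of the conformal radius $r(\Omega_t,w_*)$ after a reparametrization adapted to it. But AF2 controls an interior quantity and gives no modulus-of-continuity control on the boundary quantities $\eta(t)$ and $\delta(t)$: Propositions~\ref{l1} and~\ref{l2} yield only their continuity, and $\delta$ is not monotone (the point $\eta(s)$ is a mere contact point of $\tilde\varphi_{s,t}$, not its Denjoy--Wolff point, so the angular derivative there is not bounded by $1$), so no time reparametrization --- and certainly not one tied to the conformal radius --- can be guaranteed to make $\eta$ and $\delta$ locally absolutely continuous.

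The paper's resolution rests on two ingredients absent from your proposal. First, after normalizing so that all transition maps $\phi_{s,t}$ lie in $\mathcal P$ (your $\sigma_t$ with $\lambda\equiv0$; the paper's $m[\Phi_t]$ in Step~1 of Theorem~\ref{par_th}), the quantity to reparametrize is not the conformal radius but the interior orbit $x(t):=\phi_{0,t}(0)\in(-1,1)$, which is \emph{non-decreasing} by Remark~\ref{Julia} and continuous by Step~2 of that proof; one then sets $\theta(t):=\inf\{\theta\ge0:x(\theta)=\chi(t)\}$ for a smooth non-decreasing $\chi$ onto the range of $x$, so that $x\circ\theta$ is smooth by construction. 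Second, Proposition~\ref{m_lemma} shows that for a family in $\mathcal P$ satisfying EF1--EF2, absolute continuity of the orbit of the \emph{single} interior point $0$ already implies EF3 --- of order $\infty$, which suffices because an $L^\infty$-evolution family is an $L^d$-evolution family for every $d\in[1,+\infty]$. This removes any need to discuss the regularity of $\eta$, $\delta$, or $\varphi_{0,t}'(z_*)$ at all; general $\lambda\in\SpecR$ is then recovered via Lemma~\ref{Parabolic_to_general SpFunction}, exactly as in your Part~2 argument. Without Proposition~\ref{m_lemma} (or a substitute) and without identifying the monotone quantity $x(t)$ as the one to straighten, your (c)$\Rightarrow$(a) does not close.
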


\subsection{Proof of the Main Theorem}

\subsubsection{Auxiliary statements} First we prove an analogue of~\cite[Proposition~2.10]{SMP} for functions
with common parabolic fixed point on the boundary. Let us recall that by $AC^d(X,Y)$, $X\subset \R$,
$d\in[1,+\infty]$, we denote the class of all locally absolutely continuous functions $f:X\to Y$ such that
the derivative $f'$ belongs to $L_{\rm loc}^d(X)$.
\begin{proposition}\label{m_lemma}
Let $(\varphi_{s,t})_{0\le s\le t<+\infty}$ be a family of functions~from class~$\mathcal P$ and
${d\in[1,+\infty]}$. Suppose that $(\varphi_{s,t})$ satisfies conditions EF1 and EF2 in Definition~\ref{def-ev}.
If the function $t\mapsto a(t):=\varphi_{0,t}(0)$ belongs to~$AC^d_{loc}\big([0,+\infty),\Real\big)$, then
$(\varphi_{s,t})$ is an evolution family of order~$d$.
\end{proposition}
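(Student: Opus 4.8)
The plan is to mimic the proof of \cite[Proposition~2.10]{SMP}, but working in the upper half-plane with the Cayley transform so that the parabolic boundary fixed point at~$\tau=1$ becomes the point~$\infty$. Concretely, set $\2\varphi_{s,t}:=H^{-1}\circ\varphi_{s,t}\circ H$ and $\2a(t):=H^{-1}(a(t))\in\R$; by Theorem~\ref{ugl_pr} and Lemma~\ref{lemma} each $\2\varphi_{s,t}$ belongs to~$\mathfrak P$, so it has the normalized representation $\2\varphi_{s,t}(z)=z+\2a(t)-\2a(s)+\Phi_{s,t}(z)$ with $\angle\lim_{z\to\infty}\Phi_{s,t}(z)/z=0$ and (by Remark~\ref{Julia}) $\Im\Phi_{s,t}\ge0$ on~$\UH$. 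The point of the half-plane picture is that the behaviour at~$\infty$ is governed by the single real parameter $\2a(t)-\2a(s)$, exactly as the Denjoy--Wolff data in~\cite{SMP} were governed by $a(t)$ and~$b(t)$; here there is no multiplier because $\2\varphi'_{s,t}(\infty)=1$.

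First I would fix $T>0$ and a point $z_0\in\UD$, transfer to $\zeta_0:=H^{-1}(z_0)\in\UH$, and estimate $|\2\varphi_{s,u}(\zeta_0)-\2\varphi_{s,t}(\zeta_0)|$ for $0\le s\le u\le t\le T$. Using EF2 one writes $\2\varphi_{s,t}=\2\varphi_{u,t}\circ\2\varphi_{s,u}$, so the difference is $\2\varphi_{u,t}(w)-w$ evaluated at $w:=\2\varphi_{s,u}(\zeta_0)$. The function $w\mapsto\bigl(\2\varphi_{u,t}(w)-w-(\2a(t)-\2a(u))\bigr)/(w-\zeta_1)$ for a suitable base point~$\zeta_1$ has nonnegative imaginary part (again Remark~\ref{Julia}) and vanishing imaginary part at~$\zeta_1$, so the growth estimate for the Carath\'eodory class in the half-plane (Remark~\ref{distortion}) together with the technical Lemma~\ref{technical} bounds $|\2\varphi_{u,t}(w)-w|$ by $\const\cdot|\2a(t)-\2a(u)|$ on pseudo-hyperbolic balls, uniformly for $w$ ranging over the compact set $\{\2\varphi_{s,u}(\zeta_0):0\le s\le u\le T\}$ — this set is bounded in~$\UH$ and bounded away from~$\R$ because the orbit of a fixed point under the evolution family stays in a compact subset of~$\UH$ (this follows, e.g., from EF2 and the Schwarz--Pick inequality for the hyperbolic metric of~$\UH$). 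Hence $|\2\varphi_{s,u}(\zeta_0)-\2\varphi_{s,t}(\zeta_0)|\le C_{z_0,T}\,|\2a(t)-\2a(u)|\le C_{z_0,T}\int_u^t|\2a'(\xi)|\,d\xi$, and since $\2a'\in L^d_{\rm loc}$ by hypothesis, $k_{z_0,T}:=C_{z_0,T}|\2a'|\in L^d([0,T])$. Pulling back through~$H$ (which is bi-Lipschitz on compact subsets of~$\UH$, with constants depending only on $z_0$ and $T$ once one knows the orbit stays compact) yields condition~EF3 for $(\varphi_{s,t})$ on~$\UD$; EF1 and EF2 are assumed, so $(\varphi_{s,t})$ is an $L^d$-evolution family.

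The main obstacle I anticipate is the uniform compactness claim: to convert the pointwise Herglotz-type estimate into a bound valid for $s$ ranging over $[0,u]$, one needs that $\{\2\varphi_{s,u}(\zeta_0):0\le s\le u\le T\}$ lies in a fixed compact $K\subset\UH$, and that on~$K$ both the Carath\'eodory growth estimate and the transfer map~$H$ have uniformly controlled constants. Without the multiplier this is where all the work sits, since one cannot simply read off $\Im\2\varphi_{s,u}(\zeta_0)\ge\Im\zeta_0$ (that gives a lower bound on the imaginary part but not an upper bound on the modulus). The cleanest way around it is to invoke the Schwarz--Pick contraction of the hyperbolic distance of~$\UH$ together with the inequality $\Im\2\varphi_{s,u}(\zeta_0)\ge\Im\zeta_0$ from Remark~\ref{Julia}: the two together trap the orbit in a bounded hyperbolic ball around~$\zeta_0$ truncated to $\{\Im\zeta\ge\Im\zeta_0\}$, which is compact in~$\UH$. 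Once that is in place the rest is a routine repetition of the argument in~\cite[Proposition~2.10]{SMP}, and I would present it as such, pointing to Lemma~\ref{technical} and Remark~\ref{distortion} for the two analytic inputs and to Remark~\ref{Julia} for the monotonicity of the imaginary part.
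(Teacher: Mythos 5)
Your proposal is correct and follows essentially the same route as the paper's proof: transfer to the half-plane by the Cayley map, apply the Carath\'eodory growth estimate of Remark~\ref{distortion} to the Herglotz function $p_{u,t}(w):=-i\big(\psi_{u,t}(w)-w\big)$ at the base point $ib(u)$ (where $p_{u,t}$ takes the real value $b(t)-b(u)$, $b:=(1+a)/(1-a)$), control the pseudo-hyperbolic radius by Schwarz--Pick, and conclude with $k_{z,T}=C\,|b'|$. When writing it out, note that the correct auxiliary object is $p_{u,t}$ as above rather than your quotient by $w-\zeta_1$, that the base-point increment is $i(b(t)-b(u))$ along the imaginary axis rather than a real translation, and that the inequality $r_{\UH}\big(\psi_{s,u}(\zeta),ib(u)\big)=r_{\UH}\big(\psi_{s,u}(ib(s)),\psi_{s,u}(\zeta)\big)\le r_{\UH}\big(ib(s),\zeta\big)$ together with continuity of $b$ on $[0,T]$ already gives the uniform constant directly, so neither the compactness detour (which does not follow from EF2 and Schwarz--Pick alone, but only from Schwarz--Pick relative to the known orbit $ib(u)$ of the base point) nor Lemma~\ref{technical} is needed, and the pullback to the disk is immediate from $|w-z|\le|H(w)-H(z)|$ for $z,w\in\UD$.
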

\begin{proof}
It is more convenient to work with the family $(\psi_{s,t})$ defined by $\psi_{s,t}=H\circ \varphi_{s,t}\circ
H^{-1}$, where $H$ stands for the Cayley map~\eqref{Cayley}. Then for any $s\ge0$ and $t\ge s$, we have $\psi_{s,t}(z)=z+i
p_{s,t}(z)$, where $p_{s,t}$ is some analytic function  in~$\UH$ having non-negative real part. The equality
$\varphi_{0,t}(0)=a(t)$ implies that $p_{s,t}(ib(s))=b(t)-b(s)$,  $0\le s\le t$, where
$$b(t):=(1/i)H(a(t))=\frac{1+a(t)}{1-a(t)}.$$

The key idea is to take advantage of inequality~\eqref{C} in Remark~\ref{distortion}. Apply this inequality for
$p:=p_{u,t}$ with $z_0:=ib(u)$ and $z:=\psi_{s,u}(\zeta)$, $\zeta\in\UH$, to obtain
\begin{equation}\label{psi}
|\psi_{s,t}(\zeta)-\psi_{s,u}(\zeta)|=|\psi_{u,t}(z)-z|\le\frac{1+r}{1-r}\,|b(t)-b(u)|,\quad
r:=r_{\UH}\big(ib(u),\psi_{s,u}(\zeta)\big).
\end{equation}
Since any analytic self-map of $\UH$ is a contraction for the hyperbolic metric, we have
\begin{equation}\label{r}
r_{\UH}\big(ib(u),\psi_{s,u}(\zeta)\big)=r_{\UH}\big(\psi_{s,u}(ib(s)),\psi_{s,u}(\zeta)\big)\le
r_{\UH}\big(ib(s),\zeta\big).
\end{equation}
Combining inequalities~\eqref{psi}\,and\,\eqref{r} and taking into account that $b$ is continuous, we conclude
that for any $\zeta\in\UH$ and any $T\in[0,+\infty)$ there exists a constant $C=C(\zeta,T)>0$ such that
$|\psi_{s,t}(\zeta)-\psi_{s,u}(\zeta)|\le C\,|b(t)-b(u)|$ whenever~$0\le s\le u\le t\le T$. Bearing in mind that
$|w-z|\le |H(w)-H(z)|$ for any $z,w\in\UD$, we see that this inequality proves the proposition.
\end{proof}

Let us formulate the most difficult part to prove in Theorem~\ref{Main_Theorem}.

\begin{theorem}\label{par_th}
Let $\tau\in\UC:=\partial\UD$. For any chordally admissible family~$(\Omega_t)$ there exists an evolution
family~$(\varphi_{s,t})$ of order~$+\infty$ and a Loewner chain $(f_t)$ associated with~$(\varphi_{s,t})$ such
that $\big\{f_t(\UD):t\ge0\big\}=\big\{\Omega_t:t\ge0\big\}$ and $\varphi_{s,t}(\tau)=\tau$,
$\varphi'_{s,t}(\tau)=1$ for each $s\ge0$ and $t\ge s$.
\end{theorem}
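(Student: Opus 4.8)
The plan is to work in the upper half-plane~$\UH$ via the Cayley map~$H$, so that ``chordal with parabolic fixed point'' becomes the statement that all relevant self-maps belong to the class~$\mathfrak P$. By Lemma~\ref{adm} and its half-plane reformulation, chordal admissibility of~$(\Omega_t)$ gives us a family $(F_t)$ of univalent functions with $F_t(\UD)=\Omega_t$ and $\varphi_{s,t}^0:=F_t^{-1}\circ F_0\in\mathcal C$ for all~$t$; transporting to~$\UH$, after a suitable normalization we get univalent $\tilde F_t:\UH\to\C$ with $\tilde F_t(\UH)=\tilde\Omega_t$ and $\tilde F_t^{-1}\circ\tilde F_0\in\mathfrak P$. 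The first step is to upgrade this ``one-parameter compatibility at $s=0$'' into a full two-parameter compatibility: I want to choose, for each~$t$, the right conformal representative $\tilde F_t$ so that $\psi_{s,t}:=\tilde F_t^{-1}\circ\tilde F_s\in\mathfrak P$ for \emph{all} $0\le s\le t$, not merely for $s=0$. Here Lemma~\ref{three functions} is exactly the tool: from $\psi_{0,t}=\psi_{s,t}\circ\psi_{0,s}$, once two of the three maps are known to lie in~$\mathcal P$ (equivalently $\mathfrak P$), so does the third; so it suffices to arrange $\psi_{0,t}\in\mathfrak P$ for all~$t$, which is what chordal admissibility gives after fixing the normalization of each~$\tilde F_t$ (say, normalizing the angular derivative at~$\infty$ and one more real parameter so that the $\psi_{0,t}$ are genuinely parabolic, using $\varphi'(\infty)=1$; Remark~\ref{Julia} controls the remaining freedom, which is a real translation).

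The second step is the regularity: I must show the resulting $(\varphi_{s,t})$ (pulled back to~$\UD$) is an evolution family of order~$d=\infty$. This is precisely the role of Proposition~\ref{m_lemma}: the family $(\varphi_{s,t})$ consists of functions from~$\mathcal P$ and satisfies EF1 and EF2 by construction, so it is enough to check that $t\mapsto a(t):=\varphi_{0,t}(0)$ is locally absolutely continuous with locally bounded derivative. In the half-plane picture this is the function $b(t)$ built from the translation parameters of the~$\psi_{0,t}$, and it should be readable off from the conformal radii: since $(\Omega_t)$ is L-admissible, $t\mapsto r(\Omega_t,w_\ast)$ is in $AC^d_{loc}$, and by Lemma~\ref{conformal_radius} (together with the explicit relation between the parabolic normalization and the conformal radius, i.e.\ a computation using Theorem~D) the relevant scalar parameters inherit this regularity, in fact with the optimal $d=\infty$ after reparametrization — which is why the theorem is stated for order~$+\infty$.

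The third step is to produce the Loewner chain. Having $(\varphi_{s,t})$ as an evolution family, define $f_t:=\tilde F_t$ (pulled back to~$\UD$); then $f_t\circ\varphi_{s,t}=f_s$ holds by construction, the $f_t$ are univalent, and $f_s(\UD)\subset f_t(\UD)$ since $\Omega_s\subset\Omega_t$; the only nontrivial point is the absolute-continuity condition~LC3, which follows from~\cite[Lemma~3.2]{SMP} once we know $(\varphi_{s,t})$ is an evolution family and the $f_t$ satisfy the algebraic relation, exactly as in the proof of Theorem~\ref{characterization-L-admissiblefamily}. This yields a Loewner chain of chordal type with $\{f_t(\UD):t\ge0\}=\{\Omega_t:t\ge0\}$ and, by the normalization chosen in Step~1, $\varphi_{s,t}(\tau)=\tau$ and $\varphi'_{s,t}(\tau)=1$.

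The main obstacle I anticipate is Step~1: choosing the conformal representatives $\tilde F_t$ \emph{coherently in~$t$} so that the parabolic normalization is consistent for all pairs $(s,t)$ simultaneously, rather than just fixing each $\tilde F_t$ in isolation. The subtlety is that the conformal map $\tilde F_t$ is determined by $\Omega_t$ only up to post-composition with an automorphism of~$\UH$, and I must spend this freedom to make \emph{all} the $\psi_{0,t}$ parabolic at~$\infty$ at once; this forces a normalization involving the angular derivative at the boundary point, whose existence and finiteness along the whole family is exactly the content of chordal admissibility, but whose measurable/absolutely-continuous dependence on~$t$ then has to be extracted — and that extraction, feeding into Step~2, is where the real work lies. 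Propositions~\ref{l1} and~\ref{l2} are presumably the technical engine guaranteeing the needed continuity of $t\mapsto\psi_{0,t}$ in the kernel sense, so I would invoke them to bridge Steps~1 and~2.
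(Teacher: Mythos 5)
Your high-level architecture matches the paper's: use Lemma~\ref{three functions} to upgrade ``$\Phi_{0,t}\in\mathcal P$'' to ``$\phi_{s,t}\in\mathcal P$ for all $s\le t$''; use Propositions~\ref{l1} and~\ref{l2} to get continuity of $t\mapsto\phi_{0,t}$; use Proposition~\ref{m_lemma} to conclude that you have an evolution family; and use \cite[Lemma~3.2]{SMP} to promote the algebraic identity $f_t\circ\varphi_{s,t}=f_s$ to condition~LC3. You also correctly guess that a reparametrization is behind the order~$d=\infty$ in the statement. So you have identified every key lemma.

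However, the middle of your argument contains a genuine gap that would stall you. You try to feed Proposition~\ref{m_lemma} by arguing that the ``translation parameter'' of the parabolic normalization (your $b(t)$, the paper's $x(t)=\phi_{0,t}(0)$) ``should be readable off from the conformal radii'' and hence inherit the $AC^d$ regularity from AF2 via Lemma~\ref{conformal_radius}. This does not work: the conformal radius $r(\Omega_t,w_*)$ is proportional to $\bigl(1-|\phi_{0,t}(0)|^2\bigr)/|\phi_{0,t}'(0)|$, which ties together $\phi_{0,t}(0)$ and $\phi_{0,t}'(0)$, while the parabolic normalization $\phi_{0,t}'(\tau)=1$ is an independent constraint at the boundary; there is no way to isolate $\phi_{0,t}(0)$ from the single real number $r(\Omega_t,w_*)$. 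More importantly, the paper never uses the $AC^d$ regularity of the conformal radius at this point at all. What it does instead is the following, and this is the piece your proposal blurs: from Step~2 (Propositions~\ref{l1},~\ref{l2}) one only gets that $x(t):=\phi_{0,t}(0)$ is \emph{continuous}; Remark~\ref{Julia} then gives that $x$ is \emph{non-decreasing}; and then one \emph{reparametrizes}, choosing $\theta$ so that $t\mapsto x(\theta(t))$ is a smooth function, and sets $\varphi_{s,t}:=\phi_{\theta(s),\theta(t)}$, $f_t:=h_{\theta(t)}$. This is why only the set equality $\{f_t(\UD):t\ge0\}=\{\Omega_t:t\ge0\}$ is claimed (not $f_t(\UD)=\Omega_t$), and it requires a short but non-trivial verification — that the range of $t\mapsto h_{\theta(t)}$ exhausts the range of $t\mapsto h_t$ — which you do not address. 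Once the reparametrization is done, Proposition~\ref{m_lemma} applies with the smooth $x\circ\theta$ and yields order $+\infty$; no regularity from L-admissibility is needed or used. In short: drop the conformal-radius argument, replace it by continuity $\Rightarrow$ monotonicity $\Rightarrow$ reparametrization, and add the verification that the reparametrized chain has the same \emph{set} of image domains.
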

\begin{proof}
Without loss of generality one can assume that $\tau=1$. We divide the proof into the four following steps.

\Step{1}{There exists a family $(h_t)_{t\ge0}\subset{\sf Hol}(\UD,\Complex)$ that satisfies the following
conditions:}
\begin{itemize}
\item[(i)] $h_t(\UD)=\Omega_t$ for all $t\ge0$;
\item[(ii)] the mappings $\phi_{s,t}:=h_t^{-1}\circ h_s$, $0\le s\le t$, belong to the class $\mathcal P$,
\item[(iii)] $\phi_{0,t}(0)\in(-1,1)$ for all $t\ge 0$.
\end{itemize}

This part is easy. By hypothesis, $(\Omega_t)$ is a chordally admissible family. Hence by Lemma~\ref{adm}, there
exists a family $(F_t)$ of univalent functions in the unit disk~$\UD$ satisfying~\eqref{F_t}. Write
$\Phi_t=F_t^{-1}\circ F_0$.

Now let us make use of the following obvious assertion: for any function $\varphi\in\mathcal C$ there exists
$m[\varphi]\in\Moeb(\UD)$ such that $m[\varphi]\circ f\in\mathcal P$ and $\Im m[\varphi](\varphi(0))=0$. This
M\"obius transformation can be constructed as a composition of three transformations:
$$z\mapsto\frac{z-\varphi(0)}{1-\overline{\varphi(0)}z},~~z\mapsto e^{i\theta}z,~~\text{and}~z\mapsto
\frac{z-a}{1-az},$$ for appropriately chosen $\theta\in\Real$ and $a\in(-1,1)$. Setting $\varphi:=\Phi_{t}$ and
taking into account that $\Phi_{0}=\id_\UD$, we conclude that the functions
$h_t:=F_{t}\circ\big(m[\Phi_{t}]\big)^{-1}$, $t\ge0$, form a family satisfying condition~(i),\,(ii) and (iii),
for all~$t\ge0$.

In order to establish~(ii) for all $s\in[0,t]$ it suffices to notice that
$\phi_{0,t}=\phi_{s,t}\circ\phi_{0,s}$, and apply Lemma~\ref{three functions} to make sure that~$\phi_{s,t}$ is
also in~$\mathcal P$.

\Step{2}{The mapping $\mathcal Q:t\mapsto \phi_{0,t}=h_t^{-1}\circ h_0$ from $[0,+\infty)$ into ${\sf
Hol}(\UD,\UD)$ is continuous.}

Denote $x(t):=\phi_{0,t}(0)\in(-1,1)$. Let $t_n\in[0,+\infty)$ be such that the sequence $(t_n)$ converges to a
point $t_0$. The assertion of Step~2 will be proved if we show that~$\phi_{0,t_n}$ converges to $\phi_{0,t_0}$,
as $n\to+\infty$, uniformly on compact subsets of the unit disk. We can make even further simplification without
loss of generality. Namely, we can assume that $(t_n)$ is either increasing or decreasing. Let us start with the
first case.

\step{2a}{If $t_n$ is increasing, then $\phi_{0,t_n}\to\phi_{0,t_0}$ as $n\to+\infty$.}

For $n\in\Natural_0$ we denote $\psi_n:=\phi_{t_n,t_0}$ and $r_n:=x(t_n)$. Note that $\psi_n\in\mathcal P$.
Further, observe that $\psi_n(\UH)=h_{t_0}^{-1}\big(h_{t_n}(\UD)\big)$. By this reason,
$\psi_n(\UH)\subset\psi_m(\UH)$ whenever $0\neq n<m$. By assertion~(ii) in Remark~\ref{inclusion_chain_Remark} we have $\cup_{n\in\Natural}\psi_n(\UH)=\UH$. Summarizing the above argument we see that $(\psi_n)$
satisfies the hypothesis of Proposition~\ref{l1}. As a result we conclude that $\psi_n\to\psi_0=\id_\UH$ as
$n\to+\infty$.  Since $\psi_n\circ\phi_{0,t_n}=\phi_{0,t_0}$ is independent of~$n$, this implies that
$\phi_{0,t_n}$ converges to $\phi_{0,t_0}$.

Therefore, the proof of~Step~2a is now finished and we proceed with considering the case of decreasing
sequence~$(t_n)$.

\step{2b}{If $t_n$ is decreasing, then $\phi_{0,t_n}\to\phi_{0,t_0}$ as $n\to+\infty$.}

As in the previous case, we denote $r_n:=x(t_n)$, $n\in\Natural_0$. Set $\psi_n:=\phi_{t_0,t_n}\in\mathcal P$.
Then we have $\psi_n(r_0)=r_n$ for all $n\in\Natural$. Set $T:=t_1$. Consider mappings $\phi_{t_n,T}$,
$n\in\Natural_0$, and let $L_n$ be the unique function in $\Moeb(\UD)$ such that the functions
$\phi_n:=\phi_{t_n,T}\circ L_n^{-1}$, $n\in\Natural_0$, satisfy the normalization~$\phi_n(0)=0$ and
$\phi'_n(0)>0$, required in Proposition~\ref{l2}. Then we have
$$\phi_n\circ L_n\circ\psi_n=\phi_{t_n,T}\circ\phi_{t_0,t_n}=\phi_{t_0,T}=\phi_0\circ L_0.$$ Furthermore, the
domains $G_n:=\phi_n(\UD)=\phi_{t_n,T}(\UD)=h_T^{-1}\big(h_{t_n}(\UD)\big)$ form, starting from $n=1$, a
contracting sequence of domains, with $G_0$ being, according to assertion~(i) in Remark~\ref{inclusion_chain_Remark}, a connected component of the interior of~$\cup_{n\in\Natural}G_n$.

Hence, we can apply Proposition~\ref{l2} to conclude that $\psi_n\to\id_H$ as $n\to+\infty$. As a consequence,
$\phi_{0,t_n}$ converges to $\phi_{0,t_0}$, which is what we had to show in order to complete the proof of
Step~2b.

\Step3{Let $x(t):=\phi_{0,t}(0)$. There exists a non-decreasing function $\theta:[0,+\infty)\to[0,+\infty)$ with
$\theta(0)=0$ such that $t\mapsto x(\theta(t))$ is a smooth function on $[0,+\infty)$ and
\begin{equation}\label{Omega_range} \big\{h_{\theta(t)}:t\ge0\big\}=\big\{h_t:t\ge0\big\}.
\end{equation}}

Let us first of all mention that $x(0)=0$ and that $x(t)=\phi_{s,t}(x(s))$ for any $s\ge0$ and $t\ge s$. Passing
this equation to the upper half-plane and using that the function $x$ is real valued, from Remark~\ref{Julia} it
follows that the function $t\mapsto x(t)$, ${t\in[0,+\infty)}$, is non-decreasing. Furthermore, by Step~2 this
function is continuous.

In particular, it follows that $J:=x\big([0,+\infty)\big)$ is an interval of the form $[0,a)$ or $[0,a]$ for
some $a\in(0,1]$. Take a function $\chi:[0,+\infty)\to J$ smooth and non-decreasing. Now for each $t\ge0$ let us
set
$$\theta(t):=\inf \{\theta\ge0:x(\theta)=\chi(t)\}.$$ Since $x(0)=0$, immediately we deduce $\theta(0)=0$.
Since the functions $\theta\mapsto x(\theta)$ and $t\mapsto \chi(t)$ have the same range on~$[0,+\infty)$, the
function $t\mapsto \theta(t)$ is well defined on~$[0,+\infty)$. Furthermore, since $\theta\mapsto x(\theta)$ is
continuous it follows that $x(\theta(t))=\chi(t)$ for all $t\ge0$, which means that $t\mapsto x(\theta(t))$ is
smooth. Finally, $t\mapsto\theta(t)$ is non-decreasing because both functions $\theta\mapsto x(\theta)$ and
$t\mapsto \chi(t)$ are non-decreasing.

It remains to show that \eqref{Omega_range} holds. To this end denote
$\Theta:=\theta\big([0,+\infty)\big)$ and let us again apply Remark~\ref{Julia} and equality
$x(t)=\phi_{s,t}(x(s))$ to conclude that $x(s)=x(t)$ holds for some $t,s\ge0$ if and only if
$\phi_{s,t}=\id_\UD$, which is in its turn equivalent to~$h_t=h_s$. It follows that for any two sets $A_1,
A_2\in[0,+\infty)$ the ranges of $t\mapsto h_t$ on $A_1$ and on $A_2$ coincide if and only if the same is
true for $t\mapsto x(t)$. By construction, the range of $t\mapsto x(t)$ on $\Theta$ is the same as on the
whole semiaxis~$[0,+\infty)$. This implies~\eqref{Omega_range} and finishes the proof of Step~3.

\Step4{The family $(\varphi_{s,t})$ defined by $\varphi_{s,t}=\phi_{\theta(s),\theta(t)}$, $0\le s\le t$, is an
evolution family of order~$+\infty$ and the family $(f_t)$ defined by $f_t:=h_{\theta(t)}$ is a Loewner chain
associated with the evolution family~$(\varphi_{s,t})$.}

Clearly, the family $(\varphi_{s,t})$ satisfies conditions EF1 and EF2 in Definition~\ref{def-ev}. Moreover, by
the construction, the function $t\mapsto \varphi_{0,t}(0)$ is real-valued and smooth. Hence, by
Proposition~\ref{m_lemma}, $(\varphi_{s,t})$ is an evolution family of order~$d=+\infty$. Note that
$f_t\circ\varphi_{s,t}=f_s$ for all $s\ge0$ and $t\ge s$. Then by~\cite[Lemma~3.2]{SMP}, $(f_t)$ is a Loewner
chain of order~$d=\infty$.

The proof of Theorem~\ref{par_th} is now completed.
\end{proof}

The following statement is a direct consequence of~\cite[Lemma~2.8]{SMP}.

\begin{lemma}\label{Parabolic_to_general SpFunction}
Let $\tau\in\UC$ and $\lambda\in\SpecR$. Let~$(\psi_{s,t})$ be a family of holomorphic self-maps of~$\UD$, and
\begin{equation}\label{arbitrary_spectral_function}
\varphi_{s,t}:=h_t\circ\psi_{s,t}\circ h^{-1}_s,\quad t\ge s\ge0,\quad h_t(z):=\tau\frac{z-\tau
a(t)}{\tau-a(t)z},~a(t):=\frac{e^{\lambda(t)}-1}{e^{\lambda(t)}+1}.
\end{equation}
Then the following two statements are equivalent:
\begin{itemize}
\item[(i)] the family $(\varphi_{s,t})$ is an evolution family of order~$d$ with $\varphi_{s,t}(\tau)=\tau$ and
$\varphi'_{s,t}(\tau)=\exp\big(\lambda(t)-\lambda(s)\big)$ for all $s\ge 0$ and $t\ge s$;

\item[(ii)] the family $(\psi_{s,t})$ is an evolution family of order~$d$ with $\psi_{s,t}(\tau)=\tau$ and
$\psi'_{s,t}(\tau)=1$  for all $s\ge 0$ and $t\ge s$.
\end{itemize}
\end{lemma}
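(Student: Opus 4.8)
The plan is to read the lemma as a statement about conjugating an evolution family by a regular family of disc automorphisms, and then to invoke \cite[Lemma~2.8]{SMP}. Note that~\eqref{arbitrary_spectral_function} is equivalent to $\psi_{s,t}=h_t^{-1}\circ\varphi_{s,t}\circ h_s$, so this relation is the standing hypothesis in both directions. First I would record the elementary properties of the maps~$h_t$. Since $\lambda\in\SpecR$ is real-valued, $a(t)=\tfrac{e^{\lambda(t)}-1}{e^{\lambda(t)}+1}\in(-1,1)$ for all $t\ge0$; hence each $h_t$ belongs to $\Moeb(\UD)$, $h_0=\id_\UD$ (because $\lambda(0)=0$ gives $a(0)=0$), and $h_t(\tau)=\tau$ by direct substitution. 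A one-line computation gives $h_t'(\tau)=\tfrac{1+a(t)}{1-a(t)}=e^{\lambda(t)}>0$, and hence $(h_t^{-1})'(\tau)=e^{-\lambda(t)}$. Finally, since $\lambda$ is locally absolutely continuous with $\lambda'\in L^d_{\mathrm{loc}}$, the coefficient $t\mapsto a(t)$ lies in $AC^d_{\mathrm{loc}}\big([0,+\infty),\Real\big)$, which is the regularity of the family $(h_t)$ needed to apply \cite[Lemma~2.8]{SMP}.

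With these facts in hand I would treat both implications at once. From $\psi_{s,s}=\id_\UD$ (resp.\ $\varphi_{s,s}=\id_\UD$) one gets $\varphi_{s,s}=h_s\circ h_s^{-1}=\id_\UD$ (resp.\ $\psi_{s,s}=\id_\UD$), which is EF1; and EF2 follows from $\varphi_{u,t}\circ\varphi_{s,u}=h_t\circ\psi_{u,t}\circ(h_u^{-1}\circ h_u)\circ\psi_{s,u}\circ h_s^{-1}=h_t\circ\psi_{s,t}\circ h_s^{-1}=\varphi_{s,t}$ (and symmetrically). The assertion that $(\psi_{s,t})$ is an $L^d$-evolution family exactly when $(\varphi_{s,t})$ is one is then precisely \cite[Lemma~2.8]{SMP}, applied to the $AC^d_{\mathrm{loc}}$-family $(h_t)$ from the first paragraph; this settles everything but the behaviour at~$\tau$.

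It remains to transport the boundary normalization. Since $h_s$, and therefore $h_s^{-1}$, fixes~$\tau$, under (ii) we get $\varphi_{s,t}(\tau)=h_t\big(\psi_{s,t}(\tau)\big)=h_t(\tau)=\tau$, and symmetrically $\psi_{s,t}(\tau)=\tau$ under (i). For the angular derivatives I would apply the chain rule for angular derivatives \cite[Lemma~2]{SMC}: this is legitimate because $h_t$ and $h_s^{-1}$ are conformal across~$\tau$ even though $\psi_{s,t}$ need not be. Under (ii) it gives $\varphi'_{s,t}(\tau)=h_t'(\tau)\,\psi'_{s,t}(\tau)\,(h_s^{-1})'(\tau)=e^{\lambda(t)}\cdot 1\cdot e^{-\lambda(s)}=e^{\lambda(t)-\lambda(s)}$, and under (i) it gives $\psi'_{s,t}(\tau)=(h_t^{-1})'(\tau)\,\varphi'_{s,t}(\tau)\,h_s'(\tau)=e^{-\lambda(t)}\cdot e^{\lambda(t)-\lambda(s)}\cdot e^{\lambda(s)}=1$. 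This completes the equivalence. I do not expect a real obstacle: the only computation of substance is $h_t'(\tau)=e^{\lambda(t)}$, and the only nontrivial input is \cite[Lemma~2.8]{SMP}, which is tailored to exactly this conjugation — which is why the authors record the lemma as a direct consequence of it. The single point deserving a word of care is that the chain rule above must be understood in the angular-derivative sense, and this causes no trouble since the outer and inner factors $h_t$, $h_s^{-1}$ are Möbius and hence conformal at~$\tau$.
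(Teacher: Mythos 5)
Your proof is correct and follows the same route the paper indicates: the paper records the lemma as a ``direct consequence of \cite[Lemma~2.8]{SMP}'' without further elaboration, and what you have written is precisely the bookkeeping needed to apply that lemma — verifying that $(h_t)$ is an $AC^d_{\mathrm{loc}}$ family of disc automorphisms fixing $\tau$ with $h_0=\id_\UD$ and $h_t'(\tau)=e^{\lambda(t)}$, conjugating, and transporting the boundary normalization via the chain rule for angular derivatives.
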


Now we are ready to prove Theorem~\ref{Main_Theorem}.

\subsubsection{Proof of Statement~1 of Theorem~\ref{Main_Theorem}} Let us use Lemma~\ref{Parabolic_to_general
SpFunction} in order to deduce Statement~1 of Theorem~\ref{Main_Theorem} from Theorem~\ref{par_th}. Clearly, in
view of Theorem~\ref{par_th}, (c)~implies~(b). Let us show that (c) also implies~(a). To this end take an
arbitrary~$\lambda\in\SpecR$ and define~$(\varphi_{s,t})$ by~\eqref{arbitrary_spectral_function} and $(f_t)$ by
$f_t:=g_t\circ h_t^{-1}$. Then $(\varphi_{s,t})$ and $(f_t)$  satisfy conditions~(A.1) and~(A.2). Moreover,
by~\cite[Lemma~3.2]{SMP}, $(f_t)$ is a Loewner chain of order~$d$ associated with the evolution
family~$(\varphi_{s,t})$.  This proves that~(c) implies~(a).

It remains to show that (b) implies (c). In view of Lemma~\ref{adm} it suffices to verify that to each $t\ge0$
one can assign a conformal mapping $F_t$ of $\UD$ onto $\Omega_t$ in such a way that $F_t^{-1}\circ
F_0\in\mathcal C$ for any $t\ge0$. According to (A.2) for each $t\ge0$ there exists $\theta\ge0$ such that
$f_\theta(\UD)=\Omega_t$. Note that $f_0(\UD)=\Omega_0$ because there can be no two different minimal elements
in a linearly ordered set. Hence we can assume that if $t=0$, then $\theta$ also vanishes.

Now take $F_t:=f_\theta$. Then $F_t^{-1}\circ F_0=\varphi_{0,\theta}\in\mathcal C$ by~(A.1). This completes the
proof of Statement~1. \proofbox\vskip5mm

\subsubsection{Proof of Statement~2 of Theorem~\ref{Main_Theorem}} This statement basically follows from
Lemma~\ref{Parabolic_to_general SpFunction}. Suppose (e) is true, i.e., there exists $\lambda_0\in\SpecR$ such
that (B) holds for $\lambda:=\lambda_0$ with some evolution family $(\varphi^0_{s,t})$ and some associated
Loewner chain~$(f_t^0)$.

Let us prove that (d) holds. Define a family $(\psi_{s,t})\subset{\sf Hol}(\UD,\UD)$ by the following formula
\begin{equation*}
\psi_{s,t}:=(h^0_t)^{-1}\circ\varphi^0_{s,t}\circ h^0_s,\quad t\ge s\ge0,\quad h^0_t(z):=\tau\frac{z-\tau
a_0(t)}{\tau-a_0(t)z},~a_0(t):=\frac{e^{\lambda_0(t)}-1}{e^{\lambda_0(t)}+1},
\end{equation*}
where $h_t^0$ coincides with $h_t$ given in~\eqref{arbitrary_spectral_function} when~$\lambda=\lambda_0$. We
apply Lemma~\ref{Parabolic_to_general SpFunction} for $\lambda:=\lambda_0$ to conclude that the family
$(\psi_{s,t})$ is an evolution family, satisfying condition~(ii) in this lemma.

Take now any arbitrary $\lambda\in\SpecR$ and use Lemma~\ref{Parabolic_to_general SpFunction} again with
this~$\lambda$ to construct an evolution family~$(\varphi_{s,t})$ satisfying condition~(i) in this lemma, which
implies condition~(B.1). Define $(f_t)$ by $f_t:=f^0_t\circ h^0_t\circ h_t^{-1}$, $t\ge 0$. Note that
$f_t(\UD)=f_t^0(\UD)$ for all~$t\ge0$. It follows that condition~(B.2) is also satisfied. To complete the proof
it remains to make sure that $f_t\circ\varphi_{s,t}=f_s$ for all $s\ge0$ and $t\ge s$ and
apply~\cite[Lemma~3.2]{SMP} to conclude that $(f_t)$ is a Loewner chain of order~$d$ associated with the
evolution family~$(\varphi_{s,t})$. \proofbox

\section{L-admissible families and Goryainov--Ba evolution families}
\label{Goryainov-Ba}

\subsection{Definitions and results}

Originally the chordal variant of Loewner Theory was developed for the class~$\mathcal H$ of univalent
self-mappings~$G$ of the upper half-plane~$\UH:=\{z:\Im z>0\}$ satisfying the condition
\begin{equation*}
\lim_{\substack{z\to\infty,\\z\in\UH}}\big(G(z)-z\big)=0.
\end{equation*}

The parametric representation was given for a dense subclass of~$\mathcal H$, all elements of which
satisfy quite strong regularity condition at the point of infinity, namely $\UH\setminus G(\UH)$ is a
bounded set, so that $G$ has meromorphic continuation to a neighbourhood of~$\infty$ with real
coefficients in its Laurent expansion. The role of Loewner equation~\eqref{L_ODE_intro} in this case is
played by the so-called {\it chordal Loewner equation}
\begin{equation}\label{chLE}
\frac{dw}{dt}=\frac{1}{\lambda(t)-w},\quad t\ge0;~~w|_{t=0}=z,~z\in\UH,
\end{equation}
where $\lambda:[0,+\infty)\to\Real$ is a (piece-wise) continuous driving term.

Here we cite some papers devoted to this topic without attempts to give the complete bibliography
\cite{Popova1949, Popova1954, Kufarev_etal, AleksSob, Sobolev1970}. Mainly the theory was developed as a
tool for obtaining functional estimates in the class~$\mathcal H$.

Much later, the chordal Loewner evolution attracted interest of the wide mathematical society due to the famous
paper by Schramm~\cite{Schramm}, where he introduced and studied stochastic version of Loewner evolution (SLE),
based on the simplest variant of radial and chordal Loewner equations. This development led to deep results in
mathematical theory of 2D lattice models playing important role in Statistical Physics~see, e.g.,
[\mcite{Lawler}--\mcite{Lawler-Schramm-Werner}].

Here we consider a more general and, in some sense, more natural approach than the one developed
in~\cite{Aleks1983, AleksST, Goryainov-Ba}, see also~\cite{AleksSTSob}. Very close results were independently
obtained later by Bauer~\cite{Bauer}, motivated by a problem connected to the Cauchy transform of probability
measures.

For any function $G\in\mathcal H$ such that $\UH\setminus G(\UH)$ is bounded, we have
\begin{equation}\label{G_expan}
G(z)=z-c/z+\gamma(z),\quad c\ge0
\end{equation}
where $\gamma(z)=o(1/z)$, i.e.,
\begin{equation}\label{gamma_prop}
\lim_{\substack{z\to\infty,\\z\in\UH}}z\gamma(z)=0.
\end{equation}
The idea in~\cite{Aleks1983, AleksST, Goryainov-Ba} was to interpret expansion~\eqref{G_expan} in angular
sense, i.e., to replace~\eqref{gamma_prop} with the weaker condition
\begin{equation}\label{gamma_prop_angular}
\angle\lim_{z\to\infty}z\gamma(z)=0.
\end{equation}

This leads to the class of functions~$\mathfrak P_0$ characterized in the following way.

\begin{definition}[\cite{Bauer}]\label{class_mathfrak_P_0}
By $\mathfrak P_0$ we denote the class of all univalent function $G:\UH\to\UH$ satisfying the following
condition: there exists $C>0$ such that ${|G(z)-z|\le C/\Im z}$ for all ${z\in\UH}$. Let us denote by $\ell(G)$
the minimal value of~$C$ for which the above estimate holds.
\end{definition}

\begin{remark}\label{GorBaBauer_Remark}
Basic facts about the class~$\mathfrak P_0$ can be found in~\cite{Aleks1983, Goryainov-Ba, Bauer}. In
particular, the class~$\mathfrak P_0$ is a semigroup with respect to the operation of composition and the
functional~$\ell$ turns out to be additive, i.e., $\ell(G_1\circ G_2)=\ell(G_1)+\ell(G_2)$,
$G_j\in\mathfrak P_0$, $j=1,2$. Moreover, for any $G\in\mathfrak P_0$ and for any $\varepsilon>0$, we have
\begin{equation*}
    \ell(G)=\lim_{\substack{z\to\infty,\\\Im (z)>\varepsilon}}z\big(z-G(z)\big).
\end{equation*}
Another useful fact is that the class~$\mathfrak P_0$ coincides with the class of all univalent functions~$F$
in~$\Hol$ that admit the following integral representation:
\begin{equation}\label{P_0_repesent}
F(z)=z+\int_{\Real}\frac{d\mu(x)}{x-z},
\end{equation}
where $\mu$ is a finite positive Borel measure on~$\Real$.
\end{remark}

In~\cite{Aleks1983, AleksST} and later in~\cite{Goryainov-Ba} there was given a generalization of the
original chordal Loewner equation~\eqref{chLE} for evolution families in the class $\mathfrak P_0$, with
the functional $\ell$ playing the role of the governing parameter.

The fact that the class~$\mathfrak P_0$ can be characterized by expansion~\eqref{G_expan} with the
residual term $\gamma$ subject to condition~\eqref{gamma_prop_angular}, follows directly
from~\cite[Proposition~7 and Remark~2]{Aleks1983}.

Now we can switch to the unit disk~$\UD$ as a reference domain by means of the Cayley map~$H$, see
formula~\eqref{Cayley}. The class $\mathcal P_0:=\{\varphi=H^{-1}\circ G\circ H: G\in\mathfrak P_0\}$ can
be also defined as the subclass of~$\mathcal P$ that consists of all functions $\varphi$ having finite
angular derivatives at the point~$\tau=1$ up to the third order with $\varphi''(1)=0$. In other words,
$\varphi\in\mathcal P$ belongs to~$\mathcal P_0$ if and only if
\begin{equation}\label{P_0}\varphi(z)=1+(z-1)-\frac{c\,(z-1)^3}{4} + \gamma(z)\end{equation} for some
$c\in\Complex$ and $\gamma\in{\sf Hol}(\UD,\Complex)$ such that
$$\angle\lim_{z\to1}\dfrac{\gamma(z)}{(z-1)^3}=0.$$

Suppressing the language, we define $\ell:\mathcal P_0\to[0,+\infty)$ by $\ell(\varphi):=\ell(H\circ\varphi\circ
H^{-1})$. In view of Remark~\ref{GorBaBauer_Remark}, it is easy to check by elementary computations that  the
coefficient~$c$ in expansion~\eqref{P_0} equals~$\ell(\varphi)$. It is interesting to mention that the
class~$\mathcal P_0$ admits a rigidity property, the so-called {\it Burns--Krantz theorem},
see~\cite{Burns-Krantz}, stating that if $\varphi\in\mathcal P_0$ and $\ell(\varphi)=0$, then
$\varphi=\id_{\UD}$.

Now we introduce the notion of Goryainov--Ba evolution families, which coincides essentially (but not literally)
with those studied by Goryainov, Ba and Bauer.

\begin{definition}\label{def_GorBa}
A {\it Goryainov--Ba evolution family} is an evolution family $(\varphi_{s,t})$ contained in $\mathcal P_0$ and
such that the function $t\mapsto\ell(\varphi_{0,t})$ is locally absolutely continuous on $[0,+\infty)$.
\end{definition}

Under conditions EF1 and EF2 from Definition~\ref{def-ev} of evolution families, local absolute continuity
of~$t\mapsto\ell(\varphi_{0,t})$ implies condition~EF3. This is the content of the following simple proposition.

\begin{proposition}\label{GorBaLemma}
Suppose that a family~$(\varphi_{s,t})$, $s\ge0$, $t\ge s$, is contained in~$\mathcal P_0$ and satisfies
conditions EF1 and EF2 in Definition~\ref{def-ev}. Let $v(t):=\ell(\varphi_{0,t})$, $t\ge0$. If ${v\in
AC^d\big([0,+\infty),[0,+\infty)\big)}$, then $(\varphi_{s,t})$ is a Goryainov--Ba evolution family of
order~$d$.
\end{proposition}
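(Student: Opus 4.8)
The plan is to follow the strategy of the proof of Proposition~\ref{m_lemma}. Since the family $(\varphi_{s,t})$ already satisfies EF1 and EF2, is contained in~$\mathcal P_0$, and has $v(t)=\ell(\varphi_{0,t})$ locally absolutely continuous (since $v\in AC^d$), by Definition~\ref{def_GorBa} it is enough to verify condition EF3 with a kernel in $L^d$; this will at the same time show that $(\varphi_{s,t})$ is an evolution family of order~$d$. As in the proof of Proposition~\ref{m_lemma}, I would pass to the upper half-plane by setting $G_{s,t}:=H\circ\varphi_{s,t}\circ H^{-1}$, where $H$ is the Cayley map~\eqref{Cayley}. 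Then each $G_{s,t}$ lies in the class~$\mathfrak P_0$ of Definition~\ref{class_mathfrak_P_0}, hence in~$\mathfrak P$, so that Remark~\ref{Julia} is at our disposal.

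The first step is to record an algebraic identity for $\ell$. From EF2 we get $G_{0,t}=G_{s,t}\circ G_{0,s}$, and the additivity of $\ell$ on $\mathfrak P_0$ (Remark~\ref{GorBaBauer_Remark}) yields $v(t)=\ell(\varphi_{s,t})+v(s)$, i.e.
\begin{equation*}
\ell(\varphi_{s,t})=v(t)-v(s),\qquad 0\le s\le t.
\end{equation*}
In particular $v$ is non-decreasing (because $\ell\ge0$), so $v'\ge0$ almost everywhere and $v(t)-v(s)=\int_s^t v'(\xi)\,d\xi$ with $v'\in L^d_{\mathrm{loc}}([0,+\infty))$.

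The second step is the estimate. Fix $z\in\UD$ and put $\zeta:=H(z)\in\UH$, so that $\Im\zeta=\Im H(z)>0$. For $0\le s\le u\le t$ write $G_{s,t}=G_{u,t}\circ G_{s,u}$ and set $w:=G_{s,u}(\zeta)$. Since $G_{s,u}\in\mathfrak P$, Remark~\ref{Julia} gives $\Im w\ge\Im\zeta$. Applying the defining inequality of $\mathfrak P_0$ (Definition~\ref{class_mathfrak_P_0}) to $G_{u,t}$ at the point~$w$, and using the first step, one obtains
\begin{equation*}
|G_{s,t}(\zeta)-G_{s,u}(\zeta)|=|G_{u,t}(w)-w|\le\frac{\ell(\varphi_{u,t})}{\Im w}\le\frac{v(t)-v(u)}{\Im\zeta}=\frac{1}{\Im H(z)}\int_u^t v'(\xi)\,d\xi.
\end{equation*}
Transferring this back to the unit disk exactly as at the end of the proof of Proposition~\ref{m_lemma} --- using that $H^{-1}$ is Lipschitz on the convex set $\{w:\Im w\ge\Im\zeta\}$, which contains the segment joining $G_{s,u}(\zeta)$ and $G_{s,t}(\zeta)$ --- produces a bound of the same form for $|\varphi_{s,u}(z)-\varphi_{s,t}(z)|$. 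Hence the non-negative function $k_{z,T}(\xi):=c(z)\,v'(\xi)$, where $c(z)$ depends only on~$z$, belongs to $L^d([0,T])$ for every $T>0$ and satisfies $|\varphi_{s,u}(z)-\varphi_{s,t}(z)|\le\int_u^t k_{z,T}(\xi)\,d\xi$ whenever $0\le s\le u\le t\le T$, which is exactly EF3.

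Once EF3 is established, $(\varphi_{s,t})$ is an evolution family of order~$d$; being contained in~$\mathcal P_0$ with $t\mapsto\ell(\varphi_{0,t})=v$ locally absolutely continuous, it is a Goryainov--Ba evolution family of order~$d$ by Definition~\ref{def_GorBa}. There is no serious obstacle in this argument; the only two points that need a little care are the choice to evaluate the displacement of $G_{u,t}$ at the shifted point $w=G_{s,u}(\zeta)$ (so that EF2 is used in the form in which the two terms of EF3 share the first index) and the appeal to the monotonicity of the imaginary part under parabolic self-maps of~$\UH$ (Remark~\ref{Julia}) needed to bound the denominator below by $\Im H(z)$.
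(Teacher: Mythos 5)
Your proof is correct and follows the same strategy as the paper: pass to the half-plane via the Cayley map, use the additivity of $\ell$ to get $\ell(\varphi_{u,t})=v(t)-v(u)$, and combine the defining inequality of $\mathfrak P_0$ with Remark~\ref{Julia} to bound $|G_{s,t}(\zeta)-G_{s,u}(\zeta)|\le (v(t)-v(u))/\Im\zeta$, yielding EF3. Your explicit appeal to the Lipschitz property of $H^{-1}$ on the half-plane $\{\Im w\ge\Im\zeta\}$ to transfer the estimate back to $\UD$ is a careful (and indeed sound) way to handle a step the paper leaves implicit.
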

\begin{proof}
We have only to prove that $(\varphi_{s,t})$ satisfies condition EF3, what will be done in the framework
of the upper half-plane.

Let us consider the functions $$\Phi_{s,t}:=H\circ \varphi_{s,t}\circ H^{-1}.$$ Note that
$\Phi_{s,t}\in\mathfrak P_0$. It follows that whenever $0\le s\le u\le t$, we have
\begin{equation}
\big|\Phi_{s,t}(z)-\Phi_{s,u}(z)\big|= \big|\Phi_{u,t}(\zeta)-\zeta\big|\le \frac{v(t)-v(u)}{\Im \zeta}\le
\frac{v(t)-v(u)}{\Im z},
\end{equation}
where $\zeta:=\Phi_{s,u}(z)$.

Therefore condition~EF3 holds with $k_{z,T}:=v'|_{[0,T]}/\Im z$. This completes the proof.
\end{proof}

Our main result in this section is a complete characterization of the geometry of expanding systems of domains
which can be obtained as the set of image domains of some Loewner chain associated with a Goryainov--Ba
evolution family.

For such a characterization, we introduce the following subclass of the class~$\mathcal C$ (see
Definition~\ref{classC}):

\begin{multline*}\classCC:=\big\{\varphi\in\mathcal
C:\text{$\varphi^{(j)}(1)\neq\infty$ exists in angular sense for $j=1,2,3$,}\\ \text{and
$\Re\varphi''(1)=\varphi'(1)\big(\varphi'(1)-1\big)/2$}\big\}.
\end{multline*}

Note that the condition $\Re\varphi''(1)=\varphi'(1)\big(\varphi'(1)-1\big)/2$ in the above definition is
to ensure that the coefficient $b$ in the expansion $$ \big(H\circ \varphi\circ H^{-1}\big)(z)=a z+b+\frac
cz+\gamma(z),\quad z\in\UH,~~\angle\lim_{z\to\infty}z\gamma(z)=0, $$ is a real number for
any~$\varphi\in\classCC$.

\begin{theorem}\label{GorBaTheorem}
Let $(\Omega_t)$ be an L-admissible family. Then the following two statements are equivalent:
\begin{itemize}
\item[(i)] there exists a Goryainov--Ba evolution family~$(\varphi_{s,t})$ and
a~Loewner chain~$(f_t)$ associated with~$(\varphi_{s,t})$ such that
\begin{equation*}\label{GorBa_f_t_Omega_t}
\big\{f_t(\UD):t\ge0\big\}=\big\{\Omega_t:t\ge0\big\};
\end{equation*}

\item[(ii)] there exist a family $(F_t)$
of univalent functions in the unit disk~$\UD$ such that
\begin{equation*}\label{GorBa_F_t}
F_t(\UD)=\Omega_t\ \text{ and }\ F_t^{-1}\circ F_0\in\mathcal \classCC\quad \text{ for all $t\ge0$.}
\end{equation*}
\end{itemize}
\end{theorem}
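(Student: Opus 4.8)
The plan is to prove $(i)\Leftrightarrow(ii)$ by running the argument of Theorem~\ref{Main_Theorem} — that is, of Theorem~\ref{par_th} together with its corollaries — with the class $\mathcal P$ everywhere replaced by the rigid subclass $\mathcal P_0$, the class $\mathcal C$ replaced by $\classCC$, and the governing functional $\ell$ playing the role that the orbit $\varphi_{0,t}(0)$ plays there. The implication $(i)\Rightarrow(ii)$ is the soft one: given a Goryainov--Ba evolution family $(\varphi_{s,t})$ and an associated Loewner chain $(f_t)$ with $\{f_t(\UD):t\ge0\}=\{\Omega_t:t\ge0\}$, assign to each $t\ge0$ a value $\theta\ge0$ with $f_\theta(\UD)=\Omega_t$, taking $\theta=0$ when $t=0$ (a linearly ordered family of domains has a single minimal element), and set $F_t:=f_\theta$; then $F_t^{-1}\circ F_0=\varphi_{0,\theta}\in\mathcal P_0$. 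Since every $\varphi\in\mathcal P_0$ has, by~\eqref{P_0}, finite angular derivatives up to order three with $\varphi'(1)=1$ and $\varphi''(1)=0$, one has at once $\mathcal P_0\subset\classCC$ (in particular $\Re\varphi''(1)=0=\varphi'(1)(\varphi'(1)-1)/2$), and $(ii)$ follows.

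For $(ii)\Rightarrow(i)$ I would follow the four steps of Theorem~\ref{par_th}. First, adjust the functions $F_t$ of $(ii)$ by M\"obius transformations of $\UD$ to obtain univalent $h_t$ with $h_t(\UD)=\Omega_t$ and $\phi_{s,t}:=h_t^{-1}\circ h_s\in\mathcal P_0$ for all $0\le s\le t$; this is possible because the identity $\Re\varphi''(1)=\varphi'(1)(\varphi'(1)-1)/2$ built into $\classCC$ forces the coefficient $b$ in $\big(H\circ\varphi\circ H^{-1}\big)(z)=az+b+c/z+\ldots$ to be real, so the conjugated map can be normalized into $\mathfrak P_0$ by an affine automorphism of $\UH$. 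Obtaining $\phi_{s,t}\in\mathcal P_0$ for \emph{all} $s$, not just $s=0$, requires a $\mathcal P_0$-analogue of Lemma~\ref{three functions} — if $\phi_3=\phi_2\circ\phi_1$ with the $\phi_j$ univalent self-maps of $\UD$ and two of them in $\mathcal P_0$, then so is the third — which follows by inverting the angular expansion $G(z)=z-c/z+\gamma(z)$, $\angle\lim_{z\to\infty}z\gamma(z)=0$, characterizing $\mathfrak P_0$ (cf.~Remark~\ref{GorBaBauer_Remark}), combined with Lemma~\ref{three functions} in $\mathcal P$. Second, since $\mathcal P_0\subset\mathcal P$, Propositions~\ref{l1} and~\ref{l2} apply verbatim and give, exactly as in Step~2 of Theorem~\ref{par_th}, that $t\mapsto\phi_{0,t}$ is continuous into ${\sf Hol}(\UD,\UD)$. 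Third, put $v(t):=\ell(\phi_{0,t})$; then $v$ is non-decreasing by the additivity and non-negativity of $\ell$ (Remark~\ref{GorBaBauer_Remark}) and — the crucial point, see below — continuous. Fourth, $J:=v([0,+\infty))$ is then an interval with left endpoint $0$; choose a smooth non-decreasing surjection $\chi:[0,+\infty)\to J$, set $\theta(t):=\inf\{\theta\ge0:v(\theta)=\chi(t)\}$, and argue as in Step~3 of Theorem~\ref{par_th} — using the additivity of $\ell$ and the Burns--Krantz rigidity ($\varphi\in\mathcal P_0$, $\ell(\varphi)=0\Rightarrow\varphi=\id_\UD$) to get $v(s)=v(t)\Leftrightarrow h_s=h_t$ — that $t\mapsto h_t$ has the same range on $\theta([0,+\infty))$ as on $[0,+\infty)$. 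Setting $f_t:=h_{\theta(t)}$ and $\varphi_{s,t}:=\phi_{\theta(s),\theta(t)}$, the family $(\varphi_{s,t})$ lies in $\mathcal P_0$, satisfies EF1 and EF2, and has $\ell(\varphi_{0,t})=\chi(t)\in AC^{\infty}$, so Proposition~\ref{GorBaLemma} makes it a Goryainov--Ba evolution family; since $f_t\circ\varphi_{s,t}=f_s$, \cite[Lemma~3.2]{SMP} shows $(f_t)$ is an associated Loewner chain with $\{f_t(\UD):t\ge0\}=\{\Omega_t:t\ge0\}$.

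The main obstacle is the continuity of $v(t)=\ell(\phi_{0,t})$ asserted in the third step. It does \emph{not} follow from the locally uniform convergence $\phi_{0,t_n}\to\phi_{0,t_0}$ of the second step: the functional $\ell$ is only lower semicontinuous for that topology, because the mass of the hull measures may escape towards the boundary fixed point $\tau=1$ (equivalently, towards $\infty$ in the half-plane picture), and there are sequences in $\mathfrak P_0$ converging locally uniformly to $\id$ along which $\ell\to+\infty$. To overcome this I would extract more from Propositions~\ref{l1} and~\ref{l2} than the bare convergence: their proofs produce \emph{uniform-in-$m$} bounds — $\Im\big(\psi_m(iy)-iy\big)/y\le2\varepsilon$ for all $y\ge3y_\infty$ in Proposition~\ref{l1}, and its analogue~\eqref{cl2} in Proposition~\ref{l2} — and, writing each $\psi_m$ (in the half-plane picture) through the integral representation~\eqref{P_0_repesent} with finite measure $\mu_m$, one has $\Im\big(\psi_m(iy)-iy\big)/y=\int_{\R}(x^2+y^2)^{-1}\,d\mu_m(x)$ together with $\mu_m(\R)=\ell(\psi_m)$. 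In the nested situation here the hulls $\UH\setminus\psi_m(\UH)$ decrease with $m$ and so all lie in the fixed hull $\UH\setminus\psi_1(\UH)$; the task is to turn this containment into \emph{uniform tightness} of $\{\mu_m\}$, which is exactly what prevents the mass from escaping to $\infty$ and upgrades $\psi_m\to\id$ to $\ell(\psi_m)\to0$, i.e.\ to the continuity of $v$. It is precisely at this point that the requirement in $(ii)$ that $F_t^{-1}\circ F_0\in\classCC$ hold for \emph{every} $t\ge0$, and not merely at isolated times, has to be used in an essential way.
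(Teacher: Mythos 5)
Your $(i)\Rightarrow(ii)$ direction, the reduction to producing $(h_t)$ with $\phi_{s,t}\in\mathcal P_0$, and the overall Step~1/3/4 architecture match the paper's proof, and you are right that the crux is the continuity of $t\mapsto\ell(\phi_{0,t})$ and that it does not follow from mere locally uniform convergence of $\phi_{0,t_n}$ to $\phi_{0,t_0}$. But that is precisely where your proposal stops short of a proof. You say the task is "to turn this containment into uniform tightness of $\{\mu_m\}$," but you never carry it out, and the obstacle is real: the nesting of the hulls does not put them inside a fixed Euclidean compact (Lemma~\ref{H_epsilon} gives a bound $b=b(F)$ on the height of the hull depending on the full map, not merely on $\ell(F)$), and the uniform-in-$m$ estimate $J_{\psi_m}(iy)\le 2\varepsilon$ for $y>R(\varepsilon)$ gives only $\int d\mu_m/(x^2+y^2)\le\varepsilon$, i.e.\ a growth bound $\mu_m(\{|x|\le y\})=O(\varepsilon y^2)$, which is far weaker than tightness plus uniform boundedness of $\mu_m(\Real)$. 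Executing your sketch would in effect amount to reproving a convergence theorem for $\mathfrak P_0$.

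The paper takes a shortcut that avoids all of this. It does not use Propositions~\ref{l1} and~\ref{l2} at all in this proof: after establishing $\Phi_{s,t}\in\mathfrak P_0$ via Corollary~\ref{P_0_comp_cor} (itself proved through Proposition~\ref{P_0_composition}, which relies on Nevanlinna integral representations rather than the "inversion of the angular expansion" you gesture at), the paper sets $x(t)=\ell(\Phi_{0,t})$, notes monotonicity and the equivalence $x(s)=x(t)\Leftrightarrow\Phi_{s,t}=\id$ from Bauer's Lemma~4.1, and then gets the continuity of $x$ directly from Proposition~\ref{Bauer} (Bauer's Theorem~4.2) together with Remarks~\ref{inclusion_chain_Remark} and~\ref{BauerR}. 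Concretely: for $s_n\uparrow s_0$ or $s_n\downarrow s_0$ the sequence $x(s_n)$ is monotone and bounded, hence converges to some $L$; Proposition~\ref{Bauer} then gives $\Phi_{0,s_n}\to G\in\mathfrak P_0$ with $\ell(G)=L$ and identifies $G(\UH)$ via \eqref{image}, and Remarks~\ref{inclusion_chain_Remark} and~\ref{BauerR} force $G(\UH)=\Phi_{0,s_0}(\UH)$, hence $G=\Phi_{0,s_0}$ and $L=x(s_0)$. That citation is the missing ingredient in your argument; without it (or an equivalent convergence statement for $\mathfrak P_0$ proved from scratch), the proposal does not close. Also, the final observation in your write-up — that the pointwise condition $F_t^{-1}\circ F_0\in\classCC$ for every $t$ is what must be used "in an essential way" to get continuity of $v$ — is not the right diagnosis: that hypothesis is consumed in producing $\phi_{s,t}\in\mathcal P_0$; the continuity of $\ell$ then comes from the monotone convergence of hull images, which is Bauer's result.
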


\begin{remark}
The evolution family~$(\varphi_{s,t})$ and the Loewner chain~$(f_t)$ we construct in the proof of the
``$(ii)\Rightarrow(i)$"-part of the above theorem, are of order~$d=\infty$.
\end{remark}

\subsection{The class $\mathfrak P_0$}
In order to prove Theorem~\ref{GorBaTheorem} we need to establish some new results about $\mathfrak P_0$. As
usual, for any $b>0$, we write $\UH_b:=\{w\in\UH:\Im w>b\}$.

\begin{lemma}\label{H_epsilon}
For each function $F\in\mathfrak P_0$ there exists $b>0$ such that $\UH_b\subset F(\UH)$.
\end{lemma}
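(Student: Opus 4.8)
The plan is to obtain the inclusion directly from the growth estimate built into the definition of $\mathfrak P_0$, by a Rouch\'e-type argument. The starting point is the observation that, by Definition~\ref{class_mathfrak_P_0}, every $F\in\mathfrak P_0$ satisfies $|F(z)-z|\le\ell(F)/\Im z$ for all $z\in\UH$; thus on any sub-half-plane $\{\Im z>a\}$ the map $F$ is uniformly close to the identity, with error at most $\ell(F)/a$. Fixing $a:=1$, I would set $\delta:=\ell(F)$ and $b:=1+\delta$, and prove the sharper statement that every $w_0\in\C$ with $\Im w_0>b$ lies in $F(\UH)$; this immediately gives $\UH_b\subset F(\UH)$.

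To prove this, fix such a $w_0$ and, for $R>|w_0|+\delta$, consider the convex (hence Jordan) domain $U_R:=\{z\in\UH:\Im z>1,\ |z|<R\}$, whose closure is a compact subset of the open half-plane $\UH$ (every boundary point has imaginary part $\ge1$), so that $F$ is holomorphic on a neighbourhood of $\overline{U_R}$ and Rouch\'e's theorem applies on the piecewise-analytic Jordan curve $\partial U_R$. The key step is comparing $|F(z)-z|$ with $|z-w_0|$ on $\partial U_R$: on the horizontal edge $\{\Im z=1\}$ one has $|F(z)-z|\le\delta$ while $|z-w_0|\ge\Im w_0-1>\delta$, and on the circular arc $\{|z|=R,\ \Im z\ge1\}$ one has $|F(z)-z|\le\ell(F)/\Im z\le\delta$ while $|z-w_0|\ge R-|w_0|>\delta$. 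Hence $|F(z)-z|<|z-w_0|$ everywhere on $\partial U_R$, and since $w_0\in U_R$ the function $z\mapsto z-w_0$ has exactly one zero in $U_R$; by Rouch\'e the function $z\mapsto F(z)-w_0$ also has a zero $z_0\in U_R\subset\UH$, i.e. $w_0=F(z_0)\in F(\UH)$, as required.

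I do not foresee a genuine obstacle here; the only points requiring care are checking that $\overline{U_R}$ stays inside the open half-plane (so that Rouch\'e is legitimate) and keeping both boundary inequalities strict. The degenerate case $\ell(F)=0$ causes no trouble: the computation above still works with $\delta=0$ and $b=1$, and alternatively one may simply invoke the Burns--Krantz rigidity theorem~\cite{Burns-Krantz} to get $F=\id_{\UD}$. One could optimise the constant by taking $a:=\sqrt{\ell(F)}$, which yields $b=2\sqrt{\ell(F)}$, but the precise value of $b$ is irrelevant for the later use of the lemma.
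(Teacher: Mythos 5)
Your proof is correct and takes a genuinely different route from the paper's. You prove the inclusion directly by a Rouch\'e argument on the bounded convex domain $U_R:=\{z\in\UH:\Im z>1,\ |z|<R\}$, comparing $F(z)-w_0$ with $z-w_0$ on $\partial U_R$; the estimate $|F(z)-z|\le\ell(F)/\Im z\le\ell(F)=\delta$ on both parts of the boundary, together with $|z-w_0|>\delta$ there (guaranteed by $\Im w_0>1+\delta$ and $R>|w_0|+\delta$), gives the strict inequality needed for Rouch\'e, and letting $w_0$ range over $\UH_{1+\delta}$ yields the conclusion with the explicit constant $b=1+\ell(F)$. The paper instead argues by contradiction: assuming $\UH_b\setminus F(\UH_1)\ne\emptyset$ for every $b$, it uses the monotonicity $\Im F(z)\ge\Im z$ (Remark~\ref{Julia}) to show $\UH_b$ meets $F(\UH_1)$, hence also meets $\partial F(\UH_1)$, identifies $\partial F(\UH_1)$ with $\overline{F(\mathbb L)}$ via a cluster-set theorem for univalent maps, and derives that $\Im F$ is unbounded on the line $\mathbb L=\{x+i\}$, contradicting the bound $|F(z)-z|\le\ell(F)$ there. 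Your argument is more elementary (no appeal to cluster-set theory or prime ends), produces an explicit value of $b$, and in fact proves the slightly stronger statement $\UH_{1+\ell(F)}\subset F(\UH_1)$; the paper's argument is shorter once the cluster-set machinery is available, and fits the prime-end language used throughout the rest of the paper. Both are valid proofs of the lemma.
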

\begin{proof}
We will prove a little bit more, namely that $\UH_b\subset F(\UH_1)$. Suppose on the contrary
that~$\UH_b\setminus F(\UH_1)\neq\emptyset$ for any $b>0$. According to Remark~\ref{Julia}, $$\UH_b\cap
F(\UH_1)\supset\UH_b\cap F(\UH_b)=F(\UH_b)\neq\emptyset$$ for any $b\ge1$. It follows that
\begin{equation}\label{UH_b}
\UH_b\cap\partial F(\UH_1)\neq\emptyset,\quad\text{for all~$b\ge1$}.
\end{equation}
Using \cite[Theorem 3.1]{ClusterSets} it is easy to see that  $\partial F(\UH_1)=\overline{F(\mathbb L)}$, where
$\mathbb{L}:=\{x+i:x\in\Real\}$. Consequently, \eqref{UH_b} implies that $\UH_b\cap F(\mathbb L)\neq\emptyset$
for each $b\ge1$. It follows that $\Im F$ is not bounded on~$\mathbb L$.

At the same time, $|F(z)-z|\le 1/\Im z=1$ for all~$z\in\mathbb L$ because~$F\in\mathfrak P_0$. This
contradiction proves the lemma.
\end{proof}

By~\cite[Lemma 1]{Goryainov-Ba}, a function $F\in\mathfrak P_0$ if and only if the following two conditions are
satisfied:
\begin{align}
\label{limy}F(iy)-iy\to0\quad\text{as}\quad y\to+\infty,~y>0, \\
\label{supy}\sup_{y>0} y\,\big(\Im F(iy)-y\big)<+\infty.
\end{align}

Using this fact, we easily deduce the following

\begin{lemma}\label{mlemma}
Let $F:\UH\to\UH$ be a univalent holomorphic  function. Then the following statements are equivalent:
\begin{itemize}
 \item[(i)] $F\in\mathfrak P_0$;
\item[(ii)] for each $c\ge0$ the function $z\mapsto F(z+ic)-ic$ belongs to the class~$\mathfrak P_0$;
\item[(iii)] there exists $c\ge0$ such that the function $z\mapsto F(z+ic)-ic$ belongs to the class~$\mathfrak P_0$.
\end{itemize}
\end{lemma}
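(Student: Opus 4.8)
The plan is to prove the cycle (i) $\Rightarrow$ (ii) $\Rightarrow$ (iii) $\Rightarrow$ (i); only the last arrow carries real content, and for it I would combine the characterization \eqref{limy}--\eqref{supy} of $\mathfrak P_0$ with the univalence of $F$. For (i) $\Rightarrow$ (ii) I would argue straight from Definition~\ref{class_mathfrak_P_0}: fix $c\ge0$ and put $G_c(z):=F(z+ic)-ic$. Since $F\in\mathfrak P_0\subset\mathfrak P$, Remark~\ref{Julia} gives $\Im F(w)\ge\Im w$ for all $w\in\UH$, whence $\Im G_c(z)=\Im F(z+ic)-c\ge\Im z>0$; so $G_c$ maps $\UH$ into $\UH$, and it is univalent as a composition of the injective maps $z\mapsto z+ic$, $F$, $w\mapsto w-ic$. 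Finally $|G_c(z)-z|=|F(z+ic)-(z+ic)|\le \ell(F)/(\Im z+c)\le \ell(F)/\Im z$, so $G_c\in\mathfrak P_0$. The implication (ii) $\Rightarrow$ (iii) is trivial.

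For (iii) $\Rightarrow$ (i), suppose $G_c\in\mathfrak P_0$ for some $c>0$ (the case $c=0$ is already (i)). I would first verify \eqref{limy} and \eqref{supy} for $F$. With $y'=y+c$ one has $G_c(iy)-iy=F(iy')-iy'$, so \eqref{limy} for $G_c$ gives $F(iy')-iy'\to0$ as $y'\to+\infty$, i.e.\ \eqref{limy} for $F$; in particular $\lim_{y'\to+\infty}F(iy')/(iy')=1$, so by Lemma~\ref{lemma} the angular derivative of $F$ at $\infty$ equals $1$, hence $F\in\mathfrak P$ (as $F$ is univalent) and $\Im F(w)\ge\Im w$ on $\UH$ by Remark~\ref{Julia}. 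Moreover $|F(w)-w|=|G_c(w-ic)-(w-ic)|\le \ell(G_c)/(\Im w-c)$ whenever $\Im w>c$, which for $y\ge 2c$ already gives $y(\Im F(iy)-y)\le y\,\ell(G_c)/(y-c)\le 2\ell(G_c)$.

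The delicate point is to control $\Im F$ on the strip $\{0<\Im w\le 2c\}$, where the previous estimate is useless (it blows up as $\Im w\downarrow c$); here I would exploit univalence. On the line $L:=\{\Im w=2c\}$ the two facts above give $2c\le\Im F(w)\le 2c+\ell(G_c)/c=:M$, so $F(L)$ lies in the closed strip $\{2c\le\Im\zeta\le M\}$. A short argument on the possible limit points of $F(w_n)$ with $w_n\in\UH_{2c}$ — ruling out finite limits arising from $w_n\to\partial\UH$ or $w_n\to\infty$ by means of $\Im F(w_n)\ge\Im w_n$ and the estimate on $\{\Im w>c\}$ — shows $\partial F(\UH_{2c})\cap\C\subset\{2c\le\Im\zeta\le M\}$. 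Since $F(\UH_{2c})$ contains points of arbitrarily large imaginary part (e.g.\ $F(iy)$ with $\Im F(iy)\ge y$) while $\UH_M$ is connected and disjoint from that strip, one gets $\UH_M\subset F(\UH_{2c})$; then injectivity of $F$ forces $\Im F(w)\le M$ for $0<\Im w\le 2c$, hence $y(\Im F(iy)-y)\le yM\le 2cM$ on $(0,2c]$. Thus \eqref{supy} holds for $F$, and \cite[Lemma 1]{Goryainov-Ba} yields $F\in\mathfrak P_0$.

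The main obstacle is precisely this last step: the hypothesis $G_c\in\mathfrak P_0$ controls $F$ only on the half-plane $\{\Im w>c\}$, with an estimate that degenerates as $\Im w$ approaches $c$, so membership of $F$ in $\mathfrak P_0$ cannot be read off along the imaginary axis without using univalence to confine $\Im F$ on the leftover strip. One could replace the topological step by Lemma~\ref{H_epsilon} applied to $G_c$ (which gives $\UH_b\subset G_c(\UH)$, i.e.\ $\UH_{b+c}\subset F(\UH_c)$, hence $\Im F\le b+c$ on $\{0<\Im w\le c\}$), but that still leaves the sliver $\{c<\Im w<2c\}$ uncontrolled, so some connectedness-type argument seems unavoidable.
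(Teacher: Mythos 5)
Your proof is correct, and it supplies a real argument where the paper offers none: the authors merely assert that the lemma is ``easily deduced'' from the characterization~\eqref{limy}--\eqref{supy}, but the implication $(iii)\Rightarrow(i)$ is genuinely not a line-by-line algebraic consequence, because the bound $|F(w)-w|\le\ell(G_c)/(\Im w-c)$ inherited from $G_c$ degenerates as $\Im w\downarrow c$ and says nothing at all on $\{0<\Im w\le c\}$. You correctly identify that univalence must enter, and your connectedness argument (showing $\UH_M\subset F(\UH_{2c})$ via the boundary analysis, then invoking injectivity to confine $\Im F$ on the strip $0<\Im w\le 2c$) is sound; I checked the case analysis for $\partial F(\UH_{2c})\cap\C$ and the continuity at points with $\Im w_0=2c>0$, and it all works. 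The forward direction $(i)\Rightarrow(ii)$ and the trivial $(ii)\Rightarrow(iii)$ are also handled correctly.

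One small inaccuracy in your closing remark: the alternative via Lemma~\ref{H_epsilon} does \emph{not} leave a genuine gap on the sliver $\{c<\Im w<2c\}$. Condition~\eqref{supy} only involves the imaginary axis, and the segment $\{iy:c\le y\le 2c\}$ is a compact subset of~$\UH$, so continuity of $F$ already bounds $\Im F(iy)$ there; combined with $\Im F\le b+c$ on $\{0<\Im w\le c\}$ from $\UH_{b+c}\subset F(\UH_c)$ and the estimate for $y\ge 2c$, this gives~\eqref{supy} without any further connectedness argument. So the route you dismissed is actually the shorter one, though of course it imports Lemma~\ref{H_epsilon}, whose own proof is again topological. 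Either way, the substance of your solution is right: univalence is what rescues~\eqref{supy} near $y=0$, and this is precisely what the paper's ``easily'' elides.
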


\begin{proposition}\label{P_0_composition}
Let $\varphi_j\in\mathcal P$, $j=1,2,3$. If $\varphi_3=\varphi_2\circ\varphi_1$ and $\varphi_3\in\mathcal P_0$,
then there exists $h\in\Moeb(\UD)\cap\mathcal P$ such that $h\circ \varphi_1$ and $\varphi_2\circ h^{-1}$ belong
to $\mathcal P_0$.
\end{proposition}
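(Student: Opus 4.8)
The plan is to work in the upper half-plane via the Cayley map~$H$, setting $G_j:=H\circ\varphi_j\circ H^{-1}$, so that $G_3=G_2\circ G_1$ with $G_1,G_2\in\mathfrak P$ and $G_3\in\mathfrak P_0$. The task becomes: find $\eta\in\mathfrak P$ which is an automorphism of~$\UH$ (a real-affine map $z\mapsto z+\beta$, $\beta\in\Real$, since elements of $\mathfrak P\cap\Moeb$ fix~$\infty$ parabolically) such that $\eta\circ G_1$ and $G_2\circ\eta^{-1}$ both lie in~$\mathfrak P_0$. First I would use Lemma~\ref{H_epsilon}: since $G_3\in\mathfrak P_0$, there is $b>0$ with $\UH_b\subset G_3(\UH)\subset G_2(\UH)$, so $\UH_b\subset G_2(\UH)$. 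Because $G_2\in\mathfrak P$, Remark~\ref{Julia} gives $\Im(G_2(z)-z)\ge0$; combined with $\UH_b\subset G_2(\UH)$ one checks that on the level set $G_2^{-1}$ restricted to high horocycles $G_2$ behaves like a near-translation. The key quantitative input is the characterization~\eqref{limy}--\eqref{supy}: a univalent $F:\UH\to\UH$ lies in $\mathfrak P_0$ iff $F(iy)-iy\to0$ and $\sup_{y>0}y(\Im F(iy)-y)<+\infty$.

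The heart of the argument is to split the ``defect at infinity'' of $G_1$ and $G_2$ additively. Writing $G_1(z)=z+iP_1(z)$, $G_2(z)=z+iP_2(z)$ with $P_1,P_2$ having nonnegative real part (as in the proof of Proposition~\ref{m_lemma}), the composite satisfies $G_3(z)=z+i\big(P_1(z)+P_2(G_1(z))\big)$. Since $G_3\in\mathfrak P_0$, the angular expansion~\eqref{gamma_prop_angular} forces $\angle\lim_{z\to\infty}z\,P_j$-type quantities to be finite; more precisely, using Theorem~\ref{ugl_pr} applied to $P_1$ and to $P_2$ one extracts the leading behaviour $P_j(z)\sim \beta_j/ i \ +\ (\text{const})/z$ along non-tangential approach, where $\beta_j\in\Real$ is the ``translation part'' of $G_j$ (it is nonnegative by Remark~\ref{Julia} combined with the self-map property). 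The finiteness of the third-order angular derivative of $G_3$ at~$\infty$ (equivalently $\varphi_3\in\mathcal P_0$, cf.~\eqref{P_0}) together with additivity of these leading coefficients under composition then shows $\beta_1,\beta_2$ are finite and $\beta_1+\beta_2$ is the translation part of $G_3$, which vanishes since $G_3\in\mathfrak P_0$ (there the expansion is $z-c/z+\gamma$ with no constant term). Hence $\beta_1=\beta_2=0$ already; one then sets $\eta(z):=z$ if this suffices, but in general one must also kill a finite real constant $\beta$ coming from the $O(1)$ term of each $G_j$. Choosing $\eta(z)=z-\beta_1'$, where $\beta_1'\in\Real$ is this constant term of $G_1$, makes $\eta\circ G_1$ satisfy~\eqref{limy}; the additivity of the constant terms then automatically makes $G_2\circ\eta^{-1}$ satisfy~\eqref{limy} as well.

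Once~\eqref{limy} is arranged for both factors, it remains to verify~\eqref{supy}, i.e.\ the $\mathfrak P_0$-type bound $\sup_{y>0} y(\Im F(iy)-y)<+\infty$ for $F=\eta\circ G_1$ and $F=G_2\circ\eta^{-1}$. For $\eta\circ G_1$ this is immediate because $\eta$ is a real translation, which does not change imaginary parts, and $\sup_y y(\Im G_1(iy)-y)\le\sup_y y(\Im G_3(iy)-y)<+\infty$ by monotonicity (again Remark~\ref{Julia}: $\Im G_1(iy)\le\Im G_3(iy)$ for $G_3=G_2\circ G_1$ with $G_2\in\mathfrak P$). For $G_2\circ\eta^{-1}$ one uses $\Im\big(G_2(\eta^{-1}(iy))\big)=\Im\big(G_2(iy+\text{real const})\big)$ and applies the growth estimate~\eqref{C} (Remark~\ref{distortion}) to the Herglotz function representing $G_2$, comparing the value at $iy$ with the value at a fixed base point in $\UH_b$, exactly as in Lemma~\ref{technical}; this yields $y\big(\Im G_2(iy)-y\big)=O(1)$. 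Translating back via $H^{-1}$ and invoking the definition $\mathcal P_0=\{H^{-1}\circ F\circ H:F\in\mathfrak P_0\}$ gives $h\circ\varphi_1,\ \varphi_2\circ h^{-1}\in\mathcal P_0$ with $h:=H^{-1}\circ\eta\circ H\in\Moeb(\UD)\cap\mathcal P$.

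The main obstacle I expect is the bookkeeping of the angular (rather than unrestricted) expansions: one only knows~\eqref{gamma_prop_angular} for $G_3$, not the full $o(1/z)$, so extracting the translation and constant coefficients of the individual factors $G_1,G_2$ and proving they add up correctly requires care — essentially one must show that the angular-limit coefficients $a,b,c$ in $G_j(z)=a_jz+b_j+c_j/z+\gamma_j(z)$ behave additively under composition (with $a_1=a_2=1$ here), and that $\mathfrak P_0$ is exactly the ``$a=1,\ b=0$'' locus with the third-order angular derivative controlled. This is where Remark~\ref{GorBaBauer_Remark}, the expansion~\eqref{P_0}, and~\cite[Proposition~7 and Remark~2]{Aleks1983} (relating $\mathfrak P_0$ to the angular form of~\eqref{G_expan}) do the real work.
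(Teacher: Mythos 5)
Your overall strategy (pass to $\UH$, reduce to finding a real translation $A$, exploit the additive structure of the ``defects'' $G_j(z)-z$, then check conditions~\eqref{limy} and~\eqref{supy} for each factor) matches the paper's, and your monotonicity observation $\Im G_1(iy)\le \Im G_3(iy)$ does correctly give~\eqref{supy} for $G_1$. But there is a genuine gap at the crux of the argument: the existence of the real constant you call $\beta_1'$, i.e.\ of a finite limit of $\Re\big(G_1(iy)\big)$ (equivalently, of the constant term $b_1$ in an expansion $G_1(z)=z+b_1+c_1/z+\gamma_1(z)$). A general element of $\mathfrak P$ has no such expansion, and Theorem~\ref{ugl_pr} cannot produce it --- it only controls the \emph{linear} coefficient (the angular derivative at $\infty$), not the $O(1)$ or $O(1/z)$ terms. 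Asserting that ``the angular-limit coefficients $a_j,b_j,c_j$ behave additively under composition'' presupposes that $b_1,b_2,c_1,c_2$ exist, which is precisely what must be proved. The paper closes this gap with the Nevanlinna--Herglotz integral representation: writing $\Psi_1:=G_1-\id$, $\Psi_2:=(G_2-\id)\circ G_1$, $\Psi_3:=G_3-\id$, all three map $\UH$ into $\overline\UH$ and satisfy $\Psi_3=\Psi_1+\Psi_2$; by uniqueness of the representation the measures add, $\mu_3=\mu_1+\mu_2$, and by \emph{positivity} the finiteness of $\mu_3$ (which encodes $G_3\in\mathfrak P_0$) forces $\mu_1,\mu_2$ to be finite, which is exactly what yields the constants $\alpha_j$ and hence the translation $A$. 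No purely ``coefficient-matching'' argument along rays can substitute for this positivity step.

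A secondary flaw: your verification of~\eqref{supy} for $G_2\circ\eta^{-1}$ via the growth estimate~\eqref{C} cannot work, because Harnack-type estimates propagate bounds from a base point with a factor that \emph{grows} like $|z|$, so from a fixed base point in $\UH_b$ you would get $\Im\big(G_2(iy)\big)-y=O(y)$, not the required $O(1/y)$. What one actually knows is the bound $|G_2(w)-w|\le C/\Im w$ for $w$ in the \emph{image} of $G_1$ (coming from $G_3\in\mathfrak P_0$ and $\Im G_1(z)\ge\Im z$); the paper then uses Lemma~\ref{H_epsilon} (the image of a $\mathfrak P_0$-map contains a half-plane $\UH_b$) together with Lemma~\ref{mlemma} (stability of $\mathfrak P_0$ under vertical shifts) to upgrade this to a bound on all of $\UH$. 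You would need this pair of lemmas, applied after first establishing $G_1+A\in\mathfrak P_0$, to finish the argument for the second factor.
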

\begin{proof}
To make the proof easier we reformulate the statement of the proposition in the framework of the upper
half-plane. So let $\Phi_3\in\mathfrak P_0$, $\Phi_1,\Phi_2\in\mathfrak P$ and $\Phi_3=\Phi_2\circ
\Phi_1$. We have to prove that there exists a constant $A\in\Real$ such that the functions ${z\mapsto
\Phi_1(z)+A}$ and ${z\mapsto \Phi_2(z-A)}$ belong $\mathfrak P_0$.

Since $\Phi_j\in\mathcal P$, we can write $$\Phi_j(z)=z+\tilde\Phi_j(z),\quad j=1,2,3,$$ where
$\tilde\Phi_j:\UH\to\overline\UH$ are  holomorphic functions satisfying%
$$ \angle\lim_{z\to\infty}\frac{\tilde\Phi_j(z)}{z}=0.$$ This means that the functions~$\tilde\Phi_j$ has
vanishing angular derivative at $\infty$. Using Lemma~\ref{lemma} one can easily conclude that the same holds
for the function~$\Psi_2:=\tilde\Phi_2\circ\Phi_1$. Therefore, with the notation $\Psi_3:=\tilde\Phi_3$,
$\Psi_1:=\tilde\Phi_1$, we have:
\begin{equation}\label{tildePsi}
\angle\lim_{z\to\infty}\frac{\Psi_j(z)}{z}=0,\quad j=1,2,3.
\end{equation}

The relation~$\Phi_3=\Phi_2\circ \Phi_1$ can be rewritten now in a very simple form
\begin{equation}\label{P_0_comp}
\Psi_3=\Psi_2+\Psi_1.
\end{equation}

It is known (see, e.g., \cite[Vol.\,2, p.\,7--9]{Akh} or \cite[p.18-20]{Dono}) that every holomorphic
function~$\Psi:\UH\to\overline\UH$ can be represented in the following way:
\begin{equation}\label{represent}
\Phi(z)=\beta z+\alpha+\int_\Real\left(\frac1{x-z}-\frac x{1+x^2}\right)d\mu(x),
\end{equation}
where $\beta\ge0,~\alpha\in\Real$,   and $\mu$ is a positive Borel measure on~$\Real$ such that $$\int_\Real
d\mu(x)/(1+x^2)<+\infty.$$ All the parameters in representation~\eqref{represent}, $\alpha$, $\beta$, and $\mu$
are defined by the function~$\Phi$ in a unique way.

Denote by $V(z,x)$ the integrand in~\eqref{represent}. Obviously,
$V(z,x)/z\to 0$ as $z\to\infty$ point-wise w.r.t. $x\in\Real$.
Moreover, it is easy to check that
$$\big|V(z,x)/z\big|\le\frac{1}{1+x^2} \left[\frac1{|z|\,\Im z}+\frac{\Re z}{\Im z}+1\right]$$ for all $x\in\Real$ and $z\in\UH$. Therefore, according to the Lebesgue dominated convergence theorem, $(1/z)\int_\Real V(z,x)d\mu(x)\to0$ as $z$ tends to~$\infty$ within any Stolz angle $$\Delta_\varepsilon:=\big\{z:\Im z>\varepsilon|z|\big\}, \quad\varepsilon>0.$$ Hence we conclude that the parameter $\beta$ in~\eqref{represent} equals the angular derivative of~$\Phi$ at~$\infty$.

Now let us write representation~\eqref{represent} for the functions $\Psi_j$. According to~\eqref{tildePsi}, it
will have the following form:
\begin{equation}\label{Psi_j}
\Psi_j(z)=\alpha_j+\int_\Real\left(\frac1{x-z}-\frac x{1+x^2}\right)d\mu_j(x).
\end{equation}
On the other hand, $\Phi_3\in\mathfrak P_0$. Consequently, by Remark~\ref{GorBaBauer_Remark},
$$\Psi_3(z)=\int_{\Real}\frac{d\mu_0(x)}{x-z}$$ for some finite measure Borel measure on~$\Real$. It
follows that
\begin{equation}\label{Psi_3}
\mu_3=\mu_0,\quad \alpha_3=\int_\Real\frac{x}{1+x^2}\,d\mu_0(x).
\end{equation}
Furthermore, according to the uniqueness of~$\alpha_j$ and~$\mu_j$ in representation~\eqref{Psi_j},
equality~\eqref{P_0_comp} implies that
\begin{equation}\label{mu_3}
\mu_3=\mu_2+\mu_1,\quad \alpha_3=\alpha_2+\alpha_1.
\end{equation}
In particular, the measures $\mu_1$ and $\mu_2$ are finite. Combining~\eqref{Psi_3} with~\eqref{mu_3} we
conclude that
$$A:=\alpha_2-\int_\Real\frac{x}{1+x^2}\,d\mu_2(x)=-\alpha_1+\int_\Real\frac{x}{1+x^2}\,d\mu_1(x).$$

Now the fact that $\Phi_1+A\in\mathfrak P_0$ follows from~\eqref{Psi_j} and Remark~\ref{GorBaBauer_Remark}.

Let $z\in\UH$. Denote~$w:=\Phi_1(z)+A$. Then $\Phi_2(w-A)-w=\Phi_3(z)-z-\big(\Phi_1(z)+A-z\big)$. It follows
that
\begin{equation}\label{Phi2}
|\Phi_2(w)-w|\le \frac{C}{\Im z}\le\frac{C}{\Im w}
\end{equation}
for some $C>0$, which does not depend on~$z$. Since~$z\in\UH$ is arbitrary, inequality~\eqref{Phi2} holds for
all $w$ such that $w-A\in\Phi_1(\UH)$. Since $\Phi_1\in\mathfrak P_0$,  by Lemma~\ref{H_epsilon} there
exists~$b>0$ such that $\UH_b\subset\Phi_1(\UH)$. Hence \eqref{Phi2} holds for all $w\in\UH_b$, which can be
reformulated in the following way: inequality \eqref{Phi2} holds for all $w\in\UH$ and $\Phi_2$ replaced by the
function $\Phi_{2,b}(w):= \Phi_2(w+ib)-ib$. Hence $\Phi_{2,b}\in\mathfrak P_0$. But by Lemma~\ref{mlemma} the
latter statement implies that the function~$\Phi_2$ also belongs to the class~$\mathfrak P_0$. The proof is now
finished.
\end{proof}

\begin{corollary}\label{P_0_comp_cor}
Suppose that  $\varphi_j$, $j=1,2,3$, are univalent self-mappings of~$\UD$, and
$\varphi_3=\varphi_2\circ\varphi_1$. If any two of these three functions belong to~$\mathcal P_0$, then so does
the third one.
\end{corollary}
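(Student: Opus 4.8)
The plan is to reduce everything to Proposition~\ref{P_0_composition} together with the fact, recorded in Remark~\ref{GorBaBauer_Remark}, that $\mathfrak P_0$ is closed under composition. Since $\mathcal P_0\subset\mathcal P$, in each of the three cases at least two of the functions $\varphi_1,\varphi_2,\varphi_3$ lie in $\mathcal P$, so Lemma~\ref{three functions} gives $\varphi_1,\varphi_2,\varphi_3\in\mathcal P$ at once; this is precisely the hypothesis needed to apply Proposition~\ref{P_0_composition}. I would then split according to which two of the three functions are assumed to belong to $\mathcal P_0$.

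The case $\varphi_1,\varphi_2\in\mathcal P_0$ is immediate: passing to the upper half-plane, $H\circ\varphi_j\circ H^{-1}\in\mathfrak P_0$ for $j=1,2$, and by the semigroup property of $\mathfrak P_0$ their composition $H\circ\varphi_3\circ H^{-1}$ also lies in $\mathfrak P_0$, i.e.\ $\varphi_3\in\mathcal P_0$. In the two remaining cases we have $\varphi_3\in\mathcal P_0$ together with exactly one of $\varphi_1,\varphi_2$ in $\mathcal P_0$, and we want the missing one. Here Proposition~\ref{P_0_composition} provides $h\in\Moeb(\UD)\cap\mathcal P$ such that $h\circ\varphi_1\in\mathcal P_0$ and $\varphi_2\circ h^{-1}\in\mathcal P_0$.

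The crux is then to show that necessarily $h=\id_\UD$. Conjugating by the Cayley map, $H\circ h\circ H^{-1}$ is an automorphism of $\UH$ fixing $\infty$ with angular derivative $1$ there, hence of the form $z\mapsto z+A$ for some $A\in\Real$. If $\varphi_1\in\mathcal P_0$, then $\Phi_1:=H\circ\varphi_1\circ H^{-1}$ and $\Phi_1+A=H\circ(h\circ\varphi_1)\circ H^{-1}$ both lie in $\mathfrak P_0$, so by~\eqref{limy} both $\Phi_1(iy)-iy$ and $\Phi_1(iy)+A-iy$ tend to $0$ as $y\to+\infty$, which forces $A=0$. If instead $\varphi_2\in\mathcal P_0$, I would write $\Phi_2:=H\circ\varphi_2\circ H^{-1}\in\mathfrak P_0$ and note that $z\mapsto\Phi_2(z-A)$ equals $H\circ(\varphi_2\circ h^{-1})\circ H^{-1}\in\mathfrak P_0$; then~\eqref{limy} gives $\Phi_2(iy-A)-iy\to0$, while the uniform bound $|\Phi_2(z)-z|\le C/\Im z$ from Definition~\ref{class_mathfrak_P_0} gives $\Phi_2(iy-A)-(iy-A)\to0$, and subtracting again yields $A=0$. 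In either situation $h=\id_\UD$, so $\varphi_2=\varphi_2\circ h^{-1}\in\mathcal P_0$ in the first sub-case and $\varphi_1=h\circ\varphi_1\in\mathcal P_0$ in the second, completing the proof.

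I do not anticipate a real obstacle: the statement is genuinely a corollary of Proposition~\ref{P_0_composition}, and the only point needing attention is the triviality of the auxiliary Möbius factor $h$, which is the short half-plane argument above; the case $\varphi_1,\varphi_2\in\mathcal P_0$ is the one place where one cannot route through Proposition~\ref{P_0_composition} and must instead invoke the semigroup property directly.
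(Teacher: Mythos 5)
Your proof is correct and follows essentially the same route as the paper: the semigroup property of $\mathfrak P_0$ handles the case $\varphi_1,\varphi_2\in\mathcal P_0$, and Lemma~\ref{three functions} plus Proposition~\ref{P_0_composition} handle the other two. The one place you go beyond the paper's write-up is in explicitly verifying that the auxiliary M\"obius factor $h$ (which in the half-plane is $z\mapsto z+A$) must be the identity, via the limit condition~\eqref{limy} and the defining bound of $\mathfrak P_0$; the paper leaves this small but necessary step implicit, and your argument for it is sound.
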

\begin{proof}
If $\varphi_1,\varphi_2\in\mathcal P_0$, then $\varphi_3$ also belongs to~$\mathcal P_0$ because of the
semigroup property of the class~$\mathfrak P_0$, see Remark~\ref{GorBaBauer_Remark}.

Suppose now that $\varphi_3\in\mathcal P_0$ and one of the functions $\varphi_1$, $\varphi_2$ belongs
to~$\mathcal P_0$. Then by Lemma~\ref{three functions}, we have~$\varphi_j\in\mathcal P$ for $j=1,2,3$. Now one
can apply Proposition~\ref{P_0_composition} to deduce that $\varphi_j\in\mathcal P_0$ for $j=1,2$, which
finishes the proof.
\end{proof}

Another tool in the proof of Theorem~\ref{GorBaTheorem} is \cite[Theorem~4.2]{Bauer}, which, in particular,
contains the following
\begin{proposition}\label{Bauer}
Let $(G_n)$ be a sequence from $\mathfrak P_0$ such that for any $n,~m\in\Natural$ either ${G_n(\UH)\subset
G_m(\UH)}$, or ${G_n(\UH)\supset G_m(\UH)}$. If $\ell(G_n)$ has a finite limit as $n\to+\infty$, then $G_n$
converges to a function $G\in\mathfrak P_0$ uniformly on $\UH_\varepsilon:=\{z:\Im z>\varepsilon\}$ for each
$\varepsilon>0$, with
\begin{gather}
\label{ell}%
\ell(G)=\lim_{n\to+\infty}\ell(G_n),\\
\label{image}%
G(\UH)=\bigcup\limits_{\varepsilon>0}\,\bigcup\limits_{k\in\Natural}~\mathrm{int}\!\left(\bigcap\limits_{n>k}
G_n(\UH_\varepsilon)\right).
\end{gather}
\end{proposition}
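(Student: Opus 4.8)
The plan is to build the limit function $G$ directly as a uniform limit on each half-plane $\UH_\varepsilon:=\{z:\Im z>\varepsilon\}$, to read off $G\in\mathfrak P_0$ and the inequality $\ell(G)\le L:=\lim_n\ell(G_n)$ essentially for free, and then to concentrate on the reverse inequality in~\eqref{ell}: this is the only place where the hypothesis that the images $G_n(\UH)$ are linearly ordered by inclusion is genuinely needed, and it is the main obstacle. Formula~\eqref{image} is then a Carath\'{e}odory-kernel-type bookkeeping. Write $M:=\sup_n\ell(G_n)<\infty$.

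\emph{Step 1 (locally uniform convergence and $\ell(G)\le L$).} Fix $n,m$; by hypothesis we may assume $G_n(\UH)\subseteq G_m(\UH)$, so that $H_{n,m}:=G_m^{-1}\circ G_n$ is a well-defined univalent self-map of $\UH$. By Lemma~\ref{three functions} and Corollary~\ref{P_0_comp_cor} (transported to $\UH$ via the Cayley map) $H_{n,m}\in\mathfrak P_0$, and additivity of $\ell$ (Remark~\ref{GorBaBauer_Remark}) gives $\ell(H_{n,m})=\ell(G_n)-\ell(G_m)\ge0$; in particular $G_n(\UH)\subseteq G_m(\UH)$ forces $\ell(G_n)\ge\ell(G_m)$. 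Since $|H_{n,m}(z)-z|\le\ell(H_{n,m})/\Im z$ and $\Im H_{n,m}(z)\ge\Im z$ by Remark~\ref{Julia}, the segment from $z$ to $H_{n,m}(z)$ stays in $\UH_\varepsilon$ whenever $z\in\UH_\varepsilon$; a Cauchy estimate for $G_m-\id$ on the disks $D(w,\Im w/2)$ yields $|G_m'(w)-1|\le 4\ell(G_m)/(\Im w)^2\le 4M/\varepsilon^2$ on $\UH_\varepsilon$, whence
\[
|G_n(z)-G_m(z)|=\big|G_m(H_{n,m}(z))-G_m(z)\big|\le\frac{1+4M/\varepsilon^2}{\varepsilon}\,|\ell(G_n)-\ell(G_m)|,\qquad z\in\UH_\varepsilon .
\]
Since $(\ell(G_n))$ converges it is Cauchy, so $(G_n)$ is uniformly Cauchy on every $\UH_\varepsilon$ and converges locally uniformly on $\UH$ to a holomorphic $G$. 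Passing $|G_n(z)-z|\le\ell(G_n)/\Im z$ to the limit gives $|G(z)-z|\le L/\Im z$; in particular $G$ is non-constant, hence univalent by Hurwitz, with open image contained in $\overline\UH$ and therefore in $\UH$. Thus $G\in\mathfrak P_0$ and $\ell(G)\le L$.

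\emph{Step 2 ($\ell(G)=L$ --- the hard part).} Using the integral representation~\eqref{P_0_repesent} write $G_n(z)=z+\int_\Real d\mu_n(x)/(x-z)$ with $\mu_n\ge0$, $\mu_n(\Real)=\ell(G_n)$, and similarly $\mu$ for $G$. A direct computation gives $a_n(y):=y\big(\Im G_n(iy)-y\big)=\int_\Real\frac{y^2}{x^2+y^2}\,d\mu_n(x)$, non-decreasing in $y$ with $a_n(y)\to\ell(G_n)$ as $y\to+\infty$, and $a(y):=\int_\Real\frac{y^2}{x^2+y^2}d\mu(x)=\lim_n a_n(y)$ (since $G_n(iy)\to G(iy)$), likewise non-decreasing with limit $\ell(G)$. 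Given $\delta>0$, if $\sup_n\mu_n(\{|x|>R\})<\delta/2$ for $R$ large, then for $y_0$ large one has $\int_\Real\frac{x^2}{x^2+y_0^2}\,d\mu_n(x)<\delta$ uniformly in $n$, hence $a_n(y_0)=\ell(G_n)-\int_\Real\frac{x^2}{x^2+y_0^2}d\mu_n(x)>L-2\delta$ for large $n$, so $a(y)\ge a(y_0)=\lim_n a_n(y_0)\ge L-2\delta$ for all $y\ge y_0$ and therefore $\ell(G)\ge L-2\delta$. Thus $\ell(G)=L$ follows once one knows the family $\{\mu_n\}$ is tight, i.e. that no capacity escapes to infinity. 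This is exactly where the ordering hypothesis is indispensable --- for $G_n(z)=z-1/(z-n)$ one has $\ell(G_n)\equiv1$ but $G_n\to\id$, and the images $G_n(\UH)$ are not linearly ordered --- and it is precisely the content of \cite[Theorem~4.2]{Bauer}, which one may simply invoke. A self-contained argument would instead show that if a fixed amount of mass of $\mu_n$ drifted to $\pm\infty$ then the deficiency $\UH\setminus G_n(\UH)$ --- which by (the proof of) Lemma~\ref{H_epsilon} together with Lemma~\ref{mlemma} lies below a height $1+\ell(G_n)\le 1+M$ common to all $n$ --- would be forced to visit real positions tending to $\pm\infty$, contradicting that the nested chain $\{G_n(\UH)\}$ has $\ell(G_n)\to L$. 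I expect establishing this tightness to be the main obstacle of the whole proof.

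\emph{Step 3 (the image identity~\eqref{image}).} With $G_n\to G$ locally uniformly and $G$ univalent this is routine. For ``$\supseteq$'': given $w=G(z)$ with $\Im z>\varepsilon$, pick $\rho$ with $\overline{D(z,\rho)}\subset\UH_\varepsilon$; by uniform convergence on $\overline{D(z,\rho)}$ and the argument principle, $G_n(D(z,\rho))\supseteq D(w,\rho')$ for a fixed $\rho'>0$ and all $n$ past some $k$, so $w\in\mathrm{int}\big(\bigcap_{n>k}G_n(\UH_\varepsilon)\big)$. For ``$\subseteq$'': if $D(w,\rho)\subseteq G_n(\UH_\varepsilon)$ for all $n>k$, the restrictions $G_n^{-1}|_{D(w,\rho)}$ form a normal family (they take values in $\UH$); any subsequential limit $g$ is non-constant --- a constant value would lie in $\UH$ and be carried by $G$ to every point of $D(w,\rho)$ --- hence injective with $G\circ g=\id$ on $D(w,\rho)$, giving $w=G(g(w))\in G(\UH)$. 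Taking the union over $\varepsilon>0$ yields~\eqref{image}; together with Steps~1--2 this completes the proof.
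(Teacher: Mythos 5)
A preliminary remark: the paper offers no proof of this proposition at all --- it is stated as a quotation of \cite[Theorem~4.2]{Bauer} --- so your attempt is being compared with an external citation rather than with an internal argument, and most of what you wrote is genuinely new relative to the paper. Your Steps~1 and~3 are correct and self-contained: in Step~1 the segment $[z,H_{n,m}(z)]$ does remain in $\UH_\varepsilon$ (the imaginary part is affine along it and $\Im H_{n,m}(z)\ge\Im z$ by Remark~\ref{Julia}), the Cauchy estimate is right, and the resulting uniform Cauchy property on each $\UH_\varepsilon$ yields the convergence, $G\in\mathfrak P_0$ and $\ell(G)\le L$; Step~3 uses only this convergence and the univalence of $G$, hence is independent of Step~2, and both inclusions check out.

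The gap is exactly where you flag it, and it is genuine. The reduction of $\ell(G)\ge L$ to tightness of $(\mu_n)$ is correct, but tightness is the entire content of the hard inequality, and neither of your two exits closes it. Invoking \cite[Theorem~4.2]{Bauer} is not admissible here: that theorem \emph{is} the statement being proved (the paper obtains the proposition precisely by quoting it), so the citation collapses your proof to the paper's. The geometric sketch is too vague to carry weight: nested hulls of uniformly bounded height can perfectly well reach real positions tending to $\pm\infty$ while $\ell(G_n)$ stays bounded (attach bumps of half-plane capacity $2^{-j}$ near $x=j$), so ``the hull visits positions tending to $\pm\infty$'' is not by itself a contradiction; what must be excluded is that a \emph{fixed quantum} of capacity escapes to infinity, and the sketch does not engage with that.

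There is, however, a way to close the gap that bypasses tightness entirely, using only your Steps~1 and~3. Since $G$ and $L$ are limits along the full sequence, it suffices to prove $\ell(G)\ge L$ along a subsequence, and by Ramsey's theorem the chain hypothesis yields a subsequence along which $G_n(\UH)$ is monotone. If $G_n(\UH)$ decreases, then \eqref{image} gives $G(\UH)\subseteq G_m(\UH)$ for every $m$, so $G=G_m\circ W_m$ with $W_m=G_m^{-1}\circ G\in\mathfrak P_0$ (Corollary~\ref{P_0_comp_cor}) and additivity of $\ell$ gives $\ell(G)\ge\ell(G_m)$, whence $\ell(G)\ge L$. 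If $G_n(\UH)$ increases, then \eqref{image} gives $G_1(\UH)\subseteq G(\UH)$; the maps $H_n:=G_n^{-1}\circ G_1\in\mathfrak P_0$ satisfy $\ell(H_n)=\ell(G_1)-\ell(G_n)$ and converge locally uniformly to $H:=G^{-1}\circ G_1$ (the normal-family argument of your Step~3 shows $G_n^{-1}\to G^{-1}$ on compacta of $G(\UH)$), so letting $n\to\infty$ in $|H_n(z)-z|\le\ell(H_n)/\Im z$ yields $\ell(H)\le\ell(G_1)-L$ --- your ``easy'' inequality applied to $H_n$ instead of $G_n$ --- and then $\ell(G)=\ell(G_1)-\ell(H)\ge L$. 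With this substitution for Step~2 the proposal becomes a complete, self-contained proof.
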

\begin{remark}\label{BauerR}
If $F,G\in\mathfrak P_0$ and $F(\UH)\subset G(\UD)$, then $F=G\circ W$, where $W=G^{-1}\circ F$ is a
conformal mapping of $\UH$ into itself. It is actually known (see, e.\,g.,\,\cite[Lemma~4.1]{Bauer}) that
$W\in\mathfrak P_0$. Therefore, according to Remark~\ref{Julia},
$W(\UH_\varepsilon)\subset\UH_{\varepsilon}$ for any $\varepsilon>0$. As a consequence, the inclusion
$F(\UH)\subset G(\UH)$, where $F,G\in\mathfrak P_0$, implies that $F(\UH_\varepsilon)\subset
G(\UH_\varepsilon)$, for any $\varepsilon>0$. It also follows that for $F,G\in\mathfrak P_0$ the equality
$F(\UH)=G(\UH)$ occurs only if~$F=G$.
\end{remark}

\subsection{Proof of Theorem~\ref{GorBaTheorem}}
First of all let us note that the implication~$(i)\Rightarrow(ii)$ is almost trivial. Indeed, assume (i) holds
and find for each~$t\ge0$ a number $\theta=\theta(t)\ge0$ such that $f_{\theta(t)}(\UD)=\Omega_t$. We can assume
that $\theta(0)=0$. Then functions $F_t:=f_{\theta(t)}$ form the family one have to construct in order to
prove~(ii).

Let us now assume that~(ii) holds. We are going to prove~(i). To obtain this implication we switch to the
upper half-plane~$\UH$ as a reference domain. So assume that for each $t\ge0$ the function~$F_t$ maps
$\UH$ conformally onto~$\Omega_t$ and
\begin{equation}\label{F_t_UH}
\big(F_t^{-1}\circ F_0\big)(z)=a(t)z+b(t)+\frac{c(t)}z+\gamma_t(z),\quad \angle\lim_{z\to\infty}z\gamma_t(z)=0,
\end{equation}
where $a(t)>0$ and $b(t)\in\Real$.

The proof follows the same scheme as for Theorem~\ref{par_th}. First of all we take M\"obius transformations
$L_t(z):=a(t)z+b(t)$ and define $H_t:=F_t\circ L_t$. Denote $\Phi_{s,t}:=H_t^{-1}\circ H_s$. We claim that
\begin{equation}
\label{Phi_st}\Phi_{s,t}\in\mathfrak P_0,\quad \text{for all }s\ge0,~t\ge s.
\end{equation}
Indeed, for $s=0$ the above assertion follows from~\eqref{F_t_UH} and the fact that $a(0)=1$, $b(0)=0$. By the
same reason $\Phi_{0,s}\in\mathfrak P_0$. Now~\eqref{Phi_st} follows from the equality
$\Phi_{s,t}\circ\Phi_{0,s}=\Phi_{0,t}$ and Corollary~\ref{P_0_comp_cor}.

Let us denote $x(t):=\ell(\Phi_{0,t})$, $t\ge0$. Since the functional $\ell$ is additive,
\begin{equation}\label{x(t)}
x(t)=x(s)+\ell(\Phi_{s,t})\quad\text{ for all $s\ge0$ and $t\ge s$.}
\end{equation}
By~\cite[Lemma~4.1]{Bauer}, it follows that $t\mapsto x(t)$ is non-decreasing and
\begin{equation}\label{x}
x(t)=x(s) \Longleftrightarrow \Phi_{s,t}=\id_\UH.
\end{equation}

Fixing the parameter $t$ and regarding the parameter~$s$ in equality~\eqref{x(t)} as a variable, one can
use Proposition~\ref{Bauer} along with Remarks~\ref{inclusion_chain_Remark} and~\ref{BauerR} to see that
the function $[0,+\infty)\ni t\mapsto x(t)\in[0,+\infty)$ is continuous.

Let us now set $h_t:=H_t\circ H$, $\phi_{s,t}:=H^{-1}\circ\Phi_{s,t}\circ H$, $t\ge0$, $s\in[0,t]$, where
$H$ stands for the Cayley map~\eqref{Cayley} sending of~$\UD$ onto~$\UH$ with $H(0)=i$ and $H(1)=\infty$.
The rest of the proof is almost the same as Steps~3 and~4 in the proof of Theorem~\ref{par_th}, except
that~$x(t)$ can be now unbounded and instead of Proposition~\ref{m_lemma} we apply
Proposition~\ref{GorBaLemma}. By this reason, we omit the details. \proofbox

\section{Sufficient conditions for chordal admissibility}
\label{Geometric properties} The condition of conformal embedding at a prime end, contained in the definition of
chordally admissible families of simply connected domains, involves in an essential way information on boundary
behaviour of conformal mappings related to these domains. This might make direct application of
Theorem~\ref{Main_Theorem} in many cases quite difficult. That is why it is interesting to establish some
sufficient conditions for chordal admissibility of purely geometric nature. An example of such a sufficient
condition is the following theorem.

A closed Jordan curve $C\subset\Complex$ in the complex plane~$\Complex$ is said to be {\it Dini-smooth} if there exists a bijective mapping~$h$ of $\UC:=\partial\UD$ onto~$C$ such that
the function $\Real\ni\theta\mapsto h_1(\theta):=h(e^{i\theta})\in C$ has non-vanishing Dini-continuous derivative. A closed Jordan curve $C$ in the Riemann sphere~$\ComplexE$ is called {\it Dini-smooth} if for some (or equivalently for all) M\"obius transformation $T$ sending $C$ into~$\C$, the image $T(C)$ of $C$ is a {\it Dini-smooth} curve.

\begin{theorem}\label{chordal_adm_criterium}
Let $(\Omega_t)$ be an L-admissible family and $p\in\partial_\infty\Omega_0$. Suppose that the following
conditions hold:
\begin{itemize}

\item[(i)] there exists a Dini-smooth closed Jordan curve $C$ in the Riemann sphere
such that $p\in C$ and one of the two connected components of~$\ComplexE\setminus C$ is contained
in~$\Omega_0$.

\item[(ii)] for each $t>0$ there exists a Dini-smooth closed Jordan curve $C_t$ in the Riemann sphere
such that $p\in C_t$ and $\Omega_t\cap C_t=\emptyset$.
\end{itemize}
Then the family $(\Omega_t)$ is chordally admissible.
\end{theorem}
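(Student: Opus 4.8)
The plan is to verify the geometric characterization of chordal admissibility via Definition~\ref{chordally_admissible}, i.e.\ to exhibit a prime end $P_0\in\Prend(\Omega_0)$ at which $\Omega_0$ is conformally embedded in $\Omega_t$ for every $t\ge0$. The natural candidate for $P_0$ is the prime end of $\Omega_0$ determined by the point $p$: condition~(i) guarantees that near $p$ the boundary of $\Omega_0$ coincides with the Dini-smooth Jordan curve $C$ (one component of $\ComplexE\setminus C$ sits inside $\Omega_0$), so $p$ corresponds to a unique, accessible prime end $P_0$, and in fact $\partial\Omega_0$ is locally a Dini-smooth arc through $p$. First I would normalize by a M\"obius transformation $T$ of $\ComplexE$ so that $p$ is mapped to a finite point and all the curves $C$, $C_t$ land in $\C$; this is harmless since Dini-smoothness and conformal embedding at a prime end are M\"obius-invariant (Remark~\ref{norm_indep}). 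Throughout I would work with conformal maps $\phi_t:\Omega_t\to\UD$ and the induced self-maps, reducing the claim — via Definition~\ref{Conf_emb} and Lemma~\ref{conformally_embedded} — to showing that some conformal map of $\UD$ into $\UD$ associated with the pair $(\Omega_0,\Omega_t)$ has a regular (finite-angular-derivative) contact point over~$P_0$.

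The key step is the following local argument. Fix $t>0$. By condition~(ii) there is a Dini-smooth closed Jordan curve $C_t$ with $p\in C_t$ and $\Omega_t\cap C_t=\emptyset$; hence $\Omega_t$ lies entirely in one of the two Jordan components of $\ComplexE\setminus C_t$, call it $V_t$, and $p\in\partial V_t$. Thus we have the chain of inclusions $\Omega_0\subset\Omega_t\subset V_t$, with $p$ on the boundary of all three. By the Carath\'eodory extension theorem together with the boundary regularity theorem for Dini-smooth domains (see \cite[\S II.3]{Goluzin} or \cite[Ch.\,3]{Pommerenke}), a conformal map of $\UD$ onto the Jordan domain $V_t$ extends to a $C^1$-diffeomorphism of $\oD$ onto $\overline{V_t}$ whose derivative is non-vanishing and Dini-continuous up to the boundary; in particular it has a finite, non-zero angular derivative at the boundary point corresponding to $p$. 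The same applies to the component of $\ComplexE\setminus C$ sitting inside $\Omega_0$. The plan is then to realize the self-map $\psi:=\phi\circ F_0$ (where $F_0:\UD\to\Omega_0$ normalized so that $p$ corresponds to $1$, and $\phi:V_t\to\UD$) as a composition $\psi=(\phi\circ g)\circ(g^{-1}\circ F_0)$ through an intermediate conformal map $g$ of $\UD$ onto the inner component of $\ComplexE\setminus C$, and to invoke the chain rule for angular derivatives \cite[Lemma~2]{SMC} together with the elementary fact that each factor has $1$ as a regular contact point with finite angular derivative. This yields a finite angular derivative for $\phi\circ F_0$ at the point over $p$; restricting the target from $V_t$ back to $\Omega_t$ only shrinks the image and preserves the regular contact point, so $\Omega_0$ is embedded in $\Omega_t$ conformally at $P_0$.

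Two technical points require care, and the second is the main obstacle. First, one must check that the prime end $P_0$ does \emph{not} depend on $t$: this is immediate from the construction, since $P_0$ is fixed once and for all as the prime end of $\Omega_0$ associated with $p$ via condition~(i), and condition~(ii) is only used to produce, for each $t$, an ambient Jordan domain $V_t$. Second — and this is where the real work lies — one must make precise the passage from ``$\partial\Omega_0$ and $\partial V_t$ are Dini-smooth near $p$'' to ``the relevant self-map of $\UD$ has a regular contact point over $P_0$''. The subtlety is that $\Omega_0$ itself need not be a Jordan domain, nor need $\partial\Omega_0$ be globally Dini-smooth; only a one-sided neighbourhood of $p$ inside $\Omega_0$ is controlled (via the inner component of $\ComplexE\setminus C$, which is contained in $\Omega_0$). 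The argument here is to compare $\Omega_0$ with that inner component from inside and with $V_t$ from outside, using monotonicity of the angular derivative under inclusion of domains sharing a boundary point (a consequence of Julia's lemma / the Julia--Wolff--Carath\'eodory theorem, cf.\ Remark~\ref{Julia} and Remark~\ref{ex_of_ad}): sandwiching gives that the derivative of the composite self-map at the contact point is finite because the outer comparison map has finite derivative there, and positive because the inner one does. Once this sandwiching lemma is set up cleanly, the rest of the proof is routine bookkeeping with the chain rule for angular derivatives and Lemma~\ref{conformally_embedded}. \proofbox
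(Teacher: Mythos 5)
Your geometric skeleton is the same as the paper's: use the inner Dini-smooth Jordan domain $D\subset\Omega_0$ and the outer Dini-smooth Jordan domain containing $\Omega_t$ to sandwich the conformal maps at $p$, establish the angular limits at the point over $p$ via Koebe's landing theorem and Lehto--Virtanen, and conclude through Lemma~\ref{conformally_embedded} that $\Phi_t:=F_t^{-1}\circ F_0$ has a finite angular derivative. The problem is that the step you yourself flag as ``where the real work lies'' is exactly the content of Step~2 of the paper's proof, and what you offer in its place does not go through as written. Concretely: (a) the decomposition $\psi=(\phi\circ g)\circ(g^{-1}\circ F_0)$ is ill-formed, since $g(\UD)=D\subsetneq\Omega_0=F_0(\UD)$ means $g^{-1}\circ F_0$ is not defined on $\UD$; (b) your opening claim that condition~(i) makes $\partial\Omega_0$ ``locally a Dini-smooth arc through $p$'' is false in general (you correctly retract it later --- only a one-sided Jordan neighbourhood inside $\Omega_0$ is controlled); and (c) the ``monotonicity of the angular derivative under inclusion'' that you attribute to Julia's lemma is not the tool that delivers the finiteness half of the sandwich. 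Julia--Wolff gives that the angular derivative of a \emph{self-map} of $\UD$ at a contact point lies in $(0,+\infty]$, so positivity is automatic; but finiteness does not transfer from the map onto the smaller domain to the map onto the larger one (the inclusion of $\UH$ into $\C\setminus[0,+\infty)$, uniformized by $z\mapsto z^2$, shows the larger domain's map can even have vanishing angular derivative at the common boundary point while the smaller one is perfectly smooth there). Your attribution of which inclusion gives finiteness and which gives positivity is also muddled.

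What actually makes the sandwich work is the comparison theorem for conformal maps onto nested domains, i.e.\ \cite[Theorem~4.14]{Pommerenke-II}, which yields the boundary chain rule $f_1'(\zeta_1)=f_2'(\zeta_2)\,\varphi'(\zeta_1)$ with $\varphi'(\zeta_1)\in(0,+\infty]$ for $f_1(\UD)\subset f_2(\UD)$: the inner inclusion $D\subset\Omega_0$ forces $F_0'(1)$ to be finite, the outer inclusion $\Omega_t\subset D_t$ forces $F_t'$ to be nonzero at its contact point, and then $F_0'=F_t'\cdot\Phi_t'$ excludes $\Phi_t'(1)=\infty$. The paper implements this by contradiction: assuming $\Phi_t'(1)=\infty$ it deduces, via normality of $F_t'$ and Lehto--Virtanen, that $F_t'$ has angular limit $0$ at the contact point, which is then incompatible with $\Omega_t\subset D_t$ and the Dini-smoothness of $C_t$. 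Your plan is repairable along these lines (and the self-map version of the bookkeeping can be done with \cite[Lemma~2]{SMC} once the angular limits are in place), but as it stands the decisive analytic step is asserted rather than proved, and the one formula offered for it is not well defined; this is a genuine gap.
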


\begin{proof}

Denote by $D$ the connected component of~$\ComplexE\setminus C$ that is contained
in~$\Omega_0$. Since $D$ is a Jordan domain, there exists a conformal mapping~$f_1$ of $\UD$ onto $D$ continuously extandable to~$\partial\UD$ with $f_1(1)=p$.  Let us denote by $\Gamma$ the curve $[0,1)\ni x\mapsto f_1(x)\in D\subset\Omega_0$. Obviously, we have $\Gamma(x)\to p$ as $x\to1-$.

We note that since $p\in\partial_\infty\Omega_0$,  condition $(ii)$ in the theorem implies that
$p\in\partial_\infty\Omega_t$ for all~$t\ge0$.

The remaining part of the proof will be divided into three steps.

\Step1{For each $t\ge0$ there exists a conformal mapping $F_t$ of $\D$ onto $\Omega_t$ such that the following
two angular limits exist in the Riemann sphere:
\begin{align}\label{anglim_1}
 &\angle\lim_{z\to 1}F_t(z)=p, \\
 &\angle\lim_{z\to 1}\Phi_t(z)=1,\quad \text{ where } \Phi_t:=F_t^{-1}\circ F_0. \label{anglim_2}
\end{align}
}

\noindent{\it Proof of Step 1}: Fix $t\ge0$ and let $G_t$ be any conformal mapping of~$\UD$ onto~$\Omega_t$.
Bearing in mind that $p\in\partial_\infty\Omega_t$ and that $G_t$ is univalent, we find in accordance with
Theorem~\ref{Koebe} that the curve $\gamma_t:[0,1)\to\UD$; $x\mapsto G_t^{-1}\big(\Gamma(x)\big)$ lands at some
particular point $\xi_t\in\UC$, i.e., $\gamma_t(x)\to\xi_t$ as $x\to 1-$. Note also that
$G_t(\gamma_t(x))=\Gamma(x)\to p$ as $x\to 1-$. Therefore, $p$ is an asymptotic value for $G_t$ at the point
$\xi_t$. Since $G_t$ is univalent, it is also a normal function (see \cite[Lemma~9.3]{Pommerenke}). Therefore,
by Lehto--Virtanen's Theorem (see, e.g., \cite[Theorem~9.3]{Pommerenke}) $G_t$ has angular limit at $\xi_t$
equal to $p$.

Now, define $F_t(z):=G_t(\xi_t z)$, $z\in\D$. Statement~\eqref{anglim_1} clearly holds. To deal with
statement~\eqref{anglim_2}, we fix $t>0$. Likewise, we denote $\gamma(x):=F_0^{-1}\big(\Gamma(x)\big)$.
Appealing again to Theorem~\ref{Koebe}, we conclude that $\gamma$ is a Jordan arc with $\gamma(x)\to 1$ as $x\to
1-$. Now, note that $\Phi_t(\gamma(x))=\overline{\xi_t}\gamma_t(x)$. Hence, $\Phi_t$ has an asymptotic value
at~$z=1$, equal to~$1$. Once again, by \cite[Lemma~9.3]{Pommerenke} and Lehto--Virtanen's Theorem, we conclude
that the angular limit of $\Phi_t$ at the point~$z=1$ equals~$1$.

\Step2{For each $t>0$ the function $\Phi_t$ has finite angular derivative at the point~$z=1$.}

Observe that $\Phi_t$ is a holomorphic self-map of the unit disk having
an (angular) fixed point at~$z=1$ by~\eqref{anglim_2}. According to Remark~\ref{ex_of_ad}, the following limit
exists
$$
\Phi'_t(1):=\angle\lim_{z\to1}\frac{\Phi_t(z)-1}{z-1}\in(0,+\infty)\cup\{\infty\}.
$$
Further we proceed by contradiction. So we assume that for some $t>0$ we have ${\Phi'_t(1)=\infty}$. Since $\Phi_t$ is univalent, one can again take advantage of~\eqref{anglim_2} and~\cite[Theorem~10.5 on p.\,305]{Pommerenke} to conclude that $\angle\lim_{z\to1}\Phi'_t(z)=\infty$.

Since $C_t\cap\Omega_t=\emptyset$, the domain $\Omega_t$ is contained in one of the connected components of~$\ComplexE\setminus C_t$, which we denote by $D_t$. Applying if necessary the M\"obius  transformation $T(z):=1/(z-z_0)$, where $z_0\in\Complex$ is any point in the exterior of $D_t$, we can assume that $D_t\subset\Complex$ and is a bounded domain. Then $D$ is also bounded, with $\partial D=C$ being a Jordan curve in the plane. Note that applying $T$ will change $f_1$, $F_0$ and $F_t$, but not the function~$\Phi_t$.

Since by condition (i), $C$ is a Dini-smooth Jordan curve,  from \cite[Theorem~3.5]{Pommerenke-II} it follows
that there exists finite angular derivative~$f_1'(1)\neq0$. At the same time,
$$
f_1(\D)=D\subset \Omega_0=F_0(\D).
$$
Recall that $F_0(1)=p$ in the angular sense. Applying \cite[Theorem~4.14]{Pommerenke-II}, we conclude that $F_0$
has finite angular derivative at~$z=1$. Since $F_0$ is univalent, we have (see again
\cite[Theorem~10.5]{Pommerenke})
$$
F'_0(1)=\lim_{r\to1-}F'_0(r)\in \C.
$$
Differentiating the identity $F_t\circ\Phi_t(r)=F_0(r)$, $r\in(0,1)$,  and passing to limits as $r\to1-$, we
deduce that $\lim_{r\to1-}F'_t(\Phi_t(r))=0$. Now, by~\eqref{anglim_2} we see that $r\in[0,1)\mapsto
\Phi_t(r)\in\D$ is a Jordan arc in $\D$ landing at $z=1$. Therefore, $0$ is an asymptotic value of $F'_{t}$ at
the point~$z=1$. Since $F_t$ is univalent, its derivative $F'_{t}$ is a normal function (see, e.g.,
\cite[Lemma~9.3]{Pommerenke}). Therefore, from Lehto--Virtanen's Theorem (see, e.g.,
\cite[Theorem~9.3]{Pommerenke}) it follows that  $\angle\lim_{z\to1}F'_t(z)=0$.

Now let us consider the domain~$D_t$. We argue in a similar way as above. Namely, we let $f_2$ be any conformal
mapping of $\UD$ onto $D_t$. The  $C_t=\partial D_t$ is a Dini-smooth Jordan curve in~$\Complex$ and $p\in C_t$.
Therefore, from \cite[Theorem~3.5]{Pommerenke-II}, it follows that $f_2(\zeta_2)=p$ for some (in fact the unique
one) $\zeta_2\in\UC$ and that there exists finite angular derivative~$f'_2(\zeta_2)\neq0$. Recall that
$F_{t}(1)=p$ in the angular sense. Therefore, applying \cite[Theorem~4.14]{Pommerenke-II}, we conclude that
$|f'_2(\zeta_2)|=c|F'_t(1)|$ for some $c>0$. However, the left-hand side of this equality is different from~$0$
while~the right-hand side vanishes. This contradiction proves Step~2.

\Step3{The family $(\Omega_t)$ is chordally admissible.} Just apply Lemma \ref{adm} using previous Step 2.
\end{proof}

Now let us formulate an analog of Theorem~\ref{chordal_adm_criterium} for Goryainov--Ba evolution families. Let
us introduce some definitions.

Let $n\in\Natural$ and $\alpha\in(0,1)$. A closed Jordan curve $C$ in the complex plane~$\Complex$ will be called {\it $C^{n,+0}$-smooth} if there is a bijective mapping~$h$ of $\UC:=\partial\UD$ onto~$C$ such that
the function $\Real\ni\theta\mapsto h_1(\theta):=h(e^{i\theta})\in C$ has derivatives up to the order $n$,
the first derivative~$h_1'$ does not vanish, and $h_1^{(n)}$ is H\"older continuous with some
exponent~$\alpha>0$. A closed Jordan curve $C$ in the Riemann sphere~$\ComplexE$ is called {\it
$C^{n,+0}$-smooth} if for some (or equivalently for all) M\"obius transformation $T$ sending $C$
into~$\C$, the image $T(C)$ of $C$ is a $C^{n,+0}$-smooth curve.

Two $C^1$-smooth Jordan curves $C_1,C_2\in\Complex$ with a common point~$p$ is said to {\it have contact
of order~$n\in\Natural$ at the point~$p$},  if there exist $C^1$-smooth
parameterizations~${w_1:[-1,1]\to\Complex}$, ${w_2\in[-1,1]\to\Complex}$ of the curves~$C_1$ and $C_2$,
respectively, such that ${w_1(0)=w_2(0)=p}$, ${w_1'(0),w_2'(0)\neq0}$, and ${|w_1(t)-w_2(t)|=o(t^n)}$
as~$t\to0$. Again using M\"obius transformations one can extend this definition to the case of Jordan
curves in the Riemann sphere and $p=\infty$.

\begin{theorem}\label{GB_adm_criterium}
Let $(\Omega_t)$ be an L-admissible family and $p\in\partial_\infty\Omega_0$. Suppose that the following
conditions hold:
\begin{itemize}

\item[(i)] there exists a $C^{3,+0}$-smooth closed Jordan curve $C$ in the Riemann sphere
such that $p\in C$ and one of the two connected components of~$\ComplexE\setminus C$ is contained
in~$\Omega_0$.

\item[(ii)] for each $t>0$ there exists a $C^{3,+0}$-smooth closed Jordan curve $C_t$ in the Riemann sphere
such that
\begin{itemize}
\item[(ii.1)] $p\in C_t$,
\item[(ii.2)] the curves $C$ and $C_t$ have second order contact at the point~$p$, and
\item[(ii.3)]  $\Omega_t\cap C_t=\emptyset$.
\end{itemize}
\end{itemize}
Then the family $(\Omega_t)$ satisfies condition~(ii) in Theorem~\ref{GorBaTheorem}.
\end{theorem}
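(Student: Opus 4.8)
The proof will follow the scheme of Theorem~\ref{chordal_adm_criterium} and then refine it. Since a $C^{3,+0}$-smooth Jordan curve is in particular Dini-smooth, conditions (i), (ii.1) and (ii.3) are exactly the hypotheses of Theorem~\ref{chordal_adm_criterium}; hence that theorem applies, and, inspecting its proof, for each $t\ge0$ we obtain a conformal map $F_t\colon\UD\to\Omega_t$ with $\angle\lim_{z\to1}F_t(z)=p$ and with $\Phi_t:=F_t^{-1}\circ F_0\in\mathcal C$, $\Phi_t(1)=1$, $0<\Phi_t'(1)<\infty$ in the angular sense. Recalling that condition (ii) of Theorem~\ref{GorBaTheorem} merely asks for conformal maps $F_t$ with $F_t(\UD)=\Omega_t$ and $F_t^{-1}\circ F_0\in\classCC$, it therefore suffices to prove that these particular $F_t$ satisfy $\Phi_t\in\classCC$ for every $t$; for $t=0$ this is trivial since $\Phi_0=\id_\UD$, so fix $t>0$.

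After a preliminary M\"obius transformation I may assume $p\in\C$ and that $C$, $C_t$ bound bounded Jordan domains; let $D$ be the component of $\ComplexE\setminus C$ contained in $\Omega_0$ and $D_t$ the component of $\ComplexE\setminus C_t$ containing $\Omega_t$, so $D\subset\Omega_0\subset\Omega_t\subset D_t$, with $\partial D=C$, $\partial D_t=C_t$ of class $C^{3,+0}$ and having second order contact at $p$. Pick conformal maps $\psi\colon\UD\to D$, $\chi\colon\UD\to D_t$ normalized by a rotation so that $\psi(1)=\chi(1)=p$. The first ingredient is the higher-order Kellogg--Warschawski theorem (see, e.g., \cite[Chapter~3]{Pommerenke-II} and the references therein): as $C$, $C_t$ are $C^{3,+0}$, the maps $\psi,\chi$ extend $C^{3,\alpha}$-smoothly to a neighbourhood of $1$ in $\overline\UD$ with non-vanishing derivative there, hence have finite angular derivatives of orders $1,2,3$ at $1$. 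The second ingredient is that $F_0$ and $F_t$ have finite angular derivative at $1$ (from Step~2 of the proof of Theorem~\ref{chordal_adm_criterium}, or from \cite[Theorem~4.14]{Pommerenke-II} applied to the inclusions $D\subset\Omega_j\subset D_t$), so that indeed $\Phi_t\in\mathcal C$.

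Next I would prove that $h:=\chi^{-1}\circ\psi\in\classCC$. Being a composition of $C^{3,\alpha}$ diffeomorphisms near $1$, $p$, $p$, it fixes $1$, is $C^{3,\alpha}$ there with $h'(1)\neq0$ (so $h'(1)>0$ by Remark~\ref{ex_of_ad}), and thus has finite angular derivatives of orders $1,2,3$ at $1$; only $\Re h''(1)=h'(1)\big(h'(1)-1\big)/2$ remains. By the observation made just after the definition of $\classCC$, writing $\big(H\circ h\circ H^{-1}\big)(z)=az+b+c/z+\gamma(z)$ with $a>0$ and $\angle\lim_{z\to\infty}z\gamma(z)=0$, this relation is equivalent to $b\in\R$; and a short computation in the $1/z$-chart shows that $b\in\R$ is equivalent to the image curve $\big(H\circ h\circ H^{-1}\big)(\R)$ being tangent to $\R$ to second order at $\infty$, in the sense of the definition of $n$-th order contact given above. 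But $h$ maps $\partial\UD$ near $1$ onto $\chi^{-1}(C)$, which, since $\chi^{-1}$ is $C^{3,\alpha}$ at $p$ and $C$ has second order contact at $p$ with $C_t=\chi(\partial\UD)$, has second order contact with $\chi^{-1}(C_t)=\partial\UD$ at $1$; conjugating by the Cayley map $H$ yields exactly the required tangency, so $h\in\classCC$.

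It remains to descend from $h$ to $\Phi_t$ through the factorization
\[
h=\chi^{-1}\circ\psi=\big(\chi^{-1}\circ F_t\big)\circ\Phi_t\circ\big(F_0^{-1}\circ\psi\big)
\]
together with a composition-stability property of $\classCC$ (if two of the three factors in such a decomposition of univalent self-maps lie in $\classCC$, so does the third — proved exactly as Corollary~\ref{P_0_comp_cor}, upgrading Proposition~\ref{P_0_composition} to allow the affine normalization needed for $\classCC$). \textbf{The hard part} is to show that the two outer factors $F_0^{-1}\circ\psi$ and $\chi^{-1}\circ F_t$ themselves belong to $\classCC$. Here one cannot appeal to boundary smoothness of $\Omega_0$ or $\Omega_t$, which need not be $C^3$ near $p$; rather, near $1$ the boundary of $F_0^{-1}(D)$ (respectively of $\chi^{-1}(\Omega_t)$) is squeezed between two curves that are $C^{3,\alpha}$-images of $C$ and $C_t$ and therefore still have second order contact at that point, while the ambient map $F_0$ (respectively $\chi$) has finite angular derivative there. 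I expect the cleanest route is a two-sided, third-order refinement of the angular-derivative comparison principle (in the spirit of \cite[Theorem~4.14]{Pommerenke-II}): a conformal embedding of $\UD$ whose image domain is, near a boundary fixed point, pinched between two $C^{3,\alpha}$ curves having second order contact there, has finite angular derivatives up to the third order at that point with its $2$-jet there prescribed — equivalently, the ``non-$C^3$ parts'' of $F_0$ and of $F_t$ at $p$ coincide and so cancel in $\Phi_t=F_t^{-1}\circ F_0$. Granting this, composition stability gives $\Phi_t\in\classCC$; since $t>0$ was arbitrary and $\Phi_0=\id_\UD$, the family $(F_t)$ witnesses that $(\Omega_t)$ satisfies condition (ii) of Theorem~\ref{GorBaTheorem}.
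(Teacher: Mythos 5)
Your reduction to showing $\Phi_t\in\classCC$ for each fixed $t>0$ is correct, and so is the factorization $h=\chi^{-1}\circ\psi=(\chi^{-1}\circ F_t)\circ\Phi_t\circ(F_0^{-1}\circ\psi)$ and the observation that $h$ lies in $\classCC$ (indeed the paper's Lemma~\ref{2ndorder2} gives the stronger conclusion that, after a suitable normalization of the outer smooth conformal map, $\chi^{-1}\circ\psi$ can be arranged to lie in $\mathcal P_0$). But the step you yourself flag as the hard part is a genuine gap, not a routine refinement. You need the two outer factors $F_0^{-1}\circ\psi$ and $\chi^{-1}\circ F_t$ to belong to $\classCC$, i.e.\ to have finite angular derivatives up to third order at $z=1$ with a prescribed real part of the second. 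Their image domains, $F_0^{-1}(D)$ and $\chi^{-1}(\Omega_t)$, inherit no boundary smoothness whatsoever near $z=1$: being ``pinched between two $C^{3,\alpha}$ curves with second-order contact'' does not control the third-order angular behaviour of the Riemann map, and the ``two-sided, third-order refinement of the angular-derivative comparison principle'' you invoke is neither in Pommerenke's book nor in the paper. The analogues that do exist (Theorem~4.14 of~\cite{Pommerenke-II}, the Visser--Ostrowski condition) give only the first-order angular derivative under a pinching hypothesis and are already delicate; nothing of this nature is available at third order, and you offer no proof — only the expectation that such a tool ``should'' exist. In its current form the argument does not close.

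The paper's proof circumvents this boundary-regularity problem entirely and by a conceptually different device. It never tries to show that the intermediate conformal maps of $\UD$ onto the non-smooth domains $f_2^{-1}(\Omega_0)$, $f_2^{-1}(\Omega_t)$ have third-order angular derivatives. Instead it observes that, by Lemma~\ref{simple_lemma} and Lemma~\ref{three functions}, these intermediate maps can be chosen in the much cheaper class $\mathcal P$ (first-order angular derivative only, via prime-end landing), while the outermost composite $\varphi:=f_2^{-1}\circ F$ is in $\mathcal P_0$ by Lemma~\ref{2ndorder2}. It then applies Proposition~\ref{P_0_composition} twice. That proposition is the crucial tool: if $\varphi_3=\varphi_2\circ\varphi_1$ with all three in $\mathcal P$ and $\varphi_3\in\mathcal P_0$, then there is a M\"obius $h\in\Moeb(\UD)\cap\mathcal P$ such that $h\circ\varphi_1$ and $\varphi_2\circ h^{-1}$ lie in $\mathcal P_0$. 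Its proof is not an angular-derivative comparison at all but an algebraic argument via the Nevanlinna integral representation on~$\UH$: the additive decomposition of measures forces the finiteness of the factor measures and identifies the right affine twist. This is precisely the ``magic'' that upgrades the outer factors, and it is what your sketch is missing. Finally the paper shows that the resulting normalized map $F_t^0$ onto $\Omega_0$ is actually independent of $t$ (using that $\Moeb(\UD)\cap\mathcal P_0=\{\id_\UD\}$, a Burns--Krantz rigidity), which yields a single $F_0$ and completes assertion~(ii) of Theorem~\ref{GorBaTheorem}. You would need to either prove a theorem of the strength of Proposition~\ref{P_0_composition}, or prove the conjectured third-order comparison principle, to turn your outline into a proof.
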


For the proof of the above theorem we need the following elementary lemmas.

\begin{lemma}\label{2ndorder1}
Let $\varphi:\UD\to\UD$ be a holomorphic univalent function having $C^3$-smooth injective extension
to~$\partial\UD$ with $\varphi(1)=1$. If the curve $\partial\varphi(\UD)$ has
second order contact with~$\partial\UD$ at the point~$z=1$, then there exists $h\in\Moeb(\UD)$ such that the
function~$h\circ\varphi$ belongs to the class~$\mathcal P_0$.
\end{lemma}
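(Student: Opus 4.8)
The plan is to pass to the upper half-plane via the Cayley map~$H$ and reduce the statement to a claim about the class~$\mathfrak P_0$. Set $\Phi := H\circ\varphi\circ H^{-1}$, which is a univalent self-map of~$\UH$ with a boundary fixed point at~$z=\infty$ corresponding to the fixed point $z=1$ of~$\varphi$; the $C^3$-smoothness of the extension of~$\varphi$ and the second order contact of~$\partial\varphi(\UD)$ with~$\partial\UD$ at~$1$ translate into asymptotic information about~$\Phi$ near~$\infty$. More precisely, the $C^3$-regularity forces an expansion
\begin{equation*}
\Phi(z)=a z+b+\frac cz+\gamma(z),\qquad z\to\infty,\quad \angle\lim_{z\to\infty}z\gamma(z)=0,
\end{equation*}
for suitable $a>0$, $b\in\Complex$, $c\in\Complex$ (this is the kind of expansion already used for the class~$\mathcal C$ in Section~\ref{Goryainov-Ba}), and the hard geometric input — the second order contact of the boundary curves at~$p$ — is exactly what pins down the lower-order coefficients. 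So the first step is to make this expansion precise and to read off what the contact condition says about $a$, $b$ and $c$.

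The second step is to identify the M\"obius transformation~$h$. Since we want $h\circ\varphi\in\mathcal P_0$, in the half-plane picture we want an affine map $L(w)=Aw+B$ (the half-plane avatar of a parabolic-type normalization) such that $L\circ\Phi\in\mathfrak P_0$, i.e. $(L\circ\Phi)(z)=z-c'/z+o(1/z)$ angularly with $c'\ge0$. Comparing with the expansion above, this forces $A=1/a$ and then $B$ is determined so as to kill the constant term; one then has to check that the resulting coefficient in front of $1/z$ is a nonpositive real number (equivalently, that $c'\ge0$), which is precisely where the second order contact hypothesis — combined with univalence of~$\varphi$ and Remark~\ref{Julia} — must be invoked. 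Here Lemma~\ref{mlemma} and the characterization~\eqref{limy}--\eqref{supy} of~$\mathfrak P_0$ from~\cite[Lemma~1]{Goryainov-Ba} are the natural tools to verify membership in~$\mathfrak P_0$ once the expansion is normalized: conditions \eqref{limy} and \eqref{supy} are statements about $F(iy)-iy$ as $y\to+\infty$, which follow from the angular expansion by restricting to the imaginary axis.

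The main obstacle I anticipate is \emph{transferring the purely geometric "second order contact at~$p$'' hypothesis into an analytic statement about the Taylor/Laurent coefficients of~$\Phi$ at~$\infty$}, together with showing the relevant coefficient is a genuinely nonnegative real number rather than merely real. The smoothness assumptions let one Taylor-expand the boundary parameterizations, but one must carefully relate the boundary expansion of~$\varphi$ to the interior angular expansion — this is a boundary-regularity argument in the spirit of \cite[Theorem~3.5]{Pommerenke-II} and \cite[Theorem~3.9]{Pommerenke-II}, which give $C^{k}$-regularity of a conformal map up to the boundary near a smooth boundary arc. Concretely, one writes $\varphi$ near $z=1$ as a composition of the given conformal map onto~$\varphi(\UD)$ with a smooth change of parameter, uses that $\partial\varphi(\UD)$ osculates $\partial\UD$ to second order, and extracts that $\varphi(z)=1+(z-1)-\tfrac{c}{4}(z-1)^3+o((z-1)^3)$ angularly with $\Re$ of the appropriate combination vanishing — exactly the defining condition~\eqref{P_0} of~$\mathcal P_0$. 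Once this expansion is in hand, the existence of the normalizing $h\in\Moeb(\UD)$ is routine: one simply precomposes (after conjugating back by~$H$) with the affine map found in step two, and the sign constraint $c'\ge0$ is automatic because $h\circ\varphi$ is still a univalent self-map of~$\UD$ fixing~$1$ with unit derivative, so Remark~\ref{Julia} (i.e. $\Im(\Phi(z)-z)\ge0$) forces the leading coefficient of $z-\Phi(z)$ along the imaginary axis to be nonnegative. This completes the identification $h\circ\varphi\in\mathcal P_0$.
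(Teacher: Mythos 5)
Your plan is essentially the paper's own argument: pass to the half-plane via the Cayley map, use the $C^3$-smooth extension to get the expansion $\Phi(z)=az+b+c/z+\gamma(z)$ with $z\gamma(z)\to0$, read off from the second-order contact (equivalently $\Im\Phi(t)\to0$ along $\Real$) that $a>0$ and $b\in\Real$, and normalize by $L(z)=(z-b)/a$ so that $L\circ\Phi\in\mathfrak P_0$. The only (cosmetic) difference is your final membership check via \cite[Lemma~1]{Goryainov-Ba} and Lemma~\ref{mlemma} instead of the expansion characterization of $\mathfrak P_0$ quoted in Remark~\ref{GorBaBauer_Remark}; just note that condition~\eqref{supy} is a supremum over all $y>0$, not merely an asymptotic statement at $y\to+\infty$, so it needs the shift trick of Lemma~\ref{mlemma} (or the Nevanlinna representation) in addition to the expansion.
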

\begin{proof}
This statement is easier to prove if we pass to the upper half-plane. So we consider the function
$\Phi:=H\circ\varphi\circ H^{-1}$, where as earlier $H$ stands for the Cayley map~\eqref{Cayley}. Using
the facts that $\varphi$ has $C^3$-smooth extension to~$\partial\UD$ and that $\varphi(1)=1$, we can
assume that the following expansion takes place:
\begin{equation}\label{Phi_expan}
\Phi(z)=az+b+\frac{c}{z}+\gamma(z),~~~z\in\UH\cup\Real,
\quad a\neq0,~~\lim_{\substack{z\to\infty,\\[.1ex]z\in\UH\cup\Real}}z\gamma(z)=0.
\end{equation}

The fact that $\partial\varphi(\UD)$ has second order contact with~$\partial\UD$ at~$z=1$ can be
reformulated in the following way:
\begin{equation}\label{curve_geom}
\Im \Phi(t)\to 0\quad\text{as}\quad\text{$t\to\infty$, $t\in\Real$.}
\end{equation} (The proof of this claim is elementary and so we omit it.)

Combining~\eqref{Phi_expan} with \eqref{curve_geom} and taking into account that~$\Phi(\UH)\subset\UH$, we conclude that~$a>0$ and $b\in\Real$. Denote $L(z):=(z-b)/a$.
It follows that~$L\circ\Phi\in\mathfrak P_0$. Consequently, $h\circ\varphi\in\mathcal P_0$, where
$h:=H^{-1}\circ L\circ H$, which completes the proof.
\end{proof}

\begin{lemma}\label{2ndorder2}
Let $D_1$ and $D_2$ be domains in the Riemann sphere bounded by $C^{3,+0}$-smooth Jordan curves $C_1$ and
$C_2$ respectively. Suppose that $D_1\subset D_2$ and that the intersection $C_1\cap C_2$ contains a
point~$p$ at which the curves~$C_1$ and $C_2$ have contact of second order. If $f_1$ maps
conformally~$\UD$ onto~$D_1$ in such a manner that $f_1(1)=p$, then there exists a conformal mapping~$f_2$
of~$\UD$ onto~$D_2$ such that the function~$f_2^{-1}\circ f_1$ belongs to the class~$\mathcal P_0$.
\end{lemma}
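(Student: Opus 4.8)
The plan is to transplant the whole configuration to the unit disk by means of a Riemann map of~$D_2$ and then invoke Lemma~\ref{2ndorder1}. First, I would apply a preliminary M\"obius transformation~$T$ of the Riemann sphere with $T(\overline{D_2})\subset\Complex$, which reduces us to the case where $D_1\subset D_2$ are bounded Jordan domains in~$\Complex$ with $C^{3,+0}$-smooth boundaries and $p\in\Complex$; this costs nothing, since the function $f_2^{-1}\circ f_1$ is unchanged if $f_1$ and $f_2$ are replaced by $T\circ f_1$ and $T\circ f_2$, and $T^{-1}\circ f_2$ maps~$\UD$ onto~$D_2$. Next, fix any conformal map $g_2\colon\UD\to D_2$. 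Since $C_2$ is $C^{3,+0}$-smooth, the Kellogg--Warschawski theorem (see, e.g., \cite[Theorem~3.6]{Pommerenke-II}) shows that $g_2$ extends to a $C^3$-smooth injective map of~$\overline\UD$ with $g_2'$ non-vanishing on~$\overline\UD$; likewise the given map $f_1$ extends $C^3$-smoothly, with non-vanishing derivative, to~$\overline\UD$. Hence $\varphi_0:=g_2^{-1}\circ f_1$ is a univalent self-map of~$\UD$ that extends to a $C^3$-smooth injective map of~$\overline\UD$ onto the closure of the Jordan domain $\widetilde D_1:=g_2^{-1}(D_1)$, whose boundary is the $C^3$-smooth Jordan curve $\widetilde C_1:=g_2^{-1}(C_1)$, and $\varphi_0(1)=g_2^{-1}(p)=:\zeta_0\in\UC$.

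The geometric heart of the argument is to check that $\widetilde C_1$ has contact of second order with~$\UC$ at~$\zeta_0$. This holds because $g_2^{-1}$ is $C^3$-smooth near~$p$ with non-vanishing derivative there: if $w_1,w_2$ are $C^1$-parametrizations of $C_1,C_2$ near~$p$ with $w_j(0)=p$, $w_j'(0)\neq0$ and $|w_1(t)-w_2(t)|=o(t^2)$, then $g_2^{-1}\circ w_1$ and $g_2^{-1}\circ w_2$ are $C^1$-parametrizations of $\widetilde C_1$ and~$\UC$ near~$\zeta_0$, both taking the value~$\zeta_0$ with non-vanishing derivative at~$t=0$, and a second-order Taylor expansion of~$g_2^{-1}$ at~$p$ together with the elementary identity $(w_1(t)-p)^2-(w_2(t)-p)^2=\bigl(w_1(t)-w_2(t)\bigr)\bigl(w_1(t)+w_2(t)-2p\bigr)=o(t^2)\,O(t)$ yields $|g_2^{-1}(w_1(t))-g_2^{-1}(w_2(t))|=o(t^2)$. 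I would then compose with the rotation $\rho(z):=\overline{\zeta_0}\,z\in\Moeb(\UD)$ and set $\varphi_1:=\rho\circ\varphi_0$; this is a univalent self-map of~$\UD$ with $C^3$-smooth injective extension to~$\UC$, with $\varphi_1(1)=1$, and whose image boundary $\partial\varphi_1(\UD)=\rho(\widetilde C_1)$ has second-order contact with~$\UC$ at~$z=1$.

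Finally, Lemma~\ref{2ndorder1} applies to~$\varphi_1$ and produces $h\in\Moeb(\UD)$ with $h\circ\varphi_1\in\mathcal P_0$. Setting $f_2:=g_2\circ\rho^{-1}\circ h^{-1}$, which is a conformal map of~$\UD$ onto $g_2(\UD)=D_2$ because $\rho^{-1}\circ h^{-1}$ is an automorphism of~$\UD$, we obtain $f_2^{-1}\circ f_1=h\circ\rho\circ g_2^{-1}\circ f_1=h\circ\varphi_1\in\mathcal P_0$, as required (and $f_2(1)=p$ automatically). The main obstacle I anticipate is the boundary-regularity bookkeeping in the first two paragraphs: one must make sure that $C^{3,+0}$-smoothness of $C_1$ and $C_2$ is precisely what is needed for $g_2$, $f_1$, and hence $\varphi_0$ to extend $C^3$-smoothly and with non-vanishing derivatives to~$\overline\UD$, and that this regularity is exactly enough to carry the second-order contact of $C_1$ and $C_2$ at~$p$ over to $\partial\varphi_1(\UD)$ and~$\UC$ at~$z=1$; everything else is a formal juggling of M\"obius maps that mirrors the proof of Lemma~\ref{2ndorder1} itself.
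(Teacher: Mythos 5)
Your proof is correct and follows essentially the same route as the paper's: transplant $D_1$ into $\UD$ via a Riemann map of $D_2$ (made $C^3$ up to the boundary by Kellogg--Warschawski), observe that second-order contact at $p$ is carried over to second-order contact of $\partial\varphi(\UD)$ with $\UC$ at $z=1$, and then invoke Lemma~\ref{2ndorder1} and absorb the resulting M\"obius map into $f_2$. The only differences are cosmetic — you normalize by a rotation afterwards instead of choosing $f_2(1)=p$ at the outset, and you write out the Taylor-expansion verification of contact preservation that the paper leaves implicit.
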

\begin{proof}
First of all we note that because of the smoothness of the curves~$C_1$ and $C_2$, any conformal mappings $f_1$
and $f_2$ of~$\UD$ onto~$D_1$ and $D_2$ respectively, have $C^{3,\alpha}$-smooth extension to the unit circle
with some~$\alpha>0$, see, e.g.,~\cite[Theorem~3.6]{Pommerenke-II}. Assume now, according to the conditions of
the lemma, that $f_1(1)=p$. We also can choose~$f_2$ in such a way that $f_2(1)=p$. Then the function
$\varphi:=f_2^{-1}\circ f_1$ has $C^3$-smooth extension to~$\partial\UD$ and maps $\UD$ onto a subdomain
of~$\UD$ bounded by a curve having second order contact point with~$\partial\UD$ at~$z=1$. Then by
Lemma~\ref{2ndorder1}, $h\circ\varphi\in\mathcal P_0$ for some $h\in\Moeb(\UD)$. Hence, we may finish the proof
by replacing $f_2$ with $f_2\circ h^{-1}$.
\end{proof}

\begin{lemma}\label{simple_lemma}
Let $U\subset\UD$ be a simply connected domain and
$\varphi\in\mathcal P$. If $\varphi(\UD)\subset U$, then there
exists $\varphi_2\in\mathcal P$ such that $\varphi_2(D)=U$.
\end{lemma}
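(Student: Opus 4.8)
The plan is to realize $\varphi_2$ as a suitably normalized conformal map of $\UD$ onto $U$; the whole difficulty is to pick the normalization so that the boundary fixed point at $z=1$ is preserved with angular derivative equal to~$1$. I work throughout in~$\UD$.

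First I would record two elementary observations. Since $\varphi\in\mathcal P$, the angular (in particular, radial) limit of $\varphi$ at $z=1$ equals~$1$, so the curve $\Gamma(x):=\varphi(x)$, $x\in[0,1)$, lands at $z=1$. As $\Gamma$ lies in $\varphi(\UD)\subset U$ and $1\notin U$ (because $U\subset\UD$), this also shows that $1\in\partial U\subset\partial_\infty U$; hence $\Gamma$ is a curve in $U$ landing at a point of $\partial_\infty U$, and Theorem~\ref{Koebe} applies to it.

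Next, fix any conformal map $\psi_0\colon\UD\to U$. By Theorem~\ref{Koebe} the curve $\psi_0^{-1}\circ\Gamma$ lands at some $\xi_0\in\UC$, and after replacing $\psi_0$ by $z\mapsto\psi_0(\xi_0z)$ I may assume $\xi_0=1$. Then $\psi_0^{-1}(\Gamma(x))\to1$ and $\psi_0\bigl(\psi_0^{-1}(\Gamma(x))\bigr)=\Gamma(x)\to1$, so $1$ is an asymptotic value of the bounded (hence normal) function $\psi_0$ at the boundary point $z=1$; by the Lehto--Virtanen theorem $\angle\lim_{z\to1}\psi_0(z)=1$, and by Remark~\ref{ex_of_ad} the angular derivative $\psi_0'(1)$ exists in $(0,+\infty)\cup\{\infty\}$. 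Setting $\omega:=\psi_0^{-1}\circ\varphi$, which is a univalent self-map of $\UD$ with $\varphi=\psi_0\circ\omega$, the relation $\omega(x)=\psi_0^{-1}(\Gamma(x))\to1$ gives in the same way $\angle\lim_{z\to1}\omega(z)=1$ and $\omega'(1)\in(0,+\infty)\cup\{\infty\}$.

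Now comes the decisive step: the chain rule for angular derivatives \cite[Lemma~2]{SMC}, applied to $\varphi=\psi_0\circ\omega$, yields $\psi_0'(1)\,\omega'(1)=\varphi'(1)=1$, with the usual convention that a product of two elements of $(0,+\infty]$ equals $+\infty$ as soon as one of the factors does; consequently both $\psi_0'(1)$ and $\omega'(1)=1/\psi_0'(1)$ are finite. This finiteness of $\psi_0'(1)$ is the point I expect to be the main obstacle: a priori it could be infinite, which would preclude the correction below, and it is exactly the hypothesis $\varphi\in\mathcal P$ (through $\varphi'(1)=1<\infty$) that excludes this. To conclude, put $a:=\omega'(1)\in(0,+\infty)$, let $A\in\Moeb(\UD)$ be the hyperbolic automorphism of $\UD$ fixing $z=1$ whose angular derivative at $z=1$ equals~$a$ (such an automorphism exists for every $a>0$), and set $\varphi_2:=\psi_0\circ A$. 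Then $\varphi_2$ maps $\UD$ conformally onto $\psi_0(A(\UD))=\psi_0(\UD)=U$, and by the chain rule $\varphi_2(1)=\psi_0(A(1))=\psi_0(1)=1$ with $\varphi_2'(1)=\psi_0'(1)\,A'(1)=a^{-1}a=1$, so $\varphi_2\in\mathcal P$, as desired. (Equivalently, once $\psi_0'(1)$ and $\omega'(1)$ are known to be finite one may instead normalize so that the corrected $\omega$ lies in $\mathcal P$ and invoke Lemma~\ref{three functions}; either way the finiteness of these angular derivatives is the crux.)
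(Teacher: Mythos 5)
Your proof is correct, and its overall architecture coincides with the paper's: take an arbitrary Riemann map of $U$, use Theorem~\ref{Koebe} to locate the boundary point $\zeta_2$ (resp.\ $\xi_0$) corresponding to $z=1$, show that this map has angular limit $1$ and finite positive angular derivative there, and finally precompose with a hyperbolic automorphism fixing $z=1$ to normalize that derivative to~$1$ (the paper writes this automorphism explicitly as $\ell_x$ with $x=(1-g'(1))/(1+g'(1))$). The one genuine difference is the ingredient used to get \emph{finiteness} of the angular derivative of the Riemann map of~$U$: the paper applies the comparison theorem \cite[Theorem~4.14]{Pommerenke-II} directly to the inclusion $\varphi(\UD)\subset U$ with $f_1:=\varphi$, whereas you factor $\varphi=\psi_0\circ\omega$ with $\omega=\psi_0^{-1}\circ\varphi$ and invoke the chain rule for angular derivatives \cite[Lemma~2]{SMC}, reading off from $\psi_0'(1)\,\omega'(1)=\varphi'(1)=1$ that both factors are finite. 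Your route is the same device the paper itself uses in the proof of Lemma~\ref{three functions}, so it is entirely in the spirit of the text; it has the mild advantage of producing $\omega'(1)=1/\psi_0'(1)$ explicitly (which is what you feed into the normalizing automorphism), while the paper's route avoids having to discuss the a priori possibly infinite derivative of the inner factor. Both are valid; your identification of the finiteness of $\psi_0'(1)$ as the crux is exactly right.
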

\begin{proof}
Let $f_2$ be any conformal mapping of $\UD$ onto~$U$ and $\Gamma:=\varphi\big([0,1)\big)$.  Then the curve
$\Gamma$ lands at the point $z=1$, which is a common boundary point for the domains~$\varphi(\UD)$ and~$U$.
According to Theorem~\ref{Koebe} the curve~$\Gamma_2:=f_2^{-1}(\Gamma)$ lands at some particular point
$\zeta_2\in\partial\UD$. Now we can apply \cite[Theorem~4.14]{Pommerenke-II} for $f_1:=\varphi$ to deduce that
there exists finite angular derivative $f_2'(\zeta_2)$ and therefore there exists the angular limit
$\angle\lim_{\zeta\to\zeta_2}f_2(\zeta)$, which equals~$1$ because by just mentioned Theorem~\ref{Koebe} the
image of $\Gamma':=f_2\big([0,1)\big)$ can not land on $\partial f_2(\UD)$ at a point different from the landing
point of $\Gamma=f_2(\Gamma_2)$.

Let us consider the function $g(z):=f_2(z/\zeta_2)$. There exist
$g(1)=1$ and $g'(1)\neq\infty$ in angular sense. Moreover,
according to Remark~\ref{ex_of_ad}, $g'(1)>0$. To complete the
proof it suffices now to set $\varphi_2:=g\circ\ell_{x}$, where
$$\ell_x(z):=\frac{z-x}{1-xz},\quad x:=\frac{1-g'(1)}{1+g'(1)}.$$
\end{proof}

\begin{proof}[Proof of Theorem~\ref{GB_adm_criterium}]
Denote by $D$ the connected component of~$\ComplexE\setminus C$
that is contained in~$\Omega_0$. Since $D$ is a Jordan domain,
there exists a conformal mapping~$F$ of $\UD$ onto $D$
continuously extendable to~$\partial\UD$ with $F(1)=p$.

Fix any $t>0$. Since $C_t$ does not intersect $\Omega_t$, one of
the connected components of~$\ComplexE\setminus C_t$ contains
$\Omega_t$. Denote this connected component by $D_t$ and apply
Lemma~\ref{2ndorder2} with $F$, $D$, and $D_t$ substituted
for~$f_1$, $D_1$, and $D_2$, respectively. By this lemma there
exists a conformal mapping $f_2$ of $\UD$ onto $D_t$ such that
$\varphi:=f_2^{-1}\circ F$ belongs to $\mathcal P_0$. By
construction ${D\subset\Omega_0\subset\Omega_t\subset f_2(\UD)}$.
Let $U:=f_2^{-1}(\Omega_t)$ and $U_0:=f_2^{-1}(\Omega_0)$. Note
that ${\varphi(\UD)\subset U_0\subset U}$. Then by
Lemma~\ref{simple_lemma} there exist
$\varphi_2,\varphi_2^0\in\mathcal P$ mapping $\UD$ conformally
onto $U$ and $U_0$, respectively. Now using Lemma~\ref{three
functions} we conclude that $\varphi=\varphi^0_2\circ\varphi^0_1$
and $\varphi^0_2=\varphi_2\circ\varphi_1$ for some functions
$\varphi_1,\varphi_1^0\in\mathcal P$.  By
Proposition~\ref{P_0_composition} with $\varphi$, $\varphi_2^0$,
and $\varphi_1^0$ substituted for $\varphi_3$, $\varphi_2$, and
$\varphi_1$, respectively, there exists
$h_0\in\Moeb(\UD)\cap\mathcal P$ such that the functions
$h_0\circ\varphi_1^0$ and $\varphi_2^0\circ h_0^{-1}$ belong to
the class~$\mathcal P_0$. Finally, we apply
Proposition~\ref{P_0_composition} with $\varphi_2^0\circ
h_0^{-1}$, $\varphi_2$, and $\varphi_1\circ h_0^{-1}$ substituted
for $\varphi_3$, $\varphi_2$, and $\varphi_1$, respectively, to
conclude that there exists $h\in\Moeb(\UD)\cap\mathcal P$ such
that $h\circ \varphi_1\circ h_0^{-1}\in\mathcal P_0$.

Define $F_t^0:=f_2\circ\varphi_2^0\circ h_0^{-1}$ and
$F_t:=f_2\circ\varphi_2\circ h^{-1}$.  These functions are
conformal mappings of~$\UD$ onto domains $\Omega_0$ and
$\Omega_t$, respectively, with
\begin{equation}\label{inP0}
F_t^{-1}\circ F_t^0=h\circ \varphi_1\circ h_0^{-1}\in\mathcal P_0.
\end{equation}
One more important relation follows from the above construction.
Namely,  denote ${\varphi_t:=h_0\circ \varphi_1^0}$. Then
\begin{equation}\label{FF}
F_t^0\circ \varphi_t=F.
\end{equation}

We have constructed a family $(F_t^0)$ of conformal mappings
of~$\UD$  onto $\Omega_0$. Let us prove that~$F_t^0$ does not
depend on~$t$. To this end take arbitrary $s,t>0$ and write
${h_{s,t}:=\big(F^0_t\big)^{-1}\circ F_s^0}$. This function
maps~$\UD$ conformally onto itself, i.e., $h_t\in\Moeb(\UD)$. Now
recall that~$F$ does not depend on $t$. According to~\eqref{FF},
it follows that $\varphi_t=h_{s,t}\circ \varphi_s$. Recall also
that~$\varphi_t\in\mathcal P_0$. Therefore, by
Corollary~\ref{P_0_comp_cor} with $\varphi_t$, $h_{s,t}$, and
$\varphi_s$ substituted for $\varphi_3$, $\varphi_2$, and
$\varphi_1$, respectively, we have $h_{s,t}\in\mathcal P_0$. Since
$\Moeb(\UD)\cap \mathcal P_0=\{\id_\UD\}$, we conclude that
$h_{s,t}=\id_\UD$ and therefore $F_s^0=F_t^0$. Now in view
of~\eqref{inP0} we may finish the proof just by
setting~$F_0:=F_t^0$.
\end{proof}

The cases when Theorems~\ref{chordal_adm_criterium} and~\ref{GB_adm_criterium} can  be applied are quite
different from the classical setting of slit mappings. The following corollary of Theorem~\ref{GorBaTheorem} is
a simple assertion covering, in an obvious way, the case of an L-admissible family obtained by erasing a slit in
the complex plane or in the half-plane. More general case of a slit in an arbitrary simply connected domain can
be treated by using a conformal mapping onto the half-plane.

In the formulation and the proof we use the theory of primes ends.
The main definitions and some basic facts from this theory can be
found, e.g., in~\cite[Chapter~9]{ClusterSets}
or~\cite[Chapter~2]{Pommerenke-II}.
\begin{corollary}\label{GorBaCorSimple}
Let $(\Omega_t)$ be an L-admissible family and $P_0$ any prime end
of the domain~$\Omega_0$. Suppose that $P_0$ is degenerate, i.e.,
the impression of~$P_0$ consists of one point~$p_0$, and for each
$t\ge0$ there exists $\varepsilon>0$ such that
\begin{equation}\label{epsilon}\partial\Omega_t\cap
D(p_0,\varepsilon)=\partial\Omega_0\cap
D(p_0,\varepsilon),\end{equation} where $D(p_0,\varepsilon)$
stands for the disk of radius~$\varepsilon$ centered at~$p_0$.
Then assertions~(i) and (ii) in Theorem~\ref{GorBaTheorem} hold
and, in particular, the family~$(\Omega_t)$ is chordally
admissible.
\end{corollary}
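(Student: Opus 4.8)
We plan to establish the corollary by verifying condition~(ii) of Theorem~\ref{GorBaTheorem}; assertion~(i), and chordal admissibility (via $\classCC\subset\mathcal C$ and Lemma~\ref{adm}), then follow from that theorem. So the goal is to produce conformal maps $F_t\colon\UD\to\Omega_t$ with $F_t^{-1}\circ F_0\in\classCC$ for every $t\ge0$. The first step is prime-end bookkeeping. Since $P_0$ is degenerate with impression $\{p_0\}$, fix a null-chain $(\sigma_n)_{n\ge1}$ of crosscuts of $\Omega_0$ representing $P_0$, with crosscut neighbourhoods $V_n$ (the components of $\Omega_0\setminus\sigma_n$ adjacent to $P_0$); then $\overline{V_n}$ decreases to $\{p_0\}$, in particular $\diam\overline{V_n}\to0$. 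Given $t>0$ choose $\varepsilon>0$ as in the hypothesis and $n$ so large that $\overline{V_n}\subset D(p_0,\varepsilon)$ (hence $\overline{\sigma_n}\subset D(p_0,\varepsilon)$ as well). Using $\Omega_0\subset\Omega_t$ together with $\partial\Omega_0\cap D(p_0,\varepsilon)=\partial\Omega_t\cap D(p_0,\varepsilon)$, one checks that $\sigma_n$ is again a crosscut of $\Omega_t$ and that $V_n$ is an entire connected component of $\Omega_t\setminus\sigma_n$ (the point being the inclusion $\partial V_n\subset\sigma_n\cup(\partial\Omega_0\cap\overline{V_n})\subset\sigma_n\cup\partial\Omega_t$, so $V_n$ is open and closed in $\Omega_t\setminus\sigma_n$). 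Consequently $(\sigma_m)_{m\ge n}$ is a null-chain of $\Omega_t$ with the very same crosscut neighbourhoods $V_m$, and it represents a degenerate prime end $P_0^{(t)}$ of $\Omega_t$ with impression $\{p_0\}$.

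Next we fix once and for all a conformal $F_0\colon\UD\to\Omega_0$ with $(F_0^{p.e.})^{-1}(P_0)=1$, and for each $t>0$ a conformal $F_t\colon\UD\to\Omega_t$ with $(F_t^{p.e.})^{-1}(P_0^{(t)})=1$; set $\varphi_t:=F_t^{-1}\circ F_0$ (so $\varphi_0=\id_\UD\in\classCC$ trivially). Fix $t>0$ and take $n$ as above. Then $U_0:=F_0^{-1}(V_n)$ and $U_t:=F_t^{-1}(V_n)$ are Jordan subdomains of $\UD$, each bounded by a crosscut of $\UD$ (namely $F_0^{-1}(\sigma_n)$, resp.\ $F_t^{-1}(\sigma_n)$) together with a sub-arc $A_0$, resp.\ $A_t$, of $\UC$, and $1$ is an interior point of $A_0$ and of $A_t$ (since the endpoints of $\sigma_n$ represent prime ends different from $P_0$). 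The restriction $\varphi_t|_{U_0}\colon U_0\to U_t$ is a conformal bijection between Jordan domains, so by Carath\'eodory's theorem it extends to a homeomorphism of the closures; as $\varphi_t$ carries the crosscut $F_0^{-1}(\sigma_n)$ onto $F_t^{-1}(\sigma_n)$, it carries $A_0$ onto $A_t$, and comparing the nested preimages $F_0^{-1}(V_m)$ and $F_t^{-1}(V_m)$, $m\ge n$, yields $\varphi_t(1)=1$. Thus $\varphi_t$ is holomorphic on $U_0$, continuous on $U_0\cup A_0^{\circ}$, and maps $A_0^{\circ}\subset\UC$ into $\UC$; by the Schwarz reflection principle across the unit circle, $\varphi_t$ continues holomorphically to a full neighbourhood of $z=1$, fixing $1$ and mapping an arc of $\UC$ around $1$ into $\UC$ (here one uses that $\UD\cap B(1,\delta)$ is connected, so the reflected function agrees with $\varphi_t$ on all of it).

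It remains to conclude that $\varphi_t\in\classCC$ for every $t\ge0$. Being holomorphic at $z=1$ with $\varphi_t(1)=1$, the map $\varphi_t$ lies in $\mathcal C$ and has finite (ordinary, hence angular) derivatives of all orders at $1$, with $\varphi_t'(1)>0$ by Remark~\ref{ex_of_ad}. Conjugating by the Cayley map $H$, the function $H\circ\varphi_t\circ H^{-1}$ is holomorphic near $\infty$ with value $\infty$ there and is real on a real neighbourhood of $\infty$, hence has a Laurent expansion $az+b+c/z+\gamma(z)$ with $a=\varphi_t'(1)>0$, $b,c\in\Real$ and $z\gamma(z)\to0$; the reality of $b$ is, as noted right after the definition of $\classCC$, precisely the identity $\Re\varphi_t''(1)=\varphi_t'(1)(\varphi_t'(1)-1)/2$. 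Hence $F_t^{-1}\circ F_0\in\classCC$ for all $t\ge0$, and Theorem~\ref{GorBaTheorem} yields the assertion. The step that carries genuine analytic content — and the one we expect to be the main point — is the Schwarz-reflection argument: it works only because, after passing to the conformal models, the relevant free boundary arc $A_0$ lies on the unit circle, an \emph{analytic} curve, and this is exactly what upgrades the purely topological hypothesis on $(\Omega_t)$ to the analytic regularity ($\classCC$) demanded by Theorem~\ref{GorBaTheorem}. The prime-end bookkeeping of the first step is routine but has to be carried out carefully.
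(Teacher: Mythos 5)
Your proposal is correct and follows essentially the same route as the paper: transfer the null-chain defining the degenerate prime end~$P_0$ from $\Omega_0$ to $\Omega_t$ using the boundary-coincidence hypothesis, pull it back to~$\UD$ via $F_0$ and $F_t$, and conclude that $\Phi_t=F_t^{-1}\circ F_0$ extends analytically across~$z=1$ and hence lies in~$\classCC$. You make explicit two points the paper treats tersely — the Schwarz-reflection step behind ``$\Phi_t$ extends analytically to~$z=1$'' and the Cayley-map computation confirming $\Re\varphi_t''(1)=\varphi_t'(1)(\varphi_t'(1)-1)/2$ — but the underlying argument is the same.
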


\begin{proof}
Let $F_0$ be a conformal mapping of~$\UD$ onto~$\Omega_0$. The
fact that the impression $I(P_0)$ of the prime end~$P_0$ consists
of only one point~$p_0$, implies that $F_0$ has unrestricted limit
equal to~$p_0$ at some point $\zeta_0\in\UC$ and the curve
$F_0\big(\zeta_0[0,1)\big)$ tends to $P_0$. Using rotation
of~$\UD$ we can assume that~$\zeta_0=1$.

Now let us fix $t>0$ and let $F_t$ be a conformal mapping of~$\UD$
onto~$\Omega_t$. Another consequence of the degeneracy of~$P_0$ is
that for any $\varepsilon>0$ there exists a
null-chain~$(C_n)_{n\in\Natural}$ belonging to the prime end~$P_0$
such that for any $n\in\Natural$ the connected component~$V_n$ of
the set~$\Omega_0\setminus C_n$ containing~$C_{n+1}$, lies in
$D(p_0,\varepsilon)$ along with its boundary. Indeed, take any
null-chain~$(C_n)_{n=0}^{+\infty}$ belonging to the prime
end~$P_0$. Since by definition of the impression of prime end,
$$\{p_0\}=I(P_0)=\bigcap_{n\in\Natural}\overline{V_n},$$ we have
$V_n\subset D(p_0,\varepsilon)$ for all~$n$ sufficiently large. Therefore, dropping a finite number of
cross-cuts $C_n$ we get the desired null-chain with all~$\overline{V_n}$ lying in~$D(p_0,\varepsilon)$.

From~\eqref{epsilon} it follows that $$\Omega_t\cap D(p_0,\varepsilon)=\Omega_0\cap D(p_0,\varepsilon).$$ The
latter in its turn implies that $(C_n)$ is a null-chain for the the domain~$\Omega_t$ as well and that for each
$n\in\Natural$ the connected component of~$\Omega_t\setminus C_n$ that contains $C_{n+1}$ coincides with~$V_n$.
It follows that the prime end of~$\Omega_t$ defined by the null-chain~$(C_n)$ is also degenerate and its
impression is~$\{p_0\}$. Hence there exists a point~$\zeta_t\in\UC$ such that the unrestricted limit
$\lim_{z\to\zeta_t} F_t=p_0$ and the curve $F_t\big(\zeta_t[0,1)\big)$ tends to $P_t$. Again using rotation
of~$\UD$ we can assume that~$\zeta_t=1$.

Given a conformal mapping $F$ of the unit disk~$\UD$, the preimage
of any null-chain for the domain~$F(\UD)$ is a null-chain
for~$\UD$. Therefore, for each $n\in\Natural$ the set
$U_{0,n}:=F_0^{-1}(V_n)$ is a Jordan domain such that
$\Gamma_{0,n}:=\UC\cap\partial U_{0,n}$ is an arc and contains
$\zeta=1$ as an internal point. By the same reason
$U_{t,n}:=F_t^{-1}(V_n)$ is also a Jordan domain such that
$\Gamma_{t,n}:=\UC\cap\partial U_{t,n}$ is an arc, which contains
$\zeta=1$ as an internal point. This means that the function
$\Phi_t:=F_t^{-1}\circ F_0$ can be continuously extended to all
internal points of the arc~$\Gamma_{0,1}$ and that
$\Phi_t(\Gamma_{0,n})=\Gamma_{t,n}$ for all $n\in\Natural$, $n>1$.
In particular, it follows that $\Phi_t$ extends analytically to
the point~$z=1$. Since all prime ends of~$\UD$ are degenerate,
$$\bigcap_{n\in\Natural} \Gamma_{0,n}=\{1\}=\bigcap_{n\in\Natural}
\Gamma_{t,n}.$$ Hence we have that~$\Phi_t(1)=1$.

Summarizing the above facts one can easily conclude that $\Phi_t\in\classCC$. Therefore,~assertion~(ii) in
Theorem~\ref{GorBaTheorem} holds, which implies, according to this theorem, assertion~(i). The chordal
admissibility follows from assertion~(ii) by Lemma~\ref{adm}. The proof is finished.
\end{proof}

\section{An example}
\label{the example}

Let us recall that a holomorphic function $\varphi :\D\rightarrow \C $ belongs to the disk algebra~$\mathcal A$
if it has a continuous extension to the closed unit disk. In this section we construct an example of an
evolution family~$(\varphi_{s,t})$ which is contained in the disk algebra, but has no associated Loewner chains
$(f_t)$ with locally connected boundaries of the image domains~$f_t(\UD)$. More precisely, $\partial f_t(\UD)$
is not locally connected for every~$t\ge0$ and each Loewner chain~$(f_t)$ associated with~$(\varphi_{s,t})$.
Another interesting property of the evolution family we construct below is as follows: $(\varphi_{s,t})$ is a
Goryainov--Ba (and in particular, chordal) evolution family and $\partial f_t(\UD)$ is not locally connected,
because $f_t$ fails to have an angular limit at the point~$z=1$.

Before we start constructing the announced example let us introduce some notations and results we are going to
use.

As earlier we denote by $\Prend(D)$ the set of all prime ends of a simply connected domain~$D$. Let $U$
and $W$ be simply connected domains and $F$ a conformal mapping of~$U$ onto~$W$. By $F^{p.e.}$ we will
denote the bijective map between $\Prend(U)$ and $\Prend(W)$ induced by~$F$. If $U$ is $\UH$ or $\UD$, we
will identify $\Prend(U)$ with $\partial_\infty U$.

As in Section~\ref{Goryainov-Ba} we will denote by $\mathcal H$
the class of all univalent holomorphic self-mappings~$G$ of~$\UH$
such that
$$\lim_{\substack{z\to\infty\\z\in\UH}}\big(G(z)-z\big)=0.$$
\begin{remark}\label{H-remark}
The Riemann Mapping Theorem and the Prime End Theorem (see e.g. \cite[p. 18]{Pommerenke-II}) imply:
\begin{itemize}
\item[(i)] For any compact set $E\subset
\UH\cup\Real$  there exists $G\in\mathcal H$ such that~$G(\UH)=\UH\setminus E$.
\item[(ii)] If $G,H\in\mathcal H$ and $G(\UH)=H(\UH)$, then $G=H$.
\item[(iii)] In particular, by assertion~(ii), the function~$G$ from assertion~(i) is unique. We will denote it
by~$G[E]$. This function can be continued by symmetry to the lower half-plane and consequently can be
represented in a neighbourhood of $\infty$ by a Laurent series with real coefficients and principal part of the
form~$az+b$, $a>0$.
\end{itemize}
\end{remark}

\begin{definition}
A Jordan arc $\Gamma$ with two different end-points~$a_1$ and
$a_2$ is said to be a {\it slit} in a domain~$D$ if
$\Gamma\subset\overline D:=D\cup\partial D$ and
$\Gamma\cap\partial\D=\{a\}$, $a=a_j$ for $j=1$ or $j=2$. The
point $a$ is called the {\it root} of the slit~$\Gamma$ or its
{\it landing point}. By parametrization of the slit~$\Gamma$ we
will mean a homeomorphic mapping~$\gamma$ of a
segment~$[\alpha,\beta]$ onto $\Gamma$
with~$\gamma(\beta)\in\partial D$.
\end{definition}

The following result was proved in~\cite{Kufarev_etal}. Closely
connected results can be found in~\cite{Popova1954} and
\cite[Chapter IV\S 7, Theorem 2]{Aleksandrov}.

\begin{result} \label{ChordalRepresentation_in_Examples}
For any slit $\Gamma$ in the upper half-plane~$\UH$ landing at a
finite point, there exists a
parametrization~$\gamma:[0,T]\to\Complex$ of $\Gamma$  such that
the functions $\Phi_{0,t}:=G_t^{-1}\circ G_0$, where
$G_s:=G\big[\gamma\big([s,T]\big)\big]$, $s\in[0,T]$, have the
following Laurent expansion in a neighbourhood of~$z=\infty$,
\begin{equation}\label{expan}\Phi_{0,t}(z)=z-\frac{t}{z}+\ldots\end{equation}
Moreover, there exists a continuous function
$\lambda:[0,T]\to\Real$ such that for each $z\in\UH$ the function
$w(t):=\Phi_{0,t}(z)$, $t\in[0,T]$, is a solution to the chordal
Loewner equation
\begin{equation}\label{ChLE_in_Example}\frac{dw(t)}{dt}=\frac{1}{\lambda(t)-w(t)},\quad t\ge0.\end{equation}
\end{result}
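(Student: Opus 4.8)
The plan is to carry out the classical half-plane-capacity construction. For a compact set $E\subset\UH\cup\R$ with $\UH\setminus E$ simply connected, write $g_E:=G[E]^{-1}$ for the inverse of the map $G[E]\in\mathcal H$ furnished by Remark~\ref{H-remark}; by part~(iii) of that remark $g_E$ has near $\infty$ the expansion $g_E(w)=w+\mathrm{hcap}(E)/w+O(1/w^2)$ with a real number $\mathrm{hcap}(E)$, the half-plane capacity of~$E$. I would first recall three classical facts: (a) $\mathrm{hcap}(E)\ge0$, and $\mathrm{hcap}(E)>0$ unless $E\subset\R$; (b) if $E_1\subset E_2$ then $g_{E_2}=g_{\widehat E}\circ g_{E_1}$ with $\widehat E:=\overline{g_{E_1}(E_2\setminus E_1)}$ again such a compact, whence comparing the expansions at $\infty$ yields the additivity $\mathrm{hcap}(E_2)=\mathrm{hcap}(E_1)+\mathrm{hcap}(\widehat E)$; (c) if $E_n\to E$ in the Hausdorff metric (equivalently $\UH\setminus E_n\to\UH\setminus E$ in the sense of kernel convergence with respect to $\infty$), then $g_{E_n}\to g_E$ locally uniformly off $\overline E$ and $\mathrm{hcap}(E_n)\to\mathrm{hcap}(E)$.

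\emph{Choosing the parametrization and proving \eqref{expan}.} Fix any homeomorphism $\gamma_0:[0,1]\to\Gamma$ with $\gamma_0(1)=a$ the root, and put $\kappa(s):=\mathrm{hcap}\big(\gamma_0([s,1])\big)$. Fact~(c) makes $\kappa$ continuous (the sets $\gamma_0([s,1])$ vary Hausdorff-continuously), and facts~(a),(b) make it strictly decreasing, since for $s_1<s_2$ the difference hull $\overline{g_{\gamma_0([s_2,1])}(\gamma_0([s_1,s_2]))}$ is a nondegenerate slit (the arc $\gamma_0([s_1,s_2))$ lies in the open half-plane) and hence has positive capacity. Thus $\kappa$ is a decreasing homeomorphism of $[0,1]$ onto $[0,T]$, $T:=\mathrm{hcap}(\Gamma)=\kappa(0)$, and I set $\gamma(t):=\gamma_0\big(\kappa^{-1}(T-t)\big)$, $t\in[0,T]$; this reparametrizes $\Gamma$ with $\gamma(T)=a$ and $\mathrm{hcap}\big(\gamma([t,T])\big)=T-t$ for all $t$. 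Now fix $t$. By injectivity $\gamma([0,t])\cap\gamma([t,T])=\{\gamma(t)\}$, so $\widetilde E_t:=\overline{g_{\gamma([t,T])}\big(\gamma([0,t))\big)}$ is a slit attached to $\R$, one has $\Phi_{0,t}=G_t^{-1}\circ G_0=g_{\gamma([t,T])}\circ G_0=G[\widetilde E_t]$, and $g_\Gamma=g_{\widetilde E_t}\circ g_{\gamma([t,T])}$ (both sides are the normalized conformal map of $\UH\setminus\Gamma$ onto $\UH$). Comparing Laurent expansions at $\infty$ and invoking the previous sentence gives $\mathrm{hcap}(\widetilde E_t)=\mathrm{hcap}(\Gamma)-\mathrm{hcap}(\gamma([t,T]))=t$, i.e.\ $\Phi_{0,t}(z)=z-t/z+\cdots$, which is \eqref{expan}.

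\emph{The chordal equation.} For $0\le s\le t\le T$ put $\Phi_{s,t}:=G_t^{-1}\circ G_s$, so $\Phi_{0,t}=\Phi_{s,t}\circ\Phi_{0,s}$ and, exactly as above, $\Phi_{s,t}=G[\delta_{s,t}]$ with $\delta_{s,t}:=\overline{g_{\gamma([t,T])}(\gamma([s,t)))}$ a slit attached to $\R$ and $\mathrm{hcap}(\delta_{s,t})=t-s$. Define $\lambda(t):=g_{\gamma([t,T])}(\gamma(t))\in\R$, the value (a single real number, since the free tip of a slit is a single prime end) of the extension of $g_{\gamma([t,T])}$ to the tip of $\gamma([t,T])$; note $\lambda(t)$ is the point where $\delta_{s,t}$ meets $\R$. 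I will use the classical small-hull estimate: if $\delta$ is such a compact hull with $\mathrm{hcap}(\delta)=\eta$ and $\delta\subset\{|w-\xi|\le r\}$, $\xi\in\R$, then $G[\delta](w)=w-\eta/(w-\xi)+o(\eta)$ and $g_\delta(w)=w+\eta/(w-\xi)+o(\eta)$, uniformly for $|w-\xi|\ge 2r$. Granting (see below) that $\diam(\delta_{t,t+\varepsilon})\to0$ as $\varepsilon\downarrow0$ with the real point it collapses to being $\lambda(t)$, and that $t\mapsto\lambda(t)$ is continuous, fix $z\in\UH$ and set $w(t):=\Phi_{0,t}(z)$; then $w(t)\in\UH\setminus\widetilde E_t$, so $w$ is continuous on $[0,T]$ with $\Im w$ bounded below there, and $\big(w(t+\varepsilon)-w(t)\big)/\varepsilon=\big(\Phi_{t,t+\varepsilon}(w(t))-w(t)\big)/\varepsilon=-1/(w(t)-\lambda(t))+o(1)$. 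The left one-sided difference quotient is handled identically from $w(t-\varepsilon)=\Phi_{t-\varepsilon,t}^{-1}(w(t))=g_{\delta_{t-\varepsilon,t}}(w(t))$, using continuity of $\lambda$. Hence $w$ is differentiable on $[0,T]$ with $\dot w(t)=1/(\lambda(t)-w(t))$, which is \eqref{ChLE_in_Example}, and $\lambda$ is continuous as required.

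The one genuinely nonformal point is the claim granted in the last paragraph: that $\diam(\delta_{t,t+\varepsilon})\to0$ and $t\mapsto\lambda(t)$ is continuous. Half-plane capacity tending to zero does not by itself force the diameter to zero, so here one must use that $\Gamma$ is a \emph{curve}. The standard argument estimates the harmonic measure, as seen from $\infty$ in $\UH\setminus\gamma([t+\varepsilon,T])$, of the crosscut of that domain which cuts off $g_{\gamma([t+\varepsilon,T])}\big(\gamma([t,t+\varepsilon])\big)$: this harmonic measure tends to $0$ uniformly because $\gamma$ is uniformly continuous (a crosscut/extremal-length estimate), and together with the convergence $g_{\gamma([t+\varepsilon,T])}\to g_{\gamma([t,T])}$ from fact~(c) this yields both the collapse of $\delta_{t,t+\varepsilon}$ to a point and the continuity of $t\mapsto\lambda(t)$. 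This is precisely the step that uses the hypothesis that $\Gamma$ is a slit rather than an arbitrary increasing family of hulls.
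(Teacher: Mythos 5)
The paper does not actually prove this statement: it is quoted as a classical result, with the proof attributed to Kufarev--Sobolev--Spory\v{s}eva \cite{Kufarev_etal} (see also \cite{Popova1954} and \cite[Ch.\,IV \S7]{Aleksandrov}). Those classical arguments run in the spirit of Loewner's original one: a direct study of the tip of the slit via the Carath\'eodory convergence theorem and ad hoc distortion estimates. Your proof instead follows the modern half-plane-capacity route (the one standard in the SLE literature): normalize the parametrization by $\mathrm{hcap}$, get \eqref{expan} from additivity of $\mathrm{hcap}$ under composition, and derive \eqref{ChLE_in_Example} from the small-hull expansion $G[\delta](w)=w-\eta/(w-\xi)+o(\eta)$ once one knows the incremental hulls $\delta_{t,t+\varepsilon}$ collapse to the point $\lambda(t)$. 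This is a legitimate and, in my view, cleaner proof of the same statement; what it buys is a conceptual separation between the soft part (capacity bookkeeping, which works for arbitrary increasing hulls) and the one place where the slit hypothesis enters (the collapse of $\delta_{t,t+\varepsilon}$ and continuity of $\lambda$). You correctly identify that last point as the crux, and the harmonic-measure/extremal-length argument you indicate (bounding the harmonic measure from $\infty$ of the crosscut separating the image of $\gamma([t,t+\varepsilon])$, using uniform continuity of $\gamma$) is exactly the standard way to close it; it is only sketched, but the sketch is the right one.

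Two small imprecisions worth fixing. First, your ``fact (c)'' as stated (Hausdorff convergence of hulls implies convergence of $g_{E_n}$ and of $\mathrm{hcap}$) is stronger than what is true in general; Hausdorff convergence of compacta need not give kernel convergence of the complements. You only ever apply it to the monotone families $s\mapsto\gamma_0([s,1])$, where it does follow from the Carath\'eodory kernel theorem, so you should state and use only that monotone version. Second, the error term in the small-hull estimate is really $O(r\eta/|w-\xi|^2)$ with $r=\diam(\delta)$, so it is $o(\eta)$ only \emph{because} $r\to0$; since you prove $\diam(\delta_{t,t+\varepsilon})\to0$ anyway this is consistent, but the dependence should be made explicit, as the whole derivative computation hinges on it.
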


Now let us consider the domain $\Delta_0$ obtained by removing from the unit disk~$\UD$ the spiral curve
$C:[0,+\infty)\to\UD$ given by $C(\tau)= e^{i\tau}\big(1-1/(\tau+2)\big)$, tending asymptotically to the unit
circle. Denote $\Delta_\tau:=\UD\setminus C\big([\tau,+\infty)\big)$. For each $\tau\in[0,+\infty)$ all the
prime ends of the domain~$\Delta_\tau$ are trivial (i.e., have impressions consisting of a unique point) except
for exactly one prime end, impression of which coincides with the unit circle~$\partial\UD$. Denote this prime
end by $P_\tau$. Let us denote by $F_0$ the conformal mapping of $\UH$ onto $\Delta_0$ normalized by the
conditions $F_0(i)=0$, $F^{p.e.}_0(\infty)=P_0$.

We claim that for each $\tau\ge0$ there exists a conformal mapping
$F_\tau$ from~$\UH$ onto~$\Delta_\tau$ such that
$\tilde\Phi_{0,\tau}:=F_\tau^{-1}\circ F_0$ belongs to~$\mathcal
H$. To see this, let us first consider the conformal mapping
$H_\tau$ of~$\UH$ onto~$\Delta_\tau$ normalized by the
conditions~$H_\tau(i)=0$, $H^{p.e.}_\tau(\infty)=P_\tau$. Further,
the inverse mapping~$H^{-1}_\tau$ extends continuously to the
point~$C(\tau)$. Hence the function $\Psi_\tau:=H_\tau^{-1}\circ
F_0$ maps~$\UH$ onto~$\UH$ with slit along the Jordan
curve~$H_\tau^{-1}\big(C([0,\tau])\big)$ landing at the point
on~$H_\tau^{-1}\big(C(\tau)\big)\in\Real$. Note also that
$\Psi_\tau(\infty)=\infty$. By the Continuity theorem (see,
e.\,g., \cite[p.\,18]{Pommerenke-II}), the map $\Psi_\tau$ is
continuous up to the boundary. The argument of
Remark~\ref{H-remark} shows that in a neighbourhood of~$\infty$
the function $\Psi_\tau$ has a Laurent expansion of the form
$$\Psi_\tau(z)=az+b+\frac{c_1}{z}+\ldots,$$ where $a>0$ and $b\in\Real$. It follows that
$F_\tau(z):=H_\tau(az+b)$ is the desired conformal mapping.

Functions~$\tilde\Phi_{0,\tau}$ have a Laurent expansion of the
form~\begin{equation}\label{tilde}\tilde\Phi_{0,\tau}=z-\frac{t(\tau)}{z}+\ldots,\end{equation}
where $t(\tau)>0$.

Now let us fix some $\sigma>0$. Using the connection between $F_{\sigma}$ and $H_\sigma$ we conclude that the
function~$F_{\sigma}^{-1}\circ F_0$ maps $\UH$ onto $\UH\setminus\Gamma$, where
$\Gamma:=F_{\sigma}^{-1}\big(C([0,\sigma])\big)$ is a slit in~$\UH$ landing
at~$F_{\sigma}^{-1}(C(\sigma))\in\Real$. According to Theorem~\ref{ChordalRepresentation_in_Examples} there
exists a parametrization $t\mapsto\gamma(t)$ of the slit $\Gamma$ such that functions~$\Phi_{0,t}$ defined in
the statement of this theorem satisfy~\eqref{expan}. In particular, it means that for any $\tau\in[0,\sigma]$
there exists a unique $t_\tau\in[0,T]$ such that
\begin{equation}\label{reparam}
F_\sigma^{-1}\big(C(\tau)\big)=\gamma(t_\tau),\quad
\tau\in[0,\sigma],
\end{equation}
and the mapping~$[0,\sigma]\ni\tau\mapsto t_\tau\in[0,T]$ is an
increasing homeomorphism. Equality~\eqref{reparam} implies that
$\tilde\Phi_{0,\tau}=\Phi_{0,t_\tau}$. Consequently,
by~\eqref{expan}~and~\eqref{tilde}, $t(\tau)=t_\tau$ for
any~$\tau\in[0,\sigma]$. Since $\sigma>0$  can be chosen
arbitrarily, this means that $t\mapsto t(\tau)$ is a continuous
strictly increasing mapping of~$[0,+\infty)$ onto an interval of
the form~$[0,T_0)$, $T_0\in(0,+\infty]$.

Let us prove that $T_0=+\infty$.  Suppose on the contrary that $T_0<+\infty$.  Then it follows that $F_{\tau}$
converges, as $\tau\to+\infty$, uniformly on each compact subsets of~$\mathbb H$. Indeed, denote
$\tilde\Phi_{\nu,\mu}:=F_{\mu}^{-1}\circ F_{\nu}$, $\mu\ge\nu\ge0$. Since the functions $F_\tau$, $\tau>0$, form
a normal family in $\mathbb{H}$ and $\tilde\Phi_{\nu,\mu}(z)=z-\big(t(\mu)-t(\nu)\big)/z+\ldots$ in a
neighbourhood of $z=\infty$, for any compact set $K\subset\mathbb{H}$ we have
\begin{multline*}
\big|F_{\nu}(z)-F_{\mu}(z)\big|=\big|F_{\mu}(\tilde\Phi_{\nu,\mu}(z))-F_{\mu}(z)\big|\le\\
C^o_K\,|\tilde\Phi_{\nu,\mu}(z)-z|\le C_K (t(\mu)-t(\nu)),\quad z\in K,
\end{multline*}
provided $\mu>\nu$ and $\nu$ is large enough, say $\nu>\nu_K$,
where in the last inequality  we have applied that
$\tilde\Phi_{\nu,\mu}$ belongs to $\mathfrak P_0$. Here $C_K^o$,
$C_K$, and $\nu_K$ are positive constants independent of $\mu$ and
$\nu$. This proves convergence of~$F_\tau$ as $\tau\to+\infty$.

Denote the limit of~$F_\tau$ by $F_\infty$. For each fixed
$\nu\ge0$ there exists also a limit~$\tilde\Phi_{\nu,\infty}$
of~$\tilde\Phi_{\nu,\mu}$ as $\mu\to+\infty$. Indeed,  let
$\mu_2\ge\mu_1\ge\nu$ and $w:=\tilde\Phi_{\nu,\mu_1}(z)$. Then
$$\big|\tilde\Phi_{\nu,\mu_2}(z)-\tilde\Phi_{\nu,\mu_1}(z)\big|=\big|\tilde\Phi_{\mu_1,\mu_2}(w)-w\big|\le
\frac{\big(t(\mu_2)-t(\mu_1)\big)}{\Im w}\le
\frac{\big(t(\mu_2)-t(\mu_1)\big)}{\Im z}.$$ Moreover, by
\cite[Theorem~4.2]{Bauer}, $\tilde\Phi_{\nu,\infty}\in\mathfrak
P_0$. Now we will prove that $F_\infty$ maps $\mathbb H$ onto the
unit disk $\UD$ in one-to-one manner. First of all we recall that
$\cup_{\tau\ge0}\Delta_\tau=\UD.$ Hence, for any point $w_0\in
\UD$ there exists $\nu\ge0$ such that $F_{\nu}(z_{\nu})=w_0$ for
some $z_\nu\in\mathbb H$. Using the equality
$F_\nu(z)=F_\mu(\tilde\Phi_{\nu,\mu}(z))$, $z\in\mathbb H$, we
conclude that $z_\mu:=\tilde\Phi_{\nu,\mu}(z_\nu)$ solves the
equation~$F_\mu(z)=w_0$. Since $\tilde\Phi_{\nu,\mu}$ tends as
$\mu\to+\infty$ to a non-constant function, the point $z_\mu$
tends to an internal point $z_0\in\mathbb H$. Taking into account
locally uniform convergence of $F_\mu$ to $F_\infty$, we conclude
that $F_\infty(z_0)=w_0$. This applies to any $w_0\in \UD$. Hence,
$F_\infty(\mathbb H)\supset \UD$. On the other hand, the mapping
$F_\infty$ is univalent and maps $\mathbb{H}$ into $\UD$, because
it is a locally uniform limit of univalent functions mapping
$\mathbb{H}$ into $\UD$ and is not a constant function. This
proves that $F_\infty$ is a univalent mapping of $\mathbb H$ onto
$\UD$. Now we can pass to limits  in the equality
$F_0=F_\tau\circ\tilde\Phi_{0,\tau}$ in order to see that since
$\tilde\Phi_{0,\infty}(z)$ tends to $\infty$ as $z$ tends
to~$\infty$ along the positive direction of the imaginary axis,
the prime end $F^{p.e.}_0(\infty)$ contains a reachable point.
This conclusion contradicts the construction, and hence, proves
that $T_0=+\infty$.

Now let us define a family $(f_t:\UD\to\Complex)_{t\ge0}$ by the following relation
$$f_{t(\tau)}=F_\tau\circ H,\quad H(\zeta):=i\frac{1+\zeta}{1-\zeta},\quad \tau\ge 0.$$ Let
$\varphi_{s,t}:=f_t^{-1}\circ f_s$. Note that $\Phi_{0,t}=H\circ\varphi_{0,t}\circ H^{-1}$ for all
$t\in[0,+\infty)$. It follows that $\ell(\varphi_{s,t})=t-s$ for all $0\le s\le t <+\infty$. The family
$(\varphi_{s,t})$ satisfies~EF1 and EF2. Therefore, Proposition~\ref{GorBaLemma} implies that $(\varphi_{s,t})$
is a Goryainov--Ba evolution family of order~$d=\infty$. Then, according to~\cite[Lemma~3.2]{SMP}, $(f_t)$ is
one of the Loewner chains associated with this evolution family.

Let us take any $\nu>0$ and $\mu\ge\nu$. Denote $s:=t(\nu)$, $t:=t(\mu)$. Then the function $\varphi_{s,t}$
maps~$\UD$ onto $\UD\setminus H^{-1}\big(F_\mu^{-1}\big(C([\nu,\mu])\big)\big)$. Again by the Continuity theorem
(see e. g. \cite[p. 18]{Pommerenke-II}), we obtain that $\varphi_{s,t}$ belongs to the disk algebra~$\mathcal
A$.

However, for any $t\ge0$ and $\tau$ such that $t=t(\tau)$, the prime end $f^{p.e.}_{t}(1)=P_\tau$ contains no
reachable points. It follows that the angular limit of $f_t$ at the Denjoy--Wolff point $z=1$
of~$(\varphi_{s,t})$ does not exist for any~$t\ge0$. In particular, $f_t(\UD)$ is not locally connected.

Actually, the above statements hold for any Loewner chain~$(g_t)$
associated with the evolution family~$(\varphi_{s,t})$. Assume on
the contrary, that there exists a Loewner chain~$(g_t)$ associated
with $(\varphi_{s,t})$ and $t\ge0$ such that $g^{p.e.}_t(1)$
contains a reachable point, which we denote by~$A$. Since
by~\cite[Theorem 1.7]{SMP} $g_t=h\circ f_t$ for some univalent
holomorphic function~$h:\UD\to\Complex$, it would imply that there
is a slit~$\Gamma$ in~$g_t(\UD)$ landing at the point~$A\in
\partial g_t(\UD)=\partial h(f_t(\UD))\subset\overline{h(\UD)}$
such that the curve~$\Gamma_0:=h^{-1}(\Gamma)$ tends to the prime
end $f^{p.e.}_t(1)$. By~Theorem~\ref{Koebe}, the curve~$\Gamma_0$
lands at some particular point on the boundary of~$f_t(\UD)$. This
contradicts the fact that~$f^{p.e.}_t(1)$ contains no reachable
points.

Let us now summarize facts proved above in the following
\begin{proposition}\label{ex_prop}
There exists a Goryainov--Ba evolution family $(\varphi_{s,t})$, $s\in[0,+\infty)$, $t\in[s,+\infty)$, of
order~$d=\infty$ such that
\begin{itemize}
\item[(i)]~$\ell(\varphi_{s,t})=t-s$ for all $s\ge0$ and $t\ge s$;
\item[(ii)]~$\varphi_{s,t}\in\mathcal A$ for all $s\ge0$ and $t\ge
s$; \item[(iii)]~for each Loewner chain~$(f_t)$ associated to the
evolution family $(\varphi_{s,t})$ and for any ~$t\ge0$ the prime
end~$f^{p.e.}_t(1)$ is non-trivial and does not contain reachable
points; in particular $\partial  _{\infty }f_t(\UD)$ fails to be
locally connected.
\end{itemize}
\end{proposition}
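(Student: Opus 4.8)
The plan is to realize the required family as the chordal Loewner evolution attached to the spiral slit approaching the unit circle, using the construction set up above in this section, and then to read off the three listed properties. First I would set $\Delta_\tau:=\UD\setminus C\big([\tau,+\infty)\big)$ for the spiral $C(\tau)=e^{i\tau}\big(1-1/(\tau+2)\big)$, record that each $\Delta_\tau$ is simply connected with exactly one non-trivial prime end $P_\tau$, whose impression is all of $\partial\UD$, and transport the picture to the upper half-plane by the Cayley map $H$. After fixing the conformal map $F_0\colon\UH\to\Delta_0$ normalized by $F_0(i)=0$ and $F_0^{p.e.}(\infty)=P_0$, the first task is to produce, for every $\tau\ge0$, a conformal map $F_\tau\colon\UH\to\Delta_\tau$ with $\tilde\Phi_{0,\tau}:=F_\tau^{-1}\circ F_0\in\mathcal H$; one does this by composing the canonical slit map $H_\tau$ (normalized by $H_\tau^{p.e.}(\infty)=P_\tau$) with an affine change of variable that kills the linear and constant terms of its Laurent expansion at $\infty$ --- legitimate because, by the Continuity theorem and reflection, $H_\tau^{-1}\circ F_0$ maps $\UH$ onto a slit half-plane, fixes $\infty$, and has Laurent expansion $az+b+c_1/z+\cdots$ with $a>0$, $b\in\Real$.

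Next I would introduce the hydrodynamic capacity $t(\tau)>0$ from the expansion $\tilde\Phi_{0,\tau}(z)=z-t(\tau)/z+\cdots$ and show that $\tau\mapsto t(\tau)$ is a strictly increasing homeomorphism of $[0,+\infty)$ onto an interval $[0,T_0)$. On each segment $[0,\sigma]$ this follows from Theorem~\ref{ChordalRepresentation_in_Examples}: the slit $F_\sigma^{-1}\big(C([0,\sigma])\big)$ admits a chordal parametrization for which the associated maps $\Phi_{0,t}$ satisfy $\Phi_{0,t}(z)=z-t/z+\cdots$, and a reparametrization identity matching $\Phi_{0,t}$ with $\tilde\Phi_{0,\tau}$ forces $t(\tau)$ to be continuous and strictly increasing. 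The main obstacle is the global statement $T_0=+\infty$, which I would prove by contradiction: if $T_0<+\infty$, then (writing $\tilde\Phi_{\nu,\mu}:=F_\mu^{-1}\circ F_\nu$) the estimate $|F_\nu(z)-F_\mu(z)|\le C_K\big(t(\mu)-t(\nu)\big)$ on compacta, obtained from the Koebe distortion theorem together with $\tilde\Phi_{\nu,\mu}\in\mathfrak P_0$, forces $F_\tau$ to converge locally uniformly to a univalent $F_\infty\colon\UH\to\UD$; since $\bigcup_{\tau\ge0}\Delta_\tau=\UD$ one checks $F_\infty$ is onto $\UD$, and passing to the limit in $F_0=F_\tau\circ\tilde\Phi_{0,\tau}$ with $\tilde\Phi_{0,\infty}\in\mathfrak P_0$ would force the prime end $F_0^{p.e.}(\infty)$ to contain a reachable point, contradicting the construction of $\Delta_0$.

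Granted $T_0=+\infty$, I would put $f_{t(\tau)}:=F_\tau\circ H$ and $\varphi_{s,t}:=f_t^{-1}\circ f_s$, so that $\Phi_{0,t}=H\circ\varphi_{0,t}\circ H^{-1}$ has expansion $z-t/z+\cdots$ and hence $\ell(\varphi_{s,t})=t-s$, which is~(i); since $(\varphi_{s,t})$ satisfies EF1 and EF2 and $v(t)=\ell(\varphi_{0,t})=t$ has constant derivative, Proposition~\ref{GorBaLemma} shows $(\varphi_{s,t})$ is a Goryainov--Ba evolution family of order $d=\infty$, and by~\cite[Lemma~3.2]{SMP} the family $(f_t)$ is an associated Loewner chain. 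For~(ii), $\varphi_{s,t}$ maps $\UD$ onto $\UD$ with a Jordan slit removed, so the Continuity theorem gives $\varphi_{s,t}\in\mathcal A$. For~(iii), the key point is that $f_t^{p.e.}(1)=P_\tau$ has impression $\partial\UD$ and contains no reachable point, so $f_t$ has no angular limit at $z=1$ and $\partial_\infty f_t(\UD)$ fails to be locally connected; finally, for an arbitrary Loewner chain $(g_t)$ associated with $(\varphi_{s,t})$, \cite[Theorem~1.7]{SMP} gives $g_t=h\circ f_t$ with $h$ univalent, and a reachable point $A$ in $g_t^{p.e.}(1)$ would yield a slit in $g_t(\UD)$ landing at $A$ whose $h$-preimage tends to $f_t^{p.e.}(1)$, forcing by Theorem~\ref{Koebe} a genuine landing point of that curve on $\partial f_t(\UD)$ --- contrary to the absence of reachable points in $f_t^{p.e.}(1)$.
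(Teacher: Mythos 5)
Your proposal is correct and follows essentially the same route as the paper's own argument: the spiral domains $\Delta_\tau$, the affine renormalization of $H_\tau$ to get $\tilde\Phi_{0,\tau}\in\mathcal H$, the identification of $t(\tau)$ via Theorem~\ref{ChordalRepresentation_in_Examples}, the contradiction argument for $T_0=+\infty$ via locally uniform convergence of $F_\tau$, and the final Koebe-theorem argument ruling out reachable points for any associated Loewner chain. The only cosmetic difference is that you invoke Koebe distortion where the paper appeals to normality of the family $(F_\tau)$ for the compacta estimate, which amounts to the same thing.
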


\begin{remark}
Using Theorem~\ref{ChordalRepresentation_in_Examples} one can write down the Herglotz vector field~$G(z,t)$
corresponding to the evolution family~$(\varphi_{s,t})$ constructed in the above example. Namely,
\begin{equation}
G(z,t)=(1-z)^2p(z,t),\quad p(z,t):=\frac{(1-u(t))(1-z)}{4(z-u(t))},~u(t):=H(\lambda(t))\neq 1.
\end{equation}
\end{remark}

\section{Evolution families in the disk algebra}\label{EFinDA}

Proposition \ref{ex_prop} shows that the local connectivity of
$\partial_{\infty}f_t(\UD)$ for functions from a Loewner
chain~$(f_t)$ is not implied by the local connectivity of
$\partial\varphi_{s,t}(\UD)$ for functions from the evolution
family~$(\varphi_{s,t})$ generated by~$(f_t)$. The following
proposition shows that the converse implication is true.

\begin{theorem}
Let $(f_{t})$ be a Loewner chain with associated evolution family
$(\varphi _{s,t})$. Suppose that $\partial_{\infty }f_{t}(\D)$ is
locally connected for all $t\geq 0.$ Then $\varphi_{s,t}$ belongs
to the disk algebra~$\mathcal A$ for all $s\ge0$ and $t\ge s$.
\end{theorem}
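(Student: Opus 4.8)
The plan is to show that each $\varphi_{s,t}=f_t^{-1}\circ f_s$ has a one-point cluster set at every point of $\partial\D$; by a standard argument this forces $\varphi_{s,t}$ to extend continuously to $\overline{\D}$, i.e. to lie in $\mathcal A$. Throughout, fix $s\ge0$ and $t\ge s$, write $g:=\varphi_{s,t}$, $\Omega_s:=f_s(\D)\subseteq\Omega_t:=f_t(\D)$, and keep in mind that $f_t\circ g=f_s$ on $\D$ and $g(\D)\subseteq\D$. First I would invoke Carath\'eodory's continuity theorem: since $\partial_\infty\Omega_s$ and $\partial_\infty\Omega_t$ are locally connected, both $f_s$ and $f_t$ extend to continuous maps $\overline{\D}\to\ComplexE$, with $f_t(\partial\D)=\partial_\infty\Omega_t$ and $f_t(\D)=\Omega_t$; in particular $f_s(\zeta)\in\overline{\Omega_s}\subseteq\overline{\Omega_t}=\Omega_t\cup\partial_\infty\Omega_t$ for every $\zeta\in\partial\D$, the closures being taken in $\ComplexE$.

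The core step is to analyze, for a fixed $\zeta\in\partial\D$, the cluster set $C(g,\zeta):=\bigcap_{r>0}\overline{g\big(\D\cap D(\zeta,r)\big)}$, a connected compact subset of $\overline{\D}$ (a decreasing intersection of continua). Let $w\in C(g,\zeta)$, say $g(z_n)\to w$ with $\D\ni z_n\to\zeta$; then $f_t\big(g(z_n)\big)=f_s(z_n)\to f_s(\zeta)$ by continuity of $f_s$ on $\overline{\D}$. If $f_s(\zeta)\in\Omega_t$, then no $w\in C(g,\zeta)$ can lie on $\partial\D$ (continuity of $f_t$ would give $f_t(w)=f_s(\zeta)\in\Omega_t$, yet $f_t(w)\in\partial_\infty\Omega_t$), so $C(g,\zeta)\subseteq\D$, and then every such $w$ satisfies $f_t(w)=f_s(\zeta)$, forcing $w=f_t^{-1}(f_s(\zeta))$; hence $C(g,\zeta)$ is a single point. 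If instead $f_s(\zeta)\in\partial_\infty\Omega_t$, then no interior cluster point is possible (it would give $f_t(w)=f_s(\zeta)$ with $f_t(w)\in\Omega_t$), so $C(g,\zeta)\subseteq\partial\D$, and continuity of $f_t$ on $\overline{\D}$ gives $f_t(w)=f_s(\zeta)$ for every $w\in C(g,\zeta)$, i.e. $C(g,\zeta)\subseteq E:=\{\xi\in\partial\D:f_t(\xi)=f_s(\zeta)\}$. Being a connected compact subset of the circle $\partial\D$, $C(g,\zeta)$ is either a single point or it contains a non-degenerate closed sub-arc $A$; in the latter case $f_t$ would be constant on $A$, which is impossible for a non-constant function holomorphic in $\D$ and continuous on $\overline{\D}$ (Schwarz reflection across $A$; when $f_s(\zeta)=\infty$ one first replaces $f_t$ by $1/(f_t-a)$ with any $a\in\Omega_s$, a reduction that does not change $g$). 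So again $C(g,\zeta)$ reduces to a single point.

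Once $C(g,\zeta)$ is known to be a singleton for every $\zeta\in\partial\D$, I would finish by the routine fact that the extension of $g$ to $\overline{\D}$ by $\zeta\mapsto C(g,\zeta)$ is then continuous: if $z_n\to\zeta_0\in\partial\D$ with $g(z_n)\not\to C(g,\zeta_0)$, replacing each $z_n$ by a nearby point of $\D$ yields a cluster value of $g$ at $\zeta_0$ distinct from $C(g,\zeta_0)$, a contradiction. Hence $\varphi_{s,t}=g\in\mathcal A$. I expect the only real obstacle to be the case $f_s(\zeta)\in\partial_\infty\Omega_t$: there the point $f_s(\zeta)$ may be the $f_t$-image of several boundary points of $\D$, so a priori $g(z)$ could oscillate among them as $z\to\zeta$; this is precisely what the combination of the continuum argument with boundary uniqueness of $f_t$ excludes. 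Apart from that, the proof only uses local connectivity of $\partial_\infty f_t(\D)$ (and $\partial_\infty f_s(\D)$) to produce the continuous boundary extensions of $f_t$ and $f_s$.
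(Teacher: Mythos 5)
Your proof is correct, but it takes a genuinely different route from the one in the paper. You show directly that the cluster set $C(\varphi_{s,t},\zeta)$ at each $\zeta\in\partial\D$ is a singleton, using Carath\'eodory's continuity theorem to extend both $f_s$ and $f_t$ continuously to $\overline{\D}$ and then splitting into two cases according to whether $f_s(\zeta)\in f_t(\D)$ or $f_s(\zeta)\in\partial_\infty f_t(\D)$; in the second case the decisive fact is that a non-constant holomorphic function continuous up to the boundary cannot be constant on a boundary arc. The paper argues instead by contradiction through the theory of normal functions: a putative point of discontinuity of $\varphi_{s,t}$ produces Jordan arcs $C_n$ shrinking to that point whose images $\varphi_{s,t}(C_n)$ have Euclidean diameter bounded below, while continuity of $f_s$ (from local connectivity of $\partial_\infty f_s(\D)$) forces the spherical diameters of $f_t(\varphi_{s,t}(C_n))=f_s(C_n)$ to tend to zero, exhibiting $(\varphi_{s,t}(C_n))$ as a sequence of Koebe arcs for $f_t$ and contradicting the Bagemihl--Seidel theorem because $f_t$, being univalent, is normal. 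The trade-off: your argument is arguably more elementary, avoiding the machinery of normal functions and Koebe arcs, but it needs the continuous boundary extension of both $f_s$ and $f_t$, whereas for a fixed pair $(s,t)$ the paper's argument only uses local connectivity of $\partial_\infty f_s(\D)$ and no boundary regularity of $f_t$ beyond its automatic normality, which is a marginally sharper use of the hypotheses.
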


\begin{proof} The argument of this proof follows the same ideas that the first two authors developed in
\cite{Contreras-Diaz:iberoamericana}.

Fix $s<t$ and suppose that there is a point $a\in \partial \D$
where the function $\varphi _{s,t}$ has no continuous extension.
Write $ U_{n}=\{z\in \D:|z-a|<1/n\}$, $n\in\Natural.$ Then there
exists a positive number $ \varepsilon $
such that $diam(\varphi _{s,t}(U_{n}))>\varepsilon $ for all $n\in\Natural$,
where $diam(\Omega )$  denotes the Euclidean diameter of a given set $%
\Omega $. Since $\varphi _{s,t}(U_{n})$ is connected, we can find
Jordan arcs $C_{n}$ in $U_{n}$ such that $diam(\varphi
_{s,t}(C_{n}))>\varepsilon $ for all $n\in\Natural.$

Moreover, since $\partial _{\infty }f_{s}(\mathbb{D})$ is locally connected, we have that
\begin{equation*}
\lim_{n\rightarrow +\infty }diam^{\sharp }(f_{s}(C_{n}))=0,
\end{equation*}
where $diam^{\sharp }(\Omega )$ denotes the spherical diameter of a given set $%
\Omega $.
Since $f_{s}(U_{n+1})\subset f_{s}(U_{n})$ for all $n\in\Natural$, there is a point $%
c\in f_{s}(\mathbb{D}) \cup \partial _{\infty }f_{s}(\mathbb{D}) $
such that
\begin{equation*}
\sup_{w\in f_{t}(\varphi _{s,t}(C_{n}))}\chi(w,c)=\sup_{w\in
f_{s}(C_{n})}\chi(w,c)\leq \sup_{w\in
f_{s}(U_{n})}\chi(w,c)\underset{n\rightarrow \infty
}{\longrightarrow }0,
\end{equation*}%
where $\chi(\cdot,\cdot)$ stands for the spherical distance.

Therefore, the arcs $\varphi _{s,t}(C_{n})$ have diameter bigger than $%
\varepsilon $ for all $n\in\Natural$ and $$\lim_{n}\sup_{w\in f_{t}(\varphi _{s,t}(C_{n}))}\chi(w,c)=0.$$
That is, $(\varphi _{s,t}(C_{n}))_{n}$ is a sequence of Koebe arcs for the function $f_{t}.$ This is a
contradiction because a univalent function is normal (see, e.\,g., \cite[Lemma~9.3 on
p.\,262]{Pommerenke}), but by a theorem of Bagemihl and Seidel (see, e.\,g., \cite[Corollary~9.1 on
p.\,267]{Pommerenke}) a non-constant normal function has no sequences of Koebe arcs. This completes the
proof.
\end{proof}

\section*{Acknowledgement}
The initial draft of this paper was prepared during a visit of the third author to the
University of Seville, which was financially supported by the project MTM2006-14449-C02-01.
The third author would also like to thank the National Center for Theoretical Sciences,
Hsinchu, Taiwan, where a substantial part of the work under the paper was carried out.

\end{document}